\documentclass[aos,preprint]{imsart}

\usepackage{amsmath,amssymb,amsfonts}
\usepackage{amsthm}
\usepackage{needspace}
\usepackage[numbers]{natbib}
\usepackage{hyperref}
\hypersetup{hidelinks,pdfauthor={Nikolaos Gkoumas, Nickos Papadatos, Samis Trevezas}}
\usepackage{tabularx}

\startlocaldefs
\newcommand{\EE}{\mathbb{E}}
\newcommand{\PP}{\mathbb{P}}
\newcommand{\Var}{\mathbb{V}\mathrm{ar}}

\newtheorem{theorem}{Theorem}
\newtheorem{prop}{Proposition}
\newtheorem{proposition}[prop]{Proposition}
\newtheorem{cor}[theorem]{Corollary}
\newtheorem{lem}{Lemma}
\newtheorem{lemma}[lem]{Lemma}
\theoremstyle{definition}
\newtheorem{myrem}{Remark}

\makeatletter
\renewcommand*\l@section{\@dottedtocline{1}{0em}{2.6em}}
\renewcommand*\l@subsection{\@dottedtocline{2}{1.5em}{3.6em}}
\renewcommand{\nocontentsline}{%
    \let\@@addcontentsline\addcontentsline
    \def\addcontentsline##1##2##3{\let\addcontentsline\@@addcontentsline}%
}
\makeatother
\endlocaldefs

\begin{document}

\begin{frontmatter}

\title{Hermite Spectra and Kernel Factorization for Gaussian-Weighted Tests of Normality}
\runtitle{Hermite spectra for normality tests}

\begin{aug}
\author[A]{\fnms{Nikolaos}~\snm{Gkoumas}}
\author[A]{\fnms{Nickos}~\snm{Papadatos}}
\author[A]{\fnms{Samis}~\snm{Trevezas}\ead[label=e1]{strevezas@math.uoa.gr}}
\address[A]{Department of Mathematics, National and Kapodistrian University of Athens\printead[presep={,\ }]{e1}}
\end{aug}

\begin{abstract}
We study the asymptotic null spectrum of Gaussian-weighted tests of univariate normality with estimated location and scale. Closed Hermite coefficients give equations both away from and at the unperturbed poles. Every positive eigenvalue of the fully standardized covariance is simple, and the odd and even eigenvalues alternate strictly. For each unperturbed even pole except the largest, there is exactly one weight parameter at which it belongs to the perturbed even spectrum. These parameters are strictly ordered and converge to the boundary of the parameter interval. A signed total-positivity argument proves simplicity more generally for consecutive Gaussian covariance corrections and positive even integrable weights. We also derive the limiting quadratic forms directly from degenerate kernels at standardized observations. A derivative summability condition controls the complete spectral tail, including signed kernels, and is verified for characteristic-function kernels with a finite fourth weight moment. A real Hilbert-space factorization identifies the observation and Fourier spectra. Numerical calculations assess asymptotic calibration.
\end{abstract}

\begin{keyword}[class=MSC]
\kwdgroup[type=primary]{\kwd{62G10}\kwd{60F05}}
\kwdgroup[type=secondary]{\kwd{47G10}}
\end{keyword}

\begin{keyword}
\kwd{empirical characteristic function}
\kwd{goodness-of-fit test}
\kwd{kernel operator}
\kwd{total positivity}
\kwd{U-statistic}
\kwd{V-statistic}
\end{keyword}

\end{frontmatter}
\hypersetup{pdfsubject={Preprint}}

\tableofcontents

\section{Introduction}

Empirical characteristic functions have long been used in goodness-of-fit testing; see Feuerverger and Mureika~\cite{FeuervergerMureika1977}. The test of Epps and Pulley~\cite{EppsPulley1983} compares the empirical and normal characteristic functions in a Gaussian-weighted quadratic norm. Its multivariate extension is due to Baringhaus and Henze~\cite{BaringhausHenze1988}; related composite normality tests were studied by Cs{\"o}rg{\H{o}}~\cite{Csorgo1986}. Under the normal null, the limiting distributions are infinite weighted sums of independent chi-square variables. The weights are the eigenvalues of a covariance operator and determine asymptotic critical values.

The general setting is the spectral theory of degenerate $U$- and $V$-statistics; see Hoeffding~\cite{hoeffding1994class} and Gregory~\cite{Gregory1977}. Parameter substitution has been studied by Randles~\cite{Randles1982}, Pierce~\cite{Pierce1982}, de~Wet and Randles~\cite{DeWetRandles1987}, Shieh~\cite{Shieh1997}, and Cupari{\'c} et~al.~\cite{CuparicMilosevicObradovic2022}. Arcones~\cite[Theorems~2.3 and 3.3]{Arcones2007} gives generalized one-sample $U$-statistic substitution results and a normality-test application that includes the BHEP limit for block size one. For weakly dependent observations, see Leucht~\cite{Leucht2012}. In the normality problem, the covariance corrections due to location and scale estimation are given in Henze~\cite{Henze1990}. Explicit calculations for a particular weight occur in Baringhaus~\cite{baringhaus1996fibonacci}. Ebner and Henze~\cite{EbnerHenze2023} use a Gaussian Hermite basis and a finite-rank Fredholm determinant to compute the Epps--Pulley spectrum. These spectral representations provide the starting point for the calculations below. More recently, Ebner et~al.~\cite{EbnerJimenezGameroMilosevic2025} studied Rayleigh--Ritz approximation of covariance spectra, including the BHEP operator in dimensions one, two and three, and its use for asymptotic calibration.

We give closed expressions for the coefficients of these corrections and resolve two further questions about the spectrum. The chosen parameter makes the unperturbed eigenvalues geometric and yields an affine relation between the two even coefficient sequences. A bordered-matrix criterion supplements the resolvent equation at its poles. For every unperturbed even pole except the largest, there is exactly one parameter at which it is an even eigenvalue of the corrected covariance. The exceptional parameters are strictly increasing and tend to one. Thus the poles cannot be excluded, but their coincidences can be classified pole by pole. This strengthens the counterexample to the exclusion assertion in Ebner and Henze~\cite[Theorem~2.2]{EbnerHenze2023}; see Theorem~\ref{main-thm:unique_pole_sequence} and Section~\ref{main-subsec:EH_equivalence}.

We also prove that every positive eigenvalue of the fully standardized operator is simple, with strict alternation of the odd and even eigenvalues. Simplicity within the odd subspace follows already from its rank-one equation; it does not settle simplicity in the even subspace or exclude a common eigenvalue of the two subspaces. Theorem~\ref{main-thm:simple_full_spectrum} settles both questions. Its proof conjugates the covariance kernel to a strictly totally positive kernel and uses exterior powers. The same argument applies after subtraction of any consecutive initial block of Gaussian covariance directions, for an arbitrary positive even integrable weight; see Theorem~\ref{supp-thm:consecutive_corrections}. The total-positivity method is classical; the required signed kernel and its application to these covariance corrections are proved here.

The observation-space part derives the limit directly from the residual sums. Standardization replaces the empirical eigenfunction sums, to first order, by their projections away from the normal location and scale scores. The parameter-substitution results of Cupari{\'c} et~al.~\cite{CuparicMilosevicObradovic2022} already give $U$-statistic limits in which the original kernel trace is subtracted. The identification of the projected normal summand also agrees with Arcones~\cite{Arcones2007}; Section~\ref{supp-sec:Arcones_comparison} gives the exact correspondence. Our concern is the justification of the entire observation-space spectral expansion after standardization, not a new general parameter-substitution principle. We give a tail condition with the infinite tail taken before the sample-size limit. Proposition~\ref{main-prop:derivative_tail_criterion} controls the empirical eigenfunction sums uniformly in sample size by Gaussian norms of the functions and their first derivatives. Its summability consequence applies also to signed kernels. Corollary~\ref{main-cor:weighted_kernel_full_tail} verifies it for the complete weighted characteristic-function kernel, of either parity, whenever the weight has a finite fourth moment.

For characteristic-function kernels, the projected observation-space operator and the Fourier covariance are products of a Hilbert--Schmidt operator and its adjoint, in opposite orders. They have the same positive spectrum. The factorization uses the real space of Hermitian-symmetric functions, which fixes the multiplicities in the chi-square series. Inner-product kernel representations are familiar from Hofmann et~al.~\cite{HofmannScholkopfSmola2008} and Sejdinovic et~al.~\cite{SejdinovicSriperumbudurGrettonFukumizu2013}; here their role is to identify the operators in the two statistical derivations. If the score space is generated by kernel eigenfunctions, projection removes those eigendirections. For the odd Gaussian kernel, the location score has nonzero coordinates along every odd eigenfunction, and all odd eigenvalues change instead.

The two parts concern the same limiting eigenvalues: the factorization identifies the operator obtained from residual sums with the covariance operator treated by the secular equations. The derivative estimate justifies this identification through the infinite observation-space expansion; it does not rely on the subsequent explicit solution of the Fourier eigenvalue problem.

The numerical study examines the finite-sample accuracy of asymptotic calibration for the fully standardized Epps--Pulley $V$-statistic. We approximate the eigenvalues by the Rayleigh--Ritz method in the Gaussian Hermite basis, using the coefficients of Proposition~\ref{main-prop:overlaps_closed_form}. The geometric spectrum gives a bound on the truncation error. Section~\ref{supp-sec:numerical_use} of the Supplement reports the first ten eigenvalues, asymptotic percentage points and rejection frequencies under the normal null, based on 50,000 replications for each sample size.

Section~\ref{main-sec:positioning} gives the kernel representation and the relation between the weight parameters. Section~\ref{main-sec:spectral_preliminaries} develops the Hermite and perturbation calculations. Section~\ref{main-sec:data_domain_projection} proves the projection, tail and factorization results. Section~\ref{main-sec:full_studentization} gives the full spectrum, its simplicity and parity ordering, and the classification of coincidences at even poles. One-parameter results, auxiliary proofs and numerical details are in the Supplement. Section~\ref{main-sec:discussion} distinguishes the scope of the spectral and statistical conclusions. All statistical results are univariate.

\section{Background and kernel representation}
\label{main-sec:positioning}
\label{main-sec:background_kernel_view}

\subsection{Weighted distance and its kernel}
\label{main-subsec:cf_and_ecf}
\label{main-subsec:EPBHEP_as_weighted_distance}
\label{main-subsec:general_weighted_norm_identity}
Let $\phi(t)=\EE[\exp\!\left\{itX\right\}]$ be the characteristic function of $F$, and let $\phi_0$ be that of a target law $F_0$ symmetric about zero. Thus $\phi_0$ is real and even. For a positive, even, integrable weight $\theta$, define
\begin{equation}\label{main-eq:Delta_theta_def}
d_\theta^2(F,F_0)=\int_{\mathbb R}\theta(t)|\phi(t)-\phi_0(t)|^2\,dt.
\end{equation}
This quantity is at most $4\|\theta\|_{L^1}$. Its square root is a distance, and continuity and uniqueness of characteristic functions give
\begin{equation}\label{main-eq:characterize_phi0_with_DF=0}
d_\theta^2(F,F_0)=0\quad\Longleftrightarrow\quad\phi=\phi_0
\quad\Longleftrightarrow\quad F=F_0.
\end{equation}
Write
\begin{align}
c_1(u)&=\int_{\mathbb R}\theta(t)\phi_0(t)\cos(tu)\,dt,\label{main-eq:c1_def}\\
c_2(u)&=\int_{\mathbb R}\theta(t)\cos(tu)\,dt,\qquad u\in\mathbb R,\label{main-eq:c2_def}
\end{align}
and
\begin{equation}\label{main-eq:UB_def_general}
\mathrm{UB}=\int_{\mathbb R}\theta(t)\phi_0^2(t)\,dt.
\end{equation}
The following identity gives the observation-space kernel.
\begin{proposition}\label{main-prop:weighted_L2_kernel_rep}
For independent $X,Y$ with law $F$,
\begin{equation}\label{main-eq:Delta_identity}
d_\theta^2(F,F_0)=\mathrm{UB}-\EE[2c_1(X)-c_2(X-Y)].
\end{equation}
Equivalently, the symmetric kernel
\begin{equation}\label{main-eq:Ktheta_def}
K_\theta(x,y)=c_2(x-y)-c_1(x)-c_1(y)+\mathrm{UB}
\end{equation}
satisfies
\begin{equation}\label{main-eq:Delta_kernel_rep}
d_\theta^2(F,F_0)=\EE[K_\theta(X,Y)].
\end{equation}
\end{proposition}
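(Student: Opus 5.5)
The plan is to expand the squared modulus in \eqref{main-eq:Delta_theta_def} and integrate term by term, using Fubini's theorem, which is justified because $\theta\in L^1$ and every integrand below is bounded. Since $\phi_0$ is real,
\[
|\phi(t)-\phi_0(t)|^2=|\phi(t)|^2-2\phi_0(t)\,\mathrm{Re}\,\phi(t)+\phi_0^2(t).
\]
The last term, integrated against $\theta$, gives $\mathrm{UB}$ by \eqref{main-eq:UB_def_general}.

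For the middle term, I write $\mathrm{Re}\,\phi(t)=\EE[\cos(tX)]$. Since $|\theta(t)\phi_0(t)\cos(tX)|\le\theta(t)$ is integrable on $\mathbb R\times\Omega$, Fubini gives
\[
\int\theta\phi_0\,\mathrm{Re}\,\phi\,dt=\EE[c_1(X)],
\]
by \eqref{main-eq:c1_def}. For the first term, independence of $X$ and $Y$ gives $|\phi(t)|^2=\phi(t)\overline{\phi(t)}=\EE[e^{it(X-Y)}]$. This is real, because $X-Y$ is symmetric in law, so it equals $\EE[\cos(t(X-Y))]$. Fubini again, with dominating function $\theta$, yields $\int\theta|\phi|^2\,dt=\EE[c_2(X-Y)]$ by \eqref{main-eq:c2_def}. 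Alternatively, one can avoid the symmetry argument: $\theta$ is even, so the sine part integrates to zero. Summing the three pieces gives \eqref{main-eq:Delta_identity}.

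For \eqref{main-eq:Delta_kernel_rep}, I note that $X$ and $Y$ have the same law, so $\EE[c_1(X)]=\EE[c_1(Y)]$. Hence
\[
\EE[K_\theta(X,Y)]=\EE[c_2(X-Y)]-2\EE[c_1(X)]+\mathrm{UB},
\]
which matches \eqref{main-eq:Delta_identity}. The kernel $K_\theta$ is symmetric because $c_2$ is even, being a cosine transform. The only point needing care is the Fubini justification, and boundedness of cosines together with $\theta\in L^1$ settles it. There is no real obstacle.
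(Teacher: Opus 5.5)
Your proposal is correct and follows essentially the same route as the paper. Both expand $|\phi-\phi_0|^2=|\phi|^2-2\phi_0\Re\phi+\phi_0^2$, identify $|\phi(t)|^2=\EE[\cos(t(X-Y))]$ and $\Re\phi(t)=\EE[\cos(tX)]$, and integrate against $\theta$ by Fubini, justified by integrability of $\theta$; your explicit dominating function $\theta$ and the check that $K_\theta$ is symmetric only spell out details the paper leaves implicit.
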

\begin{proof}
Expanding the squared modulus gives
\begin{equation*}
|\phi-\phi_0|^2=|\phi|^2-2\phi_0\Re\phi+\phi_0^2.
\end{equation*}
Independence gives $|\phi(t)|^2=\EE[\cos(t(X-Y))]$, while $\Re\phi(t)=\EE[\cos(tX)]$. Fubini's theorem, justified by integrability of $\theta$, and \eqref{main-eq:c1_def}, \eqref{main-eq:c2_def} and \eqref{main-eq:UB_def_general} yield \eqref{main-eq:Delta_identity}. The definition \eqref{main-eq:Ktheta_def} gives \eqref{main-eq:Delta_kernel_rep}.
\end{proof}
\begin{cor}\label{main-cor:UB_inequality}
Under the preceding assumptions, $\EE[2c_1(X)-c_2(X-Y)]\le\mathrm{UB}$, with equality if and only if $F=F_0$.
\end{cor}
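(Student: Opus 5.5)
The corollary follows by combining Proposition~\ref{main-prop:weighted_L2_kernel_rep} with the characterization \eqref{main-eq:characterize_phi0_with_DF=0}. I will rearrange the identity \eqref{main-eq:Delta_identity} as
\begin{equation*}
\mathrm{UB}-\EE[2c_1(X)-c_2(X-Y)]=d_\theta^2(F,F_0).
\end{equation*}
The right side is the integral of the nonnegative function $\theta(t)|\phi(t)-\phi_0(t)|^2$, so it is nonnegative, and the inequality $\EE[2c_1(X)-c_2(X-Y)]\le\mathrm{UB}$ follows at once.

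All quantities are finite. The functions $c_1$ and $c_2$ are bounded by $\|\theta\|_{L^1}$, since $|\phi_0|\le1$ and $|\cos|\le1$, so the expectation is well defined. Also $\mathrm{UB}\le\|\theta\|_{L^1}$.

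For the equality case, equality holds exactly when $d_\theta^2(F,F_0)=0$. By \eqref{main-eq:characterize_phi0_with_DF=0} this is equivalent to $F=F_0$.

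If I want to make that step explicit rather than cite it, I would argue as follows. A vanishing integral of a nonnegative integrand forces $\theta(t)|\phi(t)-\phi_0(t)|^2=0$ for almost every $t$. Because $\theta>0$, this gives $\phi=\phi_0$ almost everywhere. Both characteristic functions are continuous, so $\phi=\phi_0$ everywhere, and the uniqueness theorem for characteristic functions then yields $F=F_0$. The converse direction is trivial, since $F=F_0$ gives $\phi=\phi_0$ and hence $d_\theta^2=0$.

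There is no real obstacle here. The only point needing care is the passage from almost-everywhere to everywhere equality, which uses positivity of $\theta$ together with continuity of $\phi$ and $\phi_0$; this is exactly what the statement \eqref{main-eq:characterize_phi0_with_DF=0} already records.
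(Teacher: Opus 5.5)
Your proof is correct and matches the paper's, which likewise reads \eqref{main-eq:Delta_identity} as $\mathrm{UB}-\EE[2c_1(X)-c_2(X-Y)]=d_\theta^2(F,F_0)\ge0$ and invokes \eqref{main-eq:characterize_phi0_with_DF=0} for the equality case. Your explicit argument for that characterization (positivity of $\theta$, continuity of characteristic functions, and the uniqueness theorem) simply unpacks what the paper records in that display.
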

\begin{proof}
Combine \eqref{main-eq:Delta_identity} with \eqref{main-eq:characterize_phi0_with_DF=0}.
\end{proof}
For independent observations, $U_n(\theta)=2\{n(n-1)\}^{-1}\sum_{i<j}K_\theta(X_i,X_j)$ is therefore unbiased for \eqref{main-eq:Delta_theta_def}.

\subsection{The Gaussian kernel and its parametrization}
\label{main-subsec:Gaussian_kernel_Krho}
From now on $F_0=\Phi$, with density $f_0(x)=(2\pi)^{-1/2}\exp\!\left\{-x^2/2\right\}$ and characteristic function
\begin{equation}\label{main-eq:phi0_normal}
\phi_0(t)=\exp\!\left\{-t^2/2\right\}.
\end{equation}
The notation $L^2(\Phi)=L^2(f_0)$ will be used with the scalar field specified. For $0<\rho<1$, set
\begin{equation}\label{main-eq:theta_rho_weight}
\theta_\rho(t)=\frac{1}{\sqrt{2\pi\rho}}
\exp\!\left\{-\frac{(1-\rho)^2t^2}{2\rho}\right\}.
\end{equation}
Its integral is $(1-\rho)^{-1}$. Define $\mu_F(\rho)=d_{\theta_\rho}^2(F,\Phi)$.
\begin{cor}\label{main-lem:ineq}
For every $0<\rho<1$, $\mu_F(\rho)\ge0$, with equality if and only if $F=\Phi$.
\end{cor}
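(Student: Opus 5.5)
The corollary is the special case $\theta=\theta_\rho$, $F_0=\Phi$ of \eqref{main-eq:characterize_phi0_with_DF=0}, so the plan is simply to check that $\theta_\rho$ and $\Phi$ meet the standing hypotheses of Section~\ref{main-subsec:cf_and_ecf}.

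First, I check the target law. By \eqref{main-eq:phi0_normal}, $\phi_0(t)=\exp\{-t^2/2\}$ is real and even, so $\Phi$ is symmetric about zero, as the definition \eqref{main-eq:Delta_theta_def} presupposes.

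Next, I check the weight. For fixed $0<\rho<1$, the function $\theta_\rho$ in \eqref{main-eq:theta_rho_weight} is a positive constant times a Gaussian in $t$. Its exponent $-(1-\rho)^2t^2/(2\rho)$ is negative for $t\neq0$, since $(1-\rho)^2>0$ and $\rho>0$. Hence $\theta_\rho$ is strictly positive everywhere and even. Its integral is the Gaussian integral, equal to $(1-\rho)^{-1}<\infty$, as already recorded after \eqref{main-eq:theta_rho_weight}. So $\theta_\rho$ is a positive, even, integrable weight.

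With both hypotheses in place, the nonnegativity is immediate: the integrand $\theta_\rho(t)|\phi(t)-\phi_0(t)|^2$ is nonnegative, so $\mu_F(\rho)=d_{\theta_\rho}^2(F,\Phi)\ge0$. The equality case is exactly \eqref{main-eq:characterize_phi0_with_DF=0}. In words, $\mu_F(\rho)=0$ forces $\phi=\phi_0$ almost everywhere, because $\theta_\rho>0$ everywhere. Continuity of characteristic functions then upgrades this to $\phi=\phi_0$ for every $t$, and uniqueness of characteristic functions gives $F=\Phi$. Conversely, if $F=\Phi$, the integrand vanishes identically.

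Equivalently, one could invoke Corollary~\ref{main-cor:UB_inequality} through Proposition~\ref{main-prop:weighted_L2_kernel_rep}. There is no real obstacle here. The only point needing care is the strict positivity of $\theta_\rho$ on all of $\mathbb R$, which is what makes the almost-everywhere vanishing argument valid, and it holds for every $\rho\in(0,1)$.
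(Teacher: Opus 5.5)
Your proposal is correct and follows the paper's route: the paper also gets the corollary directly from \eqref{main-eq:characterize_phi0_with_DF=0}, specialized to the weight $\theta_\rho$ and the target $\Phi$. Your explicit check that $\theta_\rho$ is positive, even and integrable, together with the almost-everywhere-to-everywhere step via continuity of characteristic functions, just spells out what the paper leaves implicit.
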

This follows from \eqref{main-eq:characterize_phi0_with_DF=0}. Using \eqref{main-eq:phi0_normal} in \eqref{main-eq:c1_def}--\eqref{main-eq:UB_def_general} gives
\begin{align*}
c_1(u)&=a_2(\rho)\exp\!\left\{-b_2(\rho)u^2\right\},\qquad
c_2(u)=a_1(\rho)\exp\!\left\{-b_1(\rho)u^2\right\},\\
\mathrm{UB}&=a_3(\rho).
\end{align*}
Thus $K_\rho=K_{\theta_\rho}$ is
\begin{align}\label{main-kernel-K}
K_\rho(x,y)&=a_1(\rho)\exp\!\left\{-b_1(\rho)(x-y)^2\right\}\nonumber\\
&\quad-a_2(\rho)\left(\exp\!\left\{-b_2(\rho)x^2\right\}
+\exp\!\left\{-b_2(\rho)y^2\right\}\right)+a_3(\rho),
\end{align}
where
\begin{align*}
a_1(\rho)&=\frac1{1-\rho},\qquad
 a_2(\rho)=\frac1{\sqrt{1-\rho+\rho^2}},\qquad
 a_3(\rho)=\frac1{\sqrt{1+\rho^2}},\\
b_1(\rho)&=\frac{\rho}{2(1-\rho)^2},\qquad
 b_2(\rho)=\frac{\rho}{2(1-\rho+\rho^2)}.
\end{align*}
For the kernel \eqref{main-kernel-K}, the associated statistic is $U_n(\rho)=2\{n(n-1)\}^{-1}\sum_{i<j}K_\rho(X_i,X_j)$, with $\EE U_n(\rho)=\mu_F(\rho)$.

\subsection{The Epps--Pulley scale and the two quadratic forms}
\label{main-subsec:EPBHEP_exact}
The normalized Gaussian weight of Epps and Pulley~\cite{EppsPulley1983} is
\begin{equation}\label{main-eq:EP_varphi_beta}
\varphi_\beta(t)=\frac{1}{\beta\sqrt{2\pi}}
\exp\!\left\{-\frac{t^2}{2\beta^2}\right\},\qquad\beta>0.
\end{equation}
The parameters are related by
\begin{equation}\label{main-eq:beta_rho}
\beta=\beta(\rho)=\frac{\sqrt\rho}{1-\rho},\qquad
\rho=\frac{2\beta^2+1-\sqrt{4\beta^2+1}}{2\beta^2}.
\end{equation}
Comparing \eqref{main-eq:theta_rho_weight} and \eqref{main-eq:EP_varphi_beta} gives
\begin{equation}\label{main-eq:theta_vs_varphi}
\varphi_\beta=(1-\rho)\theta_\rho.
\end{equation}
Hence, writing $K_\beta$ for the kernel induced by $\varphi_\beta$,
\begin{align}
\mu_F(\rho)&=(1-\rho)^{-1}d_{\varphi_\beta}^2(F,\Phi),\label{main-eq:scaling_mu_EP}\\
K_\rho(x,y)&=(1-\rho)^{-1}K_{\beta(\rho)}(x,y).\label{main-eq:kernel_scaling_rho_beta}
\end{align}
The choice of $\rho$ will give geometric eigenvalues for the unperturbed operator, without changing the statistic except by this deterministic scale.

For the raw observations and for residuals $U_{n,j}$, respectively, let
\begin{equation*}
\phi_n(t)=\frac1n\sum_{j=1}^n\exp\!\left\{itX_j\right\},\qquad
\psi_n(t)=\frac1n\sum_{j=1}^n\exp\!\left\{itU_{n,j}\right\}.
\end{equation*}
Expansion of the squared modulus gives
\begin{equation*}
\Delta_n(\theta):=\int\theta(t)|\phi_n(t)-\phi_0(t)|^2\,dt
=\frac1{n^2}\sum_{j,k=1}^nK_\theta(X_j,X_k).
\end{equation*}
Thus the known-parameter statistic is $T_{n,\beta}^{(0)}=n\Delta_n(\varphi_\beta)$. Replacing $X_j$ by standardized residuals gives $T_{n,\beta}=n\int|\psi_n-\phi_0|^2\varphi_\beta$, the classical composite-null statistic; see Epps and Pulley~\cite{EppsPulley1983}, Henze~\cite{Henze1990}, and Baringhaus and Henze~\cite{BaringhausHenze1988}.

For any symmetric $K$ and any observations, including dependent residuals, put
\begin{equation*}
V_n(K)=\frac1{n^2}\sum_{j,k}K(X_j,X_k),\qquad
U_n(K)=\frac2{n(n-1)}\sum_{j<k}K(X_j,X_k).
\end{equation*}
The exact algebraic identity is
\begin{equation}\label{main-eq:UV_identity}
nU_n(K)=\frac{n}{n-1}\left\{nV_n(K)-\frac1n\sum_jK(X_j,X_j)\right\}.
\end{equation}
Under $F=\Phi$, $K_\beta$ and $K_\rho$ are canonical: $\EE[K_\beta(x,X)]=0$. The distinction between the two forms in \eqref{main-eq:UV_identity} is therefore retained at their null scale; see Hoeffding~\cite{hoeffding1994class} and Gin{\'e} and Nickl~\cite{gine2021mathematical}. It will give the original-trace subtraction in Corollary~\ref{main-cor:full_U_V_limit}.

\section{Spectral and perturbation preliminaries}
\label{main-sec:spectral_preliminaries}

The limit theory in the parameter-estimation settings is formulated on a frequency-domain Hilbert space.
Indeed, the statistics studied later are weighted $L^2$ norms of (centered or rescaled)
empirical characteristic functions, hence their Gaussian limits are random elements in a
weighted $L^2$ space.  The corresponding covariance operators are compact and positive, and
their nonzero eigenvalues determine the limiting quadratic-form distributions.
This section introduces the operator notation and gives the explicit
diagonalization of the unperturbed Gaussian kernel operator used in the finite-rank perturbation arguments.

\subsection{Operator notation}
We work with real separable Hilbert spaces unless a complexification is stated. For $g,h\in\mathcal H$, write $(g\otimes h)f=\langle f,h\rangle_{\mathcal H}g$. Positive eigenvalues of compact self-adjoint operators are counted with multiplicity. The notation $\|\cdot\|_1$ denotes the trace norm. We use the spectral theorem, the min--max principle and the trace-class property of square-integrable covariance operators; the statements and references are collected in Section~\ref{supp-sec:supp_operator_facts}. The distinction between a real covariance space and a complex function space is made explicitly below.

\subsection{The frequency-domain Hilbert space}
\label{main-subsec:frequency_Hilbert}
Fix $\rho\in(0,1)$ and let $\beta=\beta(\rho)$ as in \eqref{main-eq:beta_rho}. Spectral calculations with real kernels are made in $L^2(\varphi_\beta;\mathbb R)$. We also use its complexification, with inner product
\begin{equation*}
\langle f,g\rangle_\beta:=\int_{\mathbb R}f(t)\overline{g(t)}\varphi_\beta(t)\,dt,
\qquad \|f\|_\beta^2:=\langle f,f\rangle_\beta.
\end{equation*}
The empirical characteristic-function differences belong to the closed real subspace
\begin{equation}
\label{main-eq:Hermitian_space}
\mathcal H_\beta:=\{f\in L^2(\varphi_\beta;\mathbb C):f(-t)=\overline{f(t)}\text{ for almost every }t\}.
\end{equation}
The inner product above is real on $\mathcal H_\beta$, because the weight is even. All Gaussian limits of characteristic-function processes below are real Gaussian elements of $\mathcal H_\beta$.

For real $f$, put $f_{\mathrm e}(t)=\{f(t)+f(-t)\}/2$ and $f_{\mathrm o}(t)=\{f(t)-f(-t)\}/2$. The map
\begin{equation}
\label{main-eq:parity_isometry}
J_\beta f:=f_{\mathrm e}+i f_{\mathrm o}
\end{equation}
is a surjective real isometry from $L^2(\varphi_\beta;\mathbb R)$ onto $\mathcal H_\beta$. Indeed, the real and imaginary parts of a function in $\mathcal H_\beta$ are respectively even and odd, and parity makes these parts orthogonal. A real integral kernel satisfying $C(-s,-t)=C(s,t)$ preserves parity. Its integral operators on these two real spaces are therefore intertwined by $J_\beta$. Their eigenvalues, including multiplicities, agree. In particular, an even real eigenfunction remains unchanged, whereas an odd real eigenfunction is multiplied by $i$ in $\mathcal H_\beta$.

Finally, $\varphi_1=f_0$ and the change of variables $t=\beta x$ gives
\begin{equation*}
\int_{\mathbb R}|f(t)|^2\varphi_\beta(t)\,dt
=\int_{\mathbb R}|f(\beta x)|^2 f_0(x)\,dx.
\end{equation*}
This change of scale will be used for the Hermite basis.

\subsection{The unperturbed Gaussian kernel operator}
\label{main-subsec:Gaussian_kernel_operator_A0}

Define the Gaussian kernel
\begin{equation*}
K_0(s,t):=\exp\!\left\{-\frac{(s-t)^2}{2}\right\},\qquad s,t\in\mathbb R,
\end{equation*}
and the integral operator $A_0:L^2(\varphi_\beta)\to L^2(\varphi_\beta)$ by
\begin{equation}
\label{main-eq:oneparam_A0}
(A_0 f)(s):=\int_{-\infty}^{\infty} K_0(s,t)\,f(t)\,\varphi_\beta(t)\,dt.
\end{equation}
Since $0\le K_0\le 1$ and $\varphi_\beta$ is a probability density, we have
$K_0\in L^2(\varphi_\beta\otimes\varphi_\beta)$, hence $A_0$ is Hilbert--Schmidt and therefore compact.
Moreover, $K_0(s,t)=K_0(t,s)$ implies that $A_0$ is self-adjoint, and $K_0$ is positive definite (as the Fourier
transform of a Gaussian density), so $A_0$ is positive.
The parametrization \eqref{main-eq:beta_rho} gives geometric eigenvalues and explicit Hermite eigenfunctions for $A_0$ (Proposition~\ref{main-prop:Hermite_basis}).

\subsection{Hermite polynomials and Mehler's formula}
\label{main-subsec:Hermite_mehler}

\noindent\textbf{Two Hermite normalizations.}
There are two standard Hermite families in the literature.  The \emph{probabilists' Hermite polynomials}
$\{\mathrm{He}_n:n\ge 0\}$ are orthogonal with respect to the standard normal density and are defined by
\begin{equation*}
\mathrm{He}_n(x)=(-1)^n \exp\!\left\{x^2/2\right\}\,\frac{d^n}{dx^n}\,\exp\!\left\{-x^2/2\right\},\qquad n=0,1,2,\dots,
\end{equation*}
or, equivalently, by the generating function
\begin{equation*}
\exp\!\left\{tx-t^2/2\right\}=\sum_{n=0}^{\infty}\mathrm{He}_n(x)\,\frac{t^n}{n!}.
\end{equation*}
The \emph{physicists' Hermite polynomials} $\{H_n\}$ correspond to the weight $\exp\!\left\{-x^2\right\}$ and satisfy the relation
$H_n(x)=2^{n/2}\mathrm{He}_n(\sqrt2\,x)$; see {National Institute of Standards and Technology}~\cite[Section~18.3]{NIST:DLMF} for a systematic discussion.
We use the probabilists' normalization because it is adapted to Gaussian derivatives and to $L^2(f_0)$.

\medskip
Setting $h_n(x):=\mathrm{He}_n(x)/\sqrt{n!}$ yields an orthonormal basis $\{h_n\}_{n\ge 0}$ of $L^2(f_0)$;
see, for example, {National Institute of Standards and Technology}~\cite[Section~18.18(i)]{NIST:DLMF} or Koekoek et~al.~\cite{koekoek2010hypergeometric}.

\medskip
\noindent\textbf{Mehler's identity and the reproducing property.}
For $\rho\in(-1,1)$, Mehler's identity states that
\begin{equation}
\label{main-eq:mehler_identity}
\sum_{n=0}^{\infty} \rho^n h_n(x)\,h_n(y)
=
\frac{1}{\sqrt{1-\rho^2}}\,
\exp\!\left\{\frac{\rho xy-\tfrac12\rho^2(x^2+y^2)}{1-\rho^2}\right\},
\qquad x,y\in\mathbb R,
\end{equation}
with absolute convergence for each $(x,y)$.
This formula goes back to Mehler~\cite{mehler1866ueber}; modern references and further identities are collected in
{National Institute of Standards and Technology}~\cite[Eq.~(18.18.28)]{NIST:DLMF}.
It implies that the integral operator on $L^2(f_0)$ with kernel given by the right-hand side of
\eqref{main-eq:mehler_identity} acts diagonally on $\{h_n\}$, multiplying the $n$th Hermite component by $\rho^n$.
This identity is the form used below to diagonalize $A_0$.

\subsection{Diagonalization of the unperturbed operator}
\label{main-subsec:Hermite_diagonalization}

The following form of the Gaussian Hermite diagonalization uses the parametrization \eqref{main-eq:beta_rho}; compare Ebner and Henze~\cite[Section~2]{EbnerHenze2023}.

\begin{proposition}
\label{main-prop:Hermite_basis}
Fix $\rho\in(0,1)$ and set $\beta=\sqrt{\rho}/(1-\rho)$.
The operator $A_0$ in \eqref{main-eq:oneparam_A0} is compact, self-adjoint and positive on $L^2(\varphi_\beta)$, with
eigenvalues
\begin{equation}
\label{main-eq:oneparam_unpert_eigs}
\lambda_n^{(0)}(\rho)=(1-\rho)\,\rho^{n},\qquad n=0,1,2,\dots,
\end{equation}
and an orthonormal eigenbasis $\{e_n(\cdot;\rho)\}_{n\ge 0}$ given by
\begin{equation*}
e_n(t;\rho)
=
c_\rho\,\exp\!\left\{-\frac{1-\rho}{2}\,t^2\right\}\,
h_n\!\left(\alpha_\rho t\right),
\qquad
c_\rho:=\Bigl(\frac{1+\rho}{1-\rho}\Bigr)^{1/4},
\qquad
\alpha_\rho:=\frac{\sqrt{1-\rho^2}}{\sqrt\rho}.
\end{equation*}
In particular, $e_n$ is even for even $n$ and odd for odd $n$.

\smallskip
Moreover, under the isometry $t=\beta x$ between $L^2(\varphi_\beta)$ and $L^2(f_0)$, the functions
\begin{equation}
\label{main-eq:gn_en_link}
g_n(x;\rho):=e_n(\beta x;\rho)
=
c_\rho\,\exp\!\left\{-\frac{\rho}{2(1-\rho)}\,x^2\right\}\,
h_n\!\left(\sqrt{\frac{1+\rho}{1-\rho}}\,x\right),
\qquad n\ge 0,
\end{equation}
form an orthonormal basis of $L^2(f_0)$.
\end{proposition}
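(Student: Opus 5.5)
The plan is to transfer everything to $L^2(f_0)$ by the change of scale $t=\beta x$, and then recognize the transformed kernel as a Mehler kernel multiplied by a Gaussian factor. Under $U f(x)=f(\beta x)$, which is a unitary map from $L^2(\varphi_\beta)$ onto $L^2(f_0)$, the operator $A_0$ becomes the integral operator on $L^2(f_0)$ with kernel
\begin{equation*}
\widetilde K(x,y)=\exp\!\left\{-\tfrac{\beta^2}{2}(x-y)^2\right\}.
\end{equation*}
Compactness, self-adjointness and positivity were already established in Section~\ref{main-subsec:Gaussian_kernel_operator_A0}. It therefore remains to show that $g_n$ is an eigenfunction with eigenvalue $(1-\rho)\rho^n$, and that $\{g_n\}$ is an orthonormal basis of $L^2(f_0)$. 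The statements about $e_n$ then follow by applying $U^{-1}$. Parity is immediate from the parity of $h_n$.

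For orthonormality and completeness, substitute $u=\kappa x$ with $\kappa=\sqrt{(1+\rho)/(1-\rho)}$. The squared Gaussian prefactor $\exp\{-\rho x^2/(1-\rho)\}$ times $f_0(x)$ equals a constant times $\exp\{-\kappa^2x^2/2\}$, because $1+2\rho/(1-\rho)=\kappa^2$. Hence $\langle g_m,g_n\rangle_{L^2(f_0)}=c_\rho^2\kappa^{-1}\delta_{mn}$. Since $c_\rho^2=\kappa$, this gives $\delta_{mn}$. For completeness, the map $f\mapsto c_\rho e^{-\rho x^2/(2(1-\rho))}f(\kappa x)$ is an isometry of $L^2(f_0)$. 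Its inverse is multiplication by $e^{+\rho x^2/(2(1-\rho))}$ composed with rescaling, and this inverse is also isometric by the same computation. The map is therefore unitary and carries the basis $\{h_n\}$ onto $\{g_n\}$.

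The eigenvalue equation is the main computational step. I would use the Mehler identity \eqref{main-eq:mehler_identity} in the variables $\kappa x,\kappa y$, weighted by the prefactors. The goal is to verify the kernel identity
\begin{equation*}
\widetilde K(x,y)=\sum_{n\ge0}(1-\rho)\rho^n g_n(x)g_n(y),
\end{equation*}
as an identity of functions, with absolute pointwise convergence. The right-hand side equals $(1-\rho)c_\rho^2(1-\rho^2)^{-1/2}$ times $\exp\{Q(x,y)\}$, where $Q$ is the quadratic form
\begin{equation*}
Q=\frac{\rho\kappa^2xy-\tfrac12\rho^2\kappa^2(x^2+y^2)}{1-\rho^2}-\frac{\rho(x^2+y^2)}{2(1-\rho)}.
\end{equation*}
Using $\kappa^2/(1-\rho^2)=1/(1-\rho)^2$, the cross term is $\rho xy/(1-\rho)^2=\beta^2xy$. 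The diagonal coefficient is
\begin{equation*}
-\tfrac12\left[\frac{\rho^2}{(1-\rho)^2}+\frac{\rho}{1-\rho}\right]=-\frac{\rho}{2(1-\rho)^2}=-\frac{\beta^2}{2}.
\end{equation*}
The constant is $(1-\rho)\kappa(1-\rho^2)^{-1/2}=1$. So the series reproduces $\widetilde K$ exactly. This identity of coefficients is the crux, and it is precisely where the choice $\beta=\sqrt\rho/(1-\rho)$ is used.

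Finally, the expansion converges in $L^2(f_0\otimes f_0)$ as well: the $g_n$ are orthonormal and $\sum\rho^{2n}<\infty$. Integrating term by term against $g_m(y)f_0(y)$ then gives $\widetilde A g_m=(1-\rho)\rho^m g_m$. Since $\{g_n\}$ is complete, these are all the eigenvalues, with the stated multiplicities, and the proof is finished by pulling back through $U$.
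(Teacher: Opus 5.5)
Your proof is correct, but your key step for the eigenvalue equation differs from the paper's.

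The paper stays in the $t$-variable on $L^2(\varphi_\beta)$ and checks the eigenfunctions one at a time. Completing the square shows that $A_0$ maps $\exp\{-(1-\rho)t^2/2\}p(t)$ to $(1-\rho)\exp\{-(1-\rho)s^2/2\}\EE[p(\rho s+\sqrt\rho Z)]$. The generating-function identity $\EE[h_n(\rho u+\sqrt{1-\rho^2}Z)]=\rho^nh_n(u)$, applied with $p(t)=h_n(\alpha_\rho t)$ and $\alpha_\rho\sqrt\rho=\sqrt{1-\rho^2}$, then gives the eigenvalue. That there are no other eigenvalues follows from completeness of the unitary image of the Hermite basis, the same device as your orthonormality step.

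You instead rescale to $L^2(f_0)$ and prove the bilinear expansion $\widetilde K=\sum_n(1-\rho)\rho^n g_n\otimes g_n$ by matching coefficients in the weighted Mehler series.

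What each route buys:
\begin{itemize}
\item Yours gives the whole spectral decomposition at once, so positivity, trace class and $\operatorname{tr}A_0=\sum_n(1-\rho)\rho^n=1$ (used later in Proposition~\ref{main-prop:EH_equivalence}) come for free.
\item The price is the convergence bookkeeping you supply: absolute pointwise convergence from Mehler, convergence in $L^2(f_0\otimes f_0)$, and a.e.\ identification of the two limits.
\item The paper's route needs only one Gaussian integral per eigenfunction and no kernel series.
\item It also shows directly that $\beta=\sqrt\rho/(1-\rho)$ is exactly the choice that makes the factor $\exp\{-(1-\rho)t^2/2\}$ reproduce itself under $A_0$.
\end{itemize}

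One small point you should state explicitly. You take the right-hand expression in \eqref{main-eq:gn_en_link} as the definition of $g_n$. Pulling back to the stated $e_n$ through $U^{-1}$ therefore uses $\rho/\{(1-\rho)\beta^2\}=1-\rho$ and $\beta^{-1}\sqrt{(1+\rho)/(1-\rho)}=\alpha_\rho$.
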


The proof is given in Section~\ref{supp-subsec:proof_Hermite_basis} of the Supplementary Material.

The unnormalized weight $\theta_\rho$ only changes scale.

\begin{myrem}\label{main-rem:scaling_eigs}
By \eqref{main-eq:theta_vs_varphi}, the inner product for $\theta_\rho$ is $(1-\rho)^{-1}\langle\cdot,\cdot\rangle_\beta$. The operator $A_{0,\rho}$ obtained by integrating $K_0$ against $\theta_\rho$ is $(1-\rho)^{-1}A_0$. It has the same eigenfunctions and eigenvalues $\lambda_n^{(0),\rho}=\rho^n$. The same factor relates the functionals and kernels in \eqref{main-eq:scaling_mu_EP}--\eqref{main-eq:kernel_scaling_rho_beta}. Interlacing is unchanged by this positive scale.
\end{myrem}
The roles of the two kernels can be stated directly.
\begin{myrem}\label{main-rem:leading_and_centered_kernel}
Let $B_\rho$ be the integral operator with kernel $K_\rho$ on $L^2(\Phi)=L^2(f_0)$, and let $D_\rho$ have kernel $(1-\rho)^{-1}K_0(\beta x,\beta y)$. The rescaling in \eqref{main-eq:gn_en_link} gives $D_\rho g_n=\rho^n g_n$. If $P_0f=f-\int f\,d\Phi$, then $B_\rho=P_0D_\rho P_0$: the two single-variable terms and the constant in $K_\rho$ are precisely its centering terms. Hence $B_\rho g_{2j+1}=\rho^{2j+1}g_{2j+1}$, but the even part of $B_\rho$ is not diagonal in the even $g_n$.

Even with known parameters, the centered empirical characteristic function has covariance kernel $K_0(s,t)-\phi_0(s)\phi_0(t)$, not $K_0(s,t)$ alone. The term $\phi_0(s)\phi_0(t)$ accounts for centering; estimation of location and scale gives two further covariance corrections.
\end{myrem}

\subsection{Rank-one directions and overlap coefficients}
\label{main-subsec:overlaps}

The parameter-estimation corrections that appear in the EP/BHEP limits can be written as finite-rank perturbations of
$A_0$ along a small number of ``score-type'' directions.
To avoid notational conflict with characteristic functions, we use starred symbols.

\medskip
Recall the standard normal characteristic function $\phi_0(t)=\exp\!\left\{-t^2/2\right\}$.
Define
\begin{equation}
\label{main-eq:oneparam_s012}
 \phi_0^*(t):=\phi_0(t),
 \qquad
 \phi_1^*(t):=t\,\phi_0(t),
 \qquad
 \phi_2^*(t):=\frac{t^2}{\sqrt{2}}\,\phi_0(t),
 \qquad t\in\mathbb R.
\end{equation}
The direction $\phi_0^*$ accounts for centering. The directions $\phi_1^*$ and $\phi_2^*$ arise from the location and scale derivatives. They coincide, up to order, with the functions used in
Henze~\cite{Henze1990} and Ebner and Henze~\cite{EbnerHenze2023}.

\medskip
For $k\in\{0,1,2\}$ and $n\ge 0$, define the overlap coefficients
\begin{equation}
\label{main-eq:oneparam_aik}
 a_{n,k}(\rho):=\langle e_n(\cdot;\rho), \phi_k^*\rangle_\beta.
\end{equation}
By parity of $e_n$ and $\phi_k^*$ we have
\begin{equation*}
a_{2m,1}(\rho)=0,
\qquad
a_{2m+1,0}(\rho)=a_{2m+1,2}(\rho)=0,
\qquad m\ge 0.
\end{equation*}
\noindent
\emph{Notational remark.} The cosine-transform functions $c_1,c_2$ introduced in Section~\ref{main-subsec:general_weighted_norm_identity} are unrelated to the score-type directions $\phi_0^*,\phi_1^*,\phi_2^*$ defined above.

The required overlap coefficients are as follows.

\begin{proposition}
\label{main-prop:overlaps_closed_form}
Fix $\rho\in(0,1)$ and set $\beta=\sqrt{\rho}/(1-\rho)$.
Let $\{(\lambda_n^{(0)}(\rho),e_n(\cdot;\rho))\}_{n\ge 0}$ be the eigenpairs of $A_0$ from Proposition~\ref{main-prop:Hermite_basis},
and let $\phi_0^*,\phi_1^*,\phi_2^*$ be as in \eqref{main-eq:oneparam_s012}.
Then the possibly nonzero overlap coefficients in \eqref{main-eq:oneparam_aik} are, for $m=0,1,2,\dots,$
\begin{align*}
a_{2m,0}(\rho)
&=
(-1)^m\,(1-\rho)^{3/4}(1+\rho)^{1/4}\,
\frac{\sqrt{(2m)!}}{2^m\,m!}\,\rho^{2m},
\\
a_{2m+1,1}(\rho)
&=
(-1)^m\,(1+\rho)^{3/4}(1-\rho)^{5/4}\,
\frac{\sqrt{(2m+1)!}}{2^m\,m!}\,\rho^{2m+\tfrac12},
\\
a_{2m,2}(\rho)
&=
\frac{(-1)^m}{\sqrt{2}}\,(1-\rho)^{3/4}(1+\rho)^{1/4}\,
\frac{\sqrt{(2m)!}}{2^m\,m!}\,\rho^{2m-1}\,\Bigl(\rho^2-2m(1-\rho^2)\Bigr).
\end{align*}
Moreover, on the even sector the coefficients satisfy the affine relation
\begin{equation}
\label{main-eq:a22_affine_relation_global}
a_{2m,2}(\rho)
=
\left(\frac{\rho}{\sqrt{2}}-\frac{\sqrt{2}(1-\rho^2)}{\rho}\,m\right)a_{2m,0}(\rho),
\qquad m\ge 0.
\end{equation}
All remaining overlaps vanish by parity.
\end{proposition}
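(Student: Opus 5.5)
The plan is to compute all three families at once from the Hermite generating function, reducing each overlap to a single Gaussian integral. For $|s|$ small, the generating function of the probabilists' polynomials gives $\sum_{n\ge0}h_n(x)s^n/\sqrt{n!}=\exp\{sx-s^2/2\}$. Multiplying by $c_\rho\exp\{-(1-\rho)t^2/2\}$ at $x=\alpha_\rho t$ and integrating against $\phi_k^*\varphi_\beta$ gives
\begin{equation*}
G_k(s):=\sum_{n\ge0}a_{n,k}(\rho)\frac{s^n}{\sqrt{n!}}
=c_\rho e^{-s^2/2}\int_{\mathbb R}e^{s\alpha_\rho t}\,e^{-(1-\rho)t^2/2}\,e^{-t^2/2}\,\pi_k(t)\,\varphi_\beta(t)\,dt,
\end{equation*}
with $\pi_0=1$, $\pi_1=t$ and $\pi_2=t^2/\sqrt2$. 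The interchange of sum and integral is justified by dominated convergence: the partial sums are bounded by $\sum_n|h_n(\alpha_\rho t)||s|^n/\sqrt{n!}$, which is bounded by a Gaussian of arbitrarily small rate in $t$. This rate is beaten by the Gaussian decay of the integrand. Since the overlaps vanish by parity for the wrong indices, the identity is an equality of entire functions of $s$. Reading off the coefficient of $s^n$ and multiplying by $\sqrt{n!}$ then gives $a_{n,k}$.

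The key simplification is the total quadratic coefficient in the exponent. With $1/\beta^2=(1-\rho)^2/\rho$, it equals
\begin{equation*}
\tfrac12\bigl\{(1-\rho)+1+(1-\rho)^2/\rho\bigr\}=1/(2\rho).
\end{equation*}
Hence the base integral is $I(s)=\int e^{s\alpha_\rho t-t^2/(2\rho)}\,dt/(\beta\sqrt{2\pi})=(\sqrt\rho/\beta)\exp\{\rho\alpha_\rho^2s^2/2\}$. Since $\sqrt\rho/\beta=1-\rho$ and $\rho\alpha_\rho^2=1-\rho^2$, this gives $I(s)=(1-\rho)\exp\{(1-\rho^2)s^2/2\}$, and therefore
\begin{equation*}
G_0(s)=c_\rho(1-\rho)\exp\{-\rho^2s^2/2\}.
\end{equation*}
Expanding in powers of $s^2$ gives $a_{2m,0}=c_\rho(1-\rho)(-1)^m\rho^{2m}\sqrt{(2m)!}/(2^mm!)$, and $c_\rho(1-\rho)=(1-\rho)^{3/4}(1+\rho)^{1/4}$, which is the stated formula. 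The factor $e^{-t^2/2}$ from $\phi_0$ was already absorbed into the quadratic coefficient above.

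For $k=1,2$, the moments are obtained by differentiating the base integral in $s$: $\int t\,e^{s\alpha_\rho t}(\cdots)=\alpha_\rho^{-1}I'(s)$ and $\int t^2 e^{s\alpha_\rho t}(\cdots)=\alpha_\rho^{-2}I''(s)$. This gives
\begin{align*}
G_1(s)&=c_\rho(1-\rho)\,\alpha_\rho^{-1}(1-\rho^2)\,s\,e^{-\rho^2s^2/2},\\
G_2(s)&=\frac{c_\rho(1-\rho)}{\sqrt2\,\alpha_\rho^{2}}\,(1-\rho^2)\bigl\{1+(1-\rho^2)s^2\bigr\}e^{-\rho^2s^2/2}.
\end{align*}
Using $(1-\rho^2)/\alpha_\rho=\sqrt{\rho(1-\rho^2)}$ and $(1-\rho^2)/\alpha_\rho^2=\rho$, the $s^{2m+1}$ coefficient of $G_1$ gives $a_{2m+1,1}$ with the stated powers $(1+\rho)^{3/4}(1-\rho)^{5/4}\rho^{2m+1/2}$. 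For $G_2$, the $s^{2m}$ coefficient collects two contributions: one from $(-\rho^2/2)^m/m!$ and one from $(1-\rho^2)(-\rho^2/2)^{m-1}/(m-1)!$. Factoring out the $G_0$ coefficient produces the multiplier
\begin{equation*}
\frac{\rho}{\sqrt2}\Bigl(1-\frac{2m(1-\rho^2)}{\rho^2}\Bigr)=\frac{\rho}{\sqrt2}-\frac{\sqrt2(1-\rho^2)}{\rho}\,m .
\end{equation*}
This is exactly the affine relation \eqref{main-eq:a22_affine_relation_global}, and the closed form for $a_{2m,2}$ follows from it together with $a_{2m,0}$.

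The main obstacle is purely bookkeeping: keeping the powers of $(1\pm\rho)$ and $\rho$ consistent through $c_\rho$, $\alpha_\rho$ and $\beta$. The structural step is the cancellation of the total quadratic coefficient to $1/(2\rho)$, which makes every generating function a polynomial times $e^{-\rho^2s^2/2}$. I would double-check the final constants at $m=0$ by a direct Gaussian integral.
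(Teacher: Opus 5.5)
Your proposal is correct and follows the paper's proof essentially step for step: the same Hermite generating function, the same observation that the combined Gaussian factor has variance $\rho$, and the same coefficient comparison. Your $G_k$ are the paper's $I_k$ (with the factor $1/\sqrt2$ included for $k=2$), and obtaining the $t$ and $t^2$ integrals from $I'(s)$ and $I''(s)$ is only a cosmetic variant.

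The growth bound on $\sum_n|\mathrm{He}_n(x)|\,|s|^n/n!$, which you state without proof, does hold: with $Z$ standard normal, $\mathrm{He}_n(x)=\EE[(x+iZ)^n]$ gives a bound of order $e^{|s||x|}$. The paper avoids needing it by extracting coefficients through differentiation under the integral at zero, which requires only polynomial-times-Gaussian domination.
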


\begin{proof}
Write $C_\rho=(1-\rho)^{3/4}(1+\rho)^{1/4}$ and
\begin{equation*}
I_k(z):=c_\rho\int_{\mathbb R}t^k
\exp\!\left\{-(2-\rho)t^2/2\right\}
\exp\!\left\{z\alpha_\rho t-z^2/2\right\}\varphi_\beta(t)\,dt.
\end{equation*}
The combined Gaussian factor has variance $\rho$. Its moment generating function gives
\begin{align*}
I_0(z)&=C_\rho\exp\!\left\{-\rho^2z^2/2\right\},\\
I_1(z)&=C_\rho\rho\alpha_\rho z\exp\!\left\{-\rho^2z^2/2\right\},\\
I_2(z)&=C_\rho\{\rho+\rho^2\alpha_\rho^2z^2\}
\exp\!\left\{-\rho^2z^2/2\right\}.
\end{align*}
The coefficient of $z^n$ in $I_k$ is $\langle e_n,t^k\phi_0\rangle_\beta/\sqrt{n!}$. Coefficient comparison, with the factor $1/\sqrt2$ for $k=2$, proves the formulas and their affine relation. Differentiation and coefficient comparison under the integral are justified by Gaussian domination for $z$ in compact sets. The coefficients with $k=0$ and $k=1$ displayed above never vanish. Those with $k=2$ may vanish when $\rho^2=2m/(2m+1)$, $m\ge1$.
\end{proof}

\subsection{Finite-rank perturbations and secular equations}
\label{main-subsec:finite_rank_lemma}

The covariance operators arising from location and scale estimation can be written as finite-rank perturbations of the unperturbed Gaussian operator $A_0$.  The following lemma gives the finite-rank secular equation and the rank-one interlacing criterion.

\begin{lemma}
\label{main-lem:finite_rank_secular}
Let $A_0$ be the compact self-adjoint operator in \eqref{main-eq:oneparam_A0}.  Its eigenpairs are
\begin{equation*}
\{(\lambda_n^{(0)}(\rho),e_n)\}_{n\ge0},
\end{equation*}
as in \eqref{main-eq:oneparam_unpert_eigs}.  For real $u_1,\dots,u_r\in L^2(\varphi_\beta;\mathbb R)$ define the finite-rank perturbation
\begin{equation*}
A:=A_0-\sum_{\ell=1}^r u_\ell\otimes u_\ell,
\qquad
(u\otimes u)f:=\langle f,u\rangle_\beta\,u.
\end{equation*}
For real $\lambda\notin\{0\}\cup\{\lambda_n^{(0)}(\rho)\}_{n\ge0}$ set $a_{n,\ell}:=\langle e_n,u_\ell\rangle_\beta$ and
\begin{equation*}
G_{\ell m}(\lambda):=\sum_{n=0}^\infty \frac{a_{n,\ell}\,a_{n,m}}{\lambda-\lambda_n^{(0)}(\rho)},
\qquad 1\le \ell,m\le r.
\end{equation*}
Then $\lambda$ is an eigenvalue of $A$ if and only if
\begin{equation}
\label{main-eq:finite_rank_det}
\det\bigl(I_r+G(\lambda)\bigr)=0.
\end{equation}
Its multiplicity equals $\dim\ker(I_r+G(\lambda))$, which is also the order of the zero of this determinant.

\smallskip
\noindent
In particular, if $r=1$ then the nonzero eigenvalues of $A$ away from $\{\lambda_n^{(0)}(\rho)\}$ are the roots of
\begin{equation}
\label{main-eq:finite_rank_rank1}
F(\lambda):=
1+\sum_{n=0}^\infty \frac{a_{n,1}^2}{\lambda-\lambda_n^{(0)}(\rho)}=0.
\end{equation}
Assume furthermore that we restrict to an invariant subspace on which the unperturbed eigenvalues form a strictly
decreasing sequence of poles $\kappa_0>\kappa_1>\kappa_2>\cdots\downarrow 0$ (e.g.\ the even or odd subspace),
and that the corresponding overlap coefficients are all nonzero:
$a_{n,1}\neq 0$ for every eigenfunction in that subspace.
Then $F$ is strictly \emph{decreasing} on each interval $(\kappa_m,\kappa_{m-1})$ and satisfies
\begin{equation*}
\lim_{\lambda\downarrow \kappa_m}F(\lambda)=+\infty,
\qquad
\lim_{\lambda\uparrow \kappa_{m-1}}F(\lambda)=-\infty,
\qquad m\ge 1.
\end{equation*}
Consequently, $F(\lambda)=0$ has exactly one simple root in every interval $(\kappa_m,\kappa_{m-1})$, and these roots
strictly interlace with the poles. They exhaust the positive spectrum in that subspace.
\end{lemma}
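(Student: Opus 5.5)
The plan is to reduce the eigenvalue equation $Af=\lambda f$ to an $r\times r$ linear system via the resolvent of $A_0$, the standard Weinstein--Aronszajn argument. Fix real $\lambda\ne0$ not equal to any $\lambda_n^{(0)}(\rho)$. Since the poles accumulate only at $0$, $\operatorname{dist}(\lambda,\{\lambda_n^{(0)}\})>0$, so $R(\lambda)=(\lambda-A_0)^{-1}$ is bounded, with $R(\lambda)e_n=(\lambda-\lambda_n^{(0)})^{-1}e_n$. If $Af=\lambda f$, then $(\lambda-A_0)f=-\sum_\ell\langle f,u_\ell\rangle_\beta u_\ell$, hence
\[
f=-\sum_\ell c_\ell R(\lambda)u_\ell,\qquad c_\ell:=\langle f,u_\ell\rangle_\beta .
\]
Taking inner products with $u_m$ and expanding in the orthonormal eigenbasis of Proposition~\ref{main-prop:Hermite_basis} gives
\[
\langle R(\lambda)u_\ell,u_m\rangle_\beta=\sum_n a_{n,\ell}a_{n,m}/(\lambda-\lambda_n^{(0)})=G_{m\ell}(\lambda).
\]
The series converges absolutely because $\sum_n a_{n,\ell}^2=\|u_\ell\|^2$ and the denominators are bounded away from zero. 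Hence $(I_r+G(\lambda))c=0$, and $c\ne0$, since otherwise $f=0$.

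Conversely, given $c\in\ker(I_r+G(\lambda))$, I would set $f=-\sum c_\ell R(\lambda)u_\ell$ and reverse the computation to obtain $Af=\lambda f$. The maps $f\mapsto c$ and $c\mapsto f$ are linear and mutually inverse between $\ker(A-\lambda)$ and $\ker(I_r+G(\lambda))$. This gives the criterion \eqref{main-eq:finite_rank_det} and equality of multiplicity with $\dim\ker$.

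For the statement that the multiplicity is also the order of the zero, I would use that $G$ is real symmetric and analytic near $\lambda$. Its derivative is $G'(\lambda)=-\langle R(\lambda)u_\ell,R(\lambda)u_m\rangle$, a negative semidefinite Gram matrix. The idea is to diagonalize analytically, for example by Rellich's theorem for analytic symmetric matrix families, so that $\det(I+G)=\prod_j(1+\mu_j(\lambda))$. Each $\mu_j$ vanishing at $\lambda$ must then have a simple zero. Indeed, restricted to the kernel, the derivative equals $-\|R(\lambda)\sum c_\ell u_\ell\|^2=-\|f\|^2<0$ for $c$ in the kernel. I expect this order-of-zero claim to be the main technical obstacle. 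An alternative route is $\det(I+G(\lambda))=\det(I-(\lambda-A_0)^{-1}U)$, a Fredholm-type identity $\det(\lambda-A)/\det(\lambda-A_0)$ for the finite-rank perturbation. That identity can be justified by truncating to the span of finitely many $e_n$ together with the $u_\ell$ and passing to the limit.

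For $r=1$ the criterion reduces to \eqref{main-eq:finite_rank_rank1}. On the invariant subspace with poles $\kappa_m$ and all $a_{n,1}\ne0$, termwise differentiation gives $F'(\lambda)=-\sum a_{n,1}^2/(\lambda-\kappa_n)^2<0$, justified by local uniform convergence. As $\lambda\downarrow\kappa_m$ the single term $a_{m,1}^2/(\lambda-\kappa_m)\to+\infty$ while the rest stays bounded, and symmetrically $F\to-\infty$ as $\lambda\uparrow\kappa_{m-1}$. The intermediate value theorem and monotonicity then give exactly one simple root in each gap.

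For $\lambda>\kappa_0$, $F>1$ since all terms are positive, so there is no root above $\kappa_0$. To finish exhaustion I must exclude eigenvalues at the poles themselves. If $Af=\kappa_m f$, pairing with $e_m$ gives $\langle f,u\rangle a_{m,1}=0$, so $\langle f,u\rangle=0$. Then $A_0f=\kappa_m f$ forces $f\propto e_m$, and hence $\langle f,u\rangle\ne0$, a contradiction. Finally, positive eigenvalues below all poles do not exist, because the poles decrease to $0$. Thus the gap roots exhaust the positive spectrum in that subspace.
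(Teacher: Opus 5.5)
Your proposal is correct and follows the paper's proof. Both reduce $Af=\lambda f$ to $(I_r+G(\lambda))c=0$ through mutually inverse linear maps, then handle rank one by termwise differentiation and pole limits, and exclude the poles through the coordinate equation $a_{j,1}\langle f,u\rangle_\beta=0$. The only variation is the order-of-zero step: you use Rellich's analytic diagonalization with $\mu_j'=w_j^\top G'w_j=-\|f\|^2<0$, whereas the paper uses a Schur-complement expansion of $\det M(\lambda)$. Both rest on the same negative-definiteness of $G'(\lambda_0)$ on the kernel, and both give order exactly $\dim\ker(I_r+G(\lambda_0))$.
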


The proof is given in Section~\ref{supp-subsec:proof_finite_rank_secular} of the Supplementary Material.

We shall also use the following rank-one interlacing consequence of the min--max principle.

\begin{lemma}
\label{main-lem:minmax_rank1}
Let $A:\mathcal H\to\mathcal H$ be a compact positive self-adjoint operator on an infinite-dimensional real Hilbert space $\mathcal H$,
with infinitely many positive eigenvalues $\lambda_0(A)\ge \lambda_1(A)\ge \lambda_2(A)\ge\cdots\downarrow 0$ repeated by multiplicity.
For $u\in\mathcal H$ define $\widetilde A:=A-u\otimes u$. This operator has infinitely many positive eigenvalues. Enumerating them in decreasing order with multiplicity, we have, for every $m\ge 0$,
\begin{equation*}
\lambda_m(A)\ \ge\ \lambda_m(\widetilde A)\ \ge\ \lambda_{m+1}(A).
\end{equation*}
\end{lemma}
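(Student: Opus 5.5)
The plan is to use the Courant--Fischer min--max characterization of the positive eigenvalues of compact positive self-adjoint operators, recalled in Section~\ref{supp-sec:supp_operator_facts}. Write $\widetilde A=A-u\otimes u$. Both $A$ and $\widetilde A$ are compact and self-adjoint. For $m\ge0$ put
\begin{equation*}
\mu_m(T):=\sup_{\dim V=m+1}\ \inf_{f\in V,\ \|f\|=1}\langle Tf,f\rangle .
\end{equation*}
For a compact self-adjoint $T$ with infinitely many positive eigenvalues, $\mu_m(T)$ is the $m$th largest positive eigenvalue counted with multiplicity. In general, $\mu_m(T)>0$ holds exactly when $T$ has at least $m+1$ positive eigenvalues, and then $\mu_m(T)$ equals the $m$th of them. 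With this convention the two inequalities follow from comparing quadratic forms.

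For the upper bound, note that $\langle \widetilde Af,f\rangle=\langle Af,f\rangle-\langle f,u\rangle^2\le\langle Af,f\rangle$ for every $f$. Hence $\mu_m(\widetilde A)\le\mu_m(A)=\lambda_m(A)$.

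For the lower bound, let $V$ be the span of the eigenvectors of $A$ belonging to $\lambda_0(A),\dots,\lambda_{m+1}(A)$, so $\dim V=m+2$. The subspace $W:=V\cap u^\perp$ then has dimension at least $m+1$. For unit $f\in W$,
\begin{equation*}
\langle\widetilde Af,f\rangle=\langle Af,f\rangle\ge\lambda_{m+1}(A).
\end{equation*}
Taking any $(m+1)$-dimensional subspace of $W$ in the sup gives $\mu_m(\widetilde A)\ge\lambda_{m+1}(A)>0$.

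This single argument also settles the claim that $\widetilde A$ has infinitely many positive eigenvalues. Since $\mu_m(\widetilde A)\ge\lambda_{m+1}(A)>0$ for every $m$, the characterization above shows that $\widetilde A$ has at least $m+1$ positive eigenvalues for each $m$. Its $m$th positive eigenvalue in decreasing order with multiplicity is then $\lambda_m(\widetilde A)=\mu_m(\widetilde A)$, and the two bounds combine to $\lambda_m(A)\ge\lambda_m(\widetilde A)\ge\lambda_{m+1}(A)$.

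The only point needing care is the bookkeeping of the characterization. Since $\widetilde A$ need not be positive, I must use the version of min--max that counts only positive eigenvalues and returns a nonpositive value (or $0$, the sup of the essential spectrum) once these are exhausted. Because the lower bound is strictly positive, there is no ambiguity with the zero or negative part of the spectrum.
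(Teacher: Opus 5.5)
Your proposal is correct and follows the paper's proof essentially line for line. Intersecting the span of the top $m+2$ eigenvectors of $A$ with $u^\perp$ gives $\lambda_m(\widetilde A)\ge\lambda_{m+1}(A)>0$, hence infinitely many positive eigenvalues, and $\langle\widetilde Af,f\rangle\le\langle Af,f\rangle$ gives the upper bound. You also note explicitly that $\widetilde A$ need not be positive, so the sup--inf characterization must count only positive eigenvalues; this is a useful precision that the paper leaves implicit, since its stated min--max principle assumes positivity.
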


The proof is given in Section~\ref{supp-subsec:proof_minmax_rank1} of the Supplementary Material.

At an unperturbed eigenvalue the resolvent formula is not defined. The following criterion covers that case.
\begin{proposition}\label{main-prop:pole_criterion}
In Lemma~\ref{main-lem:finite_rank_secular}, put $d_j=\lambda_j^{(0)}(\rho)$, $v_j=(a_{j,1},\ldots,a_{j,r})^\top$ and
\begin{equation*}
H_j:=I_r+\sum_{k\ne j}\frac{v_kv_k^\top}{d_j-d_k}.
\end{equation*}
Then $d_j$ is an eigenvalue of $A$ if and only if
\begin{equation}\label{main-eq:pole_bordered}
\det\begin{pmatrix}H_j&-v_j\\-v_j^\top&0\end{pmatrix}=0.
\end{equation}
Its multiplicity equals the nullity of this matrix. In rank one, $a_{j,1}\ne0$ excludes $d_j$ from the spectrum of $A$.
\end{proposition}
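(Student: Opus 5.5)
We reduce the eigenvalue equation $Af=d_jf$ to a finite linear system in the unknowns $c_\ell=\langle f,u_\ell\rangle_\beta$, $\ell=1,\dots,r$, together with the one coefficient $x=\langle f,e_j\rangle_\beta$ that the resolvent cannot determine at the pole.

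\textbf{Step 1: expand in the eigenbasis.} Write $f=\sum_n f_ne_n$ with $f_n=\langle f,e_n\rangle_\beta$, and put $c=(c_1,\dots,c_r)^\top$. Taking the inner product of $A_0f-\sum_\ell c_\ell u_\ell=d_jf$ with $e_n$ gives
\begin{equation*}
(d_n-d_j)f_n=v_n^\top c\qquad\text{for every } n.
\end{equation*}
The unperturbed eigenvalues $(1-\rho)\rho^n$ are simple, so $d_k\ne d_j$ for $k\ne j$.

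\textbf{Step 2: the system for an eigenvector.}
\begin{enumerate}
\item For $n=j$, Step~1 gives the constraint $v_j^\top c=0$.
\item For $k\ne j$, it gives $f_k=v_k^\top c/(d_k-d_j)$.
\item The coefficient $x:=f_j$ is left free.
\end{enumerate}
Substitute into $c_\ell=\sum_n f_na_{n,\ell}$, which is valid because $u_\ell\in L^2$. This gives
\begin{equation*}
c=xv_j+\sum_{k\ne j}\frac{v_kv_k^\top c}{d_k-d_j},
\qquad\text{that is,}\qquad
H_jc-xv_j=0.
\end{equation*}
Here $H_j$ carries the sign convention $1/(d_j-d_k)$. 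Together with $-v_j^\top c=0$, the vector $(c,x)$ lies in the kernel of the bordered matrix in \eqref{main-eq:pole_bordered}.

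\textbf{Step 3: the converse.} Given a kernel vector $(c,x)$, define
\begin{equation*}
f=xe_j+\sum_{k\ne j}\frac{v_k^\top c}{d_k-d_j}\,e_k .
\end{equation*}
This lies in $L^2$ because $|d_k-d_j|$ is bounded below for $k\ne j$: the only accumulation point of $\{d_k\}$ is $0$, and $d_j>0$. Then $\langle f,u_\ell\rangle=c_\ell$ by the kernel equation, and the coefficientwise identities of Step~1 hold, including the $n=j$ identity, which uses $v_j^\top c=0$. Hence $Af=d_jf$.

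\textbf{Step 4: multiplicity.} The map $f\mapsto(c,x)$ from $\ker(A-d_j)$ to the kernel of the bordered matrix is linear, and the map of Step~3 goes in the other direction. Both compositions are the identity:
\begin{enumerate}
\item If $f$ is an eigenvector, its coefficients are recovered from $(c,x)$ by Step~2.
\item If $(c,x)$ is in the kernel, then $\langle f,u\rangle$ returns $c$ by construction.
\end{enumerate}
So the two spaces are isomorphic, and the multiplicity equals the nullity of the bordered matrix.

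\textbf{Step 5: the rank-one case.} When $r=1$ the bordered matrix is $\begin{pmatrix}H_j&-a_{j,1}\\-a_{j,1}&0\end{pmatrix}$, whose determinant is $-a_{j,1}^2\ne0$. Hence $d_j$ is not an eigenvalue of $A$.

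\textbf{Main obstacle.} The only delicate points are the convergence of $\sum_{k\ne j}v_kv_k^\top/(d_j-d_k)$ and the square-summability of the constructed $f$. Both follow from $\sum_k a_{k,\ell}^2<\infty$ together with the uniform lower bound on $|d_j-d_k|$ for $k\ne j$, since $d_j>0$.
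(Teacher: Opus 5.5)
Your proposal is correct and follows essentially the same route as the paper. Both arguments take the coordinate equations in the eigenbasis of $A_0$ at $\lambda=d_j$, read off the constraint $v_j^\top c=0$ and the free coefficient $\langle f,e_j\rangle_\beta$ to obtain the bordered system, and use the explicit reconstruction, square-summable by the gap $\inf_{k\ne j}|d_j-d_k|>0$, as the inverse linear map; the rank-one determinant $-a_{j,1}^2$ is the same. The only cosmetic difference is that the paper shows the maps are inverse by checking that the reconstruction is injective ($f=0$ forces $z=0$ and then $c=0$), whereas you verify both compositions directly.
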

\begin{proof}
Write $f=\sum_kc_ke_k$ and $z_\ell=\langle f,u_\ell\rangle_\beta$. At $\lambda=d_j$, the coordinate equations are
\begin{equation*}
v_j^\top z=0,\qquad
c_k=-\frac{v_k^\top z}{d_j-d_k}\quad(k\ne j).
\end{equation*}
Substitution into $z=\sum_kc_kv_k$ gives $H_jz=c_jv_j$. Thus every eigenfunction determines a null vector $(z,c_j)^\top$ of the bordered matrix in \eqref{main-eq:pole_bordered}.

Conversely, let $(z,c)^\top$ belong to that nullspace and define
\begin{equation*}
f=ce_j-\sum_{k\ne j}\frac{v_k^\top z}{d_j-d_k}e_k.
\end{equation*}
Since $\delta_j:=\inf_{k\ne j}|d_j-d_k|>0$ and $\sum_k\|v_k\|^2<\infty$,
\begin{equation*}
\sum_{k\ne j}\frac{|v_k^\top z|^2}{|d_j-d_k|^2}
\le\delta_j^{-2}\|z\|^2\sum_{k\ne j}\|v_k\|^2<\infty.
\end{equation*}
The same bound justifies the defining series for $H_j$. The overlaps of the reconstructed function satisfy
\begin{equation*}
(\langle f,u_\ell\rangle_\beta)_{\ell=1}^r
=cv_j-(H_j-I_r)z=z.
\end{equation*}
Together with $v_j^\top z=0$, this proves every coordinate of $Af=d_jf$. If $f=0$, then the overlap identity gives $z=0$ and its $j$th coordinate gives $c=0$. Hence the two constructions are inverse linear maps, proving the equivalence and equality of multiplicities. In rank one the bordered determinant is $-a_{j,1}^2$.
\end{proof}

\subsection{Real covariance and quadratic-form limits}
\label{main-subsec:HilbertCLT_KL}
The Hilbert-space central limit theorem and the Gaussian squared-norm expansion are recalled as Theorems~\ref{supp-thm:HilbertCLT} and \ref{supp-thm:KLquadratic}. For characteristic functions they apply on the real space \eqref{main-eq:Hermitian_space}.
\begin{myrem}\label{main-rem:complex_H}
Let $\zeta$ be a centered square-integrable random element of $\mathcal H_\beta$ and put $C(s,t)=\EE[\zeta(s)\overline{\zeta(t)}]$. For $a\in\mathcal H_\beta$, $\langle a,\zeta\rangle_\beta$ is real, and the real covariance operator is
\begin{equation*}
(\mathcal C a)(s)=\EE[\langle a,\zeta\rangle_\beta\zeta(s)]
=\int_{\mathbb R}C(s,t)a(t)\varphi_\beta(t)\,dt.
\end{equation*}
In the normal cases below $C$ is real and preserves parity. By \eqref{main-eq:parity_isometry}, $\mathcal C$ is unitarily equivalent to the operator with kernel $C$ on $L^2(\varphi_\beta;\mathbb R)$. The latter eigenvalues are the weights of the chi-square series. Viewing the whole complex space as real and retaining only its complex covariance kernel would not, by itself, justify this conclusion.
\end{myrem}

\section{Projection of degenerate U-statistics under estimated parameters}
\label{main-sec:data_domain_projection}

The notation $\Phi$ and $f_0$ is as in Section~\ref{main-subsec:Gaussian_kernel_Krho}. Here $L^2(\Phi)$ is a real Hilbert space.  A measurable symmetric kernel $K$ is called canonical under $\Phi$ if $\mathbb E[K(x,X)]=0$ for $\Phi$-almost every $x$, where $X\sim\Phi$.  We first treat finite spectral expansions of canonical kernels.  For fully standardized residuals, the empirical sums of the eigenfunctions have the same first-order limits as the empirical sums of their projections on the orthogonal complement of the two-dimensional normal score space.  The infinite-rank passage is then stated separately, through a tail condition and a sufficient summability condition.  The one-parameter analogues and additional kernel examples are given in Sections~\ref{supp-sec:one_parameter_projection} and \ref{supp-sec:infinite_rank_and_examples} of the Supplementary Material.

Let $X_1,X_2,\ldots$ be i.i.d. standard normal variables.  We write
\begin{equation*}
\langle f,g\rangle_\Phi
:=
\int_{-\infty}^{\infty}f(x)g(x)f_0(x)\,dx,
\qquad
\|f\|_\Phi^2:=\langle f,f\rangle_\Phi .
\end{equation*}
The normal score directions used below are
\begin{equation*}
q_1(x):=x,
\qquad
q_2(x):=\frac{x^2-1}{\sqrt2}.
\end{equation*}
They are orthonormal in $L^2(\Phi)$.  Let
\begin{align}
\Pi_\mu f
&:=f-\langle f,q_1\rangle_\Phi q_1, \label{main-eq:data_Pi_mu}\\
\Pi_\sigma f
&:=f-\langle f,q_2\rangle_\Phi q_2, \label{main-eq:data_Pi_sigma}\\
\Pi_{\mu,\sigma}f
&:=f-\langle f,q_1\rangle_\Phi q_1-\langle f,q_2\rangle_\Phi q_2.
\nonumber\end{align}
For the main statements in this section we take $n\ge4$ and use the fully standardized residuals
\begin{equation*}
R_{n,i}:=\frac{X_i-\overline X_n}{S_n},
\qquad
\overline X_n:=\frac1n\sum_{i=1}^n X_i,
\qquad
S_n^2:=\frac1n\sum_{i=1}^n(X_i-\overline X_n)^2.
\end{equation*}

\subsection{Projection of empirical sums}

The functions in the next lemma are not assumed to be polynomials.  The differentiability and growth
conditions are used only for the Taylor expansion after standardization.

\begin{lemma}\label{main-lem:data_projection_empirical_sums}
Let $r_0,\ldots,r_M$ be real-valued functions on $\mathbb R$.  Assume that each $r_k$ is twice
continuously differentiable and that $r_k$, $r_k'$ and $r_k''$ have at most polynomial growth.  Then,
for $k=0,\ldots,M$,
\begin{equation}
\label{main-eq:data_projection_musigma}
\frac1{\sqrt n}\sum_{i=1}^n r_k(R_{n,i})
=
\frac1{\sqrt n}\sum_{i=1}^n(\Pi_{\mu,\sigma}r_k)(X_i)+o_{\mathbb P}(1).
\end{equation}
The remainders are uniform over $k=0,\ldots,M$.  Moreover,
\begin{equation*}
\frac1n\sum_{i=1}^n r_k^2(R_{n,i})
\xrightarrow[n\to\infty]{\mathbb P}
\mathbb E[r_k^2(X)],
\qquad k=0,\ldots,M.
\end{equation*}
\end{lemma}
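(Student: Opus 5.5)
We will use a second-order Taylor expansion of each $r_k$ around $X_i$, followed by the classical first-order asymptotics of $(\overline X_n,S_n)$ under the normal null. Write $R_{n,i}=X_i+\Delta_{n,i}$ with
\begin{equation*}
\Delta_{n,i}=\frac{X_i-\overline X_n}{S_n}-X_i=\Bigl(\frac1{S_n}-1\Bigr)X_i-\frac{\overline X_n}{S_n}.
\end{equation*}
Since $\sqrt n\,\overline X_n=O_{\mathbb P}(1)$ and $\sqrt n(S_n-1)=O_{\mathbb P}(1)$, we have $|\Delta_{n,i}|\le \varepsilon_n(1+|X_i|)$ with $\varepsilon_n=O_{\mathbb P}(n^{-1/2})$, uniformly in $i$. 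Taylor's theorem with integral remainder gives
\begin{equation*}
r_k(R_{n,i})=r_k(X_i)+r_k'(X_i)\Delta_{n,i}+\tfrac12 r_k''(\xi_{n,i})\Delta_{n,i}^2 .
\end{equation*}
Here $\xi_{n,i}$ lies between $X_i$ and $R_{n,i}$. For $n$ large, $|\xi_{n,i}|\le 2(1+|X_i|)$ on an event of probability tending to one, because $\varepsilon_n\to0$.

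The quadratic term is handled by polynomial growth. We have $|r_k''(\xi_{n,i})|\le C(1+|X_i|)^p$, so the normalized sum of quadratic terms is bounded by
\begin{equation*}
n^{-1/2}\varepsilon_n^2\sum_i C(1+|X_i|)^{p+2}=O_{\mathbb P}(n^{-1/2}),
\end{equation*}
using the law of large numbers for the Gaussian moments. The linear term splits as
\begin{equation*}
\Bigl(\frac1{S_n}-1\Bigr)\sqrt n\cdot\frac1n\sum_i r_k'(X_i)X_i-\frac{\sqrt n\,\overline X_n}{S_n}\cdot\frac1n\sum_i r_k'(X_i).
\end{equation*}
By the law of large numbers these averages converge to $\mathbb E[Xr_k'(X)]$ and $\mathbb E[r_k'(X)]$. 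Gaussian integration by parts (Stein's identity, valid under polynomial growth) identifies these limits as
\begin{equation*}
\mathbb E[r_k'(X)]=\mathbb E[Xr_k(X)]=\langle r_k,q_1\rangle_\Phi,\qquad
\mathbb E[Xr_k'(X)]=\mathbb E[(X^2-1)r_k(X)]=\sqrt2\,\langle r_k,q_2\rangle_\Phi .
\end{equation*}
Next we linearize the estimators. Since $S_n^2=n^{-1}\sum X_i^2-\overline X_n^2$, the delta method gives
\begin{equation*}
\sqrt n\Bigl(\frac1{S_n}-1\Bigr)=-\frac{1}{2\sqrt n}\sum_i(X_i^2-1)+o_{\mathbb P}(1)=-\frac{1}{\sqrt2}\,\frac{1}{\sqrt n}\sum_i q_2(X_i)+o_{\mathbb P}(1),
\end{equation*}
and $\sqrt n\,\overline X_n/S_n=n^{-1/2}\sum_i q_1(X_i)+o_{\mathbb P}(1)$. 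Combining the pieces (Slutsky), the linear term equals
\begin{equation*}
-\langle r_k,q_2\rangle_\Phi\, n^{-1/2}\sum_i q_2(X_i)-\langle r_k,q_1\rangle_\Phi\, n^{-1/2}\sum_i q_1(X_i)+o_{\mathbb P}(1).
\end{equation*}
This is exactly the projection correction, so \eqref{main-eq:data_projection_musigma} follows. Uniformity over $k$ is automatic because there are only finitely many $k$: a finite maximum of $o_{\mathbb P}(1)$ terms is $o_{\mathbb P}(1)$.

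For the second claim, apply the same mean-value argument to $r_k^2$, whose derivative $2r_kr_k'$ has polynomial growth. This gives
\begin{equation*}
\Bigl|\frac1n\sum_i\{r_k^2(R_{n,i})-r_k^2(X_i)\}\Bigr|\le \varepsilon_n\,\frac1n\sum_i C(1+|X_i|)^{q}\to0
\end{equation*}
in probability. The weak law of large numbers for $r_k^2(X_i)$ then completes the argument.

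The main obstacle is controlling the Taylor remainder uniformly in $i$. The intermediate point $\xi_{n,i}$ is random and depends on the whole sample, so the bound must come from a deterministic envelope valid on a high-probability event, namely $|S_n^{-1}-1|\le 1/2$ and $|\overline X_n|\le 1$. We will state that event explicitly and then invoke polynomial growth. Everything else is routine: the law of large numbers, Stein's identity and the delta method.
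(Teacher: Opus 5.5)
Your proof is correct and follows essentially the same route as the paper. Both arguments use a second-order Taylor expansion whose remainder is controlled by a polynomial-growth envelope on the event where $S_n$ is near one and $\overline X_n$ is small, followed by the law of large numbers, Gaussian integration by parts for $\mathbb E[r_k'(X)]$ and $\mathbb E[Xr_k'(X)]$, and linearization of $S_n$. The only difference is cosmetic: the paper expands $r_k\{(x-u)/(1+v)\}$ jointly in $(u,v)=(\overline X_n,S_n-1)$, whereas you expand in the residual increment $\Delta_{n,i}$ and move the nonlinearity of $1/S_n$ into the delta method.
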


The proof is given in Section~\ref{supp-subsec:proof_projection_empirical} of the Supplementary Material.

\subsection{Finite-rank degenerate kernels}

Let
\begin{equation*}
K_M(x,y)=\sum_{k=0}^{M}\gamma_k r_k(x)r_k(y),
\end{equation*}
where $r_0,\ldots,r_M$ are centered and orthonormal in $L^2(\Phi)$ and satisfy the assumptions
of Lemma~\ref{main-lem:data_projection_empirical_sums}.  Define
\begin{equation*}
U_n^{(\mu,\sigma)}(K_M)
:=
\frac{2}{n(n-1)}
\sum_{1\le i<j\le n}K_M(R_{n,i},R_{n,j}).
\end{equation*}

\begin{theorem}\label{main-thm:data_finite_rank_projection_limit}
Let
\begin{equation*}
\Gamma_M^{(\mu,\sigma)}
:=
\left(
\langle \Pi_{\mu,\sigma}r_i,\Pi_{\mu,\sigma}r_j\rangle_\Phi
\right)_{0\le i,j\le M},
\qquad
\Lambda_M:=\operatorname{diag}(\gamma_0,\ldots,\gamma_M).
\end{equation*}
Then
\begin{equation}
\label{main-eq:data_finite_rank_limit}
nU_n^{(\mu,\sigma)}(K_M)
\xrightarrow[n\to\infty]{d}
G_M^{\top}\Lambda_MG_M-\operatorname{tr}(\Lambda_M),
\end{equation}
where $G_M$ is centered Gaussian in $\mathbb R^{M+1}$ with covariance matrix
$\Gamma_M^{(\mu,\sigma)}$.
If $\omega_{M,0},\ldots,\omega_{M,M}$ are the eigenvalues of
\begin{equation}
\label{main-eq:data_omega_matrix}
\Gamma_M^{(\mu,\sigma)\,1/2}\Lambda_M\Gamma_M^{(\mu,\sigma)\,1/2},
\end{equation}
then
\begin{equation}
\label{main-eq:data_finite_rank_limit_diagonal}
nU_n^{(\mu,\sigma)}(K_M)
\xrightarrow[n\to\infty]{d}
\sum_{j=0}^{M}\omega_{M,j}N_j^2-
\sum_{k=0}^{M}\gamma_k,
\end{equation}
where $N_0,\ldots,N_M$ are independent standard normal variables.
\end{theorem}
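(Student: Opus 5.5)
The plan is to write the $U$-statistic through the $U$--$V$ identity \eqref{main-eq:UV_identity}, then apply Lemma~\ref{main-lem:data_projection_empirical_sums} and the multivariate central limit theorem. Put $W_{n,k}:=n^{-1/2}\sum_{i}r_k(R_{n,i})$ and $W_n=(W_{n,0},\ldots,W_{n,M})^\top$. Expanding the finite sum in the kernel gives the exact identity
\begin{equation*}
nU_n^{(\mu,\sigma)}(K_M)=\frac{n}{n-1}\left\{\sum_{k=0}^M\gamma_kW_{n,k}^2-\sum_{k=0}^M\gamma_k\frac1n\sum_{i=1}^nr_k^2(R_{n,i})\right\},
\end{equation*}
so everything reduces to the joint law of $W_n$ and to the diagonal averages.

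\textbf{Step 1 (linear parts).} By \eqref{main-eq:data_projection_musigma}, with remainders uniform over $k\le M$, we have $W_n=\widetilde W_n+o_{\PP}(1)$ in $\mathbb R^{M+1}$, where $\widetilde W_{n,k}=n^{-1/2}\sum_i(\Pi_{\mu,\sigma}r_k)(X_i)$. The summands $(\Pi_{\mu,\sigma}r_k)(X_i)$ are i.i.d. Each is centered, because $r_k$ is centered and $q_1,q_2$ are centered. Each has finite second moments, by polynomial growth. The Cram\'er--Wold device and the classical CLT then give $\widetilde W_n\to G_M$ in distribution, where $G_M$ has covariance $\EE[(\Pi_{\mu,\sigma}r_i)(X)(\Pi_{\mu,\sigma}r_j)(X)]=\Gamma^{(\mu,\sigma)}_{M,ij}$. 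By Slutsky's lemma, $W_n\to G_M$ as well.

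\textbf{Step 2 (diagonal terms).} The second part of Lemma~\ref{main-lem:data_projection_empirical_sums}, together with orthonormality, gives $n^{-1}\sum_ir_k^2(R_{n,i})\to\EE r_k^2(X)=1$ in probability. Hence the subtracted term converges to $\operatorname{tr}(\Lambda_M)=\sum_k\gamma_k$. The map $w\mapsto w^\top\Lambda_Mw$ is continuous and $n/(n-1)\to1$, so the continuous mapping theorem and Slutsky's lemma yield \eqref{main-eq:data_finite_rank_limit}.

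\textbf{Step 3 (diagonal form).} Write $G_M=\Gamma^{1/2}Z$, where $\Gamma=\Gamma_M^{(\mu,\sigma)}$ and $Z$ is standard normal in $\mathbb R^{M+1}$. Then $G_M^\top\Lambda_MG_M=Z^\top\Gamma^{1/2}\Lambda_M\Gamma^{1/2}Z$. Diagonalize the symmetric matrix \eqref{main-eq:data_omega_matrix} as $O^\top\operatorname{diag}(\omega_{M,j})O$ with $O$ orthogonal. By rotation invariance, $N:=OZ$ is again standard normal, so the quadratic form equals $\sum_j\omega_{M,j}N_j^2$. This gives \eqref{main-eq:data_finite_rank_limit_diagonal}. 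The matrix $\Gamma$ may be singular, for instance when some $r_k$ lies in $\operatorname{span}\{q_1,q_2\}$. This causes no difficulty, because the positive semidefinite square root is always defined and some $\omega_{M,j}$ may simply be zero.

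The main obstacle, the Taylor expansion after standardization, is already contained in Lemma~\ref{main-lem:data_projection_empirical_sums}. What remains is routine. The point to keep careful is that the trace subtracted is the original $\sum_k\gamma_k$, not $\operatorname{tr}(\Lambda_M\Gamma)$. This comes from the diagonal averages converging to $\EE r_k^2=1$ rather than to the projected norms $\|\Pi_{\mu,\sigma}r_k\|_\Phi^2$.
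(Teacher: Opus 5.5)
Your proposal is correct and follows essentially the same route as the paper: the exact identity $nU_n^{(\mu,\sigma)}(K_M)=\frac{n}{n-1}\sum_k\gamma_k\{S_{n,k}^2-Q_{n,k}\}$, then Lemma~\ref{main-lem:data_projection_empirical_sums} with the multivariate central limit theorem for the linear sums, then $Q_{n,k}\to1$ in probability, and finally diagonalization of \eqref{main-eq:data_omega_matrix}. You spell out the Slutsky and continuous-mapping steps, the representation $G_M=\Gamma^{1/2}Z$, and the harmless singularity of $\Gamma_M^{(\mu,\sigma)}$, which the paper leaves implicit.
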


\begin{proof}
Set
\begin{equation*}
S_{n,k}:=\frac1{\sqrt n}\sum_{i=1}^n r_k(R_{n,i}),
\qquad
Q_{n,k}:=\frac1n\sum_{i=1}^n r_k^2(R_{n,i}).
\end{equation*}
The identity
\begin{equation}
\label{main-eq:data_exact_U_identity}
nU_n^{(\mu,\sigma)}(K_M)
=
\frac{n}{n-1}
\sum_{k=0}^{M}\gamma_k\{S_{n,k}^2-Q_{n,k}\}
\end{equation}
follows by expanding the square of $\sum_i r_k(R_{n,i})$.  By
Lemma~\ref{main-lem:data_projection_empirical_sums} and the multivariate central limit theorem,
\begin{equation*}
(S_{n,0},\ldots,S_{n,M})^{\top}
\xrightarrow[n\to\infty]{d}
G_M,
\end{equation*}
with covariance matrix $\Gamma_M^{(\mu,\sigma)}$.  Also $Q_{n,k}\xrightarrow[n\to\infty]{\mathbb P}1$.
Substitution in \eqref{main-eq:data_exact_U_identity} gives \eqref{main-eq:data_finite_rank_limit}.  Diagonalizing
\eqref{main-eq:data_omega_matrix} gives \eqref{main-eq:data_finite_rank_limit_diagonal}.
\end{proof}

\subsection{Passage from finite to infinite rank}
\label{main-subsec:data_infinite_rank}
Let $K$ be a measurable symmetric canonical kernel with expansion
\begin{equation*}
K(x,y)=\sum_{k=0}^{\infty}\gamma_k r_k(x)r_k(y)
\end{equation*}
in $L^2(\Phi\otimes\Phi)$. The coefficients $\gamma_k$ are real. The functions $r_k$ are real, centered and orthonormal in $L^2(\Phi)$, and each $r_k,r_k',r_k''$ satisfies the smoothness and polynomial-growth assumptions of Lemma~\ref{main-lem:data_projection_empirical_sums}. Put
\begin{equation*}
K_M(x,y):=\sum_{k=0}^{M}\gamma_k r_k(x)r_k(y),\qquad
K_{M,L}:=K_L-K_M\quad(L>M).
\end{equation*}
We evaluate a measurable representative of $K$ at the residual pairs, and define $U_n^{(\mu,\sigma)}(K)$ by the same off-diagonal sum as for $K_M$. For $n\ge4$ the law of each pair of distinct fully standardized residuals is absolutely continuous with respect to $\Phi\otimes\Phi$. Indeed, the residual vector is uniform on the sphere of radius $\sqrt n$ in the $(n-1)$-dimensional subspace orthogonal to the constant vector. Two distinct coordinate functionals are linearly independent on this subspace. Their joint density is given in \eqref{supp-eq:residual_pair_density}. Thus the choice of representative has no effect almost surely. Section~\ref{supp-subsec:residual_pairs} gives the fixed-sample-size measure argument used below; no uniformly bounded pair-density ratio is assumed.

The approximation condition is, for every $\varepsilon>0$,
\begin{equation}
\label{main-eq:data_tail_condition_probability}
\lim_{M\to\infty}\limsup_{n\to\infty}
\PP\!\left(\left|nU_n^{(\mu,\sigma)}(K-K_M)\right|>\varepsilon\right)=0.
\end{equation}
This condition concerns the whole tail for each sample size, before taking the large-sample limit. Convergence for each fixed finite tail alone does not imply it.
For $k\ge0$ define
\begin{equation*}
S_{n,k}:=n^{-1/2}\sum_{i=1}^n r_k(R_{n,i}),\qquad
Q_{n,k}:=n^{-1}\sum_{i=1}^n r_k^2(R_{n,i}),
\end{equation*}
and
\begin{equation*}
A_k(n_0):=\sup_{n\ge n_0}\{\EE[S_{n,k}^2]+\EE[Q_{n,k}]\},\qquad n_0\ge4.
\end{equation*}
\begin{proposition}\label{main-prop:data_tail_sufficient_main}
If, for some $n_0\ge4$,
\begin{equation}
\label{main-eq:data_tail_sufficient_condition}
\sum_{k=0}^{\infty}|\gamma_k|A_k(n_0)<\infty,
\end{equation}
then \eqref{main-eq:data_tail_condition_probability} holds and
\begin{equation}
\label{main-eq:data_uniform_L1_tail}
\sup_{n\ge n_0}\EE\left|nU_n^{(\mu,\sigma)}(K-K_M)\right|
\le2\sum_{k>M}|\gamma_k|A_k(n_0).
\end{equation}
For each fixed $n\ge n_0$, the full tail is the $L^1$ limit of the finite tails as $L\to\infty$.
\end{proposition}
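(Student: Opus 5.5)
The plan is to begin with the finite tails $K_{M,L}=K_L-K_M$, for which the exact identity \eqref{main-eq:data_exact_U_identity} holds term by term:
\begin{equation*}
nU_n^{(\mu,\sigma)}(K_{M,L})=\frac{n}{n-1}\sum_{k=M+1}^{L}\gamma_k\{S_{n,k}^2-Q_{n,k}\}.
\end{equation*}
Taking absolute values inside, using $S_{n,k}^2\ge0$ and $Q_{n,k}\ge0$, and noting $n/(n-1)\le 4/3\le2$ for $n\ge4$, we get
\begin{equation*}
\EE\bigl|nU_n^{(\mu,\sigma)}(K_{M,L})\bigr|\le 2\sum_{k=M+1}^{L}|\gamma_k|\bigl\{\EE S_{n,k}^2+\EE Q_{n,k}\bigr\}\le 2\sum_{k>M}|\gamma_k|A_k(n_0)
\end{equation*}
for every $n\ge n_0$. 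By \eqref{main-eq:data_tail_sufficient_condition}, this bound is uniform in $n$ and $L$ and tends to zero as $M\to\infty$. In particular, for fixed $n$ the sequence $L\mapsto nU_n^{(\mu,\sigma)}(K_L)$ is Cauchy in $L^1(\PP)$, so it has an $L^1$ limit $Z_n$.

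The main step is to identify $Z_n$ with $nU_n^{(\mu,\sigma)}(K)$, which is defined by evaluating a measurable representative of $K$ at the residual pairs. Here I would use the residual-pair absolute continuity described before the statement, namely that for $n\ge4$ each pair $(R_{n,i},R_{n,j})$, $i\ne j$, has a density with respect to $\Phi\otimes\Phi$. Since $K_L\to K$ in $L^2(\Phi\otimes\Phi)$, a subsequence $K_{L_p}$ converges to $K$ $\Phi\otimes\Phi$-almost everywhere. Absolute continuity then transfers this almost-everywhere convergence to almost sure convergence $K_{L_p}(R_{n,i},R_{n,j})\to K(R_{n,i},R_{n,j})$ for each of the finitely many pairs, with no bounded density ratio required. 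Hence $nU_n^{(\mu,\sigma)}(K_{L_p})\to nU_n^{(\mu,\sigma)}(K)$ almost surely. Along a further subsequence the $L^1$ convergence also holds almost surely, so $Z_n=nU_n^{(\mu,\sigma)}(K)$ almost surely. This proves the final sentence of the statement, because subtracting the fixed finite sum $K_M$ leaves $nU_n^{(\mu,\sigma)}(K-K_M)$ as the $L^1$ limit of $nU_n^{(\mu,\sigma)}(K_{M,L})$.

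Applying Fatou's lemma, or $L^1$ continuity of the norm, to the uniform bound as $L\to\infty$ gives \eqref{main-eq:data_uniform_L1_tail} for each $n\ge n_0$, and then for the supremum over $n$. Markov's inequality gives
\begin{equation*}
\PP\bigl(|nU_n^{(\mu,\sigma)}(K-K_M)|>\varepsilon\bigr)\le 2\varepsilon^{-1}\sum_{k>M}|\gamma_k|A_k(n_0),
\end{equation*}
uniformly in $n\ge n_0$. Letting $M\to\infty$ yields \eqref{main-eq:data_tail_condition_probability}.

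The only delicate point is this identification step: the $L^2(\Phi\otimes\Phi)$ convergence of the kernel expansion must be transferred to the dependent residual pairs. Absolute continuity of each fixed-$n$ pair law, together with subsequence extraction, handles it. Signed $\gamma_k$ cause no difficulty, since only absolute values enter the bound.
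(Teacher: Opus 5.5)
Your proposal is correct and follows essentially the same route as the paper. The exact finite-rank identity gives an $L^1$ bound on the finite tails, uniform over $n\ge n_0$. Absolute continuity of each fixed-$n$ residual-pair law identifies the $L^1$ limit with the statistic evaluated at $K-K_M$, and passage to the limit in $L$ followed by Markov's inequality finishes the proof. The only difference is cosmetic: you push a $\Phi\otimes\Phi$-a.e.\ convergent subsequence of $K_L$ through absolute continuity, whereas the paper (Section~\ref{supp-subsec:residual_pairs}) derives convergence in probability under the pair law by truncating the density ratio. Both arguments are valid.
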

\begin{proof}
The exact finite-rank identity \eqref{main-eq:data_exact_U_identity} gives
\begin{equation*}
\sup_{n\ge n_0}\EE\left|nU_n^{(\mu,\sigma)}(K_{M,L})\right|
\le2\sum_{k=M+1}^{L}|\gamma_k|A_k(n_0).
\end{equation*}
Thus the finite tails are Cauchy in $L^1$, uniformly over $n\ge n_0$. Their limits equal the statistics evaluated at $K-K_M$. To verify this identification, $L^2(\Phi\otimes\Phi)$ convergence implies convergence in probability under each fixed residual-pair law by absolute continuity. The sum contains finitely many pairs for fixed $n$, so $nU_n^{(\mu,\sigma)}(K_{M,L})$ converges in probability to $nU_n^{(\mu,\sigma)}(K-K_M)$. Uniqueness of the limit proves the identification. Passing to $L\to\infty$ yields \eqref{main-eq:data_uniform_L1_tail}; Markov's inequality then gives \eqref{main-eq:data_tail_condition_probability}.
\end{proof}

For fully standardized normal observations, the expectation condition can be checked by a derivative estimate. No parity assumption is required.
\begin{proposition}\label{main-prop:derivative_tail_criterion}
There is a numerical constant $C$ such that every centered, continuously differentiable $r$ with $r,r'\in L^2(\Phi)$ satisfies
\begin{equation}\label{main-eq:Gaussian_derivative_bound}
\sup_{n\ge4}\left\{\EE\left[\left(n^{-1/2}\sum_{i=1}^n r(R_{n,i})\right)^2\right]
+\EE\left[n^{-1}\sum_{i=1}^n r^2(R_{n,i})\right]\right\}
\le C\{\|r\|_\Phi^2+\|r'\|_\Phi^2\}.
\end{equation}
Consequently, under the smoothness assumptions of this subsection, the condition
\begin{equation}\label{main-eq:derivative_summability}
\sum_{k\ge0}|\gamma_k|\{1+\|r_k'\|_\Phi^2\}<\infty
\end{equation}
implies \eqref{main-eq:data_tail_sufficient_condition} and \eqref{main-eq:data_tail_condition_probability}.
\end{proposition}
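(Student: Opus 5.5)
The plan is to handle the two expectations in \eqref{main-eq:Gaussian_derivative_bound} separately, using the geometry of the residual vector described in Section~\ref{main-subsec:data_infinite_rank}. That vector is uniform on the sphere of radius $\sqrt n$ in the $(n-1)$-dimensional hyperplane orthogonal to the constant vector. The first step is the single-coordinate law. By exchangeability, $R_{n,1}$ has density $p_n(x)\propto(1-x^2/(n-1))^{(n-4)/2}$ on $|x|<\sqrt{n-1}$ (up to the exact normalization). I would show that $\sup_{n\ge4}\sup_x p_n(x)/f_0(x)\le C_0$. Put $u=x^2/(n-1)\in[0,1)$. Then $\log(1-u)\le-u$ gives $(1-u)^{(n-4)/2}e^{x^2/2}\le e^{3u/2}\le e^{3/2}$. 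The ratio of normalizing constants is a Gamma-function ratio that converges and is therefore uniformly bounded for $n\ge4$. Exchangeability then gives $\EE[n^{-1}\sum_i r^2(R_{n,i})]=\EE r^2(R_{n,1})\le C_0\|r\|_\Phi^2$, and the same bound with $r'$ in place of $r$. Only the one-dimensional marginal is used, consistent with the remark that no bounded pair-density ratio is assumed.

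For the second moment of $S_n:=n^{-1/2}\sum_i r(R_{n,i})$, I would write $\EE S_n^2=\Var S_n+(\EE S_n)^2$. The variance is handled by the Poincaré inequality on the sphere: for a $C^1$ function $g$ on the sphere of radius $\sqrt n$ in dimension $d=n-1$, the uniform law satisfies $\Var g\le\frac{n}{d-1}\EE|\nabla_{\mathrm{tan}}g|^2$. Take $g(y)=n^{-1/2}\sum_ir(y_i)$. Its tangential gradient is the projection of $n^{-1/2}(r'(y_i))_i$, so its squared norm is at most $n^{-1}\sum_ir'(y_i)^2$. Combined with the first step, this gives $\Var S_n\le\frac{n}{n-2}C_0\|r'\|_\Phi^2\le 2C_0\|r'\|_\Phi^2$ for $n\ge4$. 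A technical point is that $r$ is only $C^1$ with $r'\in L^2(\Phi)$, not bounded. I would first prove the inequality for smooth compactly supported approximants $r_j\to r$ with $\|r_j-r\|_\Phi+\|r_j'-r'\|_\Phi\to0$, then pass to the limit. This uses the bounded density ratio once more, since $\EE(r_j-r)^2(R_{n,1})\le C_0\|r_j-r\|_\Phi^2$.

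The mean term is the step I expect to be the main obstacle, because it needs an $n^{-1/2}$ rate. Since $r$ is centered under $\Phi$, we have $\EE S_n=\sqrt n\int r\,(p_n/f_0-1)f_0$, and Cauchy--Schwarz gives $|\EE S_n|\le\sqrt n\,\|r\|_\Phi\,\|p_n/f_0-1\|_\Phi$. It therefore suffices to show $\|p_n/f_0-1\|_\Phi=O(n^{-1/2})$; in fact I expect $O(n^{-1})$. On the bulk $|x|\le n^{1/4}$, expanding $\log(p_n/f_0)$ gives the constant from Stirling, plus $3x^2/(2(n-1))$, minus $(n-4)x^4/(4(n-1)^2)$, plus smaller terms. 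The error is $O(n^{-1}(1+x^4))$, which is square-integrable under $\Phi$. Off the bulk, the ratio is bounded by $C_0+1$ and the Gaussian tail mass is exponentially small. This yields $(\EE S_n)^2\le C\|r\|_\Phi^2$.

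Summing the three bounds gives \eqref{main-eq:Gaussian_derivative_bound}. For the consequence, apply the bound to each $r_k$. Since $\|r_k\|_\Phi=1$, we get $A_k(4)\le C(1+\|r_k'\|_\Phi^2)$. Then \eqref{main-eq:derivative_summability} implies \eqref{main-eq:data_tail_sufficient_condition} with $n_0=4$, and Proposition~\ref{main-prop:data_tail_sufficient_main} yields \eqref{main-eq:data_tail_condition_probability}.
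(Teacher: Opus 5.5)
Your proof is correct. The mean and empirical-second-moment steps are essentially the paper's. The paper's Lemma~\ref{supp-lem:residual_density_rate} gives the same uniform bound $f_n\le C_0f_0$ and the same $O(1/n)$ rate for $\|f_n/f_0-1\|_\Phi$ from the pointwise estimate $C(1+x^4)/n$. It splits the line at $x^2=d/2$ rather than at $|x|=n^{1/4}$, which is immaterial.

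The genuine difference is the variance step. The paper treats $F_n$ as a function of the Gaussian vector $X\in\mathbb R^n$ and computes the Jacobian $S_n^{-1}(P-n^{-1}R_nR_n^\top)$. It then uses independence of $S_n$ and $R_n$ together with $\EE S_n^{-2}=n/(n-3)\le4$. Because $R_n$ is singular at $PX=0$, it must also run a cutoff approximation, with a squared-gradient error $O(\varepsilon^{n-3})$, to justify the Gaussian Poincar\'e inequality.

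You instead apply the Poincar\'e inequality of the uniform law on the $(n-2)$-dimensional sphere of radius $\sqrt n$ in $\boldsymbol1^\perp$, with sharp constant $n/(n-2)$. Your tangential gradient is exactly the paper's projection $Q$ applied to $n^{-1/2}r'(R_n)$. This removes the radial factor and the singularity, so no cutoff is needed, and it gives the constant $2C_0$ instead of $4C_0$. The price is invoking the spectral gap of the sphere. The paper's route amounts to a self-contained, non-sharp derivation of that spherical inequality from the Gaussian one, with $\EE\|Y\|^{-2}=1/(d-2)$ in place of $1/(d-1)$.

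Your approximation of $r$ by smooth compactly supported functions in the Gaussian Sobolev norm is valid but unnecessary. For fixed $n$ the coordinates are bounded by $\sqrt{n-1}$, so $g$ is already $C^1$ on the compact sphere. The paper uses the same observation to make its cut-off functions bounded with bounded gradients.
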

\begin{proof}
Put $F_n=n^{-1/2}\sum_i r(R_{n,i})$ and $Q_n=n^{-1}\sum_i r^2(R_{n,i})$. Lemma~\ref{supp-lem:residual_density_rate} gives constants $C_0,C_1$, independent of $n\ge4$, such that $f_n\le C_0f_0$ and $\|f_n/f_0-1\|_\Phi\le C_1/n$. Since $\int r\,d\Phi=0$,
\begin{equation*}
|\EE F_n|\le\sqrt n\,\|r\|_\Phi\|f_n/f_0-1\|_\Phi
\le C_1n^{-1/2}\|r\|_\Phi,
\qquad \EE Q_n\le C_0\|r\|_\Phi^2.
\end{equation*}
The residual Jacobian and independence of the normal radius and direction give
\begin{equation*}
\operatorname{Var}(F_n)\le\EE\|\nabla F_n\|^2
\le\EE[S_n^{-2}]\int |r'(x)|^2f_n(x)\,dx
\le4C_0\|r'\|_\Phi^2,
\end{equation*}
because $\EE[S_n^{-2}]=n/(n-3)\le4$. Section~\ref{supp-sec:full_derivative_tail} derives the Jacobian and proves the Sobolev approximation needed to apply Poincar\'e at zero sample variance. Therefore
\begin{equation*}
\EE F_n^2+\EE Q_n
\le4C_0\|r'\|_\Phi^2+(C_0+C_1^2/n)\|r\|_\Phi^2,
\end{equation*}
which proves \eqref{main-eq:Gaussian_derivative_bound}. Since $\|r_k\|_\Phi=1$, condition \eqref{main-eq:derivative_summability} implies \eqref{main-eq:data_tail_sufficient_condition}; Proposition~\ref{main-prop:data_tail_sufficient_main} gives \eqref{main-eq:data_tail_condition_probability}.
\end{proof}

Write $(T_Kf)(x)=\int K(x,y)f(y)\,d\Phi(y)$.
\begin{proposition}\label{main-prop:data_infinite_rank_limit_main}
Assume \eqref{main-eq:data_tail_condition_probability} and $\sum_k|\gamma_k|<\infty$. Let $\{\omega_j\}_{j\ge0}$ be the nonzero eigenvalues of $\Pi_{\mu,\sigma}T_K\Pi_{\mu,\sigma}$, counted with multiplicity and completed by zeros when necessary. Then
\begin{equation}
\label{main-eq:data_infinite_rank_limit_main}
nU_n^{(\mu,\sigma)}(K)\xrightarrow[n\to\infty]{d}
\sum_{j=0}^{\infty}\omega_jN_j^2-\sum_{k=0}^{\infty}\gamma_k,
\end{equation}
where the $N_j$ are independent standard normal variables. The series converges absolutely almost surely and in $L^1$.
\end{proposition}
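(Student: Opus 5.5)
The proof combines the finite-rank limit of Theorem~\ref{main-thm:data_finite_rank_projection_limit} with the tail condition \eqref{main-eq:data_tail_condition_probability}, using the standard approximation lemma for weak convergence (Billingsley, Theorem~3.2). Write $Z_M:=G_M^\top\Lambda_MG_M-\operatorname{tr}(\Lambda_M)$ for the finite-rank limit, and $Z$ for the proposed right side of \eqref{main-eq:data_infinite_rank_limit_main}. The argument needs three facts:
\begin{enumerate}
\item[(a)] $nU_n^{(\mu,\sigma)}(K_M)\to Z_M$ in distribution for each $M$;
\item[(b)] $Z_M\to Z$ in distribution as $M\to\infty$;
\item[(c)] $\lim_M\limsup_n\PP(|nU_n^{(\mu,\sigma)}(K)-nU_n^{(\mu,\sigma)}(K_M)|>\varepsilon)=0$.
\end{enumerate}
Fact (a) is Theorem~\ref{main-thm:data_finite_rank_projection_limit}. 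Fact (c) is exactly \eqref{main-eq:data_tail_condition_probability}, since the off-diagonal sum is linear in the kernel. So only (b) needs work.

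For (b), realize all $Z_M$ on one probability space. Let $W$ be the centered Gaussian element of $L^2(\Phi)$ with covariance operator $\Pi_{\mu,\sigma}$; this is the projection of an isonormal process. Put $G_{M,k}=\langle W,r_k\rangle=\langle W,\Pi_{\mu,\sigma}r_k\rangle$. These coordinates have the required covariance $\Gamma_M^{(\mu,\sigma)}$, so
\begin{equation*}
Z_M=\sum_{k\le M}\gamma_k(G_k^2-1).
\end{equation*}
Since $\EE G_k^2=\|\Pi_{\mu,\sigma}r_k\|_\Phi^2\le1$ and $\sum|\gamma_k|<\infty$, the series $\sum_k\gamma_kG_k^2$ converges absolutely almost surely and in $L^1$. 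Hence $Z_M\to Z_\infty:=\sum_k\gamma_k(G_k^2-1)$ in $L^1$.

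It remains to identify the law of $Z_\infty$. Since $\sum_k|\gamma_k|<\infty$, the operator $T_K=\sum_k\gamma_k r_k\otimes r_k$ is trace class. Therefore $\Pi_{\mu,\sigma}T_K\Pi_{\mu,\sigma}=\sum_k\gamma_k(\Pi_{\mu,\sigma}r_k)\otimes(\Pi_{\mu,\sigma}r_k)$ is a self-adjoint trace-class operator. Its eigenvalues $\omega_j$ satisfy $\sum_j|\omega_j|\le\sum_k|\gamma_k|$, and it has an orthonormal eigenbasis $\{\psi_j\}$ in the range of $\Pi_{\mu,\sigma}$. On the range of $\Pi_{\mu,\sigma}$, $W$ is an isonormal Gaussian process, so $N_j:=\langle W,\psi_j\rangle$ are i.i.d.\ standard normal. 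We claim that, almost surely,
\begin{equation*}
\sum_k\gamma_k\langle W,\Pi_{\mu,\sigma}r_k\rangle^2=\sum_j\omega_jN_j^2.
\end{equation*}
This is a Gaussian quadratic form $\langle TW,W\rangle$ for trace-class $T$, and the two sides are two expansions of it. To check the identity, truncate. For the finite-rank operators $T_M=\sum_{k\le M}\gamma_k(\Pi r_k)\otimes(\Pi r_k)$ it holds by finite-dimensional diagonalization. Both sides then converge in $L^1$ as $M\to\infty$, because $\EE|\langle (T-T_M)W,W\rangle|\le\|T-T_M\|_1\to0$. This last bound follows from $\EE|\langle SW,W\rangle|\le\|S\|_1$ for self-adjoint trace-class $S$ and $W$ with identity-bounded covariance, by splitting $S$ into positive and negative parts. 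Subtracting $\sum_k\gamma_k$ then gives $Z_\infty\overset{d}{=}Z$. The absolute almost sure and $L^1$ convergence of $\sum_j\omega_jN_j^2$ follows from $\sum_j|\omega_j|<\infty$.

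The main obstacle is this identification step: it requires the trace-class $L^1$ continuity of Gaussian quadratic forms, and care that $W$ is isonormal only on the range of $\Pi_{\mu,\sigma}$. Once it is in place, (a)–(c) and the approximation lemma yield \eqref{main-eq:data_infinite_rank_limit_main}.
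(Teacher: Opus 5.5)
Your proposal is correct and follows essentially the same route as the paper. Both realize the finite-rank limits through an isonormal process composed with $\Pi_{\mu,\sigma}$, obtain $L^1$ convergence from $\sum_k|\gamma_k|<\infty$, identify the limit with $\sum_j\omega_jN_j^2$ through the bound $\EE|Q(D)|\le\|D\|_1$ and trace-norm approximation, and conclude with the approximation lemma, which the paper writes out with bounded Lipschitz test functions. One small correction: $\Pi_{\mu,\sigma}$ is not trace class, so no $L^2(\Phi)$-valued Gaussian element has that covariance; $W$ must be read as the process $f\mapsto\mathcal W(\Pi_{\mu,\sigma}f)$ and $\langle TW,W\rangle$ as the $L^1$ extension from finite-rank operators, which is exactly how the paper defines $Q(B)$.
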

The proof is given in Section~\ref{supp-subsec:proof_infinite_rank_limit} of the Supplementary Material.

\begin{proposition}\label{main-prop:aligned_score_deletion}
Let $T_K$ have the spectral expansion
\begin{equation*}
T_K=
\sum_{k=0}^{\infty}\gamma_k r_k\otimes r_k
\end{equation*}
in $L^2(\Phi)$, with $\sum_k|\gamma_k|<\infty$.  Let $J$ be a finite index set and let $\Pi_J$ be the orthogonal projection onto the orthogonal complement of $\mathrm{span}\{r_j:j\in J\}$.  Then
\begin{equation*}
\Pi_JT_K\Pi_J
=
\sum_{k\notin J}\gamma_k r_k\otimes r_k.
\end{equation*}
The nonzero spectrum is therefore obtained by removing the indexed eigenvalues, with multiplicity.
\end{proposition}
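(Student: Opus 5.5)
The plan is to work directly with the rank-one pieces of the expansion. The starting point is that $\sum_k|\gamma_k|<\infty$ and $\|r_k\otimes r_k\|_1=1$, so the series $T_K=\sum_k\gamma_k r_k\otimes r_k$ converges in trace norm, and hence in operator norm. Left and right composition with the bounded operator $\Pi_J$ (norm at most one) are continuous in both norms. Therefore
\[
\Pi_JT_K\Pi_J=\sum_{k\ge0}\gamma_k\,\Pi_J(r_k\otimes r_k)\Pi_J ,
\]
with convergence in trace norm. This reduces the claim to computing each summand separately.

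For each $k$, the identity $\Pi_J(g\otimes h)\Pi_J=(\Pi_Jg)\otimes(\Pi_Jh)$ holds because $\Pi_J$ is self-adjoint: $(g\otimes h)\Pi_Jf=\langle \Pi_Jf,h\rangle_\Phi g=\langle f,\Pi_Jh\rangle_\Phi g$. Orthonormality of the $r_k$ gives two cases:
\begin{itemize}
\item for $k\in J$, $r_k$ lies in the removed span, so $\Pi_Jr_k=0$ and the term vanishes;
\item for $k\notin J$, $r_k$ is orthogonal to every $r_j$ with $j\in J$, so $\Pi_Jr_k=r_k$ and the term is unchanged.
\end{itemize}
Summing over $k$ gives the displayed identity.

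For the spectral statement, consider $\sum_{k\notin J}\gamma_kr_k\otimes r_k$. This operator is diagonal in the orthonormal system $\{r_k\}$. It is also zero on the orthogonal complement of $\overline{\mathrm{span}}\{r_k:k\notin J\}$, a complement that contains the $r_j$ with $j\in J$. Its nonzero eigenvalues, counted with multiplicity, are therefore exactly the nonzero $\gamma_k$ with $k\notin J$. Compared with the nonzero spectrum $\{\gamma_k\ne0\}$ of $T_K$, this removes precisely the indexed eigenvalues $\gamma_j$, $j\in J$, each with its multiplicity.

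One point needs checking: when $\gamma_k=\gamma_{k'}$ with $k\in J$ and $k'\notin J$, the multiplicity drops by exactly one. This follows because the eigenspace of $\gamma$ in the reduced operator is spanned by the $r_{k'}$ with $k'\notin J$ and $\gamma_{k'}=\gamma$. There is no real obstacle. The only care needed is justifying the interchange of $\Pi_J$ with the infinite sum, which the trace-norm convergence handles.
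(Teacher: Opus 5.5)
Your proposal is correct and follows the paper's own argument. Trace-norm convergence of $\sum_k\gamma_k r_k\otimes r_k$ justifies applying $\Pi_J$ termwise, and then $\Pi_J r_j=0$ for $j\in J$ and $\Pi_J r_k=r_k$ for $k\notin J$ give the identity; your explicit check of $\Pi_J(g\otimes h)\Pi_J=(\Pi_Jg)\otimes(\Pi_Jh)$ and of the multiplicity count just fills in details the paper leaves implicit.
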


\begin{proof}
The assertion follows from $\Pi_J r_j=0$ for $j\in J$ and $\Pi_J r_k=r_k$ for $k\notin J$.  The trace-norm convergence of the series permits termwise multiplication by the bounded projection $\Pi_J$.
\end{proof}

Section~\ref{supp-sec:tail_full_Krho} gives explicit estimates for the odd Gaussian eigenfunctions. Section~\ref{supp-sec:one_parameter_Krho_tail} treats the one-parameter cases. Corollary~\ref{main-cor:weighted_kernel_full_tail} below verifies the condition for the full kernel without using explicit eigenfunctions of its centered even part.

\subsection{Data-domain and frequency-domain factorization}
\label{main-subsec:data_frequency_factorization}

We keep $L^2(\Phi)$ real and define $\mathcal H_\theta$ as in \eqref{main-eq:Hermitian_space}, replacing $\varphi_\beta$ by $\theta$. The target is the standard normal law throughout this subsection. The inner product on $\mathcal H_\theta$ is real.

Let $\theta$ be positive almost everywhere, even and integrable, and equip $\mathcal H_\theta$ with
\begin{equation*}
\langle f,g\rangle_\theta:=\int_{-\infty}^{\infty}f(t)\overline{g(t)}\theta(t)\,dt.
\end{equation*}
Set
\begin{equation*}
\xi_x(t):=\exp\!\left\{itx\right\}-\phi_0(t).
\end{equation*}
Since $\theta$ is even and $\phi_0$ is real and even, the kernel of Proposition~\ref{main-prop:weighted_L2_kernel_rep} satisfies
\begin{equation*}
K_\theta(x,y)=\langle \xi_x,\xi_y\rangle_\theta.
\end{equation*}
Let $T_{K_\theta}$ be the integral operator
\begin{equation*}
(T_{K_\theta}f)(x):=\int_{-\infty}^{\infty}K_\theta(x,y)f(y)\,d\Phi(y).
\end{equation*}
Define $L_\theta:L^2(\Phi;\mathbb R)\to\mathcal H_\theta$ by
\begin{equation*}
(L_\theta f)(t):=\mathbb E_\Phi[f(X)\xi_X(t)].
\end{equation*}
The operator is Hilbert--Schmidt.  Indeed,
\begin{equation*}
\int_{-\infty}^{\infty}\int_{-\infty}^{\infty}
|\xi_x(t)|^2\,d\Phi(x)\,\theta(t)\,dt
\le 4\|\theta\|_{L^1}<\infty.
\end{equation*}
In particular, $L_\theta$ is compact and bounded.
For full standardization put
\begin{equation*}
L_{\theta,\mu,\sigma}:=L_\theta\Pi_{\mu,\sigma}.
\end{equation*}
The projected summand is
\begin{equation*}
\xi_x^{(\mu,\sigma)}:=\xi_x-q_1(x)L_\theta q_1-q_2(x)L_\theta q_2.
\end{equation*}

\begin{proposition}
\label{main-prop:data_frequency_factorization}
The data-domain and Fourier-domain operators satisfy
\begin{equation}
\label{main-eq:factorization_data_side}
L_{\theta,\mu,\sigma}^*L_{\theta,\mu,\sigma}
=
\Pi_{\mu,\sigma}T_{K_\theta}\Pi_{\mu,\sigma}
\end{equation}
and
\begin{equation}
\label{main-eq:factorization_frequency_side}
L_{\theta,\mu,\sigma}L_{\theta,\mu,\sigma}^*
=
\mathcal C_{\theta}^{(\mu,\sigma)},
\end{equation}
where $\mathcal C_{\theta}^{(\mu,\sigma)}$ is the real covariance operator in $\mathcal H_\theta$ generated by the projected Fourier summand.  The two compact positive operators in \eqref{main-eq:factorization_data_side} and \eqref{main-eq:factorization_frequency_side} have the same positive eigenvalues, counted with multiplicity.
\end{proposition}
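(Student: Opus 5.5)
The plan is to compute the adjoint $L_\theta^*$ explicitly, verify the two factorizations, and finish with the general fact that $T^*T$ and $TT^*$ share their positive spectrum.

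\textbf{The adjoint.} For $g\in\mathcal H_\theta$ and real $f\in L^2(\Phi;\mathbb R)$, Fubini, justified by the Hilbert--Schmidt bound above, gives
\begin{equation*}
\langle L_\theta f,g\rangle_\theta=\EE_\Phi\bigl[f(X)\langle\xi_X,g\rangle_\theta\bigr].
\end{equation*}
The inner product $\langle\xi_x,g\rangle_\theta$ is real, because both $\xi_x$ and $g$ lie in $\mathcal H_\theta$: the function $\xi_x$ is Hermitian-symmetric since $\phi_0$ is real and even. Hence
\begin{equation*}
(L_\theta^*g)(x)=\langle g,\xi_x\rangle_\theta,
\end{equation*}
which is a real element of $L^2(\Phi)$. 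The projection $\Pi_{\mu,\sigma}$ is an orthogonal projection on the real space $L^2(\Phi)$, so $L_{\theta,\mu,\sigma}^*=\Pi_{\mu,\sigma}L_\theta^*$.

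\textbf{Data side, \eqref{main-eq:factorization_data_side}.} Compose to get
\begin{equation*}
(L_\theta^*L_\theta f)(x)=\EE_\Phi\bigl[f(Y)\langle\xi_Y,\xi_x\rangle_\theta\bigr]=\int K_\theta(x,y)f(y)\,d\Phi(y)=(T_{K_\theta}f)(x),
\end{equation*}
using $K_\theta(x,y)=\langle\xi_x,\xi_y\rangle_\theta$ and its symmetry. Sandwiching between the two projections gives $L_{\theta,\mu,\sigma}^*L_{\theta,\mu,\sigma}=\Pi_{\mu,\sigma}L_\theta^*L_\theta\Pi_{\mu,\sigma}=\Pi_{\mu,\sigma}T_{K_\theta}\Pi_{\mu,\sigma}$.

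\textbf{Frequency side, \eqref{main-eq:factorization_frequency_side}.} First rewrite the projected summand. Since $(L_\theta q_\ell)(t)=\EE_\Phi[q_\ell(X)\xi_X(t)]$, we have for every $t$
\begin{equation*}
\xi_x^{(\mu,\sigma)}(t)=\xi_x(t)-\sum_{\ell=1}^2 q_\ell(x)\,\EE_\Phi\bigl[q_\ell(X)\xi_X(t)\bigr].
\end{equation*}
As a function of $x$, this is exactly $\Pi_{\mu,\sigma}$ applied to $x\mapsto\xi_x(t)$, applied separately to real and imaginary parts. Then, for $a\in\mathcal H_\theta$,
\begin{equation*}
(L_{\theta,\mu,\sigma}L_{\theta,\mu,\sigma}^*a)(t)=\EE_\Phi\bigl[(\Pi_{\mu,\sigma}\langle a,\xi_\cdot\rangle_\theta)(X)\,\xi_X(t)\bigr].
\end{equation*}
Self-adjointness of $\Pi_{\mu,\sigma}$ moves the projection onto $x\mapsto\xi_x(t)$; here one checks that the real $L^2(\Phi)$ pairing extends coordinatewise to the complex-valued function $x\mapsto\xi_x(t)$. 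Linearity of $\langle a,\cdot\rangle_\theta$, together with the reality of the coefficients $q_\ell(x)$, gives $\Pi_{\mu,\sigma}\langle a,\xi_\cdot\rangle_\theta=\langle a,\xi_\cdot^{(\mu,\sigma)}\rangle_\theta$. The result is
\begin{equation*}
\EE_\Phi\bigl[\langle a,\xi^{(\mu,\sigma)}_X\rangle_\theta\,\xi^{(\mu,\sigma)}_X(t)\bigr],
\end{equation*}
which is the real covariance operator of Remark~\ref{main-rem:complex_H} for the random element $\xi_X^{(\mu,\sigma)}$. That element is centered, because $\EE\xi_X=0$ and the $q_\ell$ are centered.

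\textbf{Main obstacle.} I expect the main obstacle to be bookkeeping rather than analysis. One must keep every adjoint computation over the real field, since $\mathcal H_\theta$ is a real Hilbert space. This is exactly why $\langle\cdot,\xi_x\rangle_\theta$ being real matters, and it is what fixes multiplicities, as opposed to treating the complexified space.

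\textbf{Equality of positive spectra.} This is the standard argument for a bounded operator $T=L_{\theta,\mu,\sigma}$ between real Hilbert spaces. If $T^*Tf=\lambda f$ with $\lambda>0$, then $TT^*(Tf)=\lambda Tf$. Moreover $\|Tf\|^2=\lambda\|f\|^2>0$, so $T$ maps $\ker(T^*T-\lambda)$ injectively into $\ker(TT^*-\lambda)$. Symmetrically, $T^*$ maps the second kernel injectively into the first. These eigenspaces are finite-dimensional by compactness, so their dimensions agree. Both operators are compact and positive since $T$ is Hilbert--Schmidt, which gives the equality of positive eigenvalues with multiplicity.
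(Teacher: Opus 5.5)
Your proposal is correct and is essentially the paper's argument. Like the paper, you compute the real adjoint $(L_\theta^*a)(x)=\langle a,\xi_x\rangle_\theta$, compose it with $L_\theta$ to obtain $T_{K_\theta}$ and the covariance operator of the projected summand, and transfer positive eigenspaces in both directions using $L_{\theta,\mu,\sigma}$ and $L_{\theta,\mu,\sigma}^*$. The only difference is the order of the frequency-side steps: the paper first rewrites $L_{\theta,\mu,\sigma}$ and its adjoint in terms of $\xi_x^{(\mu,\sigma)}$ and then composes, whereas you compose and then move the projection, tacitly using $\Pi_{\mu,\sigma}^2=\Pi_{\mu,\sigma}$ to place it on both factors.
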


The proof is given in Section~\ref{supp-subsec:proof_factorization} of the Supplementary Material.

For the normal target,
\begin{equation*}
L_\theta q_1(t)=it\phi_0(t),
\qquad
L_\theta q_2(t)=-\frac{t^2}{\sqrt2}\phi_0(t).
\end{equation*}
Thus
\begin{equation*}
\xi_x^{(\mu,\sigma)}(t)
=
\exp\!\left\{itx\right\}-\phi_0(t)-it\phi_0(t)x+\frac{t^2}{2}\phi_0(t)(x^2-1),
\end{equation*}
which is the summand in the full empirical characteristic-function expansion in Section~\ref{main-sec:full_studentization}.  The one-parameter identities are obtained by retaining the corresponding score direction.

The factorization also verifies the derivative condition for the full kernel.
\begin{cor}\label{main-cor:weighted_kernel_full_tail}
Suppose that $\theta$ is positive almost everywhere, even and satisfies
\begin{equation}\label{main-eq:weight_fourth_moment}
\int_{\mathbb R}(1+t^4)\theta(t)\,dt<\infty.
\end{equation}
Then $K_\theta$ satisfies the hypotheses of Proposition~\ref{main-prop:data_infinite_rank_limit_main}, including its tail condition. Its eigenfunctions corresponding to positive eigenvalues have bounded, twice continuously differentiable representatives, and
\begin{equation}\label{main-eq:full_kernel_derivative_trace}
\sum_k\gamma_k\|r_k'\|_\Phi^2\le\int_{\mathbb R}t^2\theta(t)\,dt,
\qquad
\sum_k\gamma_k=\int_{\mathbb R}\{1-\phi_0^2(t)\}\theta(t)\,dt.
\end{equation}
In particular, the direct limit theorem applies to the entire Gaussian kernel $K_\rho$ for every $0<\rho<1$.
\end{cor}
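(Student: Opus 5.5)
The plan is to work through the factorization $T_{K_\theta}=L_\theta^*L_\theta$, which is the unprojected case of Proposition~\ref{main-prop:data_frequency_factorization}. This identity follows directly from $K_\theta(x,y)=\langle\xi_x,\xi_y\rangle_\theta$.

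\textbf{Step 1: eigenfunctions.} Since $L_\theta$ is Hilbert--Schmidt, $T_{K_\theta}$ is positive and trace class. Its spectral expansion $\sum_k\gamma_k r_k\otimes r_k$ has $\gamma_k>0$, so all coefficients are nonnegative. The trace is
\[
\sum_k\gamma_k=\int\!\!\int|\xi_x(t)|^2\,d\Phi(x)\,\theta(t)\,dt .
\]
Because $\EE_\Phi|e^{itX}-\phi_0(t)|^2=1-\phi_0^2(t)$, this gives the second identity in \eqref{main-eq:full_kernel_derivative_trace}. For $\gamma_k>0$ put $w_k=\gamma_k^{-1/2}L_\theta r_k$, an orthonormal family in $\mathcal H_\theta$. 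Then
\[
r_k(x)=\gamma_k^{-1}(T_{K_\theta}r_k)(x)=\gamma_k^{-1/2}\langle\xi_x,w_k\rangle_\theta
=\gamma_k^{-1/2}\int\{e^{itx}-\phi_0(t)\}\overline{w_k(t)}\theta(t)\,dt .
\]
This provides a canonical representative. It is bounded by $2\gamma_k^{-1/2}\|\theta\|_{L^1}^{1/2}$ using Cauchy--Schwarz. Differentiating under the integral twice is legitimate because $t^j\overline{w_k}\theta\in L^1$ for $j\le2$. Indeed, $\int t^{2j}\theta<\infty$ for $j\le2$ by \eqref{main-eq:weight_fourth_moment}, and $w_k\in L^2(\theta)$. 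So $r_k\in C^2$ with $r_k,r_k',r_k''$ bounded, hence of polynomial growth. Each $r_k$ is centered, since $\EE_\Phi\xi_X(t)=0$. This verifies the smoothness hypotheses of Section~\ref{main-subsec:data_infinite_rank}.

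\textbf{Step 2: derivative trace.} From Step 1,
\[
\gamma_k^{1/2}r_k'(x)=\langle\partial_x\xi_x,w_k\rangle_\theta ,\qquad \partial_x\xi_x(t)=it e^{itx}.
\]
Bessel's inequality for the orthonormal family $\{w_k\}$ then gives, for each $x$,
\[
\sum_k\gamma_k|r_k'(x)|^2\le\|\partial_x\xi_x\|_\theta^2=\int t^2\theta(t)\,dt .
\]
Integrating against $\Phi$ yields the first inequality in \eqref{main-eq:full_kernel_derivative_trace}. One point needs care: $\partial_x\xi_x$ lies in $\mathcal H_\theta$, since $\overline{it e^{itx}}=(-it)e^{-itx}$ is Hermitian-symmetric. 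The pairing is therefore the real inner product, and Bessel applies in the real Hilbert space $\mathcal H_\theta$.

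\textbf{Step 3: tail condition and conclusion.} Combining the two identities,
\[
\sum_k\gamma_k\{1+\|r_k'\|_\Phi^2\}\le\int(1+t^2)\theta<\infty .
\]
This is \eqref{main-eq:derivative_summability}. Proposition~\ref{main-prop:derivative_tail_criterion} then yields \eqref{main-eq:data_tail_condition_probability}, and $\sum_k|\gamma_k|<\infty$ holds by Step 1. Hence Proposition~\ref{main-prop:data_infinite_rank_limit_main} applies.

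\textbf{Expansion of $K_\theta$.} We still have to check that $K_\theta$ equals its eigen-expansion in $L^2(\Phi\otimes\Phi)$. This holds because the eigenvalue-zero part contributes nothing: $\ker L_\theta=\ker T_{K_\theta}$, and $\xi_x$ projects into $\overline{\mathrm{ran}}\,L_\theta$ for $\Phi$-a.e.\ $x$.

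\textbf{Gaussian case.} For $K_\rho$, the weight $\theta_\rho$ is Gaussian, so all its moments are finite. Since $K_\rho$ is a positive multiple of $K_{\beta}$, the result applies to $K_\rho$ directly.

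\textbf{Main obstacle.} The step I expect to be most delicate is Step 1's justification of the $C^2$ representatives and the pointwise Bessel bound. The key is to pick the representative given by the integral formula, so that the identity holds everywhere rather than only $\Phi$-a.e. The fourth-moment condition is used exactly here, to differentiate twice under the integral sign.
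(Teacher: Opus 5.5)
Your proposal is correct and follows essentially the same route as the paper. You use the factorization $T_{K_\theta}=L_\theta^*L_\theta$ for the trace identity, the orthonormal family $w_k=\gamma_k^{-1/2}L_\theta r_k$ to define the everywhere-valid representative $r_k(x)=\gamma_k^{-1/2}\langle\xi_x,w_k\rangle_\theta$, the fourth weight moment to differentiate twice, and Bessel's inequality applied to $\partial_x\xi_x$, before invoking Propositions~\ref{main-prop:derivative_tail_criterion}, \ref{main-prop:data_tail_sufficient_main} and \ref{main-prop:data_infinite_rank_limit_main}. The differences are minor: you obtain centering from $\EE_\Phi\xi_X(t)=0$ via Fubini rather than from canonicality ($T_{K_\theta}1=0$), and you add a correct but standard check that $K_\theta$ equals its eigen-expansion in $L^2(\Phi\otimes\Phi)$.
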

\begin{proof}
The factorization $T_{K_\theta}=L_\theta^*L_\theta$ makes this operator positive and trace class. Parseval's identity and Tonelli's theorem give
\begin{equation*}
\operatorname{tr}(T_{K_\theta})=\|L_\theta\|_{\mathrm{HS}}^2
=\int_{\mathbb R}\EE_\Phi|\xi_X(t)|^2\theta(t)\,dt
=\int_{\mathbb R}\{1-\phi_0^2(t)\}\theta(t)\,dt.
\end{equation*}
For an orthonormal eigenfamily $r_k$ with $\gamma_k>0$, set $u_k=\gamma_k^{-1/2}L_\theta r_k$. Then
\begin{equation*}
\langle u_k,u_\ell\rangle_\theta
=(\gamma_k\gamma_\ell)^{-1/2}\langle r_k,T_{K_\theta}r_\ell\rangle_\Phi
=\boldsymbol1_{\{k=\ell\}},
\qquad
r_k(x)=\gamma_k^{-1/2}\langle u_k,\xi_x\rangle_\theta.
\end{equation*}
The second identity first holds $\Phi$-almost everywhere and defines a representative for all $x$. The maps
\begin{equation*}
\partial_x\xi_x(t)=it\exp\!\left\{itx\right\},\qquad
\partial_x^2\xi_x(t)=-t^2\exp\!\left\{itx\right\}
\end{equation*}
are continuous in $\mathcal H_\theta$ by dominated convergence under \eqref{main-eq:weight_fourth_moment}. Their squared norms are $\int t^2\theta(t)\,dt$ and $\int t^4\theta(t)\,dt$, respectively, independently of $x$. Since also $\|\xi_x\|_\theta\le2\|\theta\|_{L^1}^{1/2}$, the representative of $r_k$ and its first two derivatives are bounded. Bessel's inequality now gives
\begin{equation*}
\sum_k\gamma_k|r_k'(x)|^2
=\sum_k|\langle u_k,\partial_x\xi_x\rangle_\theta|^2
\le\|\partial_x\xi_x\|_\theta^2
=\int_{\mathbb R}t^2\theta(t)\,dt.
\end{equation*}
Integration with respect to $\Phi$ proves the derivative bound in \eqref{main-eq:full_kernel_derivative_trace}. Canonicality implies $T_{K_\theta}1=0$, so
$\gamma_k\langle r_k,1\rangle_\Phi=\langle r_k,T_{K_\theta}1\rangle_\Phi=0$.
Thus the eigenfunctions are centered and satisfy the smoothness assumptions of Lemma~\ref{main-lem:data_projection_empirical_sums}. The trace and derivative bounds imply \eqref{main-eq:derivative_summability}. Propositions~\ref{main-prop:derivative_tail_criterion}, \ref{main-prop:data_tail_sufficient_main} and \ref{main-prop:data_infinite_rank_limit_main} give the full-kernel limit. Gaussian weights satisfy \eqref{main-eq:weight_fourth_moment}.
\end{proof}

\begin{myrem}
\label{main-rem:factorization_normalization}
If the weight $\theta_\rho$ is used instead of $\varphi_\beta$, the eigenvalues in
Proposition~\ref{main-prop:data_frequency_factorization} are multiplied by $(1-\rho)^{-1}$; see
Remark~\ref{main-rem:scaling_eigs}.  Thus a frequency-domain eigenvalue $\lambda$ and the corresponding
data-domain eigenvalue $\omega$ satisfy $\lambda=(1-\rho)\omega$ when the former is written in the
normalized EP scale.
\end{myrem}

The odd data-domain restriction of $K_\rho$, including the tail verification and the equivalence with the odd frequency-domain secular equation, is worked out in Section~\ref{supp-sec:K_rho_data_domain} of the Supplementary Material.

The one-parameter normalizations, where only the mean or only the variance is estimated, are recorded in Section~\ref{supp-sec:one_param_studentization} of the Supplementary Material.  The main text henceforth concentrates on the fully standardized statistic.

\section{Full standardization: mean and variance estimated}
\label{main-sec:full_studentization}

We finally consider the standard EP/BHEP statistic based on standardized residuals, i.e.\ the case where both $\mu$ and
$\sigma^2$ are estimated.

\subsection{Setup}

Assume $X_1,\dots,X_n$ are i.i.d.\ $\mathcal N(\mu,\sigma^2)$ with $\sigma>0$.
Since the statistic is affine invariant, we may work under $H_0$ with $\mu=0$ and $\sigma^2=1$.
Define the usual estimators
\begin{equation*}
\overline{X}_n:=\frac{1}{n}\sum_{j=1}^n X_j,
\qquad
S_n^2:=\frac{1}{n}\sum_{j=1}^n (X_j-\overline{X}_n)^2
=\frac{1}{n}\sum_{j=1}^n X_j^2-\overline{X}_n^{\,2},
\end{equation*}
and the standardized residuals
\begin{equation*}
V_{n,j}:=\frac{X_j-\overline{X}_n}{S_n},
\qquad j=1,\dots,n.
\end{equation*}
Let $\psi_n$ be the empirical characteristic function of $\{V_{n,j}\}$,
\begin{equation*}
\psi_n(t):=\frac{1}{n}\sum_{j=1}^n \exp\!\left\{itV_{n,j}\right\},\qquad t\in\mathbb R,
\end{equation*}
and define the fully studentized EP/BHEP statistic
\begin{equation*}
T_{n,\beta}^{(\mu,\sigma)}
:=
n\int_{-\infty}^{\infty}\bigl|\psi_n(t)-\phi_0(t)\bigr|^2\,\varphi_\beta(t)\,dt.
\end{equation*}

\subsection{Limit law and eigenvalue equations}

\begin{theorem}\label{main-thm:mu_sigma_main}
Fix $\rho\in(0,1)$ and set $\beta=\sqrt{\rho}/(1-\rho)$.  Under
$H_0:X_i\stackrel{\mathrm{i.i.d.}}{\sim}\mathcal N(0,1)$,
\begin{equation}
\label{main-eq:mu_sigma_limit_series}
T_{n,\beta}^{(\mu,\sigma)}
\xrightarrow[n\to\infty]{d}
T_{\infty}^{(\mu,\sigma)}
:=
\int_{-\infty}^{\infty}|Z(t)|^2\varphi_\beta(t)\,dt
\stackrel{d}{=}
\sum_{n=0}^{\infty}\lambda_n^{(\mu,\sigma)}(\rho)N_n^2,
\end{equation}
where $Z$ is a centered real Gaussian element of $\mathcal H_\beta$, the weights are the positive eigenvalues of the real operator $A^{(\mu,\sigma)}$ defined below, and $\{N_n\}_{n\ge0}$ are i.i.d.
standard normal variables. The kernel $\EE[Z(s)\overline{Z(t)}]$ is
\begin{equation}
\label{main-eq:mu_sigma_kernel}
K^{(\mu,\sigma)}(s,t)
=
\exp\!\left\{-\frac{(s-t)^2}{2}\right\}
-
\left(1+st+\frac{s^2t^2}{2}\right)
\exp\!\left\{-\frac{s^2+t^2}{2}\right\}.
\end{equation}
Its covariance operator on $\mathcal H_\beta$ is unitarily equivalent, through $J_\beta$, to the operator on $L^2(\varphi_\beta;\mathbb R)$
\begin{equation}
\label{main-eq:mu_sigma_operator_form}
A^{(\mu,\sigma)}
=
A_0-\phi_0^*\otimes\phi_0^*-\phi_1^*\otimes\phi_1^*-\phi_2^*\otimes\phi_2^*.
\end{equation}
The parity subspaces are invariant.  The odd eigenvalues $\{\lambda_{m,\mathrm{odd}}^{(\mu,\sigma)}(\rho)\}_{m\ge1}$ are the unique roots in
\begin{equation*}
\lambda\in\bigl((1-\rho)\rho^{2m+1},(1-\rho)\rho^{2m-1}\bigr),
\qquad m\ge1,
\end{equation*}
of
\begin{equation*}
F_{\mathrm{odd}}(\lambda)
:=
1+\sum_{j=0}^{\infty}\frac{a_{2j+1,1}^2(\rho)}{\lambda-(1-\rho)\rho^{2j+1}}=0.
\end{equation*}
The even eigenvalues away from the poles $\kappa_j:=(1-\rho)\rho^{2j}$ are the positive roots of
\begin{equation}
\label{main-eq:mu_sigma_even_det}
D_{\mathrm{even}}(\lambda)
:=
\det
\begin{pmatrix}
1+G_{22}(\lambda) & G_{20}(\lambda)\\
G_{20}(\lambda) & 1+G_{00}(\lambda)
\end{pmatrix}=0,
\end{equation}
where $\lambda\notin\{0\}\cup\{\kappa_j:j\ge0\}$ and, for $p,q\in\{0,2\}$,
\begin{equation*}
G_{pq}(\lambda)
:=
\sum_{j=0}^{\infty}
\frac{a_{2j,p}(\rho)a_{2j,q}(\rho)}{\lambda-(1-\rho)\rho^{2j}}.
\end{equation*}
For the values at the poles, put
\begin{equation*}
b_j:=\frac{\rho}{\sqrt2}-\frac{\sqrt2(1-\rho^2)}{\rho}j,\qquad
E_j(\rho):=1+b_j^2+
\sum_{k\ne j}\frac{a_{2k,0}^2(\rho)(b_k-b_j)^2}{\kappa_j-\kappa_k}.
\end{equation*}
Then $\kappa_j$ is an even eigenvalue if and only if $E_j(\rho)=0$. Together with \eqref{main-eq:mu_sigma_even_det}, this criterion describes the entire positive even spectrum. Multiplicities are those of Lemma~\ref{main-lem:finite_rank_secular} off the poles and Proposition~\ref{main-prop:pole_criterion} at the poles.

Let $w_j(\rho):=a_{2j,0}^2(\rho)$ and, for $r=0,1,2$,
\begin{equation*}
S_r(\lambda):=\sum_{j=0}^{\infty}\frac{j^r w_j(\rho)}{\lambda-\kappa_j},\qquad 0^0:=1.
\end{equation*}
Away from the poles, \eqref{main-eq:mu_sigma_even_det} is equivalent to
\begin{align}
\label{main-eq:mu_sigma_even_scalar}
D_{\mathrm{even}}(\lambda)
&=1+\left(1+\frac{\rho^2}{2}\right)S_0(\lambda)
-2(1-\rho^2)S_1(\lambda)\nonumber\\
&\quad+2\frac{(1-\rho^2)^2}{\rho^2}
\left\{S_2(\lambda)(1+S_0(\lambda))-S_1^2(\lambda)\right\}=0.
\end{align}
If $\{\lambda_{m,\mathrm{even}}^{(\mu,\sigma)}(\rho)\}_{m\ge1}$ are ordered decreasingly, then
\begin{equation}
\label{main-eq:mu_sigma_even_bracket}
(1-\rho)\rho^{2m-2}>
\lambda_{m,\mathrm{even}}^{(\mu,\sigma)}(\rho)>
(1-\rho)\rho^{2m+2},
\qquad m\ge1.
\end{equation}
\end{theorem}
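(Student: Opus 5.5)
The proof has three parts, taken in order: the limit law, the exact secular and pole equations, and the scalar reduction with its bracket.

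\textbf{Limit law.} First I would derive the first-order expansion of $\sqrt n(\psi_n-\phi_0)$ in $\mathcal H_\beta$. Write $V_{n,j}=(X_j-\overline X_n)/S_n$ and Taylor expand $\exp\{itV_{n,j}\}$ around $X_j$, using $\sqrt n\,\overline X_n=n^{-1/2}\sum X_j$ and $\sqrt n(S_n-1)=n^{-1/2}\sum (X_j^2-1)/2+o_{\PP}(1)$. This gives
\begin{equation*}
\sqrt n(\psi_n-\phi_0)(t)=n^{-1/2}\sum_{j=1}^n\xi^{(\mu,\sigma)}_{X_j}(t)+o_{\PP}(1)
\end{equation*}
in $\mathcal H_\beta$, with the projected summand of Section~\ref{main-subsec:data_frequency_factorization}. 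The remainder is controlled in $\|\cdot\|_\beta$ by bounding $t$ and $t^2$ moments against the Gaussian weight. Then the Hilbert-space CLT (Theorem~\ref{supp-thm:HilbertCLT}) and the continuous mapping $f\mapsto\|f\|_\beta^2$ give convergence to $\int|Z|^2\varphi_\beta$. Computing $\EE[\xi^{(\mu,\sigma)}_X(s)\overline{\xi^{(\mu,\sigma)}_X(t)}]$ with normal moments yields \eqref{main-eq:mu_sigma_kernel}. Since the three subtracted terms equal $\phi_0^*(s)\phi_0^*(t)+\phi_1^*(s)\phi_1^*(t)+\phi_2^*(s)\phi_2^*(t)$, the kernel is real and parity preserving. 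Remark~\ref{main-rem:complex_H} then gives the unitary equivalence with \eqref{main-eq:mu_sigma_operator_form}, and Theorem~\ref{supp-thm:KLquadratic} gives the chi-square series. Alternatively, the factorization of Proposition~\ref{main-prop:data_frequency_factorization} together with Corollary~\ref{main-cor:weighted_kernel_full_tail} identifies the same weights.

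\textbf{Secular and pole equations.} By parity (Proposition~\ref{main-prop:overlaps_closed_form}), $\phi_1^*$ lives in the odd subspace and $\phi_0^*,\phi_2^*$ in the even one. On the odd subspace the perturbation is rank one with all $a_{2j+1,1}\ne0$. Lemma~\ref{main-lem:finite_rank_secular} therefore gives exactly one root of $F_{\mathrm{odd}}$ between consecutive odd poles and none at the poles (Proposition~\ref{main-prop:pole_criterion}). On the even subspace, Lemma~\ref{main-lem:finite_rank_secular} with $r=2$ gives \eqref{main-eq:mu_sigma_even_det}.

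To get the scalar form, substitute the affine relation \eqref{main-eq:a22_affine_relation_global}, $a_{2j,2}=b_ja_{2j,0}$ with $b_j=\rho/\sqrt2-cj$ and $c=\sqrt2(1-\rho^2)/\rho$. Then $G_{00}=S_0$, $G_{20}=(\rho/\sqrt2)S_0-cS_1$ and $G_{22}=(\rho^2/2)S_0-\sqrt2\rho cS_1+c^2S_2$. Expanding $(1+G_{22})(1+G_{00})-G_{20}^2$, the cross terms $S_0^2$ and $S_0S_1$ cancel. Since $\sqrt2\rho c=2(1-\rho^2)$ and $c^2=2(1-\rho^2)^2/\rho^2$, this gives \eqref{main-eq:mu_sigma_even_scalar}.

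At a pole $\kappa_j$, apply Proposition~\ref{main-prop:pole_criterion} with $v_k=a_{2k,0}(1,b_k)^\top$ in the order $(u_0,u_2)$. The bordered determinant is computed by the change of basis $z\mapsto(z_0+b_jz_2,\,z_2)$, which isolates the direction orthogonal to $v_j$. Reducing the $3\times3$ determinant then gives a nonzero factor $a_{2j,0}^2$ times $1+b_j^2+\sum_{k\ne j}a_{2k,0}^2(b_k-b_j)^2/(\kappa_j-\kappa_k)$, which is $E_j$. This uses $a_{2j,0}\ne0$.

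\textbf{Bracket \eqref{main-eq:mu_sigma_even_bracket}.} I would apply Lemma~\ref{main-lem:minmax_rank1} twice on the even subspace, whose unperturbed eigenvalues are $\kappa_0>\kappa_1>\cdots$. Subtracting two rank-one terms gives $\kappa_{m-1}\ge\lambda_{m,\mathrm{even}}\ge\kappa_{m+1}$, indexing from $m=1$. These estimates are only non-strict.

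The main obstacle is making the bracket strict. My first attempt is to argue via the intermediate operator $A_0-\phi_0^*\otimes\phi_0^*$ restricted to even functions. Its eigenvalues $\nu_m$ lie strictly inside $(\kappa_m,\kappa_{m-1})$ by the rank-one part of Lemma~\ref{main-lem:finite_rank_secular}, since every $a_{2j,0}\ne0$. The second subtraction then gives $\nu_{m-1}\ge\lambda_{m,\mathrm{even}}\ge\nu_m$, hence $\kappa_{m-2}>\nu_{m-1}$ and $\nu_m>\kappa_m$. This yields strict bounds with the indices shifted correctly: $\lambda_{m,\mathrm{even}}<\kappa_{m-1}$ needs $\nu_{m-1}<\kappa_{m-1}$ under suitable indexing, and $\nu_m>\kappa_{m+1}$ handles the lower bound. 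I would check the index bookkeeping carefully against the convention that $\lambda_{1,\mathrm{even}}$ is the largest even eigenvalue.

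If that bookkeeping fails to deliver strictness at one end, the fallback is to exclude equality directly. Equality at a pole $\kappa$ would force the corresponding eigenfunction to be orthogonal to both perturbation directions, and hence an eigenfunction of $A_0$ for $\kappa$. That would mean $e_{2j}$ is orthogonal to $\phi_0^*$, contradicting $a_{2j,0}\ne0$.
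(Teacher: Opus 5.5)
Your route is the paper's. It runs through the first-order expansion with the projected summand, the real Hilbert-space CLT with Remark~\ref{main-rem:complex_H}, and Lemma~\ref{main-lem:finite_rank_secular} in each parity sector. At an even pole you reduce the bordered criterion to the quadratic form of $H_j$ along $(-b_j,1)^\top$, which is $E_j$, and you use the affine substitution for \eqref{main-eq:mu_sigma_even_scalar}. The bracket goes through the intermediate operator $\widetilde A_{\mathrm{even}}=(A_0)_{\mathrm{even}}-\phi_0^*\otimes\phi_0^*$.

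The one step that does not close as written is the index bookkeeping in the bracket. Let $\nu_m\in(\kappa_m,\kappa_{m-1})$ be the $m$th largest eigenvalue of $\widetilde A_{\mathrm{even}}$. Then Lemma~\ref{main-lem:minmax_rank1} gives $\nu_m\ge\lambda_{m,\mathrm{even}}^{(\mu,\sigma)}\ge\nu_{m+1}$, not $\nu_{m-1}\ge\lambda_{m,\mathrm{even}}^{(\mu,\sigma)}\ge\nu_m$. Your shifted version would give $\lambda_{m,\mathrm{even}}^{(\mu,\sigma)}>\kappa_m$, which fails at the parameter $\rho_m$ of Theorem~\ref{main-thm:unique_pole_sequence}. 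The correct chain $\kappa_{m-1}>\nu_m\ge\lambda_{m,\mathrm{even}}^{(\mu,\sigma)}\ge\nu_{m+1}>\kappa_{m+1}$ is exactly the paper's proof. You should also record that $\widetilde A_{\mathrm{even}}$ is positive, since Lemma~\ref{main-lem:minmax_rank1} assumes this. It holds because $\widetilde A_{\mathrm{even}}$ is the even restriction of the known-parameter covariance with kernel $K_0(s,t)-\phi_0(s)\phi_0(t)$.

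Drop the fallback as phrased. Equality of an even eigenvalue with a pole does not by itself force orthogonality to $\phi_0^*$ and $\phi_2^*$. At $\rho_j$ one has $\lambda_{j,\mathrm{even}}^{(\mu,\sigma)}=\kappa_j$, and the pole-criterion argument shows the eigenfunction has overlap vector $t(-b_j,1)^\top$ with $t\ne0$ (see also Section~\ref{supp-sec:pole_validation}).

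Orthogonality is forced only when the two-step Courant--Fischer bound is tight. Put $P=\phi_0^*\otimes\phi_0^*+\phi_2^*\otimes\phi_2^*$ and suppose $\lambda_{m,\mathrm{even}}^{(\mu,\sigma)}=\kappa_{m-1}$. Take a unit vector $f$ in the span of the top $m$ perturbed even eigenfunctions that also lies in $\overline{\mathrm{span}}\{e_{2k}:k\ge m-1\}$. It satisfies $\kappa_{m-1}\le\langle A_0f,f\rangle_\beta-\langle Pf,f\rangle_\beta\le\kappa_{m-1}-\langle Pf,f\rangle_\beta$. Hence $\langle Pf,f\rangle_\beta=0$ and $f=\pm e_{2m-2}$, which forces $a_{2m-2,0}=0$, a contradiction. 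The lower end is analogous.

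Argued this way the fallback is a valid alternative, but with the corrected indices it is unnecessary.
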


\begin{proof}
Proposition~\ref{supp-prop:supp_full_ecf_limit} gives the expansion
\begin{equation*}
\sqrt n(\psi_n-\phi_0)
=n^{-1/2}\sum_{j=1}^n\zeta_{\mu,\sigma}(\cdot;X_j)+o_{\PP}(1)
\quad\text{in }\mathcal H_\beta,
\end{equation*}
where
\begin{equation*}
\zeta_{\mu,\sigma}(t;x)
=\exp\!\left\{itx\right\}-\phi_0(t)-it\phi_0(t)x
+\tfrac12 t^2\phi_0(t)(x^2-1).
\end{equation*}
The summands are centered and square integrable. Their covariance is \eqref{main-eq:mu_sigma_kernel}, by the normal characteristic-function identities. The real Hilbert-space central limit theorem, Remark~\ref{main-rem:complex_H} and the Gaussian squared-norm expansion give \eqref{main-eq:mu_sigma_limit_series}.

The operator identity \eqref{main-eq:mu_sigma_operator_form} follows from \eqref{main-eq:mu_sigma_kernel}.  Since $\phi_1^*$ is odd and $\phi_0^*,\phi_2^*$ are even, the odd and even subspaces are invariant.  Lemma~\ref{main-lem:finite_rank_secular} gives the off-pole equations. In the odd subspace the nonzero overlaps exclude the poles by Proposition~\ref{main-prop:pole_criterion}. In the even subspace, use the coordinate order $(0,2)$ and put $v_j=a_{2j,0}(1,b_j)^\top$ and $q_j=(-b_j,1)^\top$. Then $a_{2j,0}\ne0$ and $v_j^\perp=\operatorname{span}\{q_j\}$. With $H_j$ from Proposition~\ref{main-prop:pole_criterion}, the bordered equations are
\begin{equation*}
v_j^\top z=0,\qquad H_jz=cv_j.
\end{equation*}
A nonzero solution must have $z=tq_j$ with $t\ne0$. Such a solution exists exactly when $q_j^\top H_jq_j=0$, because in dimension two this is equivalent to $H_jq_j\in\operatorname{span}\{v_j\}$. Directly,
\begin{equation*}
q_j^\top H_jq_j
=1+b_j^2+\sum_{k\ne j}\frac{a_{2k,0}^2(b_k-b_j)^2}{\kappa_j-\kappa_k}
=E_j(\rho).
\end{equation*}
This proves the criterion at the poles. To obtain the scalar equation away from them, put $a=\rho/\sqrt2$ and $b=\sqrt2(1-\rho^2)/\rho$, so that $b_j=a-b\,j$. Relation \eqref{main-eq:a22_affine_relation_global} gives
\begin{equation*}
G_{00}=S_0,\qquad G_{20}=aS_0-bS_1,\qquad
G_{22}=a^2S_0-2abS_1+b^2S_2.
\end{equation*}
Substitution into $(1+G_{00})(1+G_{22})-G_{20}^2$ gives
$1+(1+a^2)S_0-2abS_1+b^2\{S_2(1+S_0)-S_1^2\}$, which is \eqref{main-eq:mu_sigma_even_scalar}.

It remains to prove \eqref{main-eq:mu_sigma_even_bracket}.  Put
\begin{equation*}
\widetilde A_{\mathrm{even}}:=(A_0)_{\mathrm{even}}-\phi_0^*\otimes\phi_0^*.
\end{equation*}
This operator is positive: it is the even restriction of the covariance operator with kernel $K_0(s,t)-\phi_0(s)\phi_0(t)$. Since $a_{2j,0}(\rho)\ne0$ for every $j$, Lemma~\ref{main-lem:finite_rank_secular} gives
\begin{equation*}
\lambda_{2m-2}^{(0)}(\rho)>\widetilde\lambda_{m,\mathrm{even}}(\rho)>\lambda_{2m}^{(0)}(\rho),
\qquad m\ge1,
\end{equation*}
where $\widetilde\lambda_{m,\mathrm{even}}(\rho)$ are the eigenvalues of $\widetilde A_{\mathrm{even}}$.  Moreover
\begin{equation*}
A_{\mathrm{even}}^{(\mu,\sigma)}=\widetilde A_{\mathrm{even}}-\phi_2^*\otimes\phi_2^*.
\end{equation*}
The rank-one interlacing Lemma~\ref{main-lem:minmax_rank1} yields
\begin{equation*}
\widetilde\lambda_{m,\mathrm{even}}(\rho)
\ge
\lambda_{m,\mathrm{even}}^{(\mu,\sigma)}(\rho)
\ge
\widetilde\lambda_{m+1,\mathrm{even}}(\rho),
\qquad m\ge1.
\end{equation*}
Combining the last two displays gives \eqref{main-eq:mu_sigma_even_bracket}.
\end{proof}

\subsection{Simplicity and ordering of the full spectrum}
\label{main-subsec:simple_full_spectrum}
The scalar odd equation gives simplicity only in the odd subspace. A different argument proves simplicity in the full space and excludes coincidences between the two parity spectra.
\begin{theorem}\label{main-thm:simple_full_spectrum}
For every $0<\rho<1$, all positive eigenvalues of $A^{(\mu,\sigma)}$ are simple. The parity spectra are disjoint and satisfy
\begin{equation}\label{main-eq:full_parity_order}
\lambda_{1,\mathrm{odd}}^{(\mu,\sigma)}>
\lambda_{1,\mathrm{even}}^{(\mu,\sigma)}>
\lambda_{2,\mathrm{odd}}^{(\mu,\sigma)}>
\lambda_{2,\mathrm{even}}^{(\mu,\sigma)}>\cdots>0.
\end{equation}
In the decreasing enumeration used in \eqref{main-eq:mu_sigma_limit_series}, this means
\begin{equation}\label{main-eq:full_parity_indexing}
\lambda_{2m-2}^{(\mu,\sigma)}=\lambda_{m,\mathrm{odd}}^{(\mu,\sigma)},\qquad
\lambda_{2m-1}^{(\mu,\sigma)}=\lambda_{m,\mathrm{even}}^{(\mu,\sigma)},\qquad m\ge1.
\end{equation}
The same simplicity and ordering hold for the positive spectrum of the projected observation-space operator, with the deterministic scaling specified in Remark~\ref{main-rem:factorization_normalization}.
\end{theorem}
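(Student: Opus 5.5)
The plan is the classical total-positivity route (Gantmacher--Krein, Kellogg): conjugate the kernel \eqref{main-eq:mu_sigma_kernel} by a sign change so that it becomes strictly totally positive, then read simplicity and the parity ordering off the oscillation properties of its eigenfunctions.

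\textbf{Step 1: series form of the kernel.} First rewrite the kernel as
\begin{equation*}
K^{(\mu,\sigma)}(s,t)=e^{-(s^2+t^2)/2}\Bigl(e^{st}-1-st-\tfrac12 s^2t^2\Bigr)
=e^{-(s^2+t^2)/2}\sum_{k\ge3}\frac{(st)^k}{k!}.
\end{equation*}
Next use the Beta-integral identity $k!/(k+3)!=\int_0^1 v^k(1-v)^2/2\,dv$. It gives
\begin{equation*}
K^{(\mu,\sigma)}(s,t)=(st)^3e^{-(s^2+t^2)/2}\int_0^1 e^{vst}\,\frac{(1-v)^2}{2}\,dv .
\end{equation*}

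\textbf{Step 2: signed conjugation.} Conjugate by the unitary multiplication operator $(Uf)(s)=\operatorname{sign}(s)f(s)$ on $L^2(\varphi_\beta;\mathbb R)$. The conjugated kernel is $|s|^3|t|^3e^{-(s^2+t^2)/2}\int_0^1\operatorname{sign}(st)e^{vst}(1-v)^2/2\,dv$. This is the ``signed kernel'' mentioned in the introduction.

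\textbf{Step 3: strict total positivity.} The aim is to show that the conjugated kernel $\widetilde K$ is strictly totally positive on $\mathbb R\setminus\{0\}$, with respect to a suitable ordering of the real line. The positive diagonal factors $|s|^3e^{-s^2/2}$ do not affect this. For the remaining power series $\sum_{k\ge0}c_k(st)^k$ with $c_k=1/(k+3)!>0$, I would expand each $p\times p$ minor by Cauchy--Binet into sums of products of generalized Vandermonde determinants $\det(x_i^{k_j})$ with positive coefficients. On $(0,\infty)$ these determinants are strictly positive, which settles that half-line. The main obstacle is the behaviour across $s=0$ and for mixed signs. The first thing I would try is to reorder the points by $x\mapsto$ (sign, modulus) so that the sign factors coming from the odd $k$ match the $\operatorname{sign}(st)$ conjugation. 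Failing that, I would work directly with the compound operators $\wedge^pA^{(\mu,\sigma)}$ and show they are positivity-improving on the cone of functions supported on ordered simplices.

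\textbf{Step 4: consequences.} Once strict total positivity holds, the Krein--Rutman theorem applied to each exterior power $\wedge^p A$ gives $\lambda_{p-1}>\lambda_p$ for every $p$, so all positive eigenvalues are simple. The oscillation theorem says that the $p$th eigenfunction of the conjugated operator has exactly $p-1$ sign changes. The top eigenfunction of the conjugated operator is positive, so the original top eigenfunction is $\operatorname{sign}(s)$ times a positive function, hence odd. Because the kernel is parity preserving and each eigenvalue is simple, every eigenfunction is either even or odd. The sign-change count then forces the parities to alternate, beginning with an odd eigenfunction. Together with simplicity this gives the disjointness of the two parity spectra, the ordering \eqref{main-eq:full_parity_order}, and the indexing \eqref{main-eq:full_parity_indexing}. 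As a consistency check, these should agree with the brackets of Theorem~\ref{main-thm:mu_sigma_main}.

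\textbf{Step 5: transfer to the observation space.} By Proposition~\ref{main-prop:data_frequency_factorization}, $\Pi_{\mu,\sigma}T_{K_\theta}\Pi_{\mu,\sigma}$ and $\mathcal C^{(\mu,\sigma)}$ share their positive eigenvalues with multiplicity. By Remark~\ref{main-rem:complex_H}, $\mathcal C^{(\mu,\sigma)}$ is unitarily equivalent to $A^{(\mu,\sigma)}$. With the scaling in Remark~\ref{main-rem:factorization_normalization}, simplicity and ordering therefore carry over to the projected observation-space operator.
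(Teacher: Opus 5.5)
Your architecture is the same as the paper's: sign conjugation, exterior powers, a Perron--Frobenius argument for the top eigenvalue of each compound, parity from reflection, and transfer by the factorization. However, Step 3 carries all the content and is not proved, and its set-up is wrong. Since $\operatorname{sgn}(s)\operatorname{sgn}(t)(st)^3=|st|^3$, the conjugated kernel is $|s|^3|t|^3e^{-(s^2+t^2)/2}\phi(st)$, where $\phi(u)=\int_0^1e^{vu}(1-v)^2/2\,dv=\sum_{k\ge0}u^k/(k+3)!$. It contains no factor $\operatorname{sgn}(st)$; the expression in your Step 2 is just the unconjugated kernel again. The target is therefore strict total positivity of $\phi(st)$ in the \emph{natural} order of $\mathbb R\setminus\{0\}$. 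Ordering the negative nodes by modulus would multiply a minor by $(-1)^{\binom{N_s}{2}+\binom{N_t}{2}}$, where $N_s,N_t$ count negative nodes, so your reordering idea points away from the true statement. Cauchy--Binet with generalized Vandermonde determinants only handles same-sign nodes, and this is not a technicality: positive Taylor coefficients do not imply total positivity across $0$. For $\phi(u)=1+u^2$, $s=(-2,-1)$ and $t=(1,2)$, the $2\times2$ minor is $5\cdot5-17\cdot2<0$. Your fallback, showing that $\wedge^pA$ is positivity improving on ordered simplices, is the same determinant inequality restated, so it is circular.

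The missing idea must use the specific structure ``exponential minus its Taylor polynomial''. This is also why the paper's Theorem~\ref{supp-thm:consecutive_corrections} needs a consecutive block of corrections. The paper borders $[e^{s_it_j}]$ by the derivative evaluations of orders $0,1,2$ at zero in both variables, forming $\mathcal E=\begin{pmatrix}D_3&T\\S&E\end{pmatrix}$ with $D_3=\operatorname{diag}(0!,1!,2!)$, $T_{k+1,j}=t_j^k$, $S_{i,k+1}=s_i^k$ and $E_{ij}=e^{s_it_j}$. The Schur complement of $D_3$ is exactly $[e^{s_it_j}-\sum_{k<3}(s_it_j)^k/k!]$. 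With the triple node at $0$ placed in its increasing position, $\det\mathcal E>0$:
\begin{itemize}
\item nonsingularity holds because a nonzero element of $\operatorname{span}\{1,u,u^2,e^{t_1u},\dots,e^{t_mu}\}$ has at most $m+2$ real zeros counted with multiplicity (a Rolle induction for $\sum_\ell p_\ell(u)e^{a_\ell u}$);
\item the sign is fixed by coalescing distinct nodes together with a small-scale Vandermonde limit.
\end{itemize}
Moving the bordering rows and columns to the front costs $(-1)^{3(N_s+N_t)}$, and this is exactly cancelled by $\prod_i\operatorname{sgn}(s_i)^3\prod_j\operatorname{sgn}(t_j)^3$.

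Granted that inequality, your Step 4 goes through. The oscillation-theorem route to parity is valid but heavier than the paper's reflection identity $\Psi_m(\mathcal R_ms)=(-1)^{m(m-1)/2}\bigl(\prod_{n<m}\epsilon_n\bigr)\Psi_m(s)$ for the sign-definite top eigenfunction of the $m$th compound. In Step 5 you must also note that $L_{\theta,\mu,\sigma}$ and its adjoint commute with reflection. Otherwise only simplicity transfers to the observation space, not the odd--even alternation.
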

\begin{proof}
Apply Theorem~\ref{supp-thm:consecutive_corrections} with $r=3$ and $\theta=\varphi_\beta$. This weight is positive, even and integrable, and its kernel is exactly
\begin{equation*}
\mathcal K_3(s,t)=\exp\!\left\{-\frac{s^2+t^2}{2}\right\}
\left(\exp\!\left\{st\right\}-1-st-\frac{s^2t^2}{2}\right)
=K^{(\mu,\sigma)}(s,t).
\end{equation*}
Thus $A^{[3]}_{\varphi_\beta}=A^{(\mu,\sigma)}$ on $L^2(\varphi_\beta;\mathbb R)$. Theorem~\ref{supp-thm:consecutive_corrections} gives simple positive eigenvalues and parity $(-1)^{3+n}$ for the eigenfunction of the $n$th eigenvalue, indexed from zero. The successive parities are therefore odd, even, odd, even, which proves \eqref{main-eq:full_parity_order} and \eqref{main-eq:full_parity_indexing}. The determinant and exterior-power arguments establishing that theorem are given in Section~\ref{supp-sec:total_positivity}.

Proposition~\ref{main-prop:data_frequency_factorization} identifies the positive eigenspaces in observation and Fourier coordinates by mutually inverse maps, preserving both multiplicity and parity. Remark~\ref{main-rem:factorization_normalization} multiplies all the eigenvalues by the same positive constant when the unnormalized weight is used. Hence neither simplicity nor their order changes.
\end{proof}

The result concerns positive eigenvalues; it does not assert simplicity of the nullspace of the projected observation operator. It also does not imply simplicity of the combined variance-only spectrum. Its even restriction is the same as here, but its unperturbed odd eigenvalues can coincide with even eigenvalues; Proposition~\ref{supp-prop:variance_only_double} gives an example by analytic bounds.

\subsection{Unique parameters for coincidences at even poles}
\label{main-subsec:even_pole_coincidence}
\label{main-subsec:even_pole_existence}
The pole criterion admits a complete parameter classification within the even subspace. In particular, the exception is unique for each fixed pole, but is not unique over the entire family of poles.
\begin{theorem}\label{main-thm:unique_pole_sequence}
For each integer $j\ge1$, there is exactly one $\rho_j\in(0,1)$ such that $\kappa_j=(1-\rho_j)\rho_j^{2j}$ is an eigenvalue of $A_{\mathrm{even}}^{(\mu,\sigma)}$. It equals $\lambda_{j,\mathrm{even}}^{(\mu,\sigma)}(\rho_j)$ and is simple in the full standardized space. The zero of $E_j$ is simple, with $E_j'(\rho_j)>0$, and
\begin{equation}\label{main-eq:ordered_exceptional_parameters}
0<\rho_1<\rho_2<\cdots<1,\qquad \rho_j\longrightarrow1.
\end{equation}
The pole $\kappa_0$ is never an even eigenvalue. Thus, at each fixed $\rho$, at most one even eigenvalue coincides with an unperturbed even pole. The pole classification is identical for the variance-only even restriction.
\end{theorem}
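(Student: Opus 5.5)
The plan is to work throughout with the pole criterion of Theorem~\ref{main-thm:mu_sigma_main}: $\kappa_j$ is an even eigenvalue if and only if $E_j(\rho)=0$. The first step is to rewrite $E_j$ in closed form as a function of $\rho$. Substitute $w_k=a_{2k,0}^2$ from Proposition~\ref{main-prop:overlaps_closed_form}, together with $b_k-b_j=-\sqrt2(1-\rho^2)(k-j)/\rho$ and $\kappa_j-\kappa_k=(1-\rho)(\rho^{2j}-\rho^{2k})$. This gives
\begin{equation*}
E_j(\rho)=1+b_j^2+\frac{2(1-\rho^2)^2(1+\rho)^{1/2}}{\rho^2(1-\rho)^{1/2}}\sum_{k\ne j}\binom{2k}{k}4^{-k}\frac{\rho^{4k}(k-j)^2}{\rho^{2j}-\rho^{2k}}.
\end{equation*}
In this sum the terms with $k>j$ are positive and those with $k<j$ are negative. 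For $j=0$ every term is positive, so $E_0>0$ and $\kappa_0$ is excluded. The same expression applies to the variance-only even restriction, because its even part is also $(A_0)_{\mathrm{even}}-\phi_0^*\otimes\phi_0^*-\phi_2^*\otimes\phi_2^*$. The pole classification therefore carries over verbatim.

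The second step, for $j\ge1$, is existence via the boundary behaviour of $E_j$. As $\rho\downarrow0$, the $k=0$ term dominates. It behaves like $-j^2\cdot 2\rho^{-2}\cdot\rho^{-2j}$, while $1+b_j^2\sim 2j^2\rho^{-2}$. Hence $E_j\to-\infty$. As $\rho\uparrow1$, we have $b_j\to1/\sqrt2$, and I expect the prefactor $(1-\rho^2)^2(1-\rho)^{-1/2}$ to kill the finite negative part. The positive tail sum over $k>j$ behaves like a Riemann sum of order $(1-\rho)^{-3/2}$ times $(1-\rho)^{-1}$ from the denominator, which should give $E_j\to$ a positive limit, or $+\infty$. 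I will verify this with the asymptotics $\binom{2k}{k}4^{-k}\sim(\pi k)^{-1/2}$ and $\rho^{2k}=e^{-2k(1-\rho)+\dots}$. Continuity then yields a zero $\rho_j$.

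The third step is uniqueness and simplicity, and this is the main obstacle: showing $E_j'>0$ at every zero, or better, monotonicity of a normalized version. My first attempt is to use the characterization of the zero spectrally instead of by brute force. At $\rho_j$ the eigenvalue $\kappa_j$ lies in the bracket \eqref{main-eq:mu_sigma_even_bracket}, which forces it to be $\lambda_{j,\mathrm{even}}$ or $\lambda_{j+1,\mathrm{even}}$. The interlacing for $\widetilde A_{\mathrm{even}}$ gives $\widetilde\lambda_{j+1}<\kappa_j<\widetilde\lambda_j$, and together with Lemma~\ref{main-lem:minmax_rank1} this pins the index to $j$. Simplicity in the full space is then Theorem~\ref{main-thm:simple_full_spectrum}.

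For uniqueness, I would consider $g_j(\rho):=\lambda_{j,\mathrm{even}}^{(\mu,\sigma)}(\rho)/\kappa_j(\rho)-1$, which is analytic in $\rho$ by simplicity. Its zeros coincide with those of $E_j$, and its sign agrees with $E_j$ by the bordered-determinant sign computation (Hellmann--Feynman on the simple eigenvalue). Showing $g_j$ crosses zero only once, upward, is where the real work lies. I would try to prove directly that $\rho^{2j}E_j(\rho)\rho^2$ is strictly increasing: write each negative term $k<j$ as $-(k-j)^2\rho^{4k}/(\rho^{2j}-\rho^{2k})\cdot\rho^{2j}=(j-k)^2\rho^{4k+2j}/(\rho^{2k}-\rho^{2j})$ times the increasing prefactor, and check termwise monotonicity. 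If this fails, I would prove $E_j'>0$ at zeros by differentiating and substituting $E_j=0$ to eliminate $1+b_j^2$.

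The last step is ordering and the limit, treating both as consequences of this monotonicity. The plan is to compare $E_{j+1}(\rho_j)<0$ using the same series, via the index shift $k\mapsto k+1$, which gives $\rho_{j+1}>\rho_j$. Then $\rho_j\to1$ follows because for fixed $\rho<1$ the negative $k=0$ term $-\rho^{-2j}j^2$ eventually dominates, so $E_j(\rho)<0$ for large $j$. The statement that at most one even eigenvalue sits on a pole at fixed $\rho$ follows from strict ordering of the $\rho_j$.
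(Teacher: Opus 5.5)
Your plan does not establish the core of the theorem, because it never uses the identity
\[
1+b_j^2=\frac{2(1-x)^{5/2}}{x}\sum_{k\ge0}c_k(k-j)^2x^k,\qquad x=\rho^2,\quad c_k=\tbinom{2k}{k}4^{-k},
\]
which follows from $\sum_kc_kx^k=(1-x)^{-1/2}$ by two differentiations. This identity lets you merge $1+b_j^2$ into the series term by term, giving $E_j(\rho)=2(1-x)^{5/2}x^{j-1}\mathcal H_j(x)$ with $\mathcal H_j(x)=\sum_{h\ge1}c_{j+h}h^2x^h/(1-x^h)-\sum_{h=1}^jc_{j-h}h^2/(1-x^h)$. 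Without the merger, the terms cancel to leading order, and your termwise claims fail.

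First, your prefactor should carry $(1-\rho)^{1/2}$, not $(1-\rho)^{-1/2}$, since $a_{2k,0}^2/(1-\rho)=\sqrt{1-x}\,c_kx^{2k}$; the correct prefactor is $2(1-x)^{5/2}/x$. Second, as $\rho\downarrow0$ the $k=0$ term has denominator $\rho^{2j}-1\to-1$. It is therefore $\sim-2j^2\rho^{-2}$, with no extra factor $\rho^{-2j}$, and it cancels the leading part $2j^2\rho^{-2}$ of $1+b_j^2$. In fact $E_1(0^+)=-2$, and for $j\ge2$ one has $E_j\sim-2C_jx^{j-1}\to0^-$, where $C_j=\sum_{h=1}^jc_{j-h}h^2$. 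So $E_j\not\to-\infty$, and the negative sign near zero is visible only after the merger.

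The same cancellation breaks your argument for $\rho_j\to1$. At fixed $x$ and large $j$, the $k=0$ term is about $-2(1-x)^{5/2}j^2/x$, which is smaller in modulus than $1+b_j^2\approx2(1-x)^2j^2/x$. The negative term $-j^2/(1-x^j)$ dominates only inside $\mathcal H_j$, and there it gives $\mathcal H_j(x)\le x(1+x)(1-x)^{-4}-j^2$. Your conclusion at $\rho\uparrow1$ survives: $E_j$ has a finite positive limit.

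Uniqueness and simplicity, which you rightly call the main work, are not proved. Your proposed normalization fails. Indeed, $\rho^{2j+2}E_j=2(1-x)^{5/2}x^{2j}\mathcal H_j(x)\approx-2C_jx^{2j}$ is decreasing near $x=0$, and your prefactor is not increasing either, since it vanishes at $\rho=1$.

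The alternative route through $g_j=\lambda_{j,\mathrm{even}}^{(\mu,\sigma)}/\kappa_j-1$ rests on an unproved claim that its sign agrees with that of $E_j$. Neither the residue relation nor a Hellmann--Feynman argument gives this directly, because the rank-two determinant $D_{\mathrm{even}}$ need not be monotone between poles.

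The normalization that works is division by $p_j(x)=x^j/(1-x^j)$. Write
\[
\mathcal H_j/p_j=-C_j/p_j+\sum_{h\le j}(c_{j+h}-c_{j-h})h^2p_h/p_j+\sum_{h>j}c_{j+h}h^2p_h/p_j .
\]
The middle coefficients are negative because $c_k$ decreases. Since $d\log p_h/d\log x=h/(1-x^h)$ increases with $h$, the ratio $p_h/p_j$ decreases for $h<j$ and increases for $h>j$, while $-C_j/p_j$ is strictly increasing. Hence $\mathcal H_j/p_j$ rises strictly from $-\infty$ to $+\infty$. This gives exactly one zero and $E_j'(\rho_j)>0$.

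Your index shift for the ordering also needs the merged form. With $f_x(u)=u^2/(x^{-u}-1)$ and $f_x(0)=0$, one has $\mathcal H_j(x)=\sum_{k\ge0}c_kf_x(k-j)$. Then $\mathcal H_{j+1}-\mathcal H_j=f_x(-j-1)-\sum_k(c_k-c_{k+1})f_x(k-j)<0$. In your unmerged form, the term $1+b_j^2$ obstructs this shift.

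The remaining parts are fine: the exclusion of $\kappa_0$, simplicity via Theorem~\ref{main-thm:simple_full_spectrum}, and the variance-only transfer. For the index, the strict bracket \eqref{main-eq:mu_sigma_even_bracket} alone already forces $m=j$.
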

\begin{proof}
Let $j\ge1$, put $x=\rho^2$ and $c_k=\binom{2k}{k}4^{-k}$, and set $p_j(x)=x^j/(1-x^j)$. Section~\ref{supp-sec:unique_poles} proves the identities
\begin{align*}
E_j(\rho)&=2(1-x)^{5/2}x^{j-1}\mathcal H_j(x),\\
\mathcal H_j(x)&=\sum_{h\ge1}\frac{c_{j+h}h^2x^h}{1-x^h}
-\sum_{h=1}^j\frac{c_{j-h}h^2}{1-x^h},
\end{align*}
and establishes that $\mathcal H_j/p_j$ has positive derivative throughout $(0,1)$, with limits $-\infty$ and $+\infty$ at the two endpoints. There is therefore exactly one zero $x_j\in(0,1)$. Since $p_j(x_j)>0$,
\begin{equation*}
\mathcal H_j'(x_j)
=p_j(x_j)\left.\frac{d}{dx}\left(\frac{\mathcal H_j(x)}{p_j(x)}\right)\right|_{x=x_j}>0.
\end{equation*}
At $\rho_j=\sqrt{x_j}$, differentiation of the first identity gives
\begin{equation*}
E_j'(\rho_j)=4\rho_j(1-x_j)^{5/2}x_j^{j-1}\mathcal H_j'(x_j)>0.
\end{equation*}
Section~\ref{supp-sec:unique_poles} also proves, for $0<x<1$,
\begin{equation*}
\mathcal H_{j+1}(x)<\mathcal H_j(x),\qquad
\mathcal H_j(x)\le\frac{x(1+x)}{(1-x)^4}-j^2.
\end{equation*}
The first inequality gives $\mathcal H_{j+1}(x_j)<0$, hence $x_{j+1}>x_j$ by the unique sign change. For every fixed $x<1$, the second inequality is negative for all sufficiently large $j$, so $x_j>x$ eventually. Thus $x_j\uparrow1$, proving \eqref{main-eq:ordered_exceptional_parameters}.

The pole criterion in Theorem~\ref{main-thm:mu_sigma_main} now gives existence and uniqueness of the corresponding parameter. If an even eigenvalue with index $m$ equals $\kappa_j$, the strict bounds \eqref{main-eq:mu_sigma_even_bracket} require
$\kappa_{m-1}>\kappa_j>\kappa_{m+1}$, which, since the poles decrease strictly, forces $m-1<j<m+1$ and therefore $m=j$. Theorem~\ref{main-thm:simple_full_spectrum} proves simplicity in the full space. Every term in the expression for $E_0$ is positive, so $\kappa_0$ is excluded. Strict ordering of the $\rho_j$ excludes two even-pole coincidences at a fixed parameter. Finally, the variance-only and fully standardized even restrictions are the same operator.
\end{proof}

\begin{proposition}\label{main-prop:even_pole_exists}
The first exceptional value $\rho_*:=\rho_1$ belongs to $(2/3,3/4)$. In particular, $(1-\rho_*)\rho_*^2$ is a simple eigenvalue of $A^{(\mu,\sigma)}$ and its largest even eigenvalue.
\end{proposition}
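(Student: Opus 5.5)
By Theorem~\ref{main-thm:unique_pole_sequence}, $\rho_1$ is the unique zero of $E_1$ on $(0,1)$, and $E_1'(\rho_1)>0$. The factorization $E_1(\rho)=2(1-x)^{5/2}\mathcal H_1(x)$ with $x=\rho^2$ has a positive prefactor, so the sign of $E_1$ equals the sign of $\mathcal H_1$. The proof therefore reduces to two sign checks:
\begin{equation*}
\mathcal H_1(4/9)<0,\qquad \mathcal H_1(9/16)>0.
\end{equation*}
Uniqueness of the sign change then places $x_1$ in $(4/9,9/16)$, that is, $\rho_*\in(2/3,3/4)$.

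For $j=1$ the function is explicit:
\begin{equation*}
\mathcal H_1(x)=\sum_{h\ge1}\frac{c_{1+h}h^2x^h}{1-x^h}-\frac{1}{1-x},\qquad c_k=\binom{2k}{k}4^{-k}.
\end{equation*}
Both sign checks will be done by truncating the series after $N$ terms and bounding the tail rigorously.
\begin{itemize}
\item Since $c_k$ decreases and $c_k\le (\pi k)^{-1/2}\le1$, and $1-x^h\ge 1-x$, the tail beyond $N$ is at most
\begin{equation*}
(1-x)^{-1}\sum_{h>N}h^2x^h,
\end{equation*}
which has a closed form in $x$ and $N$.
\item At $x=4/9$, a lower bound on the tail is not needed, since the terms are positive. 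The required inequality is partial sum plus tail upper bound $<1/(1-x)=9/5$.
\item At $x=9/16$, the partial sum alone must exceed $1/(1-x)=16/7$.
\end{itemize}
With $N$ around $10$--$15$ these become finite rational-arithmetic checks.

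\textbf{Main obstacle.} The difficulty is the margin at the endpoints. If $\rho_*$ is close to $2/3$ or $3/4$, the truncation level must grow. I would first evaluate numerically to see the margin, then pick $N$ so that the tail bound uses at most a small fraction of it. A tighter tail estimate uses $c_{1+h}\le c_{N+2}$ for $h>N$.

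The remaining claims follow from results already proved:
\begin{itemize}
\item $(1-\rho_*)\rho_*^2=\kappa_1$ equals $\lambda_{1,\mathrm{even}}^{(\mu,\sigma)}(\rho_*)$ by Theorem~\ref{main-thm:unique_pole_sequence}.
\item It is simple in the full space by Theorem~\ref{main-thm:simple_full_spectrum}.
\item Being $\lambda_{1,\mathrm{even}}$, it is the largest even eigenvalue, by the decreasing enumeration in \eqref{main-eq:mu_sigma_even_bracket}.
\end{itemize}
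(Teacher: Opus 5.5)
Your proposal is correct. The sign of $E_1$ is that of $\mathcal H_1$, and both endpoint checks hold with comfortable margins, since the zero is at $x_1=\rho_*^2\approx0.509$. The remaining claims follow, exactly as you say, from Theorems~\ref{main-thm:unique_pole_sequence} and \ref{main-thm:simple_full_spectrum}.

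You differ from the paper in which representation of $E_1$ you bound. The paper uses \eqref{main-eq:even_pole_E1}, obtained by separating $k=0$ in the original pole criterion. In that form $1+b_1^2=1+(3x-2)^2/(2x)$ and the single negative term $-2(1-x)^{3/2}/x$ appear in closed form. At $\rho=3/4$ the paper simply discards the positive series and gets $313/288-7\sqrt7/18>0$. At $\rho=2/3$ it replaces $c_k$ by $3/8$ and $1-x^{k-1}$ by $1-x$, then sums the series exactly, giving $3/2-253673\sqrt5/329550<0$. No truncation is needed, at the cost of one square root per endpoint.

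In the Lambert form $\mathcal H_1$, the constant $1+b_1^2$ has been expanded into the series by the moment identity \eqref{supp-eq:moment_pole_identity}, so you must sum several terms. At $x=9/16$ the partial sum first exceeds $16/7$ at $h=5$, where it is about $2.368$. At $x=4/9$ your crude tail bound $(1-x)^{-1}\sum_{h>N}h^2x^h$ closes once $N\ge7$, giving about $1.585<9/5$.

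In exchange, your checks are purely rational. This is the method of the supplement's rational enclosures in Section~\ref{supp-sec:unique_poles}, and it scales to the high-precision intervals for $\rho_1,\rho_2,\rho_3$. The margin concern you raise as the main obstacle does not arise for this coarse interval.
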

\begin{proof}
With $x=\rho^2$, separating $k=0$ in the original pole criterion gives
\begin{align}\label{main-eq:even_pole_E1}
E_1(\rho)&=1+\frac{(3x-2)^2}{2x}-\frac{2(1-x)^{3/2}}{x}\nonumber\\
&\quad+\frac{2(1-x)^{5/2}}{x^2}
\sum_{k=2}^\infty\frac{c_k(k-1)^2x^{2k}}{1-x^{k-1}}.
\end{align}
Using $c_k\le3/8$ and $1-x^{k-1}\ge1-x$ in the positive series yields
\begin{equation*}
E_1(2/3)\le\frac32-\frac{253673\sqrt5}{329550}<0,
\qquad E_1(3/4)\ge\frac{313}{288}-\frac{7\sqrt7}{18}>0.
\end{equation*}
Theorem~\ref{main-thm:unique_pole_sequence} then locates the unique zero and identifies its eigenvalue.
\end{proof}

Rational interval bounds give $0.71351870517303<\rho_*<0.71351870517305$, with $(1-\rho_*)\rho_*^2\simeq0.145850189093146$. Sections~\ref{supp-sec:pole_validation} and \ref{supp-sec:unique_poles} give the bounds and construct an eigenfunction at the coincident value. Theorem~\ref{main-thm:unique_pole_sequence} establishes uniqueness for each fixed pole; the exceptional parameters themselves form an infinite, strictly increasing sequence.

\subsection{Relation with the Fredholm determinant formulation}
\label{main-subsec:EH_equivalence}
The determinant representation in Ebner and Henze~\cite[Section~2]{EbnerHenze2023} uses the same Gaussian operator and the same three correction directions. The following identity gives the correspondence with its reciprocal spectral parameter, with the Fredholm determinant normalized to equal one at the origin.
\begin{proposition}\label{main-prop:EH_equivalence}
Put $d_n=(1-\rho)\rho^n$ and
\begin{equation*}
d(z):=\prod_{n=0}^{\infty}(1-zd_n),\qquad
\Delta(z):=\det(I-zA^{(\mu,\sigma)}).
\end{equation*}
For $z\ne0$ with $z^{-1}\notin\{d_n:n\ge0\}$,
\begin{equation}\label{main-eq:Fredholm_factorization}
\Delta(z)=d(z)F_{\mathrm{odd}}(z^{-1})D_{\mathrm{even}}(z^{-1}).
\end{equation}
The right-hand side has an entire continuation. Its positive zeros are the reciprocals of the positive eigenvalues of $A^{(\mu,\sigma)}$, with their multiplicities.
\end{proposition}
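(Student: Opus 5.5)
The plan is to use the matrix determinant lemma for a trace-class perturbation of finite rank. Write $A^{(\mu,\sigma)}=A_0-UU^*$, where $U:\mathbb R^3\to L^2(\varphi_\beta;\mathbb R)$ sends the $\ell$th basis vector to $\phi_{\ell-1}^*$.

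First, $A_0$ is trace class: its eigenvalues $d_n=(1-\rho)\rho^n$ are summable. Hence $\det(I-zA_0)=\prod_n(1-zd_n)=d(z)$ is entire, with simple zeros at $z=d_n^{-1}$. For $z^{-1}\notin\{d_n\}$ the operator $I-zA_0$ is invertible, and multiplicativity of Fredholm determinants gives
\begin{equation*}
\Delta(z)=\det(I-zA_0)\det\bigl(I+z(I-zA_0)^{-1}UU^*\bigr).
\end{equation*}
By Sylvester's identity $\det(I+XY)=\det(I+YX)$, the second factor equals the $3\times3$ determinant $\det\bigl(I_3+zU^*(I-zA_0)^{-1}U\bigr)$.

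Next, expand in the eigenbasis $\{e_n\}$. The entries are
\begin{equation*}
z\sum_n\frac{a_{n,\ell}a_{n,m}}{1-zd_n}=\sum_n\frac{a_{n,\ell}a_{n,m}}{z^{-1}-d_n}=G_{\ell m}(z^{-1}).
\end{equation*}
So the $3\times3$ determinant is $\det(I_3+G(z^{-1}))$, with $G$ as in Lemma~\ref{main-lem:finite_rank_secular}. By the parity vanishing in Proposition~\ref{main-prop:overlaps_closed_form}, $G_{01}=G_{12}=0$. The matrix is therefore block diagonal: the odd block is $1+G_{11}=F_{\mathrm{odd}}(z^{-1})$, and the even $(0,2)$ block has determinant $D_{\mathrm{even}}(z^{-1})$. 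This proves \eqref{main-eq:Fredholm_factorization} off the poles. Absolute convergence of the series follows from $\sum_n a_{n,\ell}^2<\infty$.

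The entire continuation is immediate from the left-hand side. Since $A^{(\mu,\sigma)}$ is trace class, $\Delta$ is entire, so the right-hand side extends to it. For the zeros I invoke the standard fact that the zeros of $\det(I-zT)$, for trace-class self-adjoint $T$, are exactly the points $\lambda^{-1}$ with $\lambda$ a nonzero eigenvalue of $T$. The order of each zero equals the algebraic multiplicity, which for a self-adjoint operator is the geometric multiplicity. This can be seen from $\Delta(z)=\prod_k(1-z\lambda_k)$ (Lidskii). Positive zeros therefore correspond to positive eigenvalues. $A^{(\mu,\sigma)}$ is positive, being a covariance operator, so there are no other real zeros of relevance.

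The step I expect to need the most care is citing the Fredholm-determinant facts correctly. Multiplicativity for $I+$trace class, the Sylvester swap for the finite-rank factor, and the product formula over eigenvalues should all be quoted from Section~\ref{supp-sec:supp_operator_facts} or Simon's trace-ideal results. At the poles nothing further is needed, because the continuation comes for free from $\Delta$. This is consistent with Proposition~\ref{main-prop:pole_criterion}, but that proposition is not required here.
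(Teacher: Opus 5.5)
Your proposal is correct and follows essentially the same route as the paper. You factor off $\det(I-zA_0)=d(z)$, reduce the finite-rank factor to the $3\times3$ determinant $\det\{I_3+zU^*(I-zA_0)^{-1}U\}$, identify its entries with $G(z^{-1})$, split by parity, and read off the continuation and the zeros from the trace-class Fredholm determinant of $A^{(\mu,\sigma)}$. The only differences are minor: the paper obtains the determinant identity by a finite-dimensional factorization followed by trace-norm approximation instead of quoting multiplicativity and Sylvester's identity, and it states explicitly that the operators are complexified for complex $z$, which you leave implicit.
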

\begin{proof}
The operators are complexified when $z$ is complex. The operator $A_0$ is trace class, since $\sum_n d_n=1$. The correction has finite rank, so $A^{(\mu,\sigma)}$ is trace class as well. Let $U:\mathbb R^3\to L^2(\varphi_\beta;\mathbb R)$ have columns $\phi_0^*,\phi_1^*,\phi_2^*$. For $I-zA_0$ invertible, the finite-rank determinant identity gives
\begin{equation*}
\Delta(z)=d(z)\det\{I_3+zU^*(I-zA_0)^{-1}U\}.
\end{equation*}
It follows first in finite dimension by factoring $I-zA_0$; trace-norm approximation then proves the above identity. The entries of the second determinant are the series of Lemma~\ref{main-lem:finite_rank_secular} evaluated at $\lambda=z^{-1}$. Parity separates its odd scalar factor from its even two-dimensional factor and gives \eqref{main-eq:Fredholm_factorization}. Entire continuation and the statement about zeros follow from the trace-class Fredholm determinant. Values at $z=d_n^{-1}$ are values of this continuation, not products of separately evaluated singular factors.

The Gaussian kernel, weight and three correction functions in Ebner and Henze~\cite{EbnerHenze2023} define the same operators $A_0$ and $A^{(\mu,\sigma)}$. Their correction functions are $\phi_2^*,\phi_1^*,\phi_0^*$ in that order, and their reciprocal parameter is $z$. Proposition~\ref{main-prop:Hermite_basis} computes the spectrum $d_n$ directly from this common operator. Thus the correspondence is an identity of integral operators and of their normalized Fredholm determinants, with the overlaps evaluated in Proposition~\ref{main-prop:overlaps_closed_form}. Proposition~\ref{main-prop:pole_criterion} gives the criterion at the poles. The last assertion of Ebner and Henze~\cite[Theorem~2.2]{EbnerHenze2023} excludes all unperturbed eigenvalues. Theorem~\ref{main-thm:unique_pole_sequence} gives an infinite sequence of distinct counterexamples to this exclusion, one at each even pole except the largest. The Fredholm product identity remains valid when the singularities are treated by entire continuation.
\end{proof}

The reason that parity does not imply exclusion of the even poles can be seen directly. Put $p_k=(1,b_k)^\top$ and, for $\lambda$ near $\kappa_j$, define
\begin{equation*}
H_j(\lambda)=I_2+\sum_{k\ne j}\frac{w_kp_kp_k^\top}{\lambda-\kappa_k}.
\end{equation*}
The series is analytic there. Since permutation of the two coordinates does not change the determinant,
\begin{equation*}
D_{\mathrm{even}}(\lambda)=\det H_j(\lambda)
+\frac{w_j}{\lambda-\kappa_j}\,p_j^\top\operatorname{adj}\{H_j(\lambda)\}p_j.
\end{equation*}
For a symmetric two-dimensional matrix, the last quadratic form equals that along $(-b_j,1)^\top$ in the original matrix. Therefore
\begin{equation}\label{main-eq:even_residue_criterion}
\lim_{\lambda\to\kappa_j}(\lambda-\kappa_j)D_{\mathrm{even}}(\lambda)
=w_jE_j(\rho).
\end{equation}
Equation~\eqref{main-eq:even_residue_criterion} identifies the possible cancellation. Parity separates the odd and even determinants but gives no reason for $E_j(\rho)$ to be nonzero. At a zero of $E_j$, the singularity of the even determinant is removable; the zero of the unperturbed Fredholm factor cannot then be discarded. The criterion and construction concern equality of eigenvalues, not preservation of the corresponding unperturbed eigenfunction; see Section~\ref{supp-sec:pole_validation}.

The preceding results distinguish rank-one interlacing from rank-two bounds.
\begin{myrem}\label{main-rem:global_spectrum}
In the mean-only and fully standardized cases, the positive odd eigenvalues lie strictly between successive odd eigenvalues of $A_0$. In the variance-only case, the odd spectrum is unchanged. The even spectrum with estimated variance satisfies the two-step bounds \eqref{main-eq:mu_sigma_even_bracket}; these bounds do not by themselves exclude equality with an interior pole. Proposition~\ref{main-prop:pole_criterion} treats that question, and Theorem~\ref{main-thm:unique_pole_sequence} classifies the corresponding parameters. Theorem~\ref{main-thm:simple_full_spectrum} gives simplicity of the positive full-standardization spectrum, including at these parameters. The exact deletion in Proposition~\ref{main-prop:aligned_score_deletion} is a sufficient conclusion for a score space generated by eigenfunctions, not a necessary criterion for every possible coincidence of eigenvalues.
\end{myrem}

The statistic used for numerical calibration is the $V$-form. Its relation to the residual $U$-form includes a nonzero centering constant, as in the general parameter-substitution results of Cupari{\'c} et~al.~\cite{CuparicMilosevicObradovic2022}. The following corollary computes this constant in the normalization of the present paper.
\Needspace{13\baselineskip}
\begin{cor}\label{main-cor:full_U_V_limit}
Under the assumptions of Theorem~\ref{main-thm:mu_sigma_main}, let
\begin{equation*}
\tau_\beta:=1-(1+2\beta^2)^{-1/2}.
\end{equation*}
Then
\begin{equation*}
nU_n^{(\mu,\sigma)}(K_\rho)
\xrightarrow[n\to\infty]{d}
\frac{1}{1-\rho}\{T_\infty^{(\mu,\sigma)}-\tau_\beta\}.
\end{equation*}
The mean of the limiting variable is
\begin{equation*}
-\frac{1}{1-\rho}\left\{\frac{\beta^2}{(1+2\beta^2)^{3/2}}
+\frac{3\beta^4}{2(1+2\beta^2)^{5/2}}\right\}.
\end{equation*}
\end{cor}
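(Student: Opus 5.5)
The plan is to use the exact algebraic identity \eqref{main-eq:UV_identity}, applied to the fully standardized residuals $V_{n,j}$ and the kernel $K_\rho$. This reduces the $U$-form to the $V$-form already treated in Theorem~\ref{main-thm:mu_sigma_main}, plus a diagonal average. Explicitly,
\begin{equation*}
nU_n^{(\mu,\sigma)}(K_\rho)=\frac{n}{n-1}\Bigl\{nV_n(K_\rho)-\frac1n\sum_{j=1}^nK_\rho(V_{n,j},V_{n,j})\Bigr\}.
\end{equation*}
By the kernel scaling \eqref{main-eq:kernel_scaling_rho_beta} and the expansion of $|\psi_n-\phi_0|^2$ in Section~\ref{main-subsec:EPBHEP_exact}, we have $nV_n(K_\rho)=(1-\rho)^{-1}T_{n,\beta}^{(\mu,\sigma)}$. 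Theorem~\ref{main-thm:mu_sigma_main} then gives convergence in distribution of this term to $(1-\rho)^{-1}T_\infty^{(\mu,\sigma)}$. The factor $n/(n-1)\to1$, so Slutsky's lemma finishes the limit statement once the diagonal average is shown to converge in probability to the constant $\tau_\beta/(1-\rho)$.

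For the diagonal term, \eqref{main-kernel-K} gives $K_\rho(x,x)=a_1-2a_2\exp\{-b_2x^2\}+a_3$. This is a bounded smooth function with bounded derivatives. Since $\overline X_n\to0$ and $S_n\to1$ almost surely, we have $|V_{n,j}-X_j|\le|X_j||S_n^{-1}-1|+|\overline X_n|/S_n$. A Lipschitz bound and the law of large numbers (or the second assertion of Lemma~\ref{main-lem:data_projection_empirical_sums} applied after centering) then give $n^{-1}\sum_jK_\rho(V_{n,j},V_{n,j})\to\EE[K_\rho(X,X)]$ in probability. By Proposition~\ref{main-prop:weighted_L2_kernel_rep}, $K_\rho(x,x)=\|\xi_x\|_{\theta_\rho}^2$, so
\begin{equation*}
\EE[K_\rho(X,X)]=\int\{1-\phi_0^2(t)\}\theta_\rho(t)\,dt=(1-\rho)^{-1}\int(1-e^{-t^2})\varphi_\beta(t)\,dt,
\end{equation*}
which is $(1-\rho)^{-1}\{1-(1+2\beta^2)^{-1/2}\}=\tau_\beta/(1-\rho)$ by the Gaussian integral $\int e^{-t^2}\varphi_\beta=(1+2\beta^2)^{-1/2}$.

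For the mean, I would use $\EE T_\infty^{(\mu,\sigma)}=\EE\|Z\|_\beta^2=\operatorname{tr}A^{(\mu,\sigma)}$. The operator $A^{(\mu,\sigma)}$ is trace class by Proposition~\ref{main-prop:EH_equivalence}, and the series $\sum\lambda_nN_n^2$ converges in $L^1$. The trace equals the diagonal integral $\int K^{(\mu,\sigma)}(t,t)\varphi_\beta(t)\,dt$; for the continuous positive kernel \eqref{main-eq:mu_sigma_kernel} this is justified either by Mercer or directly by Fubini applied to $\EE|Z(t)|^2$. Using \eqref{main-eq:mu_sigma_kernel}, $K^{(\mu,\sigma)}(t,t)=1-(1+t^2+t^4/2)e^{-t^2}$, so
\begin{equation*}
\EE T_\infty^{(\mu,\sigma)}-\tau_\beta=-\int\Bigl(t^2+\frac{t^4}{2}\Bigr)e^{-t^2}\varphi_\beta(t)\,dt.
\end{equation*}
The product $e^{-t^2}\varphi_\beta$ is $(1+2\beta^2)^{-1/2}$ times a centered normal density with variance $\beta^2/(1+2\beta^2)$. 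The normal second and fourth moments therefore give $\beta^2(1+2\beta^2)^{-3/2}$ and $3\beta^4(1+2\beta^2)^{-5/2}$. Multiplying by $(1-\rho)^{-1}$ yields the stated mean.

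The only mildly delicate step is identifying the limiting mean with the trace, i.e.\ interchanging expectation with the infinite chi-square series and the diagonal integral. This is routine here because the kernel is continuous and bounded and the operator is positive and trace class. Note that the limiting mean is that of the limit variable itself; no uniform integrability of $nU_n$ is claimed.
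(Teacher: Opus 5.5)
Your proposal is correct and follows essentially the same route as the paper: apply the exact identity \eqref{main-eq:UV_identity} at the residuals, use $nV_n(K_\rho)=(1-\rho)^{-1}T_{n,\beta}^{(\mu,\sigma)}$ with Theorem~\ref{main-thm:mu_sigma_main}, show by a Lipschitz and law-of-large-numbers argument that the diagonal average converges to $\tau_\beta/(1-\rho)$, and identify the limiting mean with a trace. The only difference is cosmetic: the paper computes the trace from the operator form as $\operatorname{tr}(A_0)-\sum_k\|\phi_k^*\|_\beta^2$, whereas you integrate the diagonal of $K^{(\mu,\sigma)}$, and both lead to the same Gaussian moment calculation.
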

The proof is given in Section~\ref{supp-subsec:proof_U_V_limit} of the Supplementary Material.

\section{Discussion}
\label{main-sec:discussion}

The spectral effect of parameter substitution depends on the correction directions and their relation to the kernel eigenfunctions. For the fully standardized Epps--Pulley statistic, Theorem~\ref{main-thm:simple_full_spectrum} gives a simple positive spectrum with strict alternation between odd and even eigenfunctions. Theorem~\ref{main-thm:unique_pole_sequence} shows that each unperturbed even pole, except the largest, nevertheless belongs to the corrected even spectrum at a unique weight parameter. Thus simplicity of the corrected spectrum does not imply disjointness from the unperturbed spectrum. The pole criterion makes these coincidences part of the spectral description rather than values excluded from the eigenvalue equations.

The observation-space analysis connects these spectral results to quadratic statistics evaluated at standardized observations. Proposition~\ref{main-prop:derivative_tail_criterion} controls the complete spectral expansion through Gaussian norms of the eigenfunctions and their first derivatives; its summability criterion also applies to signed kernels. For characteristic-function kernels, Corollary~\ref{main-cor:weighted_kernel_full_tail} verifies the criterion under a finite fourth moment of the weight, without requiring explicit eigenfunctions of the centered even kernel. The factorization then identifies the positive observation and Fourier spectra, including multiplicities and parity. The covariance projection agrees with the parameter-substitution theory of Arcones~\cite{Arcones2007} and Cupari{\'c} et~al.~\cite{CuparicMilosevicObradovic2022}, while Corollary~\ref{main-cor:full_U_V_limit} retains the original kernel trace in passing from the $V$-form to the $U$-form.

The simplicity result extends beyond Gaussian weights. Theorem~\ref{supp-thm:consecutive_corrections} in the Supplementary Material applies to every positive, even, integrable weight when the Gaussian covariance corrections remove a consecutive initial block. The variance-only example in Proposition~\ref{supp-prop:variance_only_double} shows that a nonconsecutive correction can instead produce an eigenvalue of multiplicity two. This suggests investigating which other sets of correction directions preserve simplicity. Within the variance-only family, a more specific question is to determine all intersections of the even and odd eigenvalue curves, and whether each prescribed pair can intersect at more than one parameter value.

The exceptional sequence also raises a quantitative question. Theorem~\ref{main-thm:unique_pole_sequence} gives $\rho_j\to1$, but not an asymptotic equivalent for $1-\rho_j$ or for the coincident eigenvalue $(1-\rho_j)\rho_j^{2j}$. The series representation of the pole criterion provides a starting point for investigating the regime in which $j$ increases and $\rho$ approaches one. A related problem concerns calibration when the weight parameter varies with sample size. Here the objective is to determine joint conditions on the approach of $\rho$ to one and on the growth of the Hermite truncation dimension under which the numerical critical values yield asymptotically correct rejection probabilities. This would extend the present fixed-weight calculations to a regime in which the number of retained Hermite functions must increase.

Local power and selection of the weight have already been studied for this class of tests. Henze and Wagner~\cite{HenzeWagner1997} establish the behavior under contiguous alternatives, and Tenreiro~\cite{Tenreiro2019} treats automatic selection and calibration over finite families of tuning parameters. The present spectral results suggest a more specific comparison: for a prescribed family of local alternatives, determine the coordinates of the projected mean shift in the corrected eigenbasis, and separate the contributions of the even and odd components to the resulting noncentral quadratic forms. Such a comparison requires the alternative coordinates as well as the null eigenvalues. Extending this analysis to selection over a growing set or a continuum of weights would require joint control of the projected process and of the spectral approximation; the fixed-weight null results alone do not justify that calibration.

For the multivariate BHEP statistic, an extension should address spectral multiplicities directly. With a radial Gaussian weight, a suitable starting point is to combine the tensor-product Hermite representation with a decomposition into subspaces determined by rotational symmetry. The questions are which subspaces are altered by estimating the mean and covariance matrix, which multiplicities are imposed by symmetry, and whether pole coincidences can be classified within the affected subspaces. The multivariate calculations of Ebner, Jim\'enez-Gamero and Milo\v{s}evi\'c~\cite{EbnerJimenezGameroMilosevic2025} provide numerical comparisons for such an analysis. The aim would be an analytic description of the invariant subspaces and their spectra, rather than an assertion of whole-space simplicity analogous to the univariate result.

\begin{acks}[Acknowledgments]
Samis Trevezas is the corresponding author. He is also affiliated with MICS Laboratory, CentraleSup\'elec, Universit\'e Paris-Saclay.
\end{acks}

\clearpage
\section*{Supplementary Material}
\setcounter{section}{0}
\setcounter{equation}{0}
\setcounter{theorem}{0}
\setcounter{prop}{0}
\setcounter{lem}{0}
\setcounter{myrem}{0}
\setcounter{ex}{0}
\setcounter{table}{0}
\renewcommand{\thesection}{S\arabic{section}}
\renewcommand{\theHsection}{supp.\arabic{section}}
\renewcommand{\theequation}{S\arabic{section}.\arabic{equation}}
\renewcommand{\theHequation}{supp.\arabic{section}.\arabic{equation}}
\renewcommand{\thetheorem}{S\arabic{theorem}}
\renewcommand{\theHtheorem}{supp.\arabic{theorem}}
\renewcommand{\theprop}{S\arabic{prop}}
\renewcommand{\theHprop}{supp.\arabic{prop}}
\renewcommand{\thelem}{S\arabic{lem}}
\renewcommand{\theHlem}{supp.\arabic{lem}}
\renewcommand{\themyrem}{S\arabic{myrem}}
\renewcommand{\theHmyrem}{supp.\arabic{myrem}}
\renewcommand{\theex}{S\arabic{ex}}
\renewcommand{\theHex}{supp.\arabic{ex}}

The notation is that of the main paper. Supplementary results, sections and equations carry an S prefix; references without that prefix are to the main paper. Sections~\ref{supp-sec:supp_one_parameter_ecf}--\ref{supp-sec:one_parameter_Krho_tail} give the one-parameter results and the explicit odd-kernel calculations; Section~\ref{supp-sec:numerical_use} gives the numerical study. The principal supplementary proofs for the main results are in Section~\ref{supp-sec:full_derivative_tail} (the derivative estimate and residual-pair laws), Section~\ref{supp-sec:supp_full_ecf} (the full characteristic-function expansion), Section~\ref{supp-sec:secular_multiplicity} (off-pole multiplicity), Section~\ref{supp-sec:total_positivity} (simplicity and parity), and Section~\ref{supp-sec:unique_poles} (unique exceptional parameters). Section~\ref{supp-sec:pole_validation} gives a rational enclosure and an eigenfunction construction, and Section~\ref{supp-sec:Arcones_comparison} gives the comparison with Arcones's substitution result. Section~\ref{supp-sec:preliminary_proofs} contains the supporting proofs for the Hermite, perturbation, empirical-sum, infinite-rank, factorization and original-trace results.

\section{One-parameter ECF limits}
\label{supp-sec:supp_one_parameter_ecf}
\setcounter{equation}{0}

This section proves the empirical characteristic-function limits used in
Theorems~\ref{supp-thm:mu_only_main_updated} and \ref{supp-thm:sigma_only_main_updated}.  The
notation is that of the main paper.  We fix $0<\rho<1$, set $\beta=\sqrt\rho/(1-\rho)$, and
write $\varphi_\beta$ for the $N(0,\beta^2)$ density.  Gaussian limits of characteristic-function processes are taken in the real space $\mathcal H_\beta$ of \eqref{main-eq:Hermitian_space}. Its covariance spectra are computed on $L^2(\varphi_\beta;\mathbb R)$ using \eqref{main-eq:parity_isometry}. All limits are under
$X_i\stackrel{\mathrm{i.i.d.}}{\sim}N(0,1)$.

\begin{proposition}\label{supp-prop:supp_mu_ecf_limit}
Let
\begin{equation*}
Y_{n,j}=X_j-\overline X_n,
\qquad
\psi_n(t)=\frac1n\sum_{j=1}^n \exp\!\left\{itY_{n,j}\right\}.
\end{equation*}
Then
\begin{equation}
\label{supp-eq:supp_mu_ecf_expansion}
\sqrt n(\psi_n-\phi_0)
=
\frac1{\sqrt n}\sum_{j=1}^n \zeta_\mu(\cdot;X_j)+o_{\mathbb P}(1)
\quad\hbox{in }L^2(\varphi_\beta),
\end{equation}
where
\begin{equation*}
\zeta_\mu(t;x):=\exp\!\left\{itx\right\}-\phi_0(t)-it\phi_0(t)x.
\end{equation*}
Consequently, $\sqrt n(\psi_n-\phi_0)$ converges in distribution in $L^2(\varphi_\beta)$ to a
centered real Gaussian element of $\mathcal H_\beta$ with kernel $\EE[Z(s)\overline{Z(t)}]$
\begin{equation}
\label{supp-eq:supp_mu_covariance}
K^{(\mu)}(s,t)
=
\exp\!\left\{-\frac{(s-t)^2}{2}\right\}
-
(1+st)\exp\!\left\{-\frac{s^2+t^2}{2}\right\}.
\end{equation}
\end{proposition}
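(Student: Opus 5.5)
We prove the expansion \eqref{supp-eq:supp_mu_ecf_expansion} by an exact factorization followed by a first-order Taylor bound in $L^2(\varphi_\beta)$. The key identity is
\begin{equation*}
\psi_n(t)=e^{-it\overline X_n}\phi_n(t),\qquad \phi_n(t)=\frac1n\sum_{j=1}^n e^{itX_j}.
\end{equation*}
Write $\phi_n=\phi_0+n^{-1/2}W_n$, where $W_n(t)=n^{-1/2}\sum_j\{e^{itX_j}-\phi_0(t)\}$. Then
\begin{equation*}
\sqrt n(\psi_n-\phi_0)=e^{-it\overline X_n}W_n(t)+\sqrt n\,\phi_0(t)\{e^{-it\overline X_n}-1\}.
\end{equation*}
The target summand is $W_n(t)-it\phi_0(t)\sqrt n\,\overline X_n$, since $n^{-1/2}\sum_j it\phi_0(t)X_j=it\phi_0(t)\sqrt n\,\overline X_n$. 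The remainder therefore splits into two terms:
\begin{equation*}
R_n^{(1)}(t)=\{e^{-it\overline X_n}-1\}W_n(t),\qquad
R_n^{(2)}(t)=\sqrt n\,\phi_0(t)\{e^{-it\overline X_n}-1+it\overline X_n\}.
\end{equation*}

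For $R_n^{(1)}$, we use $|e^{-it\overline X_n}-1|\le|t||\overline X_n|$. This gives
\begin{equation*}
\|R_n^{(1)}\|_\beta^2\le\overline X_n^{\,2}\int t^2|W_n(t)|^2\varphi_\beta(t)\,dt .
\end{equation*}
By independence, $\EE|W_n(t)|^2=1-\phi_0^2(t)\le1$, so the integral has expectation at most $\beta^2$ and is $O_{\PP}(1)$. Since $\overline X_n^{\,2}=O_{\PP}(n^{-1})$, we get $R_n^{(1)}=o_{\PP}(1)$.

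For $R_n^{(2)}$, we use $|e^{-iu}-1+iu|\le u^2/2$. This gives
\begin{equation*}
\|R_n^{(2)}\|_\beta\le\tfrac12\sqrt n\,\overline X_n^{\,2}\Bigl(\int t^4\phi_0^2\varphi_\beta\Bigr)^{1/2}=O_{\PP}(n^{-1/2}).
\end{equation*}
Both remainders vanish in probability in $L^2(\varphi_\beta)$, which proves the expansion. No step here is delicate. The only point needing care is the moment bound for $\int t^2|W_n|^2\varphi_\beta$, which Tonelli handles.

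For the limit law, note that the summands $\zeta_\mu(\cdot;X_j)$ are i.i.d. and lie in $\mathcal H_\beta$, since each is Hermitian symmetric in $t$. They are centered, because $\EE e^{itX}=\phi_0$ and $\EE X=0$. They are square integrable, with $\|\zeta_\mu\|_\beta^2\le C(1+X^2)$. The real Hilbert-space CLT (Theorem~\ref{supp-thm:HilbertCLT}) and Slutsky's lemma then give convergence to a centered Gaussian element of $\mathcal H_\beta$.

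The kernel comes from a direct computation of $\EE[\zeta_\mu(s;X)\overline{\zeta_\mu(t;X)}]$. It uses three normal identities:
\begin{equation*}
\EE e^{i(s-t)X}=e^{-(s-t)^2/2},\qquad \EE[Xe^{isX}]=is\phi_0(s),\qquad \EE X^2=1.
\end{equation*}
The cross terms produce $-\phi_0(s)\phi_0(t)$ and $-st\,\phi_0(s)\phi_0(t)$ with the appropriate signs, and the quadratic score term adds back $+st\phi_0(s)\phi_0(t)$. The sum reduces to \eqref{supp-eq:supp_mu_covariance}.

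The only step where a sign slip could occur is this bookkeeping of the $st$ terms. In the kernel, the location part enters once with coefficient $-1$, exactly as $\phi_1^*\otimes\phi_1^*$ does in \eqref{main-eq:mu_sigma_operator_form}. That operator form serves as the check on the computation.
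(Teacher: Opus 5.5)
Your proof is correct and follows essentially the same route as the paper: factor $\psi_n(t)=\exp\{-it\overline X_n\}\phi_n(t)$, use $|\exp\{-iu\}-1+iu|\le u^2/2$, control the empirical fluctuation in the linear term at order $O_{\mathbb P}(n^{-1/2})$, and then apply the Hilbert-space CLT and the normal identities for the kernel. The only difference is bookkeeping. You center $\phi_n$ first and bound $\{\exp(-it\overline X_n)-1\}W_n$ directly. The paper instead expands around $1-it\overline X_n$ and then replaces $\phi_n$ by $\phi_0$ in the linear term using $\|t(\phi_n-\phi_0)\|_\beta=O_{\mathbb P}(n^{-1/2})$.
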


\begin{proof}
Since $\psi_n(t)=\exp\!\left\{-it\overline X_n\right\}\phi_n(t)$ and
$|\exp\!\left\{-iu\right\}-(1-iu)|\le u^2/2$, we have
\begin{equation*}
\left\|\exp\!\left\{-it\overline X_n\right\}-(1-it\overline X_n)\right\|_\beta
\le C_\beta \overline X_n^2.
\end{equation*}
Thus
\begin{equation*}
\psi_n(t)-\phi_0(t)
=
\phi_n(t)-\phi_0(t)-it\phi_0(t)\overline X_n+r_n(t),
\qquad
\sqrt n\|r_n\|_\beta=o_{\mathbb P}(1).
\end{equation*}
The replacement of $\phi_n$ by $\phi_0$ in the linear term is legitimate because
$\|t(\phi_n-\phi_0)\|_\beta=O_{\mathbb P}(n^{-1/2})$ and
$\overline X_n=O_{\mathbb P}(n^{-1/2})$.  Multiplying by $\sqrt n$ and using
$\sqrt n\overline X_n=n^{-1/2}\sum_{j=1}^n X_j$ gives \eqref{supp-eq:supp_mu_ecf_expansion}.
The summands have finite second moment in $L^2(\varphi_\beta)$, hence the Hilbert-space
central limit theorem applies.

Using
\begin{align*}
\mathbb E[\exp\!\left\{iuX\right\}]
&=\exp\!\left\{-u^2/2\right\},\\
\mathbb E[X\exp\!\left\{iuX\right\}]
&=iu\exp\!\left\{-u^2/2\right\},
\end{align*}
in $\mathbb E[\zeta_\mu(s;X)\overline{\zeta_\mu(t;X)}]$ gives
\eqref{supp-eq:supp_mu_covariance}.
\end{proof}

\begin{proposition}\label{supp-prop:supp_sigma_ecf_limit}
Let
\begin{equation*}
S_{n,0}^2=\frac1n\sum_{j=1}^n X_j^2,
\qquad
W_{n,j}=\frac{X_j}{S_{n,0}},
\qquad
\psi_n(t)=\frac1n\sum_{j=1}^n \exp\!\left\{itW_{n,j}\right\}.
\end{equation*}
Then
\begin{equation}
\label{supp-eq:supp_sigma_ecf_expansion}
\sqrt n(\psi_n-\phi_0)
=
\frac1{\sqrt n}\sum_{j=1}^n \zeta_\sigma(\cdot;X_j)+o_{\mathbb P}(1)
\quad\hbox{in }L^2(\varphi_\beta),
\end{equation}
where
\begin{equation*}
\zeta_\sigma(t;x):=\exp\!\left\{itx\right\}-\phi_0(t)+\frac{t^2}{2}\phi_0(t)(x^2-1).
\end{equation*}
Consequently, $\sqrt n(\psi_n-\phi_0)$ converges in distribution in $L^2(\varphi_\beta)$ to a
centered real Gaussian element of $\mathcal H_\beta$ with kernel $\EE[Z(s)\overline{Z(t)}]$
\begin{equation}
\label{supp-eq:supp_sigma_covariance}
K^{(\sigma)}(s,t)
=
\exp\!\left\{-\frac{(s-t)^2}{2}\right\}
-
\left(1+\frac{s^2t^2}{2}\right)
\exp\!\left\{-\frac{s^2+t^2}{2}\right\}.
\end{equation}
\end{proposition}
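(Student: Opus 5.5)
The argument mirrors the proof of Proposition~\ref{supp-prop:supp_mu_ecf_limit}, with the location shift replaced by a scale change. Write $\psi_n(t)=\phi_n(t/S_{n,0})$, where $\phi_n$ is the empirical characteristic function of the raw $X_j$. Put $\delta_n:=S_{n,0}^{-1}-1$. Since $\sqrt n(S_{n,0}^2-1)=n^{-1/2}\sum_j(X_j^2-1)$ is asymptotically normal, the delta method gives $\delta_n=O_{\PP}(n^{-1/2})$ and
\begin{equation*}
\sqrt n\,\delta_n=-\frac{1}{2\sqrt n}\sum_{j=1}^n(X_j^2-1)+o_{\PP}(1).
\end{equation*}
Decompose
\begin{equation*}
\psi_n(t)-\phi_0(t)=\{\phi_n(t(1+\delta_n))-\phi_0(t(1+\delta_n))\}+\{\phi_0(t(1+\delta_n))-\phi_0(t)\}.
\end{equation*}
For the second bracket, a Taylor expansion of $\phi_0$ gives $\phi_0(t(1+\delta))-\phi_0(t)=-t^2\phi_0(t)\delta+R(t,\delta)$. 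Here $|R(t,\delta)|\le C(1+t^4)\delta^2$ uniformly for $|\delta|\le1/2$, because $\phi_0''$ times $t^2$ is bounded by a polynomial. Every moment of $\varphi_\beta$ is finite, so $\sqrt n\|R(\cdot,\delta_n)\|_\beta=O_{\PP}(n^{-1/2})$. Hence $\sqrt n$ times the second bracket equals $\frac{t^2}{2}\phi_0(t)\,n^{-1/2}\sum_j(X_j^2-1)+o_{\PP}(1)$ in $L^2(\varphi_\beta)$.

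The main obstacle is the first bracket. Set $\eta_n(t):=\sqrt n(\phi_n(t)-\phi_0(t))$. We must show that $\|\eta_n(\cdot(1+\delta_n))-\eta_n\|_\beta=o_{\PP}(1)$, that is, that the random rescaling of the argument is negligible.

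\begin{itemize}
\item Plan A. Use the mean value theorem in $t$. This gives $|\eta_n(t(1+\delta))-\eta_n(t)|\le|t\delta|\sup_{|s-t|\le|t\delta|}|\eta_n'(s)|$. The derivative $\eta_n'(s)=n^{-1/2}\sum_j\{iX_je^{isX_j}-\EE[iXe^{isX}]\}$ satisfies $\EE|\eta_n'(s)|^2\le\EE X^2=1$ uniformly in $s$. A supremum over a shrinking interval is awkward to control directly.
\item Plan B (preferred). On the event $|\delta_n|\le1/2$, write the difference as $\int_0^1 t\delta_n\,\eta_n'(t(1+u\delta_n))\,du$. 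Cauchy--Schwarz then gives
\begin{equation*}
\|\cdot\|_\beta^2\le\delta_n^2\int_0^1\!\!\int t^2|\eta_n'(t(1+u\delta_n))|^2\varphi_\beta(t)\,dt\,du.
\end{equation*}
Bound the inner integral by replacing $\delta_n$ with a supremum over $|v|\le1/2$. Substitute $s=t(1+v)$; the density ratio $\varphi_\beta(s/(1+v))/\varphi_\beta(s)$ is not bounded when $|1+v|>1$. It is, however, bounded by a Gaussian with variance at most $(3/2)^2\beta^2$, so we may dominate by $\int s^2|\eta_n'(s)|^2\bar\varphi(s)\,ds$ for a fixed integrable Gaussian $\bar\varphi$. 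The expectation of this dominating integral is at most $\int s^2\bar\varphi(s)\,ds<\infty$, so the integral is $O_{\PP}(1)$. Multiplying by $\delta_n^2=O_{\PP}(n^{-1})$ shows the term is $o_{\PP}(1)$.
\end{itemize}

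Combining the two brackets yields \eqref{supp-eq:supp_sigma_ecf_expansion} with summand $\zeta_\sigma$. The summands are i.i.d., centered, and satisfy $\EE\|\zeta_\sigma(\cdot;X)\|_\beta^2<\infty$. They take values in $\mathcal H_\beta$ because $\zeta_\sigma(-t;x)=\overline{\zeta_\sigma(t;x)}$, so the real Hilbert-space CLT (Theorem~\ref{supp-thm:HilbertCLT}) gives the Gaussian limit. For the covariance \eqref{supp-eq:supp_sigma_covariance}, expand $\EE[\zeta_\sigma(s;X)\overline{\zeta_\sigma(t;X)}]$ using
\begin{equation*}
\EE[e^{iuX}]=\phi_0(u),\qquad \EE[(X^2-1)e^{iuX}]=-u^2\phi_0(u),\qquad \EE(X^2-1)^2=2.
\end{equation*}
The cross terms are each $-\tfrac{s^2t^2}{2}\phi_0(s)\phi_0(t)$ and the score term is $+\tfrac{s^2t^2}{2}\phi_0(s)\phi_0(t)$. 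Together with $\phi_0(s-t)-\phi_0(s)\phi_0(t)$, these give \eqref{supp-eq:supp_sigma_covariance}.
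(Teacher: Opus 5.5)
Your proof is correct, but it is organized differently from the paper's. The paper does not view $\psi_n$ as a time-changed process. Instead, it Taylor-expands each summand $\exp\{itX_j/S_{n,0}\}$ about $\exp\{itX_j\}$ at the fixed frequency $t$. This yields the linear term $-it(S_{n,0}-1)m_n(t)$, with $m_n(t)=n^{-1}\sum_jX_j\exp\{itX_j\}$, and a pointwise remainder bounded by a constant times $(S_{n,0}-1)^2\{|t|n^{-1}\sum_j|X_j|+t^2n^{-1}\sum_jX_j^2\}$. It then replaces $m_n(t)$ by $\EE[X\exp\{itX\}]=it\phi_0(t)$ through a variance bound.

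You instead write $\psi_n(t)=\phi_n(t/S_{n,0})$ and obtain the scale correction from a Taylor expansion of the deterministic function $\phi_0$ under dilation. Everything else then reduces to a stochastic-equicontinuity estimate for $\eta_n$. The probabilistic input is the same in both: since $\eta_n'(s)=i\sqrt n\{m_n(s)-is\phi_0(s)\}$, your bound $\EE|\eta_n'(s)|^2\le1$ is exactly the paper's variance bound. The paper, however, needs it only at the fixed frequency $t$, whereas you need it along the random frequencies $t(1+u\delta_n)$. That is what forces your change of variables and the domination of $\varphi_\beta(s/(1+v))$, uniformly in $|v|\le1/2$, by a multiple of an inflated Gaussian density. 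That uniformity correctly disposes of the dependence between $\delta_n$ and $\eta_n$.

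Each route has an advantage. Your decomposition makes it transparent that the correction $\tfrac{t^2}{2}\phi_0(t)(x^2-1)$ comes from the scale derivative of the target characteristic function. It also gives an explicit $O_{\PP}(n^{-1/2})$ rate for the fluctuation remainder. The paper's sample-wise expansion uses only a finite fourth moment of the weight, so it transfers verbatim to any even integrable weight with $\int t^4\theta(t)\,dt<\infty$. Your dilation step, by contrast, requires the dilated weights to be dominated by a fixed weight with finite second moment. That is immediate for $\varphi_\beta$, or for any radially nonincreasing weight, but it does not follow from moment conditions alone.

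A minor point of framing: the mean-only proof you say you mirror handles the location shift through the multiplicative factor $\exp\{-it\overline X_n\}$. A dilation admits no such factorization, so your equicontinuity step has no counterpart there.
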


\begin{proof}
Since $S_{n,0}^2-1=n^{-1}\sum_{j=1}^n(X_j^2-1)$,
\begin{equation*}
\sqrt n(S_{n,0}-1)
=
\frac1{2\sqrt n}\sum_{j=1}^n(X_j^2-1)+o_{\mathbb P}(1).
\end{equation*}
The expansion $S_{n,0}^{-1}=1-(S_{n,0}-1)+o_{\mathbb P}(n^{-1/2})$, followed by a first-order
Taylor expansion of $x\mapsto \exp\!\left\{itx\right\}$, gives
\begin{equation*}
\psi_n(t)-\phi_0(t)
=
\phi_n(t)-\phi_0(t)
-it(S_{n,0}-1)\frac1n\sum_{j=1}^n X_j\exp\!\left\{itX_j\right\}
+r_n(t),
\end{equation*}
Put $b_n=S_{n,0}-1$. On $|b_n|\le1/2$, the identity $(1+b_n)^{-1}=1-b_n+b_n^2/(1+b_n)$ and $|\exp\!\left\{iu\right\}-1-iu|\le u^2/2$ give
\begin{equation*}
|r_n(t)|\le Cb_n^2\left\{|t|\frac1n\sum_{j=1}^n|X_j|+t^2\frac1n\sum_{j=1}^nX_j^2\right\}.
\end{equation*}
Thus $\sqrt n\|r_n\|_\beta=o_{\mathbb P}(1)$, since $b_n=O_{\mathbb P}(n^{-1/2})$ and the Gaussian weight has a finite fourth moment. Moreover,
\begin{equation*}
\left\|t\left(\frac1n\sum_{j=1}^n X_j\exp\!\left\{itX_j\right\}-\mathbb E[X\exp\!\left\{itX\right\}]\right)\right\|_\beta
=
O_{\mathbb P}(n^{-1/2}),
\end{equation*}
and $\mathbb E[X\exp\!\left\{itX\right\}]=it\phi_0(t)$.  Substitution of the expansion of $S_{n,0}-1$ gives
\eqref{supp-eq:supp_sigma_ecf_expansion}.  The Hilbert-space central limit theorem applies because
$\mathbb E\|\zeta_\sigma(\cdot;X)\|_\beta^2<\infty$.

Using the identities
\begin{align*}
\mathbb E[\exp\!\left\{iuX\right\}]&=\exp\!\left\{-u^2/2\right\},\\
\mathbb E[X^2\exp\!\left\{iuX\right\}]&=(1-u^2)\exp\!\left\{-u^2/2\right\},
\end{align*}
in $\mathbb E[\zeta_\sigma(s;X)\overline{\zeta_\sigma(t;X)}]$ gives
\eqref{supp-eq:supp_sigma_covariance}.
\end{proof}
\section{One-parameter spectral results}
\label{supp-sec:one_param_studentization}

\subsection{Notation}
\label{supp-subsec:one_param_shared_notation}

Throughout this section, $\phi_0(t)=\exp\!\left\{-t^2/2\right\}$ and $\varphi_\beta$ is the Gaussian weight
\eqref{main-eq:EP_varphi_beta}.  We fix $\rho\in(0,1)$ and set $\beta=\sqrt{\rho}/(1-\rho)$.
All limits are under $H_0:X_i\stackrel{\mathrm{i.i.d.}}{\sim}\mathcal N(0,1)$.  We use the
operator $A_0$, its eigenpairs, and the overlap coefficients introduced in
Section~\ref{main-sec:spectral_preliminaries}.  The empirical characteristic-function limits needed below
are proved in Section~\ref{supp-sec:supp_one_parameter_ecf} of the Supplementary Material.

\subsection{Known variance, estimated mean}
\label{supp-subsec:mu_only}

Assume $X_1,\dots,X_n$ are i.i.d. $\mathcal N(\mu,1)$.  By translation invariance we take
$\mu=0$.  Let
\begin{equation*}
\overline{X}_n=\frac{1}{n}\sum_{j=1}^n X_j,
\qquad
Y_{n,j}:=X_j-\overline{X}_n.
\end{equation*}
The empirical characteristic function of the centered sample is
\begin{equation*}
\psi_n(t):=\frac{1}{n}\sum_{j=1}^n \exp\!\left\{itY_{n,j}\right\}.
\end{equation*}
The statistic is
\begin{equation*}
T_{n,\beta}^{(\mu)}
:=
 n\int_{-\infty}^{\infty}|\psi_n(t)-\phi_0(t)|^2\,\varphi_\beta(t)\,dt.
\end{equation*}

\begin{theorem}\label{supp-thm:mu_only_main_updated}
Under $H_0$,
\begin{equation}
\label{supp-eq:mu_only_limit_series_updated}
T_{n,\beta}^{(\mu)}
\xrightarrow[n\to\infty]{d}
T_{\infty}^{(\mu)}
:=
\int_{-\infty}^{\infty}|Z(t)|^2\varphi_\beta(t)\,dt
\stackrel{d}{=}
\sum_{n=0}^{\infty}\lambda_n^{(\mu)}(\rho)N_n^2,
\end{equation}
where $Z$ is a centered real Gaussian element in $\mathcal H_\beta$, $\{N_n\}_{n\ge0}$ are i.i.d.
standard normal variables, and $\{\lambda_n^{(\mu)}(\rho)\}_{n\ge0}$ are the nonzero eigenvalues of
the covariance operator $A^{(\mu)}$.  Its kernel is
\begin{equation}
\label{supp-eq:mu_only_kernel_updated}
K^{(\mu)}(s,t)
=
\exp\!\left\{-\frac{(s-t)^2}{2}\right\}
-
(1+st)\exp\!\left\{-\frac{s^2+t^2}{2}\right\},
\end{equation}
or, equivalently,
\begin{equation}
\label{supp-eq:mu_only_operator_form}
A^{(\mu)}=A_0-\phi_0^*\otimes\phi_0^*-\phi_1^*\otimes\phi_1^*.
\end{equation}
The even and odd subspaces are invariant.  The even eigenvalues $\{\lambda_{m,\mathrm{even}}^{(\mu)}(\rho)\}_{m\ge1}$ are the unique roots in
\begin{equation*}
\lambda\in\bigl((1-\rho)\rho^{2m},(1-\rho)\rho^{2m-2}\bigr),
\qquad m\ge1,
\end{equation*}
of
\begin{equation}
\label{supp-eq:mu_only_even_secular_updated}
F_{\mathrm{even}}(\lambda)
:=
1+\sum_{j=0}^{\infty}\frac{a_{2j,0}^2(\rho)}{\lambda-(1-\rho)\rho^{2j}}=0.
\end{equation}
The odd eigenvalues $\{\lambda_{m,\mathrm{odd}}^{(\mu)}(\rho)\}_{m\ge1}$ are the unique roots in
\begin{equation*}
\lambda\in\bigl((1-\rho)\rho^{2m+1},(1-\rho)\rho^{2m-1}\bigr),
\qquad m\ge1,
\end{equation*}
of
\begin{equation}
\label{supp-eq:mu_only_odd_secular_updated}
F_{\mathrm{odd}}(\lambda)
:=
1+\sum_{j=0}^{\infty}\frac{a_{2j+1,1}^2(\rho)}{\lambda-(1-\rho)\rho^{2j+1}}=0.
\end{equation}
In particular,
\begin{equation}
\label{supp-eq:mu_only_interlace_odd_updated}
(1-\rho)\rho>\lambda_{1,\mathrm{odd}}^{(\mu)}(\rho)>(1-\rho)\rho^3>
\lambda_{2,\mathrm{odd}}^{(\mu)}(\rho)>(1-\rho)\rho^5>\cdots.
\end{equation}
All positive eigenvalues of $A^{(\mu)}$ are simple, and the full ordering is
\begin{equation*}
\lambda_{1,\mathrm{even}}^{(\mu)}>
\lambda_{1,\mathrm{odd}}^{(\mu)}>
\lambda_{2,\mathrm{even}}^{(\mu)}>
\lambda_{2,\mathrm{odd}}^{(\mu)}>\cdots>0.
\end{equation*}
\end{theorem}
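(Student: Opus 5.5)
The proof runs in parallel with that of Theorem~\ref{main-thm:mu_sigma_main}, with one structural change: the mean-only correction is rank one in each parity sector.

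\textbf{Limit law.} Proposition~\ref{supp-prop:supp_mu_ecf_limit} gives the linear expansion of $\sqrt n(\psi_n-\phi_0)$ with summand $\zeta_\mu$ and covariance kernel \eqref{supp-eq:supp_mu_only_kernel_updated}. The real Hilbert-space central limit theorem then applies in $\mathcal H_\beta$. Combining it with Remark~\ref{main-rem:complex_H} and the Gaussian squared-norm expansion yields \eqref{supp-eq:mu_only_limit_series_updated}. Expanding $(1+st)\exp\{-(s^2+t^2)/2\}=\phi_0^*(s)\phi_0^*(t)+\phi_1^*(s)\phi_1^*(t)$ gives the operator form \eqref{supp-eq:mu_only_operator_form}. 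Since $\phi_0^*$ is even and $\phi_1^*$ is odd, both parity subspaces are invariant.

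\textbf{Secular equations.} On the even subspace the operator is $(A_0)_{\mathrm{even}}-\phi_0^*\otimes\phi_0^*$, with poles $(1-\rho)\rho^{2j}$ and overlaps $a_{2j,0}\ne0$ by Proposition~\ref{main-prop:overlaps_closed_form}. On the odd subspace it is $(A_0)_{\mathrm{odd}}-\phi_1^*\otimes\phi_1^*$, with poles $(1-\rho)\rho^{2j+1}$ and overlaps $a_{2j+1,1}\ne0$. Proposition~\ref{main-prop:pole_criterion} (rank one) therefore excludes every pole from the spectrum. The interlacing part of Lemma~\ref{main-lem:finite_rank_secular} then gives exactly one simple root in each gap $(\kappa_m,\kappa_{m-1})$, and these roots exhaust the positive spectrum of each sector. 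This produces \eqref{supp-eq:mu_only_even_secular_updated}, \eqref{supp-eq:mu_only_odd_secular_updated} and \eqref{supp-eq:mu_only_interlace_odd_updated}.

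One point needs checking. On the even side the root in the topmost gap $(\kappa_1,\kappa_0)$ must be the largest even eigenvalue, and no positive root may lie above $\kappa_0$. For $\lambda>\kappa_0$ every term of $F_{\mathrm{even}}$ is positive, so $F_{\mathrm{even}}>1$ there and no root exists. The same argument excludes odd roots above $(1-\rho)\rho$. We also need at least the claimed number of positive roots. This holds because the operator is a positive covariance, and Lemma~\ref{main-lem:minmax_rank1} ensures infinitely many positive eigenvalues in each sector.

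\textbf{Simplicity and global ordering.} Within each sector, simplicity is already given by the rank-one secular equation. It remains to exclude coincidences between the sectors and to obtain the order even, odd, even, odd. The plan is to apply Theorem~\ref{supp-thm:consecutive_corrections} with $r=2$ and $\theta=\varphi_\beta$. The kernel factors as $\exp\{-(s^2+t^2)/2\}(\exp\{st\}-1-st)$, which is exactly the consecutive correction $\mathcal K_2$. That theorem then yields simple positive eigenvalues with parity $(-1)^{2+n}$ for the $n$th eigenvalue, so the parities run even, odd, even, odd, as claimed.

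If one wanted to avoid that theorem, the gap brackets alone do not settle the order. The even root lies in $(\kappa_{2m},\kappa_{2m-2})$ and the odd root in $((1-\rho)\rho^{2m+1},(1-\rho)\rho^{2m-1})$, so these intervals overlap and a crude comparison is insufficient. The main obstacle is therefore matching the kernel to the consecutive-correction hypothesis of Theorem~\ref{supp-thm:consecutive_corrections}. This amounts to verifying that $\phi_0^*,\phi_1^*$ are the first two Gaussian covariance directions in the sense of that theorem, i.e.\ the Taylor terms $(st)^k/k!$ for $k=0,1$ multiplied by $\exp\{-(s^2+t^2)/2\}$. I expect this identification to be immediate from the expansion above.
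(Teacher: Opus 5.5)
Your proposal is correct and follows the paper's proof step for step. Like the paper, you obtain the limit law from Proposition~\ref{supp-prop:supp_mu_ecf_limit} and the quadratic-form expansion, then the operator form and parity splitting, then the rank-one secular equations and interlacing from Lemma~\ref{main-lem:finite_rank_secular} with nonvanishing overlaps, and finally whole-space simplicity and the even--odd alternation from Theorem~\ref{supp-thm:consecutive_corrections} with $r=2$. Your extra checks, namely that no root lies above the top pole and that infinitely many positive eigenvalues exist via Lemma~\ref{main-lem:minmax_rank1}, are already covered by the exhaustion and intermediate-value parts of Lemma~\ref{main-lem:finite_rank_secular}, so they are harmless but redundant.
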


\begin{proof}
Proposition~\ref{supp-prop:supp_mu_ecf_limit} gives the weak convergence of
$\sqrt n(\psi_n-\phi_0)$ in $L^2(\varphi_\beta)$ and the covariance kernel
\eqref{supp-eq:mu_only_kernel_updated}.  The continuous mapping theorem and Theorem~\ref{supp-thm:KLquadratic}
give \eqref{supp-eq:mu_only_limit_series_updated}.

The operator identity \eqref{supp-eq:mu_only_operator_form} follows from the displayed covariance kernel.
Since $\phi_0^*$ is even and $\phi_1^*$ is odd, the two parity subspaces are invariant and
\begin{equation*}
A_{\mathrm{even}}^{(\mu)}=(A_0)_{\mathrm{even}}-\phi_0^*\otimes\phi_0^*,
\qquad
A_{\mathrm{odd}}^{(\mu)}=(A_0)_{\mathrm{odd}}-\phi_1^*\otimes\phi_1^*.
\end{equation*}
Lemma~\ref{main-lem:finite_rank_secular} gives \eqref{supp-eq:mu_only_even_secular_updated} and
\eqref{supp-eq:mu_only_odd_secular_updated}.  Proposition~\ref{main-prop:overlaps_closed_form} gives
$a_{2j+1,1}(\rho)\ne0$ for every $j$, and the strict interlacing in
\eqref{supp-eq:mu_only_interlace_odd_updated} follows from the rank-one part of
Lemma~\ref{main-lem:finite_rank_secular}. Whole-space simplicity and the ordering between the two parity spectra follow from Theorem~\ref{supp-thm:consecutive_corrections} with $r=2$. Its proof does not use the present theorem.
\end{proof}

\subsection{Known mean, estimated variance}
\label{supp-subsec:sigma_only}

Assume $X_1,\dots,X_n$ are i.i.d. $\mathcal N(0,\sigma^2)$, with known mean $0$ and $\sigma>0$. By scale invariance we take $\sigma^2=1$. Put
\begin{equation*}
S_{n,0}^2:=\frac1n\sum_{j=1}^n X_j^2,
\qquad
W_{n,j}:=\frac{X_j}{S_{n,0}}.
\end{equation*}
Let
\begin{equation*}
\psi_n(t):=\frac1n\sum_{j=1}^n \exp\!\left\{itW_{n,j}\right\}.
\end{equation*}
The statistic is
\begin{equation*}
T_{n,\beta}^{(\sigma)}
:=
n\int_{-\infty}^{\infty}|\psi_n(t)-\phi_0(t)|^2\,\varphi_\beta(t)\,dt.
\end{equation*}

\begin{theorem}\label{supp-thm:sigma_only_main_updated}
Under $H_0$,
\begin{equation}
\label{supp-eq:sigma_only_limit_series_updated}
T_{n,\beta}^{(\sigma)}
\xrightarrow[n\to\infty]{d}
T_{\infty}^{(\sigma)}
:=
\int_{-\infty}^{\infty}|Z(t)|^2\varphi_\beta(t)\,dt
\stackrel{d}{=}
\sum_{n=0}^{\infty}\lambda_n^{(\sigma)}(\rho)N_n^2.
\end{equation}
Here $Z$ is a centered real Gaussian element of $\mathcal H_\beta$, the $N_n$ are independent standard normal variables, and the $\lambda_n^{(\sigma)}$ are the positive eigenvalues of the covariance operator, with multiplicity. The kernel $\EE[Z(s)\overline{Z(t)}]$ is
\begin{equation}
\label{supp-eq:sigma_only_kernel_updated}
K^{(\sigma)}(s,t)
=
\exp\!\left\{-\frac{(s-t)^2}{2}\right\}
-
\left(1+\frac{s^2t^2}{2}\right)\exp\!\left\{-\frac{s^2+t^2}{2}\right\},
\end{equation}
or
\begin{equation}
\label{supp-eq:sigma_only_operator_form}
A^{(\sigma)}=A_0-\phi_0^*\otimes\phi_0^*-\phi_2^*\otimes\phi_2^*.
\end{equation}
The odd eigenvalues are unperturbed:
\begin{equation}
\label{supp-eq:sigma_only_odd_eigs_updated}
\lambda_{m,\mathrm{odd}}^{(\sigma)}(\rho)=\lambda_{2m-1}^{(0)}(\rho)=(1-\rho)\rho^{2m-1},
\qquad m\ge1.
\end{equation}
The even eigenvalues away from $\{(1-\rho)\rho^{2j}:j\ge0\}$ are the positive roots of
\begin{equation}
\label{supp-eq:sigma_only_det_updated}
D_{\mathrm{even}}^{(\sigma)}(\lambda)
:=
\det
\begin{pmatrix}
1+G_{22}(\lambda) & G_{20}(\lambda)\\
G_{20}(\lambda) & 1+G_{00}(\lambda)
\end{pmatrix}=0,
\end{equation}
where $\lambda>0$ is not a pole and, for $p,q\in\{0,2\}$,
\begin{equation}
\label{supp-eq:sigma_only_Gpq_updated}
G_{pq}(\lambda)
:=
\sum_{j=0}^{\infty}\frac{a_{2j,p}(\rho)a_{2j,q}(\rho)}{\lambda-(1-\rho)\rho^{2j}}.
\end{equation}
If $\{\lambda_{m,\mathrm{even}}^{(\sigma)}(\rho)\}_{m\ge1}$ are ordered decreasingly, then
\begin{equation}
\label{supp-eq:sigma_only_rank2_bracket_updated}
\lambda_{2m-2}^{(0)}(\rho)>
\lambda_{m,\mathrm{even}}^{(\sigma)}(\rho)>
\lambda_{2m+2}^{(0)}(\rho),
\qquad m\ge1.
\end{equation}
At a pole, the even-spectrum criterion of Theorem~\ref{main-thm:mu_sigma_main} applies, since the even operators are identical. Hence Theorem~\ref{main-thm:unique_pole_sequence} gives the unique, strictly ordered coincidence parameters for this even restriction as well. Every positive eigenvalue in either parity restriction is simple; the two spectra need not be disjoint.
\end{theorem}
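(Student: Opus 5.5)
The plan parallels the proofs of Theorem~\ref{main-thm:mu_sigma_main} and Theorem~\ref{supp-thm:mu_only_main_updated}.

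\textbf{Limit law and kernel.} Proposition~\ref{supp-prop:supp_sigma_ecf_limit} gives the linear expansion of $\sqrt n(\psi_n-\phi_0)$ in $\mathcal H_\beta$ and the covariance kernel \eqref{supp-eq:sigma_only_kernel_updated}. The real Hilbert-space central limit theorem, Remark~\ref{main-rem:complex_H}, the continuous mapping theorem and Theorem~\ref{supp-thm:KLquadratic} then give \eqref{supp-eq:sigma_only_limit_series_updated}.

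\textbf{Operator form and parity.} Since
\[
\tfrac{s^2t^2}{2}\phi_0(s)\phi_0(t)=\phi_2^*(s)\phi_2^*(t),
\]
the kernel yields \eqref{supp-eq:sigma_only_operator_form}. Both correction directions $\phi_0^*,\phi_2^*$ are even, so the parity subspaces are invariant and the odd restriction equals $(A_0)_{\mathrm{odd}}$. This gives \eqref{supp-eq:sigma_only_odd_eigs_updated} directly from Proposition~\ref{main-prop:Hermite_basis}, and these eigenvalues are simple within the odd subspace.

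\textbf{Even spectrum.} The even restriction $(A_0)_{\mathrm{even}}-\phi_0^*\otimes\phi_0^*-\phi_2^*\otimes\phi_2^*$ is literally the same operator as $A^{(\mu,\sigma)}_{\mathrm{even}}$. Hence the off-pole determinant \eqref{supp-eq:sigma_only_det_updated} follows from Lemma~\ref{main-lem:finite_rank_secular}, or equivalently by quoting Theorem~\ref{main-thm:mu_sigma_main}. The same identification gives the bracket \eqref{supp-eq:sigma_only_rank2_bracket_updated} (via the rank-one interlacing argument there), the pole criterion $E_j(\rho)=0$, and Theorem~\ref{main-thm:unique_pole_sequence}.

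\textbf{Simplicity within each parity.} Odd simplicity was noted above. Even simplicity follows from Theorem~\ref{main-thm:simple_full_spectrum}: the even eigenspaces of $A^{(\mu,\sigma)}$ are exactly those of $A^{(\sigma)}$, and simplicity in the full space implies simplicity in the even subspace.

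\textbf{Expected obstacle.} The only delicate point is the final clause, that the two parity spectra need not be disjoint. Nothing has to be proved beyond noting that the ordering argument of Theorem~\ref{supp-thm:consecutive_corrections} does not apply, because the correction block $\{0,2\}$ is not consecutive. The statement is supported by referring to the explicit double-eigenvalue example of Proposition~\ref{supp-prop:variance_only_double}.
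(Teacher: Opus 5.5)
Your proposal is correct and follows essentially the same route as the paper. The limit law comes from Proposition~\ref{supp-prop:supp_sigma_ecf_limit}, and the odd restriction equals $(A_0)_{\mathrm{odd}}$ because both corrections are even. Every even-spectrum assertion (secular determinant, bracket, pole criterion, simplicity) is inherited from $A^{(\sigma)}_{\mathrm{even}}=A^{(\mu,\sigma)}_{\mathrm{even}}$, and Proposition~\ref{supp-prop:variance_only_double} supplies the non-disjointness. The only cosmetic difference is in the bracket: the paper re-runs the two-step interlacing, first the rank-one bound for $(A_0)_{\mathrm{even}}-\phi_0^*\otimes\phi_0^*$ and then Lemma~\ref{main-lem:minmax_rank1}, whereas you quote it from Theorem~\ref{main-thm:mu_sigma_main}.
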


\begin{proof}
Proposition~\ref{supp-prop:supp_sigma_ecf_limit} gives the weak convergence, the covariance
kernel \eqref{supp-eq:sigma_only_kernel_updated}, and hence \eqref{supp-eq:sigma_only_limit_series_updated}
by the continuous mapping theorem and Theorem~\ref{supp-thm:KLquadratic}.

The operator identity \eqref{supp-eq:sigma_only_operator_form} follows from the kernel
\eqref{supp-eq:sigma_only_kernel_updated}.  Since $\phi_0^*$ and $\phi_2^*$ are even, the odd
restriction of $A^{(\sigma)}$ is $(A_0)_{\mathrm{odd}}$, giving
\eqref{supp-eq:sigma_only_odd_eigs_updated}.  On the even subspace, Lemma~\ref{main-lem:finite_rank_secular}
gives \eqref{supp-eq:sigma_only_det_updated}--\eqref{supp-eq:sigma_only_Gpq_updated}.

Let
\begin{equation*}
\widetilde A_{\mathrm{even}}:=(A_0)_{\mathrm{even}}-\phi_0^*\otimes\phi_0^*.
\end{equation*}
The rank-one interlacing in Lemma~\ref{main-lem:finite_rank_secular}, together with
$a_{2j,0}(\rho)\ne0$, gives
\begin{equation*}
\lambda_{2m-2}^{(0)}(\rho)>
\lambda_{m,\mathrm{even}}^{(\mu)}(\rho)>
\lambda_{2m}^{(0)}(\rho),
\qquad m\ge1.
\end{equation*}
Since $A_{\mathrm{even}}^{(\sigma)}=\widetilde A_{\mathrm{even}}-\phi_2^*\otimes\phi_2^*$,
Lemma~\ref{main-lem:minmax_rank1} gives
\begin{equation*}
\lambda_{m,\mathrm{even}}^{(\mu)}(\rho)
\ge
\lambda_{m,\mathrm{even}}^{(\sigma)}(\rho)
\ge
\lambda_{m+1,\mathrm{even}}^{(\mu)}(\rho).
\end{equation*}
Combining the last two displays yields \eqref{supp-eq:sigma_only_rank2_bracket_updated}. The even restriction agrees with the fully standardized one, so Theorems~\ref{main-thm:simple_full_spectrum} and \ref{main-thm:unique_pole_sequence} give its simplicity and pole classification. The odd restriction has distinct geometric eigenvalues. Proposition~\ref{supp-prop:variance_only_double} proves that the combined spectrum can contain an eigenvalue of multiplicity two.
\end{proof}

\section{One-parameter projection expansions}
\label{supp-sec:one_parameter_projection}
\setcounter{equation}{0}

This section gives the one-parameter versions of Lemma~\ref{main-lem:data_projection_empirical_sums}.
The notation is that of Section~\ref{main-sec:data_domain_projection} of the main paper.  Thus
$X_1,X_2,\ldots$ are i.i.d. standard normal variables and
\begin{equation*}
\overline X_n=\frac1n\sum_{i=1}^n X_i,
\qquad
S_{n,0}^2=\frac1n\sum_{i=1}^n X_i^2.
\end{equation*}
We use
\begin{equation*}
R_{n,i}^{(\mu)}:=X_i-\overline X_n,
\qquad
R_{n,i}^{(\sigma)}:=\frac{X_i}{S_{n,0}}.
\end{equation*}
The projections $\Pi_\mu$ and $\Pi_\sigma$ are defined in
\eqref{main-eq:data_Pi_mu} and \eqref{main-eq:data_Pi_sigma}.

\begin{lemma}\label{supp-lem:supp_one_parameter_projection}
Let $r_0,\ldots,r_M$ be real-valued functions on $\mathbb R$.  Assume that each $r_k$ is
twice continuously differentiable and that $r_k$, $r_k'$ and $r_k''$ have at most polynomial
growth.  Then, for each $k=0,\ldots,M$,
\begin{align}
\frac1{\sqrt n}\sum_{i=1}^n r_k(R_{n,i}^{(\mu)})
&=
\frac1{\sqrt n}\sum_{i=1}^n(\Pi_\mu r_k)(X_i)+o_{\mathbb P}(1),
\label{supp-eq:supp_projection_mu}\\
\frac1{\sqrt n}\sum_{i=1}^n r_k(R_{n,i}^{(\sigma)})
&=
\frac1{\sqrt n}\sum_{i=1}^n(\Pi_\sigma r_k)(X_i)+o_{\mathbb P}(1).
\label{supp-eq:supp_projection_sigma}
\end{align}
The remainders are uniform over $k=0,\ldots,M$.  Moreover, for $\alpha\in\{\mu,\sigma\}$,
\begin{equation*}
\frac1n\sum_{i=1}^n r_k^2(R_{n,i}^{(\alpha)})
\xrightarrow[n\to\infty]{\mathbb P}
\mathbb E[r_k^2(X)],
\qquad k=0,\ldots,M.
\end{equation*}
\end{lemma}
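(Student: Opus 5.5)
The proof is a second-order Taylor expansion around $X_i$, with the linear terms identified through the law of large numbers; the polynomial growth assumptions control the remainder. Each case is a single-parameter version of the argument behind Lemma~\ref{main-lem:data_projection_empirical_sums}. Fix $k$; uniformity over $k=0,\ldots,M$ is automatic because there are finitely many indices.

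For the location case, put $\delta_n=\overline X_n=O_{\PP}(n^{-1/2})$. Expanding $r_k(X_i-\delta_n)$ gives
\begin{equation*}
r_k(X_i-\delta_n)=r_k(X_i)-\delta_n r_k'(X_i)+\tfrac12\delta_n^2 r_k''(\xi_i),
\end{equation*}
with $\xi_i$ between $X_i$ and $X_i-\delta_n$. Polynomial growth gives $|r_k''(\xi_i)|\le C(1+|X_i|^p+|\delta_n|^p)$. Hence
\begin{equation*}
n^{-1/2}\sum_i\delta_n^2|r_k''(\xi_i)|=O_{\PP}(n^{-1/2})=o_{\PP}(1).
\end{equation*}
For the linear term, $n^{-1/2}\sum_i\delta_n r_k'(X_i)=\sqrt n\,\overline X_n\cdot n^{-1}\sum_i r_k'(X_i)$. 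By the law of large numbers and Slutsky, this equals $\sqrt n\,\overline X_n\,\EE[r_k'(X)]+o_{\PP}(1)$. Gaussian integration by parts (Stein's identity) gives
\begin{equation*}
\EE[r_k'(X)]=\EE[Xr_k(X)]=\langle r_k,q_1\rangle_\Phi,
\end{equation*}
and the boundary terms vanish by polynomial growth. Since $\sqrt n\,\overline X_n=n^{-1/2}\sum_i q_1(X_i)$, the linear term is $n^{-1/2}\sum_i\langle r_k,q_1\rangle_\Phi q_1(X_i)$, and subtracting it yields \eqref{supp-eq:supp_projection_mu}.

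For the scale case, write
\begin{equation*}
X_i/S_{n,0}=X_i+\eta_nX_i,\qquad \eta_n=S_{n,0}^{-1}-1.
\end{equation*}
The delta method gives $\sqrt n\,\eta_n=-\tfrac12 n^{-1/2}\sum_i(X_i^2-1)+o_{\PP}(1)=-2^{-1/2}n^{-1/2}\sum_iq_2(X_i)+o_{\PP}(1)$. Taylor expansion produces the linear term $\eta_n\sum_iX_ir_k'(X_i)$. The remainder is bounded by $\eta_n^2\sum_iX_i^2|r_k''(\xi_i)|$; on the event $|\eta_n|\le1/2$, which has probability tending to one, polynomial growth makes this remainder $O_{\PP}(n\eta_n^2)=O_{\PP}(1)$, so after division by $\sqrt n$ it is $o_{\PP}(1)$. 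By the law of large numbers, $n^{-1}\sum_iX_ir_k'(X_i)\to\EE[Xr_k'(X)]$. Two integrations by parts give
\begin{equation*}
\EE[Xr_k'(X)]=\EE[(X^2-1)r_k(X)]=\sqrt2\,\langle r_k,q_2\rangle_\Phi.
\end{equation*}
Multiplying, the linear term is $-\langle r_k,q_2\rangle_\Phi\, n^{-1/2}\sum_iq_2(X_i)+o_{\PP}(1)$, which proves \eqref{supp-eq:supp_projection_sigma}. This sign and constant bookkeeping, together with the integration-by-parts identities under mere polynomial growth, is the only delicate point. The identities can be checked on the Hermite directions $q_1,q_2$ themselves.

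For the quadratic averages, apply the mean value theorem to $r_k^2$ in both cases:
\begin{equation*}
|r_k^2(R_{n,i}^{(\alpha)})-r_k^2(X_i)|\le|R_{n,i}^{(\alpha)}-X_i|\,\sup|2r_kr_k'|.
\end{equation*}
The supremum is taken over the segment, so it is bounded by a polynomial in $|X_i|$ and the $O_{\PP}(1)$ parameters. Moreover $|R_{n,i}^{(\alpha)}-X_i|\le|\delta_n|$, respectively $\le|\eta_n||X_i|$, and both are $O_{\PP}(n^{-1/2})$ times a polynomial in $|X_i|$. So the averaged difference is $o_{\PP}(1)$, and the law of large numbers for $n^{-1}\sum_ir_k^2(X_i)$ gives the stated limit $\EE[r_k^2(X)]$.
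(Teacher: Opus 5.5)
Your proposal is correct and follows essentially the same route as the paper: a second-order Taylor expansion with the remainder controlled by polynomial growth, the law of large numbers with Slutsky for the linear terms, and Gaussian integration by parts to identify the coefficients as $\langle r_k,q_1\rangle_\Phi$ and $\sqrt2\,\langle r_k,q_2\rangle_\Phi$. The differences are cosmetic: you expand in $S_{n,0}^{-1}-1$ rather than $S_{n,0}-1$, and you treat the squared averages with a first-order mean-value bound instead of rerunning the Taylor argument on $r_k^2$.
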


\begin{proof}
For mean estimation, Taylor's formula gives
\begin{equation*}
r_k(X_i-\overline X_n)
=
r_k(X_i)-\overline X_n r_k'(X_i)+\varepsilon_{n,i,k}^{(\mu)}.
\end{equation*}
Since $r_k''$ has polynomial growth and $\overline X_n=O_{\mathbb P}(n^{-1/2})$,
\begin{equation*}
\frac1{\sqrt n}\sum_{i=1}^n\varepsilon_{n,i,k}^{(\mu)}=o_{\mathbb P}(1),
\end{equation*}
uniformly over $k=0,\ldots,M$.  Hence
\begin{align*}
\frac1{\sqrt n}\sum_{i=1}^n r_k(R_{n,i}^{(\mu)})
&=
\frac1{\sqrt n}\sum_{i=1}^n r_k(X_i)
-
\sqrt n\,\overline X_n
\left(\frac1n\sum_{i=1}^n r_k'(X_i)\right)
+o_{\mathbb P}(1)\\
&=
\frac1{\sqrt n}\sum_{i=1}^n
\left\{r_k(X_i)-\mathbb E[r_k'(X)]X_i\right\}
+o_{\mathbb P}(1).
\end{align*}
By Gaussian integration by parts,
\begin{equation*}
\mathbb E[r_k'(X)]=\mathbb E[Xr_k(X)]=\langle r_k,q_1\rangle_\Phi.
\end{equation*}
This proves \eqref{supp-eq:supp_projection_mu}.

For variance estimation with known mean,
\begin{equation*}
\sqrt n(S_{n,0}-1)
=
\frac1{2\sqrt n}\sum_{i=1}^n(X_i^2-1)+o_{\mathbb P}(1).
\end{equation*}
On the event $|S_{n,0}-1|\le1/2$,
\begin{equation*}
\frac{X_i}{S_{n,0}}=X_i-X_i(S_{n,0}-1)+\Delta_{n,i},
\qquad
|\Delta_{n,i}|\le C|X_i|(S_{n,0}-1)^2.
\end{equation*}
Taylor's formula and the polynomial-growth assumptions give, uniformly over $k=0,\ldots,M$,
\begin{equation*}
r_k(R_{n,i}^{(\sigma)})
=
r_k(X_i)-X_i r_k'(X_i)(S_{n,0}-1)+\varepsilon_{n,i,k}^{(\sigma)},
\end{equation*}
where, for some $C,L<\infty$,
\begin{equation*}
|\varepsilon_{n,i,k}^{(\sigma)}|
\le
C(1+|X_i|^L)(S_{n,0}-1)^2.
\end{equation*}
Since $S_{n,0}-1=O_{\mathbb P}(n^{-1/2})$ and $n^{-1}\sum_i(1+|X_i|^L)=O_{\mathbb P}(1)$,
\begin{equation*}
\frac1{\sqrt n}\sum_{i=1}^n\varepsilon_{n,i,k}^{(\sigma)}=o_{\mathbb P}(1).
\end{equation*}
Consequently,
\begin{align*}
\frac1{\sqrt n}\sum_{i=1}^n r_k(R_{n,i}^{(\sigma)})
&=
\frac1{\sqrt n}\sum_{i=1}^n r_k(X_i)
-
\sqrt n(S_{n,0}-1)
\left(\frac1n\sum_{i=1}^n X_i r_k'(X_i)\right)
+o_{\mathbb P}(1)\\
&=
\frac1{\sqrt n}\sum_{i=1}^n
\left\{r_k(X_i)-\frac12\mathbb E[Xr_k'(X)](X_i^2-1)\right\}
+o_{\mathbb P}(1).
\end{align*}
Since
\begin{equation*}
\mathbb E[Xr_k'(X)]=\mathbb E[(X^2-1)r_k(X)],
\end{equation*}
we obtain \eqref{supp-eq:supp_projection_sigma}.  The convergence of the empirical second
moments follows by applying the same argument to $r_k^2$.
\end{proof}
\section{Infinite-rank details and kernel examples}
\label{supp-sec:infinite_rank_and_examples}
\setcounter{equation}{0}

The main paper states the tail condition and a sufficient summability criterion in Propositions~\ref{main-prop:data_tail_sufficient_main} and \ref{main-prop:data_infinite_rank_limit_main}.  We repeat the notation in order to verify the condition in the examples used in the paper.  Let $K$ be a symmetric canonical kernel with expansion in $L^2(\Phi\otimes\Phi)$.  We assume that each finite collection of functions appearing below satisfies the smoothness assumptions of Lemma~\ref{main-lem:data_projection_empirical_sums}.
\begin{equation}
\label{supp-eq:data_infinite_kernel_expansion}
K(x,y)=\sum_{k=0}^{\infty}\gamma_k r_k(x)r_k(y),
\end{equation}
where $\{r_k\}_{k\ge0}$ is an orthonormal system of real, centered functions in $L^2(\Phi)$.  Let
\begin{equation*}
K_M(x,y):=\sum_{k=0}^{M}\gamma_k r_k(x)r_k(y).
\end{equation*}
For $M<L$ put $K_{M,L}(x,y):=\sum_{k=M+1}^{L}\gamma_k r_k(x)r_k(y)$.  For full-standardization residuals, the passage from $K_M$ to $K$ follows if
\begin{equation}
\label{supp-eq:data_tail_condition_probability}
\lim_{M\to\infty}
\limsup_{n\to\infty}
\mathbb P\left(\left|nU_n^{(\mu,\sigma)}(K-K_M)\right|>\varepsilon\right)=0,
\qquad \varepsilon>0.
\end{equation}
For the full-standardization residuals set
\begin{equation*}
S_{n,k}:=\frac1{\sqrt n}\sum_{i=1}^n r_k(R_{n,i}),
\qquad
Q_{n,k}:=\frac1n\sum_{i=1}^n r_k^2(R_{n,i}),
\end{equation*}
where $R_{n,i}$ is defined in Section~\ref{main-sec:data_domain_projection}.  Define
\begin{equation*}
A_k(n_0):=
\sup_{n\ge n_0}
\left\{
\mathbb E[S_{n,k}^2]+\mathbb E[Q_{n,k}]
\right\},
\qquad n_0\ge4.
\end{equation*}

\begin{lemma}
\label{supp-lem:data_tail_sufficient}
If
\begin{equation}
\label{supp-eq:data_tail_sufficient_condition}
\sum_{k=0}^{\infty}|\gamma_k|A_k(n_0)<\infty
\end{equation}
for some $n_0\ge4$, then \eqref{supp-eq:data_tail_condition_probability} holds.
\end{lemma}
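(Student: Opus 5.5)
This lemma restates the first part of Proposition~\ref{main-prop:data_tail_sufficient_main}, and I would prove it the same way: derive a uniform $L^1$ bound on the tail statistics and finish with Markov's inequality.

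\emph{Finite-rank step.} For $L>M$, the kernel $K_{M,L}$ has finite rank. Expanding the square of each $\sum_i r_k(R_{n,i})$ gives the exact identity \eqref{main-eq:data_exact_U_identity}:
\begin{equation*}
nU_n^{(\mu,\sigma)}(K_{M,L})=\frac{n}{n-1}\sum_{k=M+1}^{L}\gamma_k\{S_{n,k}^2-Q_{n,k}\}.
\end{equation*}
Since $n/(n-1)\le 4/3\le 2$ and $|S_{n,k}^2-Q_{n,k}|\le S_{n,k}^2+Q_{n,k}$, taking expectations yields
\begin{equation*}
\sup_{n\ge n_0}\EE|nU_n^{(\mu,\sigma)}(K_{M,L})|\le 2\sum_{k=M+1}^{L}|\gamma_k|A_k(n_0).
\end{equation*}
By \eqref{supp-eq:data_tail_sufficient_condition}, these partial tails are Cauchy in $L^1$, uniformly in $n\ge n_0$.

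\emph{Identification step.} This is the only delicate point. I must show that, for each fixed $n$, the $L^1$ limit as $L\to\infty$ is the statistic $nU_n^{(\mu,\sigma)}(K-K_M)$ evaluated at a measurable representative. The kernels satisfy $K_{M,L}\to K-K_M$ in $L^2(\Phi\otimes\Phi)$, hence in $\Phi\otimes\Phi$-measure. For $n\ge4$, the law of each pair $(R_{n,i},R_{n,j})$ with $i\ne j$ is absolutely continuous with respect to $\Phi\otimes\Phi$, via the explicit pair density \eqref{supp-eq:residual_pair_density}. Convergence in $\Phi\otimes\Phi$-measure therefore transfers to convergence in probability of $K_{M,L}(R_{n,i},R_{n,j})$. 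The absolute continuity works through a subsequence converging almost everywhere, so no bounded density ratio is needed. Since the $U$-statistic is a finite sum over pairs, $nU_n^{(\mu,\sigma)}(K_{M,L})\to nU_n^{(\mu,\sigma)}(K-K_M)$ in probability. Uniqueness of limits identifies this with the $L^1$ limit.

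\emph{Conclusion.} Fatou's lemma, or directly the $L^1$ convergence, gives
\begin{equation*}
\sup_{n\ge n_0}\EE|nU_n^{(\mu,\sigma)}(K-K_M)|\le 2\sum_{k>M}|\gamma_k|A_k(n_0).
\end{equation*}
The right-hand side tends to $0$ as $M\to\infty$. Markov's inequality then bounds $\limsup_n\PP(|nU_n^{(\mu,\sigma)}(K-K_M)|>\varepsilon)$ by $\varepsilon^{-1}$ times this quantity, which proves \eqref{supp-eq:data_tail_condition_probability}.
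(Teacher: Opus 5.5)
Your proposal is correct and follows the paper's proof step for step. The exact finite-rank identity gives the uniform $L^1$ Cauchy bound $2\sum_{k=M+1}^{L}|\gamma_k|A_k(n_0)$, the $L^1$ limit is identified with $nU_n^{(\mu,\sigma)}(K-K_M)$ through absolute continuity of the residual-pair laws, and Markov's inequality finishes. Your subsequence (almost-everywhere) argument for transferring convergence in $\Phi\otimes\Phi$-measure to each pair law is an equally valid substitute for the paper's truncation bound $\nu_n\{|K_L-K|>\delta\}\le B\delta^{-2}\|K_L-K\|_{L^2(\mu)}^2+\int_{\{h_n>B\}}h_n\,d\mu$.
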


\begin{proof}
For $M<L$, the exact identity used in the proof of Theorem~\ref{main-thm:data_finite_rank_projection_limit} gives
\begin{equation*}
nU_n^{(\mu,\sigma)}(K_{M,L})
=
\frac{n}{n-1}\sum_{k=M+1}^{L}\gamma_k\{S_{n,k}^2-Q_{n,k}\}.
\end{equation*}
Therefore, for $n\ge n_0$,
\begin{equation*}
\mathbb E\left|nU_n^{(\mu,\sigma)}(K_{M,L})\right|
\le
2\sum_{k=M+1}^{L}|\gamma_k|A_k(n_0).
\end{equation*}
The finite partial sums are Cauchy in $L^1$, uniformly over $n\ge n_0$.  Their $L^1$ limit equals $nU_n^{(\mu,\sigma)}(K-K_M)$ by the absolute-continuity argument in Proposition~\ref{main-prop:data_tail_sufficient_main}, and
\begin{equation*}
\limsup_{n\to\infty}
\mathbb E\left|nU_n^{(\mu,\sigma)}(K-K_M)\right|
\le
2\sum_{k=M+1}^{\infty}|\gamma_k|A_k(n_0).
\end{equation*}
The right-hand side tends to zero by \eqref{supp-eq:data_tail_sufficient_condition}; Markov's inequality proves \eqref{supp-eq:data_tail_condition_probability}.
\end{proof}

For a non-polynomial example, consider
\begin{equation*}
r_k(x)=\sqrt2\cos(k\pi\Phi(x)),\qquad k\ge1.
\end{equation*}
These functions are centered and orthonormal because $\Phi(X)$ is uniform on $(0,1)$. They satisfy $\|r_k\|_\infty\le\sqrt2$, $\|r_k'\|_\infty\le Ck$ and $\|r_k''\|_\infty\le Ck^2$. The constants follow from boundedness of $f_0$ and $f_0'$.

The required bounds are bounds on expectations, not only remainder estimates in probability. Let $R$ denote any of the three residual arrays and set $F_{n,k}=n^{-1/2}\sum_i r_k(R_{n,i})$. For full standardization, write $X_i=\overline X_n+S_nR_{n,i}$. Independence of the mean, radius and direction gives
\begin{equation*}
\EE[(R_{n,1}-X_1)^2]=n^{-1}+\EE[(S_n-1)^2]\le 3/n.
\end{equation*}
Here $(\sqrt x-1)^2\le(x-1)^2$ and $nS_n^2\sim\chi^2_{n-1}$ give the bound. For mean-only and variance-only residuals the corresponding bounds are respectively $1/n$ and $2/n$. Consequently, since $\EE[r_k(X)]=0$,
\begin{equation*}
|\EE[F_{n,k}]|\le\sqrt n\|r_k'\|_\infty\EE|R_{n,1}-X_1|
\le\sqrt3\|r_k'\|_\infty.
\end{equation*}
For $n\ge4$, the residual Jacobians and the Gaussian Poincar\'e inequality give $\Var(F_{n,k})\le4\|r_k'\|_\infty^2$. For the full and variance-only arrays this uses respectively $\EE[S_n^{-2}]=n/(n-3)\le4$ and $\EE[S_{n,0}^{-2}]=n/(n-2)\le2$; the centered-array Jacobian has norm one. For the radial arrays, the cutoff construction in Section~\ref{supp-sec:full_derivative_tail} applies in dimensions $n-1$ and $n$, respectively; its squared-gradient contribution is $O(\varepsilon^{n-3})$ and $O(\varepsilon^{n-2})$. The centered array is a linear function of $X$ and needs no radial cutoff. Thus, uniformly over the three arrays,
\begin{equation*}
\sup_{n\ge4}\left\{\EE[F_{n,k}^2]+\EE\left[n^{-1}\sum_i r_k^2(R_{n,i})\right]\right\}
\le2+7\|r_k'\|_\infty^2\le C(1+k^2).
\end{equation*}
It follows that $\sum_{k\ge1}k^2|\gamma_k|<\infty$ is sufficient. In particular, the condition holds for $\gamma_k=a^k$, $0<a<1$. This provides a non-polynomial example with a uniform expectation bound.

\section{Tail condition for the odd part of the rho-kernel under full standardization}
\label{supp-sec:tail_full_Krho}
\setcounter{equation}{0}

\begin{lemma}\label{supp-lem:data_standardized_residual_density_bound}
For $n\ge4$, the density of $R_{n,1}$ is bounded by $Cf_0(x)$, with a numerical constant
$C$ independent of $n$.
\end{lemma}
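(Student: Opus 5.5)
The plan is to compute the density of $R_{n,1}$ exactly from the spherical representation and then compare it with $f_0$ by elementary inequalities. As noted in Section~\ref{main-subsec:data_infinite_rank}, the residual vector $(R_{n,1},\ldots,R_{n,n})$ is uniform on the sphere of radius $\sqrt n$ in the $(n-1)$-dimensional subspace orthogonal to the constant vector. The coordinate functional $e_1$ restricted to that subspace has norm $\sqrt{(n-1)/n}$. The marginal of a coordinate of a uniform point on a sphere of radius $a$ in dimension $d=n-1$ is proportional to $(1-u^2/a^2)^{(d-3)/2}$ on $|u|<a$. With $a=\sqrt n\cdot\sqrt{(n-1)/n}=\sqrt{n-1}$, this gives
\begin{equation*}
f_n(x)=c_n\Bigl(1-\frac{x^2}{n-1}\Bigr)^{(n-4)/2}\boldsymbol 1_{\{|x|<\sqrt{n-1}\}},
\qquad
c_n=\frac{\Gamma((n-1)/2)}{\sqrt{\pi(n-1)}\,\Gamma((n-2)/2)}.
\end{equation*}
The exponent is nonnegative exactly when $n\ge4$, which explains the restriction in the statement; at $n=4$ the density is uniform on $(-\sqrt3,\sqrt3)$.

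For the shape factor I would use $\log(1-y)\le-y$ to get
\begin{equation*}
\Bigl(1-\frac{x^2}{n-1}\Bigr)^{(n-4)/2}\le\exp\Bigl\{-\frac{(n-4)x^2}{2(n-1)}\Bigr\}
=e^{-x^2/2}\exp\Bigl\{\frac{3x^2}{2(n-1)}\Bigr\}.
\end{equation*}
On the support, $x^2<n-1$, so the last factor is at most $e^{3/2}$.

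For the constant, a standard Gamma-ratio bound (Gautschi or Wendel) gives $\Gamma(z+1/2)/\Gamma(z)\le\sqrt z$. Taking $z=(n-2)/2$ yields $c_n\le\sqrt{(n-2)/(2\pi(n-1))}\le(2\pi)^{-1/2}$. Combining the two bounds gives $f_n\le e^{3/2}f_0$ for all $n\ge4$, with $C=e^{3/2}$.

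The main obstacle is not the inequality but a clean justification of the exact marginal density. This includes the projection of the sphere onto one coordinate with the correct radius $\sqrt{n-1}$. Here I would write $e_1=\sqrt{(n-1)/n}\,v$ with $v$ a unit vector in the subspace and use rotation invariance to reduce to a single coordinate of a uniform point on $S^{n-2}$ of radius $\sqrt n$. I would also check the $n=4$ edge case separately. The uniform-on-sphere property itself follows from the independence of the radius and direction of the centered Gaussian vector, as already used in the paper.
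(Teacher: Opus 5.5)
Your proposal is correct and follows essentially the same route as the paper: the exact spherical marginal $f_n(x)=c_n(1-x^2/(n-1))_+^{(n-4)/2}$; then $\log(1-u)\le -u$, which bounds the ratio to $f_0$ by the factor $\exp\{3x^2/(2(n-1))\}\le e^{3/2}$; and the Gamma-ratio bound $\Gamma(z+1/2)\le\sqrt z\,\Gamma(z)$ at $z=(n-2)/2$, which gives $\sqrt{2\pi}\,c_n\le1$ and hence $C=e^{3/2}$. The only differences are cosmetic: the paper obtains the Gamma-ratio bound from log-convexity of $\Gamma$ rather than citing Gautschi or Wendel, and it takes the marginal density as known, whereas you sketch its derivation through the coordinate functional of norm $\sqrt{(n-1)/n}$ on the sphere of radius $\sqrt n$.
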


\begin{proof}
The random variable $R_{n,1}$ has density
\begin{equation*}
f_n(x)=c_n\left(1-\frac{x^2}{n-1}\right)_+^{(n-4)/2},
\qquad |x|<\sqrt{n-1}.
\end{equation*}
For $0\le u<1$, the inequality $\log(1-u)\le -u$ gives
\begin{equation*}
\left(1-\frac{x^2}{n-1}\right)^{(n-4)/2}\exp\!\left\{\frac{x^2}{2}\right\}
\le
\exp\!\left\{\frac{3x^2}{2(n-1)}\right\}\le \exp\!\left\{3/2\right\}.
\end{equation*}
Here $c_n=\Gamma\{(n-1)/2\}/[\sqrt{\pi(n-1)}\Gamma\{(n-2)/2\}]$. Log-convexity of the gamma function gives $\Gamma(z+1/2)^2\le z\Gamma(z)^2$. With $z=(n-2)/2$, this yields $\sqrt{2\pi}c_n\le\sqrt{(n-2)/(n-1)}\le1$. Thus one may take $C=\exp\!\{3/2\}$, also outside the support where the density is zero.
\end{proof}

\begin{lemma}\label{supp-lem:data_g_derivative_bound}
For each fixed $0<\rho<1$ there exists $C_\rho<\infty$ such that
\begin{equation*}
\int_{-\infty}^{\infty}\{g_m'(x;\rho)\}^2f_0(x)\,dx
\le C_\rho(m+1),
\qquad m=0,1,2,\ldots .
\end{equation*}
\end{lemma}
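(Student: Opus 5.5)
The plan is to compute $g_m'$ explicitly and reduce its $L^2(f_0)$ norm to Gaussian integrals of products of Hermite functions. Write
\begin{equation*}
g_m(x)=c_\rho E(x)h_m(\gamma x),\qquad E(x)=\exp\!\left\{-\tfrac{\rho}{2(1-\rho)}x^2\right\},\qquad \gamma=\sqrt{\tfrac{1+\rho}{1-\rho}}.
\end{equation*}
Using $h_m'=\sqrt m\,h_{m-1}$ for the probabilists' normalization, we get
\begin{equation*}
g_m'(x)=c_\rho E(x)\Bigl\{\gamma\sqrt m\,h_{m-1}(\gamma x)-\tfrac{\rho}{1-\rho}\,x\,h_m(\gamma x)\Bigr\}.
\end{equation*}
By $(a+b)^2\le2a^2+2b^2$, it suffices to bound separately
\begin{equation*}
I_1=m\int E^2h_{m-1}^2(\gamma x)f_0\,dx,\qquad I_2=\int E^2x^2h_m^2(\gamma x)f_0\,dx,
\end{equation*}
each by $C_\rho(m+1)$.

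\textbf{Key reduction.} For every polynomial $p$, orthonormality of $\{g_n\}$ in $L^2(f_0)$ (Proposition~\ref{main-prop:Hermite_basis}) gives
\begin{equation*}
c_\rho^2\int E^2 h_n^2(\gamma x)f_0(x)\,dx=\|g_n\|_\Phi^2=1.
\end{equation*}
This immediately yields $I_1\le c_\rho^{-2}m$. For $I_2$, substitute $y=\gamma x$, so that $x=y/\gamma$. The recurrence $y\,h_m(y)=\sqrt{m+1}\,h_{m+1}(y)+\sqrt m\,h_{m-1}(y)$ then shows that
\begin{equation*}
x\,g_m(x)=\gamma^{-1}\bigl\{\sqrt{m+1}\,g_{m+1}(x)+\sqrt m\,g_{m-1}(x)\bigr\},
\end{equation*}
because the Gaussian factor $E$ is common to all $g_n$. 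Hence, by orthonormality of the $g_n$,
\begin{equation*}
c_\rho^2 I_2=\|x\,g_m\|_\Phi^2=\gamma^{-2}(2m+1).
\end{equation*}
Combining,
\begin{equation*}
\|g_m'\|_\Phi^2\le 2\gamma^2 m+2\Bigl(\tfrac{\rho}{1-\rho}\Bigr)^2\gamma^{-2}(2m+1),
\end{equation*}
which is at most $C_\rho(m+1)$.

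An even cleaner route, which I would present, is to write $g_m'$ itself as a combination of $g_{m-1}$ and $g_{m+1}$, by expressing $x h_m$ via the recurrence. This gives $g_m'=\alpha_m g_{m-1}+\beta_m g_{m+1}$ with $|\alpha_m|,|\beta_m|=O_\rho(\sqrt{m+1})$, and the bound then follows from Pythagoras.

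\textbf{Main obstacle.} The only delicate point is bookkeeping. One must check that the weight $E^2f_0$ appearing in these integrals is exactly the one for which the $g_n$ are orthonormal; it is, since this weight is built into $\|g_n\|_\Phi$. One must also confirm that the constants coming from $\gamma$ and $\rho/(1-\rho)$ depend only on $\rho$. No delicate asymptotics of Hermite functions are needed.
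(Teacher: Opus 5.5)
Your proof is correct and follows the paper's argument essentially step for step. It uses the same derivative formula, the same split $(a+b)^2\le2a^2+2b^2$, and the Hermite three-term recurrence for the $x^2$ integral; your appeal to orthonormality of the $g_n$ in $L^2(\Phi)$ is just the paper's substitution $y=\gamma x$ (the paper writes $D_\rho$ for your $\gamma$) with $c_\rho^2\exp\{-\rho x^2/(1-\rho)\}f_0(x)\,dx=f_0(y)\,dy$.

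Your ``cleaner route'' also works, and it gives the exact value: $g_m'=\{(1-\rho)\gamma\}^{-1}\{\sqrt m\,g_{m-1}-\rho\sqrt{m+1}\,g_{m+1}\}$, hence $\|g_m'\|_\Phi^2=\{m(1+\rho^2)+\rho^2\}/(1-\rho^2)$.
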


\begin{proof}
Put
\begin{equation*}
A_\rho:=\frac{\rho}{2(1-\rho)},
\qquad
D_\rho:=\left(\frac{1+\rho}{1-\rho}\right)^{1/2},
\qquad
c_\rho:=\left(\frac{1+\rho}{1-\rho}\right)^{1/4}.
\end{equation*}
Then
\begin{equation*}
g_m(x;\rho)=c_\rho\exp\!\left\{-A_\rho x^2\right\}h_m(D_\rho x).
\end{equation*}
Since $h_m'=\sqrt m\,h_{m-1}$, with the second term absent when $m=0$,
\begin{equation*}
g_m'(x;\rho)
=
c_\rho\exp\!\left\{-A_\rho x^2\right\}
\left\{-2A_\rho xh_m(D_\rho x)+D_\rho\sqrt m\,h_{m-1}(D_\rho x)\right\}.
\end{equation*}
By $(a+b)^2\le2a^2+2b^2$, it is enough to bound the two resulting integrals.  With
$y=D_\rho x$, the identity
\begin{equation*}
c_\rho^2\exp\!\left\{-2A_\rho x^2\right\}f_0(x)\,dx
=
f_0(y)\,dy
\end{equation*}
follows from $D_\rho^2=(1+\rho)/(1-\rho)$ and $2A_\rho=\rho/(1-\rho)$.  Hence
\begin{equation*}
c_\rho^2\int x^2\exp\!\left\{-2A_\rho x^2\right\}h_m^2(D_\rho x)f_0(x)\,dx
=
D_\rho^{-2}\int y^2h_m^2(y)f_0(y)\,dy.
\end{equation*}
The Hermite recurrence $yh_m(y)=\sqrt{m+1}h_{m+1}(y)+\sqrt m h_{m-1}(y)$ gives
\begin{equation*}
\int y^2h_m^2(y)f_0(y)\,dy=2m+1.
\end{equation*}
For $m\ge1$, also
\begin{equation*}
c_\rho^2\int\exp\!\left\{-2A_\rho x^2\right\}h_{m-1}^2(D_\rho x)f_0(x)\,dx=1,
\end{equation*}
and for $m=0$ the corresponding term is zero.

Combining the preceding bounds gives
\begin{equation*}
\int\{g_m'(x;\rho)\}^2f_0(x)\,dx
\le C_\rho(m+1),
\end{equation*}
where $C_\rho<\infty$ depends only on $\rho$.
\end{proof}

\begin{proposition}\label{supp-prop:data_Krho_tail_verified}
Let $r_j=g_{2j+1}(\cdot;\rho)$ and $\gamma_j=\rho^{2j+1}$, $j=0,1,2,\ldots$.  For the
fully standardized residuals, condition \eqref{supp-eq:data_tail_sufficient_condition} holds with
$n_0=4$.  Hence \eqref{supp-eq:data_tail_condition_probability} holds for the odd part of $K_\rho$.
\end{proposition}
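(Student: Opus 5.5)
The plan is to bound $A_j(4)$ for $r_j=g_{2j+1}(\cdot;\rho)$ by a constant times $j+1$, using Proposition~\ref{main-prop:derivative_tail_criterion}, and then sum against the geometric weights $\gamma_j=\rho^{2j+1}$.

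\emph{Hypotheses on $r_j$.} The functions $r_j$ are orthonormal in $L^2(\Phi)$ by Proposition~\ref{main-prop:Hermite_basis}. They are odd, hence centered. Each is a Gaussian factor times a polynomial, so it is $C^\infty$, and $r_j$, $r_j'$, $r_j''$ are bounded and in particular of polynomial growth. The smoothness assumptions of Lemma~\ref{main-lem:data_projection_empirical_sums} therefore hold, and $r_j,r_j'\in L^2(\Phi)$.

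\emph{Uniform bound on $A_j(4)$.} Applying \eqref{main-eq:Gaussian_derivative_bound} to $r=r_j$ gives
\begin{equation*}
A_j(4)\le C\{1+\|g_{2j+1}'\|_\Phi^2\}.
\end{equation*}
Lemma~\ref{supp-lem:data_g_derivative_bound} with $m=2j+1$ bounds the derivative norm by $C_\rho(2j+2)$. Hence $A_j(4)\le C'_\rho(j+1)$.

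If one prefers not to rely on the Poincar\'e machinery behind Proposition~\ref{main-prop:derivative_tail_criterion}, an alternative route bounds the two pieces separately. The term $\EE Q_{n,j}$ is at most $e^{3/2}\|r_j\|_\Phi^2=e^{3/2}$ by Lemma~\ref{supp-lem:data_standardized_residual_density_bound}. For $\EE S_{n,j}^2$, oddness of $r_j$ together with the symmetry $R\mapsto -R$ of the residual vector gives $\EE S_{n,j}=0$. The variance is then controlled by the gradient bound $\EE[S_n^{-2}]\int|r_j'|^2 f_n$, and using $f_n\le e^{3/2}f_0$ this is again $O(j+1)$.

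\emph{Summation.} It follows that
\begin{equation*}
\sum_j\rho^{2j+1}A_j(4)\le C'_\rho\sum_j(j+1)\rho^{2j+1}<\infty,
\end{equation*}
which is \eqref{supp-eq:data_tail_sufficient_condition} with $n_0=4$. Lemma~\ref{supp-lem:data_tail_sufficient} then yields \eqref{supp-eq:data_tail_condition_probability}.

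The odd part of $K_\rho$ is $\sum_j\rho^{2j+1}g_{2j+1}(x)g_{2j+1}(y)$, by Remark~\ref{main-rem:leading_and_centered_kernel}. For this identification to be legitimate, I need the expansion to converge in $L^2(\Phi\otimes\Phi)$. This holds because the $\gamma_j$ are square-summable and the $g_{2j+1}$ are orthonormal.

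The main obstacle is the variance bound for $S_{n,j}$, which must hold uniformly in $n\ge4$. This requires the Gaussian Poincar\'e inequality on the residual sphere, including its degeneracy at zero sample variance. I will invoke Proposition~\ref{main-prop:derivative_tail_criterion} as already established, so the remaining work is the bookkeeping above.
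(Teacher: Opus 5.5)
Your proposal is correct and is essentially the paper's proof. The paper establishes $A_j(4)\le C_\rho(j+1)$ along your ``alternative route'': oddness gives a zero mean, the Sobolev-approximated Poincar\'e inequality is combined with the Jacobian bound and $\EE[S_n^{-2}]\le4$, then the density comparison $f_n\le e^{3/2}f_0$ and Lemma~\ref{supp-lem:data_g_derivative_bound} are applied, after which it sums against $\rho^{2j+1}$ and invokes Lemma~\ref{supp-lem:data_tail_sufficient}. Your main route packages the same estimate through Proposition~\ref{main-prop:derivative_tail_criterion}, controlling the mean by the density-rate bound instead of by oddness, which is an equally valid shortcut.
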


\begin{proof}
It suffices to show that $A_j(4)\le C_\rho(j+1)$.  Put
\begin{equation*}
F_{n,j}:=\frac1{\sqrt n}\sum_{i=1}^n g_{2j+1}(R_{n,i};\rho).
\end{equation*}
The function $g_{2j+1}$ and its derivative belong to $L^2(\Phi)$. The Sobolev approximation in the proof of Proposition~\ref{main-prop:derivative_tail_criterion} in Section~\ref{supp-sec:full_derivative_tail} therefore applies with $r=g_{2j+1}$ and gives
\begin{equation*}
\operatorname{Var}(F_{n,j})\le \mathbb E\|\nabla F_{n,j}\|^2.
\end{equation*}
Since $g_{2j+1}$ is odd and the standardized residual vector is symmetric,
$\mathbb E[F_{n,j}]=0$.  The Jacobian of $R_n=(R_{n,1},\ldots,R_{n,n})^{\top}$ is
\begin{equation*}
S_n^{-1}\left(P-\frac1nR_nR_n^{\top}\right),
\end{equation*}
where $P$ is the orthogonal projection onto the hyperplane orthogonal to $(1,\ldots,1)$.  The
operator norm of the matrix in parentheses is at most one.  Thus
\begin{equation*}
\|\nabla F_{n,j}\|^2
\le
S_n^{-2}\frac1n\sum_{i=1}^n\{g_{2j+1}'(R_{n,i};\rho)\}^2.
\end{equation*}
Moreover $R_n=(R_{n,1},\ldots,R_{n,n})^{\top}$ is independent of $S_n$ and
$nS_n^2\sim\chi^2_{n-1}$, so $\mathbb E[S_n^{-2}]=n/(n-3)\le4$ for $n\ge4$.
Lemmas~\ref{supp-lem:data_standardized_residual_density_bound} and \ref{supp-lem:data_g_derivative_bound} imply
\begin{equation*}
\sup_{n\ge4}\mathbb E[F_{n,j}^2]\le C_\rho(j+1).
\end{equation*}
The same density bound and the orthonormality of $g_{2j+1}$ give
\begin{equation*}
\sup_{n\ge4}\mathbb E\left[\frac1n\sum_{i=1}^n g_{2j+1}^2(R_{n,i};\rho)\right]
\le C.
\end{equation*}
Therefore $A_j(4)\le C_\rho(j+1)$.  Since $\sum_{j\ge0}\rho^{2j+1}(j+1)<\infty$,
Lemma~\ref{supp-lem:data_tail_sufficient} applies.
\end{proof}
\section{The odd part of the rho-kernel}
\label{supp-sec:K_rho_data_domain}

Let $B_\rho$ denote the operator induced by $K_\rho$:
\begin{equation*}
(B_\rho f)(x)=\int_{-\infty}^{\infty}K_\rho(x,y)f(y)\,d\Phi(y).
\end{equation*}
On the odd subspace of $L^2(\Phi)$, the even terms in $K_\rho$ do not contribute, and
\begin{equation*}
B_{\rho,\mathrm{odd}}g_{2j+1}(\cdot;\rho)
=
\rho^{2j+1}g_{2j+1}(\cdot;\rho),
\qquad j=0,1,2,\ldots .
\end{equation*}
The tail condition for this odd component under full standardization is established in Section~\ref{supp-sec:tail_full_Krho}.

\begin{proposition}\label{supp-prop:data_mean_score_g_basis}
For $j=0,1,2,\ldots$, define
\begin{equation*}
d_j(\rho):=\langle q_1,g_{2j+1}(\cdot;\rho)\rangle_\Phi.
\end{equation*}
Then
\begin{equation}
\label{supp-eq:data_dj_formula}
d_j(\rho)
=
(1-\rho)^{3/4}(1+\rho)^{3/4}
\frac{\sqrt{(2j+1)!}}{2^j j!}\rho^j,
\qquad j=0,1,2,\ldots .
\end{equation}
In particular $d_j(\rho)>0$ for all $j\ge0$ and $0<\rho<1$.
\end{proposition}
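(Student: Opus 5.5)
We need $d_j(\rho)=\langle q_1,g_{2j+1}\rangle_\Phi=\int x\,g_{2j+1}(x;\rho)f_0(x)\,dx$. The plan is a generating-function computation of the same type as in the proof of Proposition~\ref{main-prop:overlaps_closed_form}. Set $D_\rho=\sqrt{(1+\rho)/(1-\rho)}$ and $A_\rho=\rho/\{2(1-\rho)\}$ as in Lemma~\ref{supp-lem:data_g_derivative_bound}. Since $\sum_n h_n(y)z^n/\sqrt{n!}=\exp\{zy-z^2/2\}$, I would consider
\begin{equation*}
J(z):=c_\rho\int_{\mathbb R}x\exp\{-A_\rho x^2\}\exp\{zD_\rho x-z^2/2\}f_0(x)\,dx.
\end{equation*}
The coefficient of $z^n$ in $J$ is $\langle q_1,g_n\rangle_\Phi/\sqrt{n!}$. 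Termwise integration is justified by Gaussian domination for $z$ in compact sets, exactly as before.

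First I would combine the Gaussian factors. The exponent is $-\tfrac12(1+2A_\rho)x^2$, and $1+2A_\rho=1/(1-\rho)$, so the combined weight is $\sqrt{1-\rho}$ times the $N(0,1-\rho)$ density. The first moment of $N(0,\sigma^2)$ tilted by $e^{bx}$ is $\sigma^2 b\,e^{\sigma^2b^2/2}$. With $b=zD_\rho$ and $\sigma^2=1-\rho$, this gives
\begin{equation*}
J(z)=c_\rho\sqrt{1-\rho}\,(1-\rho)D_\rho\,z\,\exp\Bigl\{\tfrac{z^2}{2}\bigl((1-\rho)D_\rho^2-1\bigr)\Bigr\}.
\end{equation*}
Since $(1-\rho)D_\rho^2=1+\rho$, the exponent is $\rho z^2/2$. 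For the prefactor, $c_\rho(1-\rho)^{3/2}D_\rho=(1+\rho)^{1/4}(1-\rho)^{-1/4}(1-\rho)^{3/2}(1+\rho)^{1/2}(1-\rho)^{-1/2}=(1-\rho)^{3/4}(1+\rho)^{3/4}$.

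Next I would expand $z\exp\{\rho z^2/2\}=\sum_j\rho^jz^{2j+1}/(2^jj!)$. Only odd powers appear, which is consistent with parity. The coefficient of $z^{2j+1}$, multiplied by $\sqrt{(2j+1)!}$, gives \eqref{supp-eq:data_dj_formula}. Positivity is immediate, since every factor is positive for $0<\rho<1$.

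The only real risk is bookkeeping of the normalizing constants, namely $c_\rho$, the factor $\sqrt{1-\rho}$ from the variance change, and $D_\rho$. I would therefore cross-check the case $j=0$ directly. Here $g_1=c_\rho e^{-A_\rho x^2}D_\rho x$, so $d_0=c_\rho D_\rho\int x^2e^{-A_\rho x^2}f_0=c_\rho D_\rho(1-\rho)^{3/2}$, which agrees with the formula.
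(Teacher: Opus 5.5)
Your proposal is correct and follows essentially the same route as the paper: a tilted-Gaussian first-moment generating function giving $z\,(1-\rho)\sqrt{1+\rho}\,\exp\{\rho z^2/2\}$, followed by comparison of the coefficients of $z^{2j+1}$. The only cosmetic difference is that you build $c_\rho$ and the orthonormal normalization $h_n/\sqrt{n!}$ into the generating function from the start, whereas the paper uses the $\mathrm{He}_n$ generating function and applies $c_\rho$ and $\sqrt{(2j+1)!}$ at the end.
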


\begin{proof}
By \eqref{main-eq:gn_en_link},
\begin{equation*}
g_{2j+1}(x;\rho)
=
c_\rho\exp\!\left\{-\frac{\rho}{2(1-\rho)}x^2\right\}
h_{2j+1}\left(\sqrt{\frac{1+\rho}{1-\rho}}\,x\right).
\end{equation*}
Set $A_\rho:=\rho/(2(1-\rho))$ and $D_\rho:=\sqrt{(1+\rho)/(1-\rho)}$.  The generating
function gives
\begin{equation*}
\mathbb E\left[X\exp\!\left\{-A_\rho X^2\right\}\exp\!\left\{sD_\rho X-s^2/2\right\}\right]
=
s(1-\rho)\sqrt{1+\rho}\exp\!\left\{\frac{\rho s^2}{2}\right\}.
\end{equation*}
The coefficient of $s^{2j+1}$ on the right-hand side is
\begin{equation*}
(1-\rho)\sqrt{1+\rho}\frac{1}{j!}\left(\frac{\rho}{2}\right)^j.
\end{equation*}
Comparing coefficients and using $h_{2j+1}=\mathrm{He}_{2j+1}/\sqrt{(2j+1)!}$ gives
\eqref{supp-eq:data_dj_formula} after multiplication by $c_\rho$.
\end{proof}

\begin{proposition}\label{supp-prop:data_domain_mean_secular}
Let $\omega_{m,\mathrm{odd}}^{(\mu,\sigma)}(\rho)$, $m\ge1$, denote the positive odd eigenvalues
of $\Pi_{\mu,\sigma}B_\rho\Pi_{\mu,\sigma}$, ordered decreasingly.  Then
$\omega_{m,\mathrm{odd}}^{(\mu,\sigma)}(\rho)$ is the unique root in
\begin{equation}
\label{supp-eq:data_domain_intervals}
\omega\in(\rho^{2m+1},\rho^{2m-1}),
\qquad m\ge1,
\end{equation}
of
\begin{equation}
\label{supp-eq:data_domain_mean_secular}
\sum_{j=0}^{\infty}\frac{d_j^2(\rho)}{\omega-\rho^{2j+1}}=0.
\end{equation}
Consequently,
\begin{equation*}
\rho>\omega_{1,\mathrm{odd}}^{(\mu,\sigma)}(\rho)>\rho^3>
\omega_{2,\mathrm{odd}}^{(\mu,\sigma)}(\rho)>\rho^5>\cdots .
\end{equation*}
\end{proposition}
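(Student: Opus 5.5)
On the odd subspace $L^2_{\mathrm{odd}}(\Phi)$ the projection $\Pi_{\mu,\sigma}$ reduces to $\Pi_\mu$, since $q_2$ is even and $q_1$ is odd. By Section~\ref{supp-sec:K_rho_data_domain}, $B_{\rho,\mathrm{odd}}=\sum_{j\ge0}\rho^{2j+1}g_{2j+1}\otimes g_{2j+1}$, with $\{g_{2j+1}\}$ an orthonormal basis of $L^2_{\mathrm{odd}}(\Phi)$ by Proposition~\ref{main-prop:Hermite_basis}. The plan is therefore to study the compressed operator $P B P$ with $P=I-q_1\otimes q_1$, acting on the orthogonal complement of $q_1$ in the odd space. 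By Proposition~\ref{supp-prop:data_mean_score_g_basis}, $q_1=\sum_j d_j g_{2j+1}$ with all $d_j>0$ and $\sum_j d_j^2=\|q_1\|_\Phi^2=1$.

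We derive the secular equation directly. Let $f\perp q_1$ be odd with $PBPf=\omega f$ and $\omega>0$. Then $Bf=\omega f+c\,q_1$, where $c=\langle Bf,q_1\rangle_\Phi$. Writing $f=\sum_j f_jg_{2j+1}$, the coordinate equations are $(\rho^{2j+1}-\omega)f_j=c\,d_j$.

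First suppose $\omega$ equals some $\rho^{2k+1}$. Then $c\,d_k=0$, so $c=0$, and hence $f_j=0$ for every $j\ne k$. Thus $f$ is a multiple of $g_{2k+1}$, but $\langle g_{2k+1},q_1\rangle_\Phi=d_k\ne0$, which forces $f=0$. So no pole is an eigenvalue; this mirrors the rank-one pole exclusion in Proposition~\ref{main-prop:pole_criterion}.

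Away from the poles we have $f_j=c\,d_j/(\rho^{2j+1}-\omega)$, and $c\ne0$ since $f\ne0$. The constraint $\langle f,q_1\rangle_\Phi=0$ then reads $\sum_j d_j^2/(\omega-\rho^{2j+1})=0$, which is \eqref{supp-eq:data_domain_mean_secular}. Conversely, any root $\omega>0$ defines $f$ by these formulas. Since $\inf_j|\rho^{2j+1}-\omega|>0$ and $\sum_j d_j^2<\infty$, this $f$ lies in $L^2$; it is orthogonal to $q_1$, and it satisfies $PBPf=\omega f$. Each root gives a one-dimensional eigenspace because $f$ is determined by $c$.

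For the root count, set $\Psi(\omega)=\sum_j d_j^2/(\omega-\rho^{2j+1})$. Each term has negative derivative, so $\Psi$ is strictly decreasing on every interval $(\rho^{2m+1},\rho^{2m-1})$. Since $d_m,d_{m-1}\ne0$, $\Psi$ tends to $+\infty$ at the left endpoint and to $-\infty$ at the right endpoint, so there is exactly one root in each such interval, $m\ge1$. On $(\rho,\infty)$ every term is positive, so there is no root there. Any positive $\omega$ lies either at a pole or in one of these intervals, so this enumeration is exhaustive.

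The main obstacle is the lack of the constant term $1$ that appears in $F_{\mathrm{odd}}$. Lemma~\ref{main-lem:finite_rank_secular} therefore does not apply verbatim, and the compression, not a perturbation, has to be handled by the direct coordinate argument above. With the eigenvalues bracketed between consecutive odd poles and none lying above $\rho$, the decreasing enumeration is $\omega_{m,\mathrm{odd}}^{(\mu,\sigma)}\in(\rho^{2m+1},\rho^{2m-1})$. This gives the displayed chain $\rho>\omega_{1,\mathrm{odd}}^{(\mu,\sigma)}>\rho^3>\omega_{2,\mathrm{odd}}^{(\mu,\sigma)}>\rho^5>\cdots$.
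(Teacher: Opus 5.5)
Your proof is correct and follows the paper's argument essentially step for step. Both reduce $\Pi_{\mu,\sigma}$ to $\Pi_\mu$ on the odd sector, write the coordinate equations $(\rho^{2j+1}-\omega)f_j=c\,d_j$, exclude the poles via $d_\ell\neq0$, and build the explicit square-summable eigenfunction off the poles; strict monotonicity with $\pm\infty$ endpoint limits then gives exactly one simple root per interval and none above $\rho$. The only step you leave implicit is that an eigenfunction with $\omega>0$ automatically lies in $q_1^\perp$, since $f=\omega^{-1}PBPf$ is in the range of $P$; the paper states this explicitly.
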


\begin{proof}
On the odd subspace, $q_2$ is orthogonal to every odd function.  Thus $\Pi_{\mu,\sigma}$
coincides with $\Pi_\mu$ there.  Also
\begin{equation*}
B_{\rho,\mathrm{odd}}
=
\sum_{j=0}^{\infty}\rho^{2j+1}g_{2j+1}\otimes g_{2j+1},
\qquad
q_1=\sum_{j=0}^{\infty}d_j(\rho)g_{2j+1}.
\end{equation*}
An eigenfunction $f=\sum_j c_jg_{2j+1}$ with positive eigenvalue $\omega$ belongs to $q_1^\perp$, since the range of $\Pi_\mu B_{\rho,\mathrm{odd}}\Pi_\mu$ is contained in that subspace. Consequently,
\begin{equation*}
(B_{\rho,\mathrm{odd}}-\omega I)f=cq_1
\end{equation*}
for some $c$. If $\omega=\rho^{2\ell+1}$, the $\ell$th coordinate gives $cd_\ell=0$, hence $c=0$. The remaining coordinates imply that $f$ is a multiple of $g_{2\ell+1}$, contrary to $f\perp q_1$ and $d_\ell\ne0$. Thus no pole is an eigenvalue. Away from the poles, $c\ne0$ and the coordinate equation gives \eqref{supp-eq:data_domain_mean_secular}.

Conversely, for a positive off-pole solution $\omega$ of \eqref{supp-eq:data_domain_mean_secular}, set
\begin{equation*}
f=\sum_{j\ge0}\frac{d_j(\rho)}{\rho^{2j+1}-\omega}\,g_{2j+1}.
\end{equation*}
The coefficients are square summable because $\sum_jd_j^2=1$ and $\omega$ is separated from the pole set. The secular equation gives $f\perp q_1$, and $(B_{\rho,\mathrm{odd}}-\omega I)f=q_1$ gives the projected eigenvalue equation. The left-hand side of \eqref{supp-eq:data_domain_mean_secular} is strictly decreasing on every interval in \eqref{supp-eq:data_domain_intervals}, with limits $+\infty$ and $-\infty$ at the endpoints; it is positive for $\omega>\rho$. Thus these intervals contain all positive eigenvalues, exactly one in each, and the coordinate equation proves simplicity.
\end{proof}

\subsection{Equivalence with the frequency-domain equation}
\label{supp-subsec:data_frequency_equivalence}

\begin{proposition}\label{supp-prop:data_frequency_odd_equivalence}
Let $0<\rho<1$.  A positive number $\omega$ satisfies \eqref{supp-eq:data_domain_mean_secular} if and only if
\begin{equation*}
\lambda=(1-\rho)\omega
\end{equation*}
satisfies
\begin{equation*}
1+
\sum_{j=0}^{\infty}
\frac{a_{2j+1,1}^2(\rho)}{\lambda-(1-\rho)\rho^{2j+1}}=0.
\end{equation*}
Consequently,
\begin{equation*}
\lambda_{m,\mathrm{odd}}^{(\mu,\sigma)}(\rho)
=
(1-\rho)\omega_{m,\mathrm{odd}}^{(\mu,\sigma)}(\rho),
\qquad m\ge1.
\end{equation*}
The same relation holds in the mean-only odd sector.
\end{proposition}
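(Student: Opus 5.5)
The plan is to show that the two secular sums coincide after the substitution $\lambda=(1-\rho)\omega$. The key input is the relation between the data-domain coordinates $d_j(\rho)$ of Proposition~\ref{supp-prop:data_mean_score_g_basis} and the Fourier overlaps $a_{2j+1,1}(\rho)$ of Proposition~\ref{main-prop:overlaps_closed_form}. Dividing the closed forms,
\begin{equation*}
\frac{a_{2j+1,1}^2(\rho)}{d_j^2(\rho)}
=\frac{(1+\rho)^{3/2}(1-\rho)^{5/2}\rho^{4j+1}}{(1-\rho)^{3/2}(1+\rho)^{3/2}\rho^{2j}}
=(1-\rho)\rho^{2j+1},
\end{equation*}
so $a_{2j+1,1}^2=(1-\rho)\rho^{2j+1}d_j^2$. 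This identity is the only computation needed, and I would check it carefully since a stray power of $\rho$ would break everything.

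Next, write $\lambda_j:=(1-\rho)\rho^{2j+1}$ and note that $\lambda-\lambda_j=(1-\rho)(\omega-\rho^{2j+1})$. For $\omega$ off the poles,
\begin{equation*}
1+\sum_j\frac{a_{2j+1,1}^2}{\lambda-\lambda_j}
=1+\sum_j\frac{\rho^{2j+1}d_j^2}{\omega-\rho^{2j+1}}
=1+\sum_j d_j^2\Bigl(\frac{\omega}{\omega-\rho^{2j+1}}-1\Bigr)
=\omega\sum_j\frac{d_j^2}{\omega-\rho^{2j+1}},
\end{equation*}
because $\sum_jd_j^2=\|q_1\|_\Phi^2=1$. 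This uses the expansion $q_1=\sum_jd_jg_{2j+1}$ in the odd orthonormal basis $\{g_{2j+1}\}$, which is complete by Proposition~\ref{main-prop:Hermite_basis}. All the series converge absolutely off the poles, so the rearrangement is legitimate. Since $\omega>0$, the two equations have the same positive off-pole roots.

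Poles are excluded on both sides. In the Fourier domain this follows from Proposition~\ref{main-prop:pole_criterion}, since $a_{2j+1,1}\ne0$. In the data domain it follows from the argument in Proposition~\ref{supp-prop:data_domain_mean_secular}, since $d_j\ne0$. The pole sets also correspond under the scaling $\lambda=(1-\rho)\omega$. Hence the equivalence holds for all positive $\omega$.

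For the consequence, the root of each equation in corresponding intervals is unique: Theorem~\ref{main-thm:mu_sigma_main} gives this for $\lambda$ in $((1-\rho)\rho^{2m+1},(1-\rho)\rho^{2m-1})$, and Proposition~\ref{supp-prop:data_domain_mean_secular} gives it for $\omega$ in $(\rho^{2m+1},\rho^{2m-1})$. These intervals correspond under the scaling, so $\lambda_{m,\mathrm{odd}}^{(\mu,\sigma)}=(1-\rho)\omega_{m,\mathrm{odd}}^{(\mu,\sigma)}$. In the mean-only case the odd operators are the same: $\Pi_\mu$ agrees with $\Pi_{\mu,\sigma}$ on odd functions, and $A^{(\mu)}$ and $A^{(\mu,\sigma)}$ have the same odd restriction. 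The relation therefore carries over unchanged.
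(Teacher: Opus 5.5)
Your proposal is correct and follows the paper's proof essentially step for step. It uses the identity $a_{2j+1,1}^2(\rho)=(1-\rho)\rho^{2j+1}d_j^2(\rho)$ from the closed forms, Parseval's identity $\sum_j d_j^2(\rho)=\|q_1\|_\Phi^2=1$, the same rewriting of the Fourier secular function as $\omega\sum_j d_j^2/(\omega-\rho^{2j+1})$, exclusion of the poles via Propositions~\ref{supp-prop:data_domain_mean_secular} and \ref{main-prop:pole_criterion}, matching of roots by uniqueness in the rescaled intervals, and the mean-only case via the coincidence of the odd restrictions.
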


\begin{proof}
Fix $\omega>0$ outside $\{\rho^{2j+1}:j\ge0\}$ and set $\lambda=(1-\rho)\omega$. The coefficient formulas in Propositions~\ref{main-prop:overlaps_closed_form} and \ref{supp-prop:data_mean_score_g_basis} give
\begin{equation*}
a_{2j+1,1}^2(\rho)=(1-\rho)\rho^{2j+1}d_j^2(\rho),\qquad j\ge0.
\end{equation*}
Also $\sum_jd_j^2(\rho)=\|q_1\|_\Phi^2=1$, by Parseval's identity in the odd Hermite basis. This normalization can be checked directly: with $x=\rho^2$ and $c_j=\binom{2j}{j}4^{-j}$,
\begin{equation*}
\sum_{j\ge0}d_j^2(\rho)
=(1-x)^{3/2}\sum_{j\ge0}(2j+1)c_jx^j
=(1-x)^{3/2}(1-x)^{-3/2}=1.
\end{equation*}
The pole set, completed by zero, is compact and does not contain $\omega$. Hence its distance from $\omega$ is positive. Both series in the following calculation are therefore absolutely convergent:
\begin{align*}
1+\sum_{j\ge0}\frac{a_{2j+1,1}^2(\rho)}{\lambda-(1-\rho)\rho^{2j+1}}
&=1+\sum_{j\ge0}\frac{\rho^{2j+1}d_j^2(\rho)}{\omega-\rho^{2j+1}}\\
&=1-\sum_{j\ge0}d_j^2(\rho)
+\omega\sum_{j\ge0}\frac{d_j^2(\rho)}{\omega-\rho^{2j+1}}\\
&=\omega\sum_{j\ge0}\frac{d_j^2(\rho)}{\omega-\rho^{2j+1}}.
\end{align*}
Since $\omega>0$, the two secular equations are equivalent. Multiplication by $1-\rho>0$ maps the interval $(\rho^{2m+1},\rho^{2m-1})$ onto $((1-\rho)\rho^{2m+1},(1-\rho)\rho^{2m-1})$. The unique-root statements in Proposition~\ref{supp-prop:data_domain_mean_secular} and Theorem~\ref{main-thm:mu_sigma_main} therefore identify roots with the same index $m$. The two odd restrictions are unchanged by the even scale correction, so the same identity holds in the mean-only case. Values at the poles are excluded by Proposition~\ref{supp-prop:data_domain_mean_secular} and the rank-one part of Proposition~\ref{main-prop:pole_criterion}.
\end{proof}
\section{Tail condition for one-parameter arrays}
\label{supp-sec:supp_tail_one_parameter}
\setcounter{equation}{0}

Let $K$ have the spectral expansion \eqref{supp-eq:data_infinite_kernel_expansion}.  For
$\alpha\in\{\mu,\sigma\}$ set
\begin{equation*}
S_{n,k}^{(\alpha)}:=\frac1{\sqrt n}\sum_{i=1}^n r_k(R_{n,i}^{(\alpha)}),
\qquad
Q_{n,k}^{(\alpha)}:=\frac1n\sum_{i=1}^n r_k^2(R_{n,i}^{(\alpha)}).
\end{equation*}
Define
\begin{equation*}
A_k^{(\alpha)}
:=
\sup_{n\ge4}
\left\{
\mathbb E\left[(S_{n,k}^{(\alpha)})^2\right]+
\mathbb E[Q_{n,k}^{(\alpha)}]
\right\}.
\end{equation*}

\begin{lemma}\label{supp-lem:supp_tail_one_parameter}
If
\begin{equation}
\label{supp-eq:supp_tail_sufficient_one_parameter}
\sum_{k=0}^{\infty}|\gamma_k|A_k^{(\alpha)}<\infty,
\end{equation}
then the full tails satisfy, for every $\varepsilon>0$,
\begin{equation*}
\lim_{M\to\infty}
\limsup_{n\to\infty}
\mathbb P
\left(
\left|nU_n^{(\alpha)}(K-K_M)\right|>\varepsilon
\right)=0,
\end{equation*}
Here $K-K_M$ is evaluated at the residual pairs; for $n\ge4$ these pairs have an absolutely continuous law, as in the main proof. Moreover, for each $M$ the variables $nU_n^{(\alpha)}(K-K_M)$ are well defined as $L^1$ limits of $nU_n^{(\alpha)}(K_{M,L})$ as $L\to\infty$.
\end{lemma}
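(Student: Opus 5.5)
The argument repeats the proof of Lemma~\ref{supp-lem:data_tail_sufficient} and Proposition~\ref{main-prop:data_tail_sufficient_main}, with the fully standardized residuals replaced by the one-parameter arrays $R_{n,i}^{(\alpha)}$, $\alpha\in\{\mu,\sigma\}$.

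\textbf{Step 1 (finite tails).} For $M<L$, expand the square of $\sum_i r_k(R^{(\alpha)}_{n,i})$ exactly as in \eqref{main-eq:data_exact_U_identity}. This gives
\begin{equation*}
nU_n^{(\alpha)}(K_{M,L})=\frac{n}{n-1}\sum_{k=M+1}^{L}\gamma_k\bigl\{(S^{(\alpha)}_{n,k})^2-Q^{(\alpha)}_{n,k}\bigr\}.
\end{equation*}
The identity is purely algebraic and holds for any array, dependent or not. Since $n/(n-1)\le 4/3\le2$ for $n\ge4$, taking expectations yields
\begin{equation*}
\sup_{n\ge4}\EE\bigl|nU_n^{(\alpha)}(K_{M,L})\bigr|\le 2\sum_{k=M+1}^{L}|\gamma_k|A_k^{(\alpha)}.
\end{equation*}
By \eqref{supp-eq:supp_tail_sufficient_one_parameter}, the sequence $L\mapsto nU_n^{(\alpha)}(K_{M,L})$ is therefore Cauchy in $L^1$, uniformly in $n\ge4$, and it has an $L^1$ limit $Z_{n,M}$.

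\textbf{Step 2 (identifying the limit).} This is the only nonroutine point. We must show that $Z_{n,M}$ equals $nU_n^{(\alpha)}(K-K_M)$ evaluated at the residual pairs. It suffices that for $n\ge4$ and $i\ne j$ the law of $(R^{(\alpha)}_{n,i},R^{(\alpha)}_{n,j})$ is absolutely continuous with respect to Lebesgue measure on $\mathbb R^2$, equivalently with respect to $\Phi\otimes\Phi$.
\begin{itemize}
\item For $\alpha=\mu$, the pair $(X_i-\overline X_n,X_j-\overline X_n)$ is a nondegenerate bivariate normal vector, since its covariance matrix has determinant $(1-1/n)^2-1/n^2>0$.
\item For $\alpha=\sigma$, the vector $X/\|X\|\cdot\sqrt n$ is uniform on the sphere of radius $\sqrt n$ in $\mathbb R^n$. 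For $n\ge3$, two coordinate projections of this uniform law have a density, of the form $c(1-(x^2+y^2)/n)_+^{(n-4)/2}$.
\end{itemize}
Given absolute continuity, $K_{M,L}\to K-K_M$ in $L^2(\Phi\otimes\Phi)$ implies convergence in measure with respect to $\Phi\otimes\Phi$, and hence convergence in probability under each pair law. Because $n$ is fixed, the sum over pairs is finite, so $nU_n^{(\alpha)}(K_{M,L})\to nU_n^{(\alpha)}(K-K_M)$ in probability. The $L^1$ limit and the limit in probability must coincide, which identifies $Z_{n,M}$. The same argument shows that the choice of representative of $K$ does not matter almost surely.

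\textbf{Step 3 (conclusion).} Letting $L\to\infty$ in Step~1 by Fatou's lemma, or directly by $L^1$ convergence, gives
\begin{equation*}
\sup_{n\ge4}\EE\bigl|nU_n^{(\alpha)}(K-K_M)\bigr|\le2\sum_{k>M}|\gamma_k|A_k^{(\alpha)}.
\end{equation*}
The right-hand side tends to $0$ as $M\to\infty$. Markov's inequality then gives the stated limit in probability for every $\varepsilon>0$. The $L^1$ well-definedness claim is exactly the content of Step~2.
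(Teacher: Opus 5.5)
Your proposal is correct and follows the paper's proof essentially step for step: the exact finite-rank identity gives a uniform $L^1$ Cauchy bound, and the limit is identified with the statistic at $K-K_M$ through absolute continuity of each fixed residual-pair law together with $L^2(\Phi\otimes\Phi)$ convergence, after which Markov's inequality finishes. Your explicit density checks match the paper's residual-pair subsection: a nondegenerate Gaussian pair for $\alpha=\mu$, and two coordinates of a uniform law on a sphere in $\mathbb R^n$ for $\alpha=\sigma$.
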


\begin{proof}
The proof is the same as that of Lemma~\ref{supp-lem:data_tail_sufficient}.  For $M<L$,
\begin{equation*}
nU_n^{(\alpha)}(K_{M,L})
=
\frac{n}{n-1}\sum_{k=M+1}^{L}\gamma_k
\left\{(S_{n,k}^{(\alpha)})^2-Q_{n,k}^{(\alpha)}\right\}.
\end{equation*}
Taking expectations of absolute values and using $n/(n-1)\le2$ gives
\begin{equation*}
\sup_{n\ge4}\mathbb E\left|nU_n^{(\alpha)}(K_{M,L})\right|
\le
2\sum_{k=M+1}^{L}|\gamma_k|A_k^{(\alpha)}.
\end{equation*}
The estimate applied to differences of finite tails makes them Cauchy in $L^1$ for fixed $n$. Their limit equals the statistic evaluated at $K-K_M$: Section~\ref{supp-subsec:residual_pairs} gives absolute continuity of each fixed residual-pair law, and $L^2(\Phi\otimes\Phi)$ convergence then gives convergence in probability under that law. The finite sum over pairs preserves this convergence. Passing to $L\to\infty$ in the bound gives
\begin{equation*}
\sup_{n\ge4}\EE|nU_n^{(\alpha)}(K-K_M)|
\le2\sum_{k>M}|\gamma_k|A_k^{(\alpha)}\longrightarrow0.
\end{equation*}
Markov's inequality proves the asserted full-tail condition.
\end{proof}
\section{One-parameter tail verification for the odd part of the rho-kernel}
\label{supp-sec:one_parameter_Krho_tail}
\setcounter{equation}{0}

We verify the one-parameter versions of the tail condition for the odd part of
$K_\rho$.  The full-standardization verification is given in Proposition~\ref{supp-prop:data_Krho_tail_verified}.  We use the notation
\begin{equation*}
r_j=g_{2j+1}(\cdot;\rho),
\qquad
\gamma_j=\rho^{2j+1},
\qquad j=0,1,2,\ldots .
\end{equation*}

\begin{proposition}\label{supp-prop:supp_Krho_mean_tail}
For centered residuals $R_{n,i}^{(\mu)}=X_i-\overline X_n$, the one-parameter tail condition
\eqref{supp-eq:supp_tail_sufficient_one_parameter} holds with $\alpha=\mu$ for the odd expansion
of $K_\rho$.
\end{proposition}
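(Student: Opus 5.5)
The plan is to bound $A_j^{(\mu)}$ for the odd expansion $r_j=g_{2j+1}(\cdot;\rho)$, $\gamma_j=\rho^{2j+1}$, and then invoke Lemma~\ref{supp-lem:supp_tail_one_parameter}. The target bound is $A_j^{(\mu)}\le C_\rho(j+1)$, uniformly in $n\ge4$. With that in hand, summability follows from $\sum_j\rho^{2j+1}(j+1)<\infty$.

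\emph{Step 1: the mean.} Put $F_{n,j}=n^{-1/2}\sum_i g_{2j+1}(X_i-\overline X_n;\rho)$. The function $g_{2j+1}$ is odd, and the centered vector $(X_i-\overline X_n)_i$ is symmetric in law under $X\mapsto-X$. Hence $\EE F_{n,j}=0$.

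\emph{Step 2: the variance.} The map $X\mapsto F_{n,j}(X)$ is smooth on $\mathbb R^n$, and $X$ is standard Gaussian. The Gaussian Poincar\'e inequality therefore gives $\operatorname{Var}(F_{n,j})\le\EE\|\nabla F_{n,j}\|^2$. The Jacobian of the centered array is the projection $P$ onto $(1,\dots,1)^\perp$, whose operator norm is one. This yields
\begin{equation*}
\|\nabla F_{n,j}\|^2\le\frac1n\sum_{i=1}^n\{g_{2j+1}'(X_i-\overline X_n;\rho)\}^2 .
\end{equation*}
Unlike the radial arrays, no cutoff or Sobolev approximation is needed here, because the residual map is linear. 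To apply Poincar\'e one needs $F_{n,j}\in L^2$ with square-integrable gradient. This holds because $g_{2j+1}$ and $g_{2j+1}'$ are Gaussian times polynomial, so both are bounded.

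\emph{Step 3: the density bound.} Each residual $X_i-\overline X_n$ is distributed as $N(0,1-1/n)$. Its density is $f_0(x/s)/s$ with $s^2=1-1/n\ge3/4$, and
\begin{equation*}
\frac{f_0(x/s)/s}{f_0(x)}=s^{-1}\exp\{-x^2(s^{-2}-1)/2\}\le s^{-1}\le 2/\sqrt3 .
\end{equation*}
So the residual density is at most $C f_0$ with $C$ independent of $n$. Combining this with Lemma~\ref{supp-lem:data_g_derivative_bound} gives $\EE\|\nabla F_{n,j}\|^2\le C\,C_\rho(2j+2)$. The same density bound, together with $\|g_{2j+1}\|_\Phi=1$, gives $\EE Q_{n,j}^{(\mu)}\le C$.

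\emph{Conclusion.} Together these give $A_j^{(\mu)}\le C_\rho'(j+1)$, and Lemma~\ref{supp-lem:supp_tail_one_parameter} concludes.

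The main point needing care is the justification of Poincar\'e for the composite map. Because the map is a bounded smooth function of a Gaussian vector with bounded gradient, this is routine. So no step is a serious obstacle; the argument is a simplification of Proposition~\ref{supp-prop:data_Krho_tail_verified}.
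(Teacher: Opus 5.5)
Your proof is correct and matches the paper's argument step for step. Both use zero mean from oddness and symmetry, the Gaussian Poincar\'e inequality with the norm-one Jacobian $P$ of the linear residual map, a uniform bound of the $N(0,1-1/n)$ density by a constant multiple of $f_0$, and Lemma~\ref{supp-lem:data_g_derivative_bound} to reach $A_j^{(\mu)}\le C_\rho(j+1)$; the differences are only your constant $2/\sqrt3$ (the paper uses $\sqrt2$) and your explicit remark that boundedness of $g_{2j+1}$ and $g_{2j+1}'$ justifies applying Poincar\'e.
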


\begin{proof}
Let
\begin{equation*}
F_{n,j}^{(\mu)}:=\frac1{\sqrt n}\sum_{i=1}^n g_{2j+1}(X_i-\overline X_n;\rho).
\end{equation*}
Since $g_{2j+1}$ is odd and the centered Gaussian vector is symmetric,
$\mathbb E[F_{n,j}^{(\mu)}]=0$.  By the Gaussian Poincar\'e inequality,
\begin{equation*}
\mathbb E[(F_{n,j}^{(\mu)})^2]
\le
\mathbb E\|\nabla F_{n,j}^{(\mu)}\|^2.
\end{equation*}
Differentiation gives
\begin{equation*}
\|\nabla F_{n,j}^{(\mu)}\|^2
\le
\frac1n\sum_{i=1}^n
\left\{g_{2j+1}'(X_i-\overline X_n;\rho)\right\}^2.
\end{equation*}
For $n\ge4$, the random variable $X_1-\overline X_n$ is normal with variance
$1-1/n\in[1/2,1]$; its density is bounded by $\sqrt2\,f_0$.  Therefore, by
Lemma~\ref{supp-lem:data_g_derivative_bound},
\begin{equation*}
\sup_{n\ge4}\mathbb E[(F_{n,j}^{(\mu)})^2]
\le
C_\rho(j+1).
\end{equation*}
The same density comparison gives
\begin{equation*}
\sup_{n\ge4}\mathbb E\left[
\frac1n\sum_{i=1}^n g_{2j+1}^2(X_i-\overline X_n;\rho)
\right]
\le C.
\end{equation*}
Thus $A_j^{(\mu)}\le C_\rho(j+1)$, and
\begin{equation*}
\sum_{j=0}^{\infty}\rho^{2j+1}A_j^{(\mu)}<\infty.
\end{equation*}
Lemma~\ref{supp-lem:supp_tail_one_parameter} proves the assertion.
\end{proof}

\begin{proposition}\label{supp-prop:supp_Krho_sigma_tail}
For the residuals $R_{n,i}^{(\sigma)}=X_i/S_{n,0}$ normalized by the scale estimate, the tail
condition \eqref{supp-eq:supp_tail_sufficient_one_parameter} holds with $\alpha=\sigma$ for the
odd expansion of $K_\rho$.
\end{proposition}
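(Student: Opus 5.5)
The plan is to follow the template of Proposition~\ref{supp-prop:supp_Krho_mean_tail} and Proposition~\ref{supp-prop:data_Krho_tail_verified}. We bound $A_j^{(\sigma)}\le C_\rho(j+1)$ for $r_j=g_{2j+1}(\cdot;\rho)$ and then invoke Lemma~\ref{supp-lem:supp_tail_one_parameter}, since $\sum_j\rho^{2j+1}(j+1)<\infty$. Put
\begin{equation*}
F_{n,j}^{(\sigma)}:=n^{-1/2}\sum_{i=1}^n g_{2j+1}(X_i/S_{n,0};\rho).
\end{equation*}
Because $g_{2j+1}$ is odd and $X\mapsto -X$ leaves $S_{n,0}$ unchanged, the vector $W_n=X/S_{n,0}$ is symmetric. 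Hence $\EE[F_{n,j}^{(\sigma)}]=0$, and no bias term as in Proposition~\ref{main-prop:derivative_tail_criterion} is needed.

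\emph{Variance via Poincaré.} The vector $W_n=\sqrt n\,X/\|X\|$ is uniform on the sphere of radius $\sqrt n$ in $\mathbb R^n$ and is independent of $S_{n,0}$, where $nS_{n,0}^2\sim\chi^2_n$. Its Jacobian is $S_{n,0}^{-1}(I-n^{-1}W_nW_n^\top)$, which has operator norm at most $S_{n,0}^{-1}$. The Gaussian Poincaré inequality, justified at the singular point $X=0$ by the radial cutoff of Section~\ref{supp-sec:full_derivative_tail} in dimension $n$ (gradient contribution $O(\varepsilon^{n-2})$), gives
\begin{equation*}
\operatorname{Var}(F_{n,j}^{(\sigma)})\le\EE\Bigl[S_{n,0}^{-2}\,n^{-1}\textstyle\sum_i\{g_{2j+1}'(W_{n,i})\}^2\Bigr]
=\EE[S_{n,0}^{-2}]\,\EE[\{g_{2j+1}'(W_{n,1})\}^2],
\end{equation*}
using independence of radius and direction. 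Here $\EE[S_{n,0}^{-2}]=n/(n-2)\le2$ for $n\ge4$.

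\emph{Density comparison.} We need a uniform bound $f^{(\sigma)}_n\le Cf_0$ for the density of $W_{n,1}$. This density is $c_n'(1-x^2/n)_+^{(n-3)/2}$. It is handled exactly as in Lemma~\ref{supp-lem:data_standardized_residual_density_bound}, with $n-1$ replaced by $n$ and the exponent shifted. The inequality $\log(1-u)\le-u$ bounds $(1-x^2/n)^{(n-3)/2}e^{x^2/2}$ by $e^{3/2}$, and gamma log-convexity bounds $\sqrt{2\pi}\,c_n'\le1$. Then Lemma~\ref{supp-lem:data_g_derivative_bound} gives $\EE[\{g_{2j+1}'(W_{n,1})\}^2]\le C\,C_\rho(2j+2)$. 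Orthonormality of $g_{2j+1}$ gives $\EE[Q^{(\sigma)}_{n,j}]\le C$.

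Combining the two steps yields $A_j^{(\sigma)}\le C_\rho(j+1)$, and Lemma~\ref{supp-lem:supp_tail_one_parameter} concludes. The main obstacle I expect is the rigorous use of Poincaré for a function that is non-smooth at the origin, since $W_n$ is undefined at $X=0$. I would rely on the cutoff/Sobolev approximation from Section~\ref{supp-sec:full_derivative_tail}, which is already stated to apply in dimension $n$ for the variance-only array. The density constant is a minor routine check.
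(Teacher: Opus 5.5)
Your proposal is correct and follows essentially the same route as the paper: oddness of $g_{2j+1}$ for the zero mean, the spherical marginal density bound by $\exp\{3/2\}f_0$, the Jacobian $S_{n,0}^{-1}(I-n^{-1}W_nW_n^\top)$ with the radial cutoff in dimension $n$ to justify Poincar\'e, and independence of radius and direction with $\EE[S_{n,0}^{-2}]=n/(n-2)$, combined with Lemma~\ref{supp-lem:data_g_derivative_bound} to get $A_j^{(\sigma)}\le C_\rho(j+1)$. The only difference is cosmetic: the paper works with $n\ge3$ and the bound $3$ on $\EE[S_{n,0}^{-2}]$, whereas you use $n\ge4$ and the bound $2$, which matches the definition of $A_k^{(\alpha)}$ anyway.
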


\begin{proof}
Let
\begin{equation*}
F_{n,j}^{(\sigma)}:=\frac1{\sqrt n}\sum_{i=1}^n g_{2j+1}(X_i/S_{n,0};\rho).
\end{equation*}
The vector $(X_1/S_{n,0},\ldots,X_n/S_{n,0})$ has norm $\sqrt n$ and uniform direction on the sphere in $\mathbb R^n$. Its first coordinate has density
\begin{equation*}
\frac{\Gamma(n/2)}{\sqrt{\pi n}\,\Gamma((n-1)/2)}
(1-x^2/n)^{(n-3)/2},\qquad |x|<\sqrt n,
\end{equation*}
and zero elsewhere. The calculation in Lemma~\ref{supp-lem:residual_density_rate}, with radial dimension $d=n$, bounds it by $\exp\!\{3/2\}f_0$ for every $n\ge3$. Hence
\begin{equation*}
\sup_{n\ge3}\mathbb E\left[
\frac1n\sum_{i=1}^n g_{2j+1}^2(X_i/S_{n,0};\rho)
\right]
\le C.
\end{equation*}
The oddness of $g_{2j+1}$ gives $\EE[F_{n,j}^{(\sigma)}]=0$. For the sum, apply the Gaussian Poincar\'e inequality. The cutoff argument in Section~\ref{supp-sec:full_derivative_tail}, with $P=I_n$, applies here: the small-radius probability is $O(\varepsilon^n)$ and the additional squared-gradient integral is $O(\varepsilon^{n-2})\to0$ for $n>2$. The Jacobian of
$x\mapsto x/S_{n,0}$ is
\begin{equation*}
S_{n,0}^{-1}\left(I-\frac1nRR^{\top}\right),
\qquad
R=(X_1/S_{n,0},\ldots,X_n/S_{n,0})^{\top},
\end{equation*}
and the matrix in parentheses is an orthogonal projection.  Therefore
\begin{equation*}
\|\nabla F_{n,j}^{(\sigma)}\|^2
\le
S_{n,0}^{-2}\frac1n\sum_{i=1}^n
\left\{g_{2j+1}'(X_i/S_{n,0};\rho)\right\}^2.
\end{equation*}
The radius $S_{n,0}$ is independent of the direction and $nS_{n,0}^2\sim\chi_n^2$, so
$\mathbb E[S_{n,0}^{-2}]=n/(n-2)\le3$ for $n\ge3$.  Combining this with the density
comparison and Lemma~\ref{supp-lem:data_g_derivative_bound} gives
\begin{equation*}
\sup_{n\ge3}\mathbb E[(F_{n,j}^{(\sigma)})^2]
\le C_\rho(j+1).
\end{equation*}
Thus $A_j^{(\sigma)}\le C_\rho(j+1)$ and
\begin{equation*}
\sum_{j=0}^{\infty}\rho^{2j+1}A_j^{(\sigma)}<\infty.
\end{equation*}
Lemma~\ref{supp-lem:supp_tail_one_parameter} proves the result.
\end{proof}

\Needspace{8\baselineskip}
\section{Numerical evaluation and null calibration}
\label{supp-sec:numerical_use}
\setcounter{equation}{0}

\subsection{Finite Hermite approximation}
All numerical results concern the fully standardized statistic $T_{n,\beta}^{(\mu,\sigma)}$ in the normalized EP scale. They do not concern the shifted $U$-statistic limit of Corollary~\ref{main-cor:full_U_V_limit}. Let $P_M$ denote projection onto $\mathrm{span}\{e_0,\ldots,e_{M-1}\}$. The matrix of $P_M A^{(\mu,\sigma)}P_M$ is
\begin{equation}\label{supp-eq:numerical_Hermite_matrix}
A_M=\mathrm{diag}\{(1-\rho)\rho^j:0\le j<M\}
-\sum_{k=0}^2 a_k^{(M)}(a_k^{(M)})^\top,
\qquad a_k^{(M)}:=(a_{0,k},\ldots,a_{M-1,k})^\top.
\end{equation}
This is the Rayleigh--Ritz approximation in the Gaussian Hermite basis; see Ebner et~al.~\cite[Sections~3 and 4.5]{EbnerJimenezGameroMilosevic2025} for applications to covariance operators and BHEP statistics. The coefficients of Proposition~\ref{main-prop:overlaps_closed_form} determine all entries explicitly. Diagonalization includes both parity subspaces and remains applicable when an eigenvalue coincides with an unperturbed pole.

Let $\lambda_j$ be the ordered positive eigenvalues of $A^{(\mu,\sigma)}$, and let $\lambda_j^{[M]}$ be the ordered eigenvalues of $A_M$, padded with zeros for $j\ge M$. Positivity and the min--max principle give $0\le\lambda_j^{[M]}\le\lambda_j$. Moreover,
\begin{align*}
0\le\sum_{j\ge0}(\lambda_j-\lambda_j^{[M]})
&=\operatorname{tr}(A^{(\mu,\sigma)})-\operatorname{tr}(A_M)\\
&=\sum_{j\ge M}\left\{(1-\rho)\rho^j-\sum_{k=0}^2a_{j,k}^2(\rho)\right\}
\le\rho^M.
\end{align*}
The equality is the trace identity in the Hermite basis; the final bound follows by omitting the nonnegative squared overlaps. Coupling the two quadratic forms with the same independent normal variables therefore gives
\begin{equation*}
\EE\left|\sum_{j\ge0}\lambda_jN_j^2-\sum_{j=0}^{M-1}\lambda_j^{[M]}N_j^2\right|\le\rho^M.
\end{equation*}
If $H$ and $H_M$ denote their respective distribution functions, then, for $\varepsilon>0$,
\begin{equation*}
H_M(x-\varepsilon)-\rho^M/\varepsilon\le H(x)\le H_M(x).
\end{equation*}
These bounds concern the exact eigenvalues of $A_M$. For the four values of $\rho$ considered below, we use $M=160$, giving $\rho^{160}\le2^{-160}$. For inversion of the quadratic-form distribution, only computed eigenvalues greater than $10^{-14}$ are retained.

\subsection{Eigenvalues and percentage points}
Table~\ref{supp-tab:first10} lists the first ten eigenvalues in decreasing order for $\rho=0.2,0.3,0.4,0.5$, with rank one corresponding to index zero in the main text. By \eqref{main-eq:full_parity_indexing}, odd-numbered and even-numbered ranks correspond respectively to odd and even eigenfunctions of the fully standardized covariance operator. The associated weight parameter is $\beta=\sqrt\rho/(1-\rho)$.

The percentage points in Table~\ref{supp-tab:quantiles} were obtained by numerical inversion of the finite weighted chi-square distribution. The calculations used R~4.5.3 and \texttt{CompQuadForm}~1.4.4, with the algorithm of Davies~\cite{Davies1980} and Imhof~\cite{Imhof1961} inversion as an alternative. The requested probability and root-finding tolerances were $10^{-10}$ and $10^{-9}$, respectively.

The matrix eigenvalues were checked against 240-point Gauss--Hermite quadrature of the covariance operator. The maximum absolute difference for the first ten eigenvalues was below $1.3\times10^{-15}$.

\begin{table}[htbp]
\centering
\caption{The first ten eigenvalues of the fully standardized covariance operator in the normalized EP scale.}
\label{supp-tab:first10}
\begin{tabular}{rcccc}
\hline
Rank & $\rho=0.2$ & $\rho=0.3$ & $\rho=0.4$ & $\rho=0.5$\\
\hline
1 & $1.5348745\times10^{-2}$ & $4.3082082\times10^{-2}$ & $8.1676872\times10^{-2}$ & $1.2194605\times10^{-1}$\\
2 & $5.2162634\times10^{-3}$ & $2.1200089\times10^{-2}$ & $5.1126249\times10^{-2}$ & $9.0125820\times10^{-2}$\\
3 & $4.4347270\times10^{-4}$ & $2.9038280\times10^{-3}$ & $1.0241549\times10^{-2}$ & $2.5130509\times10^{-2}$\\
4 & $1.3130431\times10^{-4}$ & $1.2676992\times10^{-3}$ & $5.8130446\times10^{-3}$ & $1.7234335\times10^{-2}$\\
5 & $1.5299698\times10^{-5}$ & $2.2736910\times10^{-4}$ & $1.4447350\times10^{-3}$ & $5.6412450\times10^{-3}$\\
6 & $4.1800254\times10^{-6}$ & $9.2325557\times10^{-5}$ & $7.7081260\times10^{-4}$ & $3.6818227\times10^{-3}$\\
7 & $5.6211994\times10^{-7}$ & $1.8852182\times10^{-5}$ & $2.1404172\times10^{-4}$ & $1.3162385\times10^{-3}$\\
8 & $1.4561880\times10^{-7}$ & $7.2863136\times10^{-6}$ & $1.0930755\times10^{-4}$ & $8.2829957\times10^{-4}$\\
9 & $2.1276266\times10^{-8}$ & $1.6075746\times10^{-6}$ & $3.2522294\times10^{-5}$ & $3.1369709\times10^{-4}$\\
10 & $5.3072199\times10^{-9}$ & $5.9947450\times10^{-7}$ & $1.6075325\times10^{-5}$ & $1.9193174\times10^{-4}$\\
\hline
\end{tabular}
\end{table}
\begin{table}[htbp]
\centering
\caption{Asymptotic percentage points for the fully standardized EP statistic.}
\label{supp-tab:quantiles}
\begin{tabular}{cccc}
\hline
$\rho$ & 0.90 & 0.95 & 0.99\\
\hline
0.2 & 0.04929173 & 0.06651722 & 0.10910634\\
0.3 & 0.15432205 & 0.20328447 & 0.32245797\\
0.4 & 0.32696859 & 0.42309367 & 0.65224449\\
0.5 & 0.54902691 & 0.69873256 & 1.05060541\\
\hline
\end{tabular}
\end{table}

\subsection{Normal-null simulations}
For each $n\in\{50,100,200\}$, we generated 50,000 independent samples from the standard normal law. Each sample was used for all four values of $\rho$, so comparisons across weights are paired. Both parameters were estimated, with the sample variance defined using divisor $n$.

For each standardized sample, the statistic was evaluated as
\begin{align*}
T_{n,\beta}^{(\mu,\sigma)}
&=\frac1n\sum_{j,k=1}^n
\exp\!\left\{-\frac{\beta^2}{2}(V_{n,j}-V_{n,k})^2\right\}\\
&\quad-\frac{2}{\sqrt{1+\beta^2}}\sum_{j=1}^n
\exp\!\left\{-\frac{\beta^2V_{n,j}^2}{2(1+\beta^2)}\right\}
+\frac{n}{\sqrt{1+2\beta^2}}.
\end{align*}
The test rejects when the statistic exceeds the asymptotic 0.95 percentage point reported in Table~\ref{supp-tab:quantiles}. If $\widehat p$ is the observed rejection proportion and $B=50{,}000$, its marginal Monte Carlo standard error is $\{\widehat p(1-\widehat p)/B\}^{1/2}$.

The rejection frequencies in Table~\ref{supp-tab:null_size} range from 0.04670 to 0.05054. The test is mildly conservative at $n=100$, with departures from 0.05 exceeding two Monte Carlo standard errors for each weight. At $n=200$, the frequencies lie between 0.04936 and 0.05054 and are close to the nominal level. These results indicate accurate null calibration for the sample sizes and fixed weights considered; no power comparison is made.

\Needspace{20\baselineskip}
\begin{table}[htbp]
\centering
\caption{Normal-null rejection proportions at nominal level 0.05, based on 50,000 replications per sample size. The last column gives the Monte Carlo standard error.}
\label{supp-tab:null_size}
\begin{tabular}{rrrr}
\hline
$n$ & $\rho$ & Rejection proportion & Standard error\\
\hline
50 & 0.2 & 0.04926 & 0.000968\\
50 & 0.3 & 0.04778 & 0.000954\\
50 & 0.4 & 0.04868 & 0.000962\\
50 & 0.5 & 0.04866 & 0.000962\\
100 & 0.2 & 0.04670 & 0.000944\\
100 & 0.3 & 0.04716 & 0.000948\\
100 & 0.4 & 0.04742 & 0.000950\\
100 & 0.5 & 0.04772 & 0.000953\\
200 & 0.2 & 0.04970 & 0.000972\\
200 & 0.3 & 0.04980 & 0.000973\\
200 & 0.4 & 0.04936 & 0.000969\\
200 & 0.5 & 0.05054 & 0.000980\\
\hline
\end{tabular}
\end{table}

\section{Operator facts used in the main paper}\label{supp-sec:supp_operator_facts}\setcounter{equation}{0}
\label{supp-subsec:operator_background}

Let $(\mathcal H,\langle\cdot,\cdot\rangle_{\mathcal H})$ be a real separable Hilbert space.
A bounded linear operator $A:\mathcal H\to\mathcal H$ is \emph{self-adjoint} if
$\langle Af,g\rangle_{\mathcal H}=\langle f,Ag\rangle_{\mathcal H}$ for all $f,g\in\mathcal H$ and
\emph{positive} if $\langle Af,f\rangle_{\mathcal H}\ge 0$ for all $f\in\mathcal H$.

\medskip
\noindent\textbf{Compact operators and the spectral theorem.}
An operator $A$ is \emph{compact} if it maps the unit ball of $\mathcal H$ to a relatively compact set.
If $A$ is compact and self-adjoint, then the nonzero spectrum of $A$ consists of real eigenvalues of finite
multiplicity with $0$ as the only possible accumulation point, and $\mathcal H$ admits an orthonormal basis
of eigenfunctions of $A$ (possibly completed by an orthonormal basis of $\ker(A)$).
For the spectral theorem and covariance operators, see
Horv{\'a}th and Kokoszka~\cite[Chapter~2]{horvath2012inference} or Hsing and Eubank~\cite[Sections~4.2--4.3]{hsing2015theoretical}.

\medskip
\noindent\textbf{Courant--Fischer variational principle.}
If $A$ is compact, self-adjoint and positive, list its positive eigenvalues in decreasing order, repeated according to multiplicity, and append zeros when its rank is finite. Then, for every integer $k$ with $0\le k<\dim\mathcal H$,
\begin{equation*}
\lambda_k
=
\max_{\substack{L\subset\mathcal H\\ \dim(L)=k+1}}
\ \min_{\substack{f\in L\\ \|f\|_{\mathcal H}=1}}
\langle Af,f\rangle_{\mathcal H}
=
\min_{\substack{L\subset\mathcal H\\ \dim(L)=k}}
\ \max_{\substack{f\perp L\\ \|f\|_{\mathcal H}=1}}
\langle Af,f\rangle_{\mathcal H}.
\end{equation*}
This min--max principle is commonly attributed to Fischer~\cite{fischer1905quadratische} and Courant~\cite{courant1920eigenwerte}; see, for example,
Teschl~\cite{teschl2014mathematical} for a proof in the compact-operator setting.

\medskip
\noindent\textbf{Hilbert--Schmidt and trace-class operators.}
If $\{e_j\}_{j\ge 1}$ is an orthonormal basis of $\mathcal H$, the operator $A$ is called
\emph{Hilbert--Schmidt} if $\sum_{j\ge 1}\|Ae_j\|_{\mathcal H}^2<\infty$, in which case the
Hilbert--Schmidt norm is given by
\begin{equation*}
\|A\|_{\mathrm{HS}}^2:=\sum_{j=1}^\infty \|Ae_j\|_{\mathcal H}^2,
\end{equation*}
and does not depend on the chosen basis. Every Hilbert--Schmidt operator is compact.
An operator is \emph{trace class} if $\sum_{j\ge1}\langle |A|e_j,e_j\rangle_{\mathcal H}<\infty$, where
$|A|=(A^*A)^{1/2}$. Its trace norm is $\|A\|_1:=\sum_j\langle |A|e_j,e_j\rangle_{\mathcal H}$. For such an operator, $\operatorname{tr}(A):=\sum_j\langle Ae_j,e_j\rangle_{\mathcal H}$. Every trace-class operator is Hilbert--Schmidt.
Covariance operators of square-integrable $\mathcal H$-valued random elements are positive, self-adjoint
and trace class; see again Horv{\'a}th and Kokoszka~\cite[Chapter~2]{horvath2012inference}.

\medskip
\noindent\textbf{Integral operators.}
For $\mathcal H=L^2(\mu)$ and a kernel $K\in L^2(\mu\otimes\mu)$, the integral operator
\begin{equation*}
(A_K f)(s):=\int K(s,t)\,f(t)\,\mu(dt)
\end{equation*}
is Hilbert--Schmidt with $\|A_K\|_{\mathrm{HS}}=\|K\|_{L^2(\mu\otimes\mu)}$; see
Hsing and Eubank~\cite[Sections~4.4 and 4.6]{hsing2015theoretical}.
If additionally $K(s,t)=K(t,s)$ $\mu\otimes\mu$-a.e., then $A_K$ is self-adjoint, and if $K$ is positive
definite then $A_K$ is positive.

\medskip
\noindent\textbf{Rank-one and finite-rank operators.}
For $g,h\in\mathcal H$ we write $g\otimes h$ for the rank-one operator
\begin{equation*}
(g\otimes h)f:=\langle f,h\rangle_{\mathcal H}\,g,\qquad f\in\mathcal H.
\end{equation*}
Finite sums of such terms are finite-rank operators.  In our setting, parameter estimation enters through
subtracting a small number of rank-one terms from an unperturbed covariance operator; the resulting eigenvalues
are then characterized by secular equations (Section~\ref{main-subsec:finite_rank_lemma}).

\subsection{Gaussian limits and quadratic forms}

The limit theory for the weighted $L^2$ statistics is formulated on the frequency-domain space $L^2(\varphi_\beta)$.

We shall use the following central limit theorem in a separable Hilbert space.

\begin{theorem}
\label{supp-thm:HilbertCLT}
Let $\mathcal H$ be a separable Hilbert space over $\mathbb R$.
Let $\{\xi_j\}_{j\ge1}$ be i.i.d.\ $\mathcal H$-valued random elements with $\EE[\xi_1]=0$ and
$\EE\|\xi_1\|_{\mathcal H}^2<\infty$.
Then
\begin{equation*}
\frac{1}{\sqrt n}\sum_{j=1}^n \xi_j \ \xrightarrow[n\to\infty]{d}\ \mathcal Z
\qquad\text{in }\mathcal H,
\end{equation*}
where $\mathcal Z$ is a centered Gaussian element in $\mathcal H$ with covariance operator
$C:\mathcal H\to\mathcal H$ defined by
\begin{equation*}
\langle C f, g\rangle_{\mathcal H}
=
\EE\bigl[\langle \xi_1,f\rangle_{\mathcal H}\langle \xi_1,g\rangle_{\mathcal H}\bigr],
\qquad f,g\in\mathcal H.
\end{equation*}
\end{theorem}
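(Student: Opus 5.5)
The plan is to reduce to finite dimensions by projection onto an orthonormal basis, and then to control the infinite-dimensional remainder uniformly in $n$ by a second-moment bound. Fix an orthonormal basis $\{e_k\}_{k\ge1}$ of $\mathcal H$ and let $P_N$ be the orthogonal projection onto $\mathrm{span}\{e_1,\ldots,e_N\}$. Write $S_n:=n^{-1/2}\sum_{j=1}^n\xi_j$.

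First, the operator $C$ and the limit $\mathcal Z$ need to be constructed. The bilinear form $(f,g)\mapsto\EE[\langle\xi_1,f\rangle\langle\xi_1,g\rangle]$ is bounded by $\EE\|\xi_1\|^2\|f\|\|g\|$, so it defines a positive self-adjoint $C$. Its trace is $\sum_k\EE\langle\xi_1,e_k\rangle^2=\EE\|\xi_1\|^2<\infty$, by Tonelli and Parseval, so $C$ is trace class. Let $\sigma_k\ge0$ and $\{v_k\}$ be its eigenvalues and orthonormal eigenvectors. Then $\mathcal Z:=\sum_k\sigma_k^{1/2}N_kv_k$, with i.i.d. standard normal $N_k$, converges in $L^2(\mathcal H)$ because $\sum_k\sigma_k<\infty$. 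This gives a centered Gaussian element with covariance $C$.

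Second, the finite-dimensional step. The coordinate vector $(\langle S_n,e_1\rangle,\ldots,\langle S_n,e_N\rangle)$ is a normalized sum of i.i.d. centered square-integrable vectors in $\mathbb R^N$. By the multivariate CLT, it converges to a Gaussian vector with covariance $(\langle Ce_k,e_l\rangle)_{k,l\le N}$, which is the law of the coordinates of $P_N\mathcal Z$. Since $P_N\mathcal H$ is isometric to $\mathbb R^N$, this gives $P_NS_n\to P_N\mathcal Z$ in distribution in $\mathcal H$ for each fixed $N$.

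Third, the uniform tail bound, which is the key step. By independence and centering the cross terms vanish, so
\begin{equation*}
\EE\|(I-P_N)S_n\|^2=\EE\|(I-P_N)\xi_1\|^2=\sum_{k>N}\EE\langle\xi_1,e_k\rangle^2 .
\end{equation*}
The right side does not depend on $n$ and tends to $0$ as $N\to\infty$, by dominated convergence applied to $\|(I-P_N)\xi_1\|^2\le\|\xi_1\|^2$. Likewise $\EE\|(I-P_N)\mathcal Z\|^2=\operatorname{tr}((I-P_N)C)\to0$. Chebyshev then gives, for every $\varepsilon>0$,
\begin{equation*}
\lim_{N\to\infty}\limsup_{n\to\infty}\PP(\|S_n-P_NS_n\|>\varepsilon)=0,
\end{equation*}
and also $P_N\mathcal Z\to\mathcal Z$ in distribution.

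Finally, the approximation theorem for weak convergence (Billingsley, \emph{Convergence of Probability Measures}, Theorem~3.2) combines the last three facts to give $S_n\to\mathcal Z$ in distribution in $\mathcal H$. Alternatively, one can check directly that $|\EE h(S_n)-\EE h(\mathcal Z)|\to0$ for bounded Lipschitz $h$. The only point needing care is the third step, namely that the tail is controlled uniformly in $n$. In the Hilbert setting this is immediate from orthogonality of the i.i.d. summands, so I expect no genuine obstacle.
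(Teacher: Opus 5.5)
Your proof is correct. The coordinate CLT gives $P_NS_n\to P_N\mathcal Z$ for each $N$, and orthogonality of the centered i.i.d.\ summands gives $\EE\|(I-P_N)S_n\|^2=\EE\|(I-P_N)\xi_1\|^2\to0$ uniformly in $n$. Billingsley's approximation theorem then combines these with $P_N\mathcal Z\to\mathcal Z$.

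The paper gives no proof of its own here; it only cites the result from standard texts. Your argument is the usual textbook route (finite-dimensional projections plus uniform tail control), so nothing further is needed.
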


We shall use the following Karhunen--Lo\`eve representation for the squared norm.

\begin{theorem}
\label{supp-thm:KLquadratic}
Let $\mathcal Z$ be a centered Gaussian element in a separable Hilbert space $\mathcal H$ over $\mathbb R$,
with covariance operator $C$.
Let $\{\lambda_n\}_{n\ge0}$ be its nonzero eigenvalues, repeated by multiplicity and completed by zeros if the rank is finite.
Then
\begin{equation*}
\|\mathcal Z\|_{\mathcal H}^2\ \stackrel{d}{=}\ \sum_{n=0}^\infty \lambda_n\,N_n^2,
\end{equation*}
where $\{N_n\}_{n\ge0}$ are i.i.d.\ $\mathcal N(0,1)$ and the series converges almost surely and in $L^1$.
\end{theorem}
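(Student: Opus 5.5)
This is the standard Karhunen--Lo\`eve argument; I would carry it out in four steps.

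\textbf{Step 1: reduce to a diagonal expansion.} Since $\mathcal Z$ is centered Gaussian with $\EE\|\mathcal Z\|_{\mathcal H}^2<\infty$, its covariance operator $C$ is positive, self-adjoint and trace class (Section~\ref{supp-sec:supp_operator_facts}), with $\operatorname{tr}C=\EE\|\mathcal Z\|_{\mathcal H}^2$. By the spectral theorem, choose an orthonormal basis of $\mathcal H$ consisting of eigenvectors $e_n$ of $C$ with eigenvalues $\lambda_n$. Complete this by an orthonormal basis $\{f_m\}$ of $\ker C$.

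\textbf{Step 2: the coordinates.} Put $\eta_n:=\langle\mathcal Z,e_n\rangle_{\mathcal H}$. Every finite family of such functionals is jointly Gaussian and centered, because $\mathcal Z$ is Gaussian. Their covariances are
\[
\EE[\eta_n\eta_m]=\langle Ce_n,e_m\rangle_{\mathcal H}=\lambda_n\boldsymbol1_{\{n=m\}}.
\]
Uncorrelated jointly Gaussian variables are independent. Hence $\eta_n=\sqrt{\lambda_n}N_n$ with $N_n$ i.i.d.\ standard normal; when $\lambda_n=0$, define $N_n$ as an auxiliary independent normal, enlarging the probability space if needed. For the kernel directions, $\EE\langle\mathcal Z,f_m\rangle_{\mathcal H}^2=\langle Cf_m,f_m\rangle_{\mathcal H}=0$, so these coordinates vanish almost surely. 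There are countably many of them, so they vanish simultaneously almost surely.

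\textbf{Step 3: the squared norm.} Apply Parseval's identity in the complete basis. Almost surely,
\[
\|\mathcal Z\|_{\mathcal H}^2=\sum_n\eta_n^2=\sum_n\lambda_nN_n^2 .
\]
This holds pointwise on the full-probability event of Step 2, so the identity is almost sure, and in particular it holds in distribution.

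\textbf{Step 4: modes of convergence.} The terms are nonnegative, and $\EE\sum_n\lambda_nN_n^2=\sum_n\lambda_n=\operatorname{tr}C<\infty$ by monotone convergence. The partial sums therefore increase to an integrable limit. This gives almost sure convergence, and dominated convergence gives $L^1$ convergence. Completion by zeros in the finite-rank case changes nothing.

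The only delicate points are small. One is establishing joint Gaussianity of the countable coordinate family, which follows directly from the definition of a Gaussian element. The other is handling the kernel directions. I would treat the latter by the almost-sure vanishing argument in Step 2, so that no separate argument is needed.
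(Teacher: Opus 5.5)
Your argument is correct and is the standard Karhunen--Lo\`eve proof; the paper does not prove this statement itself but cites Gin\'e and Nickl, Horv\'ath and Kokoszka, and Hsing and Eubank, where essentially this argument appears. The one input you assume rather than derive is $\EE\|\mathcal Z\|_{\mathcal H}^2<\infty$, and with it the trace-class property of $C$; it is standard for Gaussian elements (for example by Fernique's theorem), and it is automatic in the paper's uses, where $\mathcal Z$ is the Hilbert-space CLT limit of square-integrable summands.
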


For these two results see, for example, Gin{\'e} and Nickl~\cite{gine2021mathematical}, Horv{\'a}th and Kokoszka~\cite{horvath2012inference}, and Hsing and Eubank~\cite{hsing2015theoretical}. Their application to characteristic functions uses the real space \eqref{main-eq:Hermitian_space}.

\section{A derivative estimate for the complete spectral tail}
\label{supp-sec:full_derivative_tail}
\setcounter{equation}{0}
This section proves Proposition~\ref{main-prop:derivative_tail_criterion}. It supplies a bound for centered functions of either parity. In combination with Corollary~\ref{main-cor:weighted_kernel_full_tail}, it verifies the tail condition for the whole Gaussian kernel. The earlier odd-sector estimates remain useful because they give bounds for the explicit functions $g_{2j+1}$.

\subsection{Residual-pair laws and kernel representatives}
\label{supp-subsec:residual_pairs}
Put $d=n-1$ and let $U$ be uniform on the unit sphere in the $d$-dimensional subspace $\boldsymbol1^\perp$. Then $R_n=\sqrt n U$. For distinct coordinates, the corresponding linear forms have Gram matrix $(d/n)\Sigma_d$, where
\begin{equation*}
\Sigma_d=\begin{pmatrix}1&-1/d\\-1/d&1\end{pmatrix}.
\end{equation*}
By an orthogonal change of coordinates, $(R_{n,1},R_{n,2})^\top$ has the law of $\sqrt d\,\Sigma_d^{1/2}(U_1,U_2)^\top$, with $U_1,U_2$ the first two coordinates of a uniform direction in $\mathbb R^d$. For $d>2$, integrating the remaining $d-2$ coordinates gives the density
\begin{equation}\label{supp-eq:residual_pair_density}
p_n(z)=\frac{\Gamma(d/2)}{\pi d\,\Gamma((d-2)/2)\sqrt{\det\Sigma_d}}
\left(1-\frac{z^\top\Sigma_d^{-1}z}{d}\right)^{(d-4)/2},
\quad z^\top\Sigma_d^{-1}z<d,
\end{equation}
and zero outside this ellipse. Its normalization follows by setting $y=d^{-1/2}\Sigma_d^{-1/2}z$ and integrating $(1-\|y\|^2)^{(d-4)/2}$ over the unit disc in polar coordinates. Since the normal product density is strictly positive, \eqref{supp-eq:residual_pair_density} proves absolute continuity with respect to $\Phi\otimes\Phi$. For $n=4$ the exponent is $-1/2$: the density is integrable but unbounded at the boundary. No bounded joint-density comparison is needed.

Here is the measure argument behind Proposition~\ref{main-prop:data_tail_sufficient_main}. Let $\mu=\Phi\otimes\Phi$, let $\nu_n$ be the law of one residual pair, and put $h_n=d\nu_n/d\mu$. If $K_L\to K$ in $L^2(\mu)$, then, for $\delta,B>0$,
\begin{equation*}
\nu_n\{|K_L-K|>\delta\}
\le B\delta^{-2}\|K_L-K\|_{L^2(\mu)}^2
+\int_{\{h_n>B\}}h_n\,d\mu.
\end{equation*}
Fix $n$, let $L\to\infty$, and then let $B\to\infty$. The last integral tends to zero because $h_n\in L^1(\mu)$. A union bound over the finitely many off-diagonal pairs gives convergence in probability of the kernel statistics for this fixed $n$. The separate summability estimate of Proposition~\ref{main-prop:data_tail_sufficient_main} supplies the uniform $L^1$ bound. These two arguments do not interchange the spectral and sample-size limits.

For completeness, centered residual pairs have a nonsingular Gaussian covariance matrix with diagonal $1-1/n$ and off-diagonal $-1/n$ when $n>2$. Variance-normalized pairs with known mean are two coordinates of a uniform sphere in dimension $n$ and have a density for $n>2$. This justifies the corresponding representative identifications in Lemma~\ref{supp-lem:supp_tail_one_parameter} for the stated cutoff $n_0\ge4$.

\subsection{The marginal density and the derivative estimate}

\begin{lemma}\label{supp-lem:residual_density_rate}
Let $f_n$ be the density of $R_{n,1}$ under full standardization. There is a numerical constant $C$ such that, for every $n\ge4$,
\begin{equation}\label{supp-eq:residual_density_rate}
0\le f_n(x)\le Cf_0(x),\qquad
\left|\frac{f_n(x)}{f_0(x)}-1\right|\le\frac{C}{n}(1+x^4),\qquad x\in\mathbb R.
\end{equation}
In particular, $\|f_n/f_0-1\|_\Phi\le C/n$.
\end{lemma}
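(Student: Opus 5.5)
Using the explicit density from Lemma~\ref{supp-lem:data_standardized_residual_density_bound},
\begin{equation*}
f_n(x)=c_n\Bigl(1-\frac{x^2}{d}\Bigr)_+^{(d-3)/2},\qquad d=n-1,\quad c_n=\frac{\Gamma(d/2)}{\sqrt{\pi d}\,\Gamma((d-1)/2)},
\end{equation*}
I will compare the normalizing constant and the shape factor with $f_0$ separately. The bound $f_n\le Cf_0$ is already proved there with $C=e^{3/2}$. So the work is the relative rate $|f_n/f_0-1|\le Cn^{-1}(1+x^4)$. The $L^2(\Phi)$ consequence then follows from $\int(1+x^4)^2\,d\Phi<\infty$ wherever this pointwise bound holds.

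The first step is the constant. Stirling's series (or $\Gamma(z+1/2)/\Gamma(z)=\sqrt z\,(1-1/(8z)+O(z^{-2}))$) gives
\begin{equation*}
\sqrt{2\pi}\,c_n=1+O(1/n).
\end{equation*}
Log-convexity bounds from the earlier lemma supply explicit two-sided bounds, so this holds uniformly for $n\ge4$.

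The second step is the shape factor, treated in two regions.

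In the region $x^2\le d/2$, I expand
\begin{equation*}
\log\Bigl[\Bigl(1-\frac{x^2}{d}\Bigr)^{(d-3)/2}e^{x^2/2}\Bigr]
=\frac{d-3}{2}\log\Bigl(1-\frac{x^2}{d}\Bigr)+\frac{x^2}{2}.
\end{equation*}
Using $\log(1-u)=-u-u^2/2-\dots$ with $u=x^2/d\le1/2$, whose remainder is bounded by $Cu^2$, the exponent is $E=3x^2/(2d)-(d-3)O(x^4/d^2)$. Hence $|E|\le C(x^2+x^4)/n$, and the exponent is also bounded above by $3/2$. The inequality $|e^E-1|\le|E|e^{\max(E,0)}$ then gives $|e^E-1|\le C(1+x^4)/n$ when $|E|$ is bounded. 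When $|E|$ is large and negative, $|e^E-1|\le1\le|E|$ still gives the same bound.

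In the region $x^2>d/2$, the target right-hand side $Cn^{-1}(1+x^4)$ is at least of order $n$. Meanwhile $|f_n/f_0-1|\le C+1$ by the first bound. The inequality is therefore trivial after enlarging $C$.

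Combining the two steps, $f_n/f_0=(\sqrt{2\pi}c_n)\cdot(\text{shape})$. The product of two factors, each of the form $1+O((1+x^4)/n)$ with the shape factor bounded, yields the claim.

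The main obstacle is keeping every constant uniform down to $n=4$, where $(d-3)/2=0$ and the expansions are not small. I would handle small $n$, say $n\le n_1$, by the crude bound $|f_n/f_0-1|\le C+1\le C'(1+x^4)/n$, and apply the asymptotic argument only for $n>n_1$.
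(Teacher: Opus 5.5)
Your proposal is correct and follows essentially the same route as the paper. Both arguments write $f_n/f_0$ as the constant $a_d=\sqrt{2\pi}\,c_n$ times $\exp\{x^2/2+\tfrac{d-3}{2}\log(1-x^2/d)\}$, control $\log(1-u)+u$ on $x^2\le d/2$, and absorb the region $x^2>d/2$ (including outside the support) using the uniform bound $e^{3/2}$ together with $(1+x^4)/n$ being of order $n$ there. The paper obtains $|\log a_d|\le 3/d$ from two log-convexity inequalities, whereas the earlier lemma supplies only the upper bound; your Stirling asymptotics, with $C$ enlarged for finitely many $n$, give the same uniformity. The expansion is already valid for every $d\ge 3$, including $n=4$, so no separate small-$n$ step is needed.
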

\begin{proof}
Write $d=n-1\ge3$. The spherical marginal density is
\begin{equation*}
f_n(x)=\frac{\Gamma(d/2)}{\sqrt{\pi d}\,\Gamma((d-1)/2)}
(1-x^2/d)^{(d-3)/2},\qquad |x|<\sqrt d,
\end{equation*}
and is zero outside this interval. Put $a_d=\sqrt{2/d}\,\Gamma(d/2)/\Gamma((d-1)/2)$. Log-convexity of the gamma function, which follows by Cauchy--Schwarz from its integral representation, gives, for $z\ge1$,
\begin{equation*}
\Gamma(z+1/2)^2\le\Gamma(z)\Gamma(z+1)=z\Gamma(z)^2,
\qquad
\Gamma(z)^2\le\Gamma(z-1/2)\Gamma(z+1/2)
=\frac{\Gamma(z+1/2)^2}{z-1/2}.
\end{equation*}
Taking $z=(d-1)/2$ yields
\begin{equation*}
\sqrt{1-2/d}\le a_d\le\sqrt{1-1/d}\le1,
\qquad
|\log a_d|\le-\tfrac12\log(1-2/d)\le3/d.
\end{equation*}
For $u=x^2/d<1$, the inequality $\log(1-u)\le-u$ implies
\begin{equation*}
\frac{f_n(x)}{f_0(x)}
=a_d\exp\!\left\{x^2/2+(d-3)\log(1-u)/2\right\}
\le\exp\!\left\{3x^2/(2d)\right\}\le\exp\!\left\{3/2\right\}.
\end{equation*}
This proves the first bound uniformly in $d$. If $x^2\le d/2$, then
\begin{align*}
\left|\log\frac{f_n(x)}{f_0(x)}\right|
&\le\frac3d+\frac{3x^2}{2d}
+\frac{d-3}{2}|\log(1-u)+u|\\
&\le\frac3d+\frac{3x^2}{2d}+\frac{x^4}{2d}
\le\frac{C}{d}(1+x^4),
\end{align*}
where we used $|\log(1-u)+u|\le u^2/\{2(1-u)\}$. Since
$|\exp\!\{v\}-1|\le\max(1,\exp\!\{v\})|v|$ for real $v$, the uniform bound on the density ratio gives the second estimate in \eqref{supp-eq:residual_density_rate} on this region. If $x^2>d/2$, the ratio is still bounded, including outside the support, and
\begin{equation*}
\frac{1+x^4}{d}\ge\frac d4\ge\frac34.
\end{equation*}
The same estimate follows there after increasing $C$. Finally, $d=n-1\ge3n/4$ and
\begin{equation*}
\int_{\mathbb R}(1+x^4)^2\,d\Phi(x)=1+2\EE[X^4]+\EE[X^8]=112.
\end{equation*}
Replacing $d$ by $n$ and integrating the squared pointwise bound proves the asserted $L^2(\Phi)$ rate.
\end{proof}

\begin{proof}[Proof of Proposition~\ref{main-prop:derivative_tail_criterion}]
Let $C_0,C_1$ be constants, independent of $n\ge4$, such that
$f_n\le C_0f_0$ and $\|f_n/f_0-1\|_\Phi\le C_1/n$, as in Lemma~\ref{supp-lem:residual_density_rate}. Put
\begin{equation*}
F_n=n^{-1/2}\sum_{i=1}^n r(R_{n,i}),\qquad
Q_n=n^{-1}\sum_{i=1}^n r^2(R_{n,i}).
\end{equation*}
We bound the mean, variance and empirical second moment separately.

\emph{Mean and empirical second moment.} Exchangeability, centering of $r$ under $\Phi$, and Cauchy--Schwarz give
\begin{align*}
|\EE F_n|
&=\sqrt n\left|\int r(x)\left(\frac{f_n(x)}{f_0(x)}-1\right)d\Phi(x)\right|
\le C_1n^{-1/2}\|r\|_\Phi,\\
\EE Q_n&=\int r(x)^2 f_n(x)\,dx\le C_0\|r\|_\Phi^2.
\end{align*}
In particular, $F_n\in L^2$ for fixed $n$, since $F_n^2\le nQ_n$.

\emph{Derivative and variance.} Let $P=I_n-n^{-1}\boldsymbol1\boldsymbol1^\top$, set $Y=PX$, and write
\begin{equation*}
S_n=\|Y\|/\sqrt n,\qquad R_n=\sqrt n\,Y/\|Y\|.
\end{equation*}
On $\{Y\ne0\}$, differentiation yields
\begin{equation*}
\frac{\partial R_n}{\partial X}=S_n^{-1}Q,
\qquad Q=P-n^{-1}R_nR_n^\top.
\end{equation*}
The identities $PR_n=R_n$ and $R_n^\top R_n=n$ imply $Q^\top=Q$, $Q^2=Q$ and $QR_n=0$. Hence $\|Q\|_{\mathrm{op}}\le1$ and, with $r'(R_n)$ denoting the vector of coordinatewise derivatives,
\begin{equation*}
\nabla F_n=\frac1{\sqrt n S_n}Q\,r'(R_n),\qquad
\|\nabla F_n\|^2\le S_n^{-2}\frac1n\sum_{i=1}^n|r'(R_{n,i})|^2,
\qquad Y^\top\nabla F_n=0.
\end{equation*}
The vector $Y$ is standard Gaussian on $\boldsymbol1^\perp$, of dimension $d=n-1$. Its polar-coordinate density factors into a uniform direction and a radial density proportional to $u^{d-1}\exp\!\{-u^2/2\}$. Thus $R_n$ and $S_n$ are independent and $nS_n^2\sim\chi_d^2$. Direct integration gives
\begin{equation*}
\EE S_n^{-2}
=\frac{n\Gamma((d-2)/2)}{2\Gamma(d/2)}
=\frac{n}{n-3}\le4.
\end{equation*}
Using this independence, exchangeability and the density bound, we obtain
\begin{equation*}
\EE\|\nabla F_n\|^2
\le\EE S_n^{-2}\int |r'(x)|^2f_n(x)\,dx
\le4C_0\|r'\|_\Phi^2.
\end{equation*}
It remains to justify the Gaussian Poincar\'e inequality across $Y=0$. Choose a smooth function $\chi:[0,\infty)\to[0,1]$ with $\chi=0$ on $[0,1]$ and $\chi=1$ on $[2,\infty)$. Put
\begin{equation*}
\eta_\varepsilon(X)=\chi(\|PX\|/\varepsilon),
\qquad F_{n,\varepsilon}=\eta_\varepsilon F_n,
\end{equation*}
with $F_{n,\varepsilon}=0$ where $PX=0$. For fixed $n$, each residual belongs to $[-\sqrt{n-1},\sqrt{n-1}]$. The continuity of $r,r'$ therefore makes $F_{n,\varepsilon}$ a continuously differentiable function with bounded value and gradient. The Gaussian Poincar\'e inequality applies to it.

Let $M_{n,r}=\sqrt n\sup_{|x|\le\sqrt{n-1}}|r(x)|<\infty$. The radial density gives $\PP(\|PX\|\le2\varepsilon)\le C_n\varepsilon^d$ for $0<\varepsilon\le1$. Consequently,
\begin{align*}
\EE|F_{n,\varepsilon}-F_n|^2
&\le M_{n,r}^2\PP(\|PX\|\le2\varepsilon)\longrightarrow0,\\
\EE[F_n^2\|\nabla\eta_\varepsilon\|^2]
&\le\frac{M_{n,r}^2\|\chi'\|_\infty^2}{\varepsilon^2}
\PP(\|PX\|\le2\varepsilon)
\le C_{n,r}\varepsilon^{n-3}\longrightarrow0.
\end{align*}
Furthermore, $\nabla\eta_\varepsilon$ is parallel to $Y$, whereas $Y^\top\nabla F_n=0$. It follows that
\begin{equation*}
\EE\|\nabla F_{n,\varepsilon}-\nabla F_n\|^2
=\EE[(1-\eta_\varepsilon)^2\|\nabla F_n\|^2]
+\EE[F_n^2\|\nabla\eta_\varepsilon\|^2]\longrightarrow0.
\end{equation*}
The first term tends to zero by dominated convergence and the integrable gradient bound. Passing to the limit in the Poincar\'e inequality for $F_{n,\varepsilon}$ now proves
\begin{equation*}
\operatorname{Var}(F_n)\le\EE\|\nabla F_n\|^2
\le4C_0\|r'\|_\Phi^2.
\end{equation*}
The cutoff limit was taken for each fixed $n$; its constants do not enter this uniform estimate.

Combining the three bounds gives, for every $n\ge4$,
\begin{equation*}
\EE F_n^2+\EE Q_n
\le4C_0\|r'\|_\Phi^2+(C_0+C_1^2/n)\|r\|_\Phi^2,
\end{equation*}
which proves \eqref{main-eq:Gaussian_derivative_bound}. For the centered orthonormal functions $r_k$, this implies
$A_k(4)\le C(1+\|r_k'\|_\Phi^2)$. Multiplication by $|\gamma_k|$ and summation prove \eqref{main-eq:data_tail_sufficient_condition} under \eqref{main-eq:derivative_summability}; Proposition~\ref{main-prop:data_tail_sufficient_main} then gives the full-tail condition \eqref{main-eq:data_tail_condition_probability}.
\end{proof}

\section{Full empirical characteristic-function expansion}
\label{supp-sec:supp_full_ecf}
\setcounter{equation}{0}
The following gives the probabilistic part of Theorem~\ref{main-thm:mu_sigma_main}, with the notation of its setup.
\begin{proposition}\label{supp-prop:supp_full_ecf_limit}
Under independent standard normal observations,
\begin{equation}\label{supp-eq:supp_full_ecf_expansion}
\sqrt n(\psi_n-\phi_0)=n^{-1/2}\sum_{j=1}^n\zeta_{\mu,\sigma}(\cdot;X_j)+o_{\PP}(1)
\quad\text{in }\mathcal H_\beta,
\end{equation}
where $\zeta_{\mu,\sigma}(t;x)=\exp\!\left\{itx\right\}-\phi_0(t)-it\phi_0(t)x+t^2\phi_0(t)(x^2-1)/2$. The summands are centered and square integrable, and their covariance is \eqref{main-eq:mu_sigma_kernel}.
\end{proposition}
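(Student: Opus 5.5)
The plan is to reduce everything to the two known one-parameter expansions combined with a joint Taylor step, working throughout in $\mathcal H_\beta$. Write $V_{n,j}=(X_j-\overline X_n)/S_n$ and $\psi_n(t)=e^{-it\overline X_n/S_n}\phi_n(t/S_n)$, where $\phi_n$ is the raw empirical characteristic function. Set $a_n=\overline X_n$ and $b_n=S_n-1$. Both are $O_{\PP}(n^{-1/2})$, and
\begin{equation*}
\sqrt n\,a_n=n^{-1/2}\sum_j X_j,\qquad \sqrt n\,b_n=(2\sqrt n)^{-1}\sum_j(X_j^2-1)+o_{\PP}(1).
\end{equation*}
The second identity uses $S_n^2=n^{-1}\sum_j X_j^2-\overline X_n^2$ together with $\sqrt n\,\overline X_n^2=o_{\PP}(1)$. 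On the event $\{|b_n|\le 1/2\}$, whose probability tends to one, I expand each summand as
\begin{equation*}
e^{itV_{n,j}}=e^{itX_j}\bigl(1+it(V_{n,j}-X_j)\bigr)+\rho_{n,j}(t),\qquad |\rho_{n,j}(t)|\le \tfrac12 t^2(V_{n,j}-X_j)^2 .
\end{equation*}
Here $V_{n,j}-X_j=-a_n/S_n-X_jb_n/S_n$. Hence $(V_{n,j}-X_j)^2\le C(a_n^2+X_j^2b_n^2)$, and the remainder satisfies $\|\rho_n\|_\beta\le C(a_n^2+b_n^2\,n^{-1}\sum_jX_j^2)$. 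This is $O_{\PP}(n^{-1})$, because $\int t^4\varphi_\beta<\infty$.

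Next comes the linear term, which equals
\begin{equation*}
-it\,\frac{a_n}{S_n}\,\phi_n(t)-it\,\frac{b_n}{S_n}\,\frac1n\sum_jX_je^{itX_j}.
\end{equation*}
I would replace $\phi_n(t)$ by $\phi_0(t)$ and $n^{-1}\sum_jX_je^{itX_j}$ by its mean $it\phi_0(t)$, and $1/S_n$ by $1$. Each replacement costs $O_{\PP}(n^{-1/2})$ in $\|t\,\cdot\|_\beta$ and is multiplied by $a_n$ or $b_n=O_{\PP}(n^{-1/2})$, so the total error is $o_{\PP}(n^{-1/2})$. The norm estimates hold because the Hilbert-space CLT (Theorem~\ref{supp-thm:HilbertCLT}) applies to $t\,e^{itX}$ and $t\,Xe^{itX}$, whose squared norms have finite mean since $\int t^2\varphi_\beta<\infty$.

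After these replacements the linear term becomes $-it\phi_0(t)a_n+t^2\phi_0(t)b_n$. Inserting the linear representations of $\sqrt n\,a_n$ and $\sqrt n\,b_n$ gives $\zeta_{\mu,\sigma}$, since $t^2\phi_0(t)\cdot\tfrac12(x^2-1)$ matches the stated summand. All errors are uniform in $t$ only through $\|\cdot\|_\beta$ with polynomial weights in $t$, which is exactly what Gaussian moments of $\varphi_\beta$ provide.

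For the remaining claims, centering follows from the three moments $\EE e^{itX}=\phi_0(t)$, $\EE Xe^{itX}=it\phi_0(t)$ and $\EE(X^2-1)=0$. Square integrability follows from $|\zeta_{\mu,\sigma}(t;x)|\le C(1+|t||x|+t^2(1+x^2))$. For the covariance I compute $\EE[\zeta(s)\overline{\zeta(t)}]$ term by term, using $\EE e^{i(s-t)X}=e^{-(s-t)^2/2}$, $\EE Xe^{iuX}=iue^{-u^2/2}$ and $\EE X^2e^{iuX}=(1-u^2)e^{-u^2/2}$. The cross terms then collapse, and the score terms contribute $-(1+st+s^2t^2/2)e^{-(s^2+t^2)/2}$.

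I expect the main obstacle to be the bookkeeping in this covariance collapse, namely confirming that each cross term equals minus the corresponding diagonal score term. It is routine but error-prone. A secondary concern is that the Hermitian symmetry $\zeta(-t)=\overline{\zeta(t)}$ must be checked, so that the expansion indeed lives in the real space $\mathcal H_\beta$; this is immediate from the explicit formula.
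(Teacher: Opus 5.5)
Your proposal is correct and follows the paper's route: a Taylor expansion at the residuals, averaging over $j$, replacing $\phi_n$ and $n^{-1}\sum_j X_j e^{itX_j}$ by their means at cost $o_{\PP}(n^{-1/2})$, linearizing $S_n-1$, and using the three Gaussian moment identities for the covariance. The only difference is that you expand $e^{itV_{n,j}}$ in the single displacement $V_{n,j}-X_j$, so the remainder follows directly from $|e^{iw}-1-iw|\le w^2/2$ instead of the paper's two-variable Hessian bound; this leaves an extra factor $1/S_n$ in the linear term, which you correctly remove.
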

\begin{proof}

Let
\begin{equation*}
\phi_n(t):=\frac1n\sum_{j=1}^n\exp\!\left\{itX_j\right\},
\qquad
m_n(t):=\frac1n\sum_{j=1}^nX_j\exp\!\left\{itX_j\right\}.
\end{equation*}
For fixed $x,t\in\mathbb R$ set
\begin{equation*}
h_{t,x}(u,v):=\exp\!\left\{it\frac{x-u}{1+v}\right\},
\qquad (u,v)\in\mathbb R\times(-1,\infty).
\end{equation*}
Taylor's formula at $(0,0)$ gives, for $|v|\le1/2$,
\begin{equation*}
h_{t,x}(u,v)
=
\exp\!\left\{itx\right\}-it\exp\!\left\{itx\right\}u-itx\exp\!\left\{itx\right\}v
+R_{t,x}(u,v).
\end{equation*}
A direct differentiation gives
\begin{align*}
\partial_{uu}h_{t,x}(u,v)
&=-\frac{t^2}{(1+v)^2}h_{t,x}(u,v),\\
\partial_{uv}h_{t,x}(u,v)
&=\left\{\frac{it}{(1+v)^2}-\frac{t^2(x-u)}{(1+v)^3}\right\}h_{t,x}(u,v),\\
\partial_{vv}h_{t,x}(u,v)
&=\left\{\frac{2it(x-u)}{(1+v)^3}-\frac{t^2(x-u)^2}{(1+v)^4}\right\}h_{t,x}(u,v).
\end{align*}
Since $|h_{t,x}|=1$ and $|1+v|^{-k}\le2^k$ on $|v|\le1/2$, the Hessian norm is bounded by
$C(1+t^2)(1+(x-u)^2)$.  Hence, possibly changing $C$,
\begin{equation*}
|R_{t,x}(u,v)|
\le
C(1+t^2)(1+x^2+u^2)(u^2+v^2),
\qquad |v|\le1/2.
\end{equation*}
Apply the expansion with $u=\overline X_n$, $v=S_n-1$, $x=X_j$, and average over $j$.  On a set whose probability tends to one,
\begin{equation*}
\psi_n(t)
=
\phi_n(t)-it\overline X_n\phi_n(t)-it(S_n-1)m_n(t)+r_n(t),
\end{equation*}
where
\begin{equation*}
|r_n(t)|
\le
C(1+t^2)(\overline X_n^{\,2}+(S_n-1)^2)
\left(1+\overline X_n^{\,2}+\frac1n\sum_{j=1}^nX_j^2\right).
\end{equation*}
Since $\overline X_n=O_{\mathbb P}(n^{-1/2})$, $S_n-1=O_{\mathbb P}(n^{-1/2})$ and $n^{-1}\sum_jX_j^2=O_{\mathbb P}(1)$, while
$\int(1+t^2)^2\varphi_\beta(t)\,dt<\infty$, it follows that
\begin{equation*}
\sqrt n\|r_n\|_\beta=o_{\mathbb P}(1).
\end{equation*}
We next replace the empirical factors in the two linear terms by their deterministic limits.  By Fubini and the variance bound,
\begin{equation*}
\mathbb E\|t(\phi_n-\phi_0)\|_\beta^2
\le
\frac1n\int t^2\varphi_\beta(t)\,dt,
\end{equation*}
and, with $m(t):=\mathbb E[X\exp\!\left\{itX\right\}]=it\phi_0(t)$,
\begin{equation*}
\mathbb E\|t(m_n-m)\|_\beta^2
\le
\frac1n\int t^2\mathbb E[X^2]\varphi_\beta(t)\,dt.
\end{equation*}
Thus $\|t(\phi_n-\phi_0)\|_\beta=O_{\mathbb P}(n^{-1/2})$ and $\|t(m_n-m)\|_\beta=O_{\mathbb P}(n^{-1/2})$.  Since $\overline X_n$ and $S_n-1$ are $O_{\mathbb P}(n^{-1/2})$, the replacement errors are $o_{\mathbb P}(n^{-1/2})$ in $L^2(\varphi_\beta)$.  Moreover,
\begin{equation*}
S_n-1=\frac12(S_n^2-1)+O_{\mathbb P}(n^{-1}).
\end{equation*}
Consequently,
\begin{align*}
\psi_n(t)-\phi_0(t)
&=\phi_n(t)-\phi_0(t)-it\phi_0(t)\overline X_n
+\frac{t^2}{2}\phi_0(t)(S_n^2-1)+\widetilde r_n(t),\\
\sqrt n\|\widetilde r_n\|_\beta&=o_{\mathbb P}(1).
\end{align*}
Since
\begin{equation*}
S_n^2-1=\frac1n\sum_{j=1}^n(X_j^2-1)-\overline X_n^{\,2},
\qquad
\sqrt n\,\overline X_n^{\,2}=o_{\mathbb P}(1),
\end{equation*}
we obtain
\begin{equation*}
\sqrt n(\psi_n-\phi_0)
=
\frac1{\sqrt n}\sum_{j=1}^n\zeta_{\mu,\sigma}(\cdot;X_j)+o_{\mathbb P}(1)
\quad\hbox{in }L^2(\varphi_\beta),
\end{equation*}
where
\begin{equation*}
\zeta_{\mu,\sigma}(t;x)
:=
\exp\!\left\{itx\right\}-\phi_0(t)-it\phi_0(t)x+\frac{t^2}{2}\phi_0(t)(x^2-1).
\end{equation*}
The summands are centered and satisfy $\mathbb E\|\zeta_{\mu,\sigma}(\cdot;X)\|_\beta^2<\infty$, since
$|\zeta_{\mu,\sigma}(t;X)|\le 2+|t||X|+t^2(1+X^2)/2$ and the Gaussian weight has finite moments of all orders.  The Hilbert-space central limit theorem and Remark~\ref{main-rem:complex_H} give convergence to a centered Gaussian element $Z$ with covariance
\begin{equation*}
\mathbb E[Z(s)\overline{Z(t)}]
=
\mathbb E[\zeta_{\mu,\sigma}(s;X)\overline{\zeta_{\mu,\sigma}(t;X)}].
\end{equation*}
The identities
\begin{align*}
\mathbb E[\exp\!\left\{iuX\right\}]
&=\exp\!\left\{-u^2/2\right\},\\
\mathbb E[X\exp\!\left\{iuX\right\}]
&=iu\exp\!\left\{-u^2/2\right\},\\
\mathbb E[X^2\exp\!\left\{iuX\right\}]
&=(1-u^2)\exp\!\left\{-u^2/2\right\}.
\end{align*}
yield \eqref{main-eq:mu_sigma_kernel}.  The continuous mapping theorem and Theorem~\ref{supp-thm:KLquadratic} give \eqref{main-eq:mu_sigma_limit_series}.

The expansion established above is \eqref{supp-eq:supp_full_ecf_expansion}. All its terms are Hermitian symmetric, so the norm convergence is also convergence in the real space $\mathcal H_\beta$.
\end{proof}

\section{Verification of the coincidence at an even pole}
\label{supp-sec:pole_validation}
\setcounter{equation}{0}
Proposition~\ref{main-prop:even_pole_exists} proves existence by analytic inequalities, and Theorem~\ref{main-thm:unique_pole_sequence} establishes uniqueness. We locate the exceptional parameter by rational interval bounds and construct a corresponding eigenfunction.

For rational $\rho$, put $x=\rho^2$ and $c_k=\binom{2k}{k}4^{-k}$. Let $P_N(x)$ be the right-hand side of \eqref{main-eq:even_pole_E1} with the positive series stopped at $k=N$. The remainder satisfies
\begin{align}\label{supp-eq:pole_series_tail}
0\le E_1(\rho)-P_N(x)
&\le\frac{2(1-x)^{5/2}}{x^2(1-x^N)}\,y^{N+1}\nonumber\\
&\quad\times\left\{\frac{N^2}{1-y}+\frac{2Ny}{(1-y)^2}
+\frac{y(1+y)}{(1-y)^3}\right\},\qquad y=x^2.
\end{align}
Indeed $c_k\le1$ and $1-x^{k-1}\ge1-x^N$ for $k\ge N+1$; the remaining power series is summed exactly. All terms of $P_N$ and of the bound are rational multiples of $1$ or $\sqrt{1-x}$. Rational lower and upper bounds for the square root therefore give bounds for the finite sum and its remainder.

Using \eqref{supp-eq:pole_series_tail} with $N=80$ and rational bounds of width $10^{-60}$ for the square root gives
\begin{equation*}
E_1(0.71351870517303)<0,
\qquad E_1(0.71351870517305)>0.
\end{equation*}
The endpoints are exact rational numbers. The change of sign and Theorem~\ref{main-thm:unique_pole_sequence} identify the interval containing the unique zero.

\Needspace{12\baselineskip}
\subsection{An eigenfunction at the coincident value}
Let $\rho_*$ be the unique zero of $E_1$ located in Proposition~\ref{main-prop:even_pole_exists}. All coefficients below are evaluated at $\rho_*$. Put
\begin{equation*}
v_k=a_{2k,0}(1,b_k)^\top,\qquad q=(-b_1,1)^\top,\qquad
H_1=I_2+\sum_{k\ne1}\frac{v_kv_k^\top}{\kappa_1-\kappa_k}.
\end{equation*}
Since $q^\top H_1q=E_1(\rho_*)=0$, the vector $H_1q$ is parallel to $v_1$. Define
\begin{align*}
\ell_k&=-\frac{a_{2k,0}(b_k-b_1)}{\kappa_1-\kappa_k},\qquad k\ne1,\\
\ell_1&=\frac{v_1^\top H_1q}{\|v_1\|^2},\qquad
f_*(t)=\sum_{k\ge0}\ell_ke_{2k}(t;\rho_*).
\end{align*}
The coefficients are square summable, since $\kappa_1$ is isolated and $\sum_k\|v_k\|^2<\infty$. Moreover,
\begin{equation*}
\sum_{k\ge0}v_k\ell_k=v_1\ell_1-(H_1-I_2)q=q.
\end{equation*}
For $k\ne1$ the coordinate equation is $(\kappa_k-\kappa_1)\ell_k=v_k^\top q$; for $k=1$ it follows from $v_1^\top q=0$. Hence $A_{\mathrm{even}}^{(\mu,\sigma)}f_*=\kappa_1f_*$. Since $q\ne0$, the displayed overlap identity also proves that $f_*\ne0$. This constructs an eigenfunction of the infinite-dimensional operator.

In particular, the assertion is not that $e_2$ remains an eigenfunction. In fact,
\begin{equation*}
\langle(A_0-A^{(\mu,\sigma)})e_2,e_2\rangle_\beta
=a_{2,0}^2+a_{2,2}^2>0.
\end{equation*}
Thus the eigenvalue coincides with $\kappa_1$, but its eigendirection differs from that of the unperturbed operator.

\section{Multiplicity in the off-pole secular equation}
\label{supp-sec:secular_multiplicity}
\setcounter{equation}{0}
We complete the multiplicity assertion in Lemma~\ref{main-lem:finite_rank_secular}. Use $d_n$ and $v_n$ as in Proposition~\ref{main-prop:pole_criterion}, and put
\begin{equation*}
M(\lambda)=I_r+\sum_{n\ge0}\frac{v_nv_n^\top}{\lambda-d_n}.
\end{equation*}
Fix a real root $\lambda_0\notin\{0,d_0,d_1,\ldots\}$. The series and all its derivatives converge locally uniformly near $\lambda_0$, also for the complex variable, since $\sum_n\|v_n\|^2<\infty$ and the denominators are bounded away from zero. Put $V=\ker M(\lambda_0)$ and $k=\dim V$. The reconstruction in Lemma~\ref{main-lem:finite_rank_secular} is the linear bijection
\begin{equation*}
z\longmapsto f_z=-\sum_{n\ge0}\frac{v_n^\top z}{\lambda_0-d_n}e_n,
\qquad V\longrightarrow\ker(A-\lambda_0I).
\end{equation*}
For $z\in V\setminus\{0\}$, differentiation gives
\begin{equation*}
z^\top M'(\lambda_0)z
=-\sum_{n\ge0}\frac{(v_n^\top z)^2}{(\lambda_0-d_n)^2}
=-\|f_z\|_\beta^2<0.
\end{equation*}
Thus $M'(\lambda_0)$ restricted as a quadratic form to $V$ is negative definite. Choose orthonormal bases of $V$ and $V^\perp$. In the resulting block representation, the $V^\perp$ block $M_{22}(\lambda_0)$ is invertible, and both off-diagonal blocks vanish at $\lambda_0$. Taylor expansion of the Schur complement gives
\begin{equation*}
M_{11}(\lambda)-M_{12}(\lambda)M_{22}(\lambda)^{-1}M_{21}(\lambda)
=(\lambda-\lambda_0)B+O((\lambda-\lambda_0)^2),
\end{equation*}
where $B$ is the negative definite matrix of the restricted quadratic form. Consequently
\begin{equation*}
\det M(\lambda)
=(\lambda-\lambda_0)^k\det M_{22}(\lambda_0)\det B
+O((\lambda-\lambda_0)^{k+1}),
\end{equation*}
with nonzero leading coefficient. If $V^\perp=\{0\}$, omit that block and interpret its determinant as one. Hence the order of the zero is exactly $k$. Self-adjointness identifies the eigenvalue multiplicity with the eigenspace dimension.

In particular, a determinant root is counted with its order in the even spectrum. For a general parity-preserving operator, multiplicities are added if the blocks share an eigenvalue. Theorem~\ref{main-thm:simple_full_spectrum} rules this out for the fully standardized covariance; it is possible in the variance-only case, as shown in Proposition~\ref{supp-prop:variance_only_double}. At an unperturbed pole this argument is inapplicable: the bordered matrix of Proposition~\ref{main-prop:pole_criterion}, not $M(\lambda)$, gives the multiplicity.

\section{Total positivity, simple eigenvalues and parity}
\label{supp-sec:total_positivity}
\setcounter{equation}{0}
We prove the operator result used in Theorem~\ref{main-thm:simple_full_spectrum}. The argument concerns a consecutive initial block of powers of $st$. This restriction is important: the variance-only correction removes powers zero and two, but not power one.

\begin{theorem}\label{supp-thm:consecutive_corrections}
Let $r\ge1$ be an integer and let $\theta$ be an even, integrable weight, positive almost everywhere on $\mathbb R$. On $L^2(\theta;\mathbb R)$ consider the operator $A^{[r]}_\theta$ with kernel
\begin{equation}\label{supp-eq:consecutive_kernel}
\mathcal K_r(s,t)=\exp\!\left\{-\frac{s^2+t^2}{2}\right\}
\left(\exp\!\left\{st\right\}-\sum_{k=0}^{r-1}\frac{(st)^k}{k!}\right).
\end{equation}
Its positive eigenvalues can be ordered as $\nu_0>\nu_1>\cdots>0$, and all are simple. A real eigenfunction for $\nu_n$ has parity $(-1)^{r+n}$. In particular, the positive even and odd spectra are disjoint and alternate. These assertions do not concern the multiplicity of zero.
\end{theorem}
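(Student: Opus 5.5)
The plan is to derive simplicity and ordering from a Perron--Frobenius argument applied to the exterior powers of $A^{[r]}_\theta$, in the spirit of Gantmacher--Krein and Kellogg. Parity is then read off from the leading monomial structure. First I would expand the kernel \eqref{supp-eq:consecutive_kernel} as
\begin{equation*}
\mathcal K_r(s,t)=\sum_{k\ge r}\frac{1}{k!}\,\psi_k(s)\psi_k(t),\qquad \psi_k(s)=s^k\exp\{-s^2/2\}.
\end{equation*}
This series converges in trace norm on $L^2(\theta;\mathbb R)$ because $\theta$ is integrable and the $\psi_k$ are bounded with summable $\|\psi_k\|^2/k!$. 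So $A^{[r]}_\theta=\sum_{k\ge r}(k!)^{-1}\psi_k\otimes\psi_k$ is positive and trace class, and its range lies in the closed span of $\{\psi_k\}_{k\ge r}$. Since $\theta>0$ a.e., the $\psi_k$ are linearly independent, so there are infinitely many positive eigenvalues. The kernel is invariant under $(s,t)\mapsto(-s,-t)$, so each eigenspace splits by parity. The even part has kernel $\sum_{k\ge r,\ k\ \mathrm{even}}$ and the odd part has kernel $\sum_{k\ge r,\ k\ \mathrm{odd}}$.

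Next, for each $m\ge1$ I would pass to the $m$th exterior power $\wedge^mA^{[r]}_\theta$ acting on antisymmetric functions. These can be identified with functions on the ordered simplex $\Delta_m=\{s_1<\cdots<s_m\}$ with weight $\prod\theta(s_i)$. By the Cauchy--Binet formula, its kernel is
\begin{equation*}
\det[\mathcal K_r(s_i,t_j)]=\sum_{r\le k_1<\cdots<k_m}\Bigl(\prod_j k_j!\Bigr)^{-1}\Psi_{\mathbf k}(s)\Psi_{\mathbf k}(t),\qquad
\Psi_{\mathbf k}(s)=\det[s_i^{k_j}]\,e^{-\sum s_i^2/2}.
\end{equation*}
On $(0,\infty)$ the generalized Vandermonde determinants $\det[s_i^{k_j}]$ are strictly positive, so there the kernel would be strictly totally positive immediately. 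The difficulty is that the operator lives on all of $\mathbb R$. My plan is to conjugate by the parity decomposition. Write $s=\epsilon|s|$ and split each function into its even and odd parts restricted to $(0,\infty)$. On the combined half-line space $L^2((0,\infty))\oplus L^2((0,\infty))$, the operator becomes a block-diagonal integral operator. I would then interleave the two blocks into a single signed kernel, placing the odd-$k$ and even-$k$ monomials on one totally ordered index set whose order matches the natural order of the exponents $k\ge r$. The aim is that, after multiplying each compound kernel by a fixed sign function $\sigma_m(s)\sigma_m(t)$, every term $\Psi_{\mathbf k}(s)\Psi_{\mathbf k}(t)$ becomes nonnegative, and the term with $\mathbf k=(r,\dots,r+m-1)$ becomes strictly positive almost everywhere. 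Given that, the conjugated $\wedge^mA$ is positivity-improving on a cone in $L^2(\Delta_m)$. Jentzsch's theorem (or Krein--Rutman) then makes its top eigenvalue $\nu_0\nu_1\cdots\nu_{m-1}$ simple and strictly larger than every other eigenvalue. Comparing consecutive $m$ yields $\nu_{m-1}>\nu_m$ for all $m$, which is simplicity together with strict ordering.

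For the parity assertion I would use the Perron eigenvector of $\wedge^m A$. It is the wedge $e_0\wedge\cdots\wedge e_{m-1}$ of the first $m$ eigenfunctions, and it has a nonzero component along the leading monomial block $\Psi_{(r,\dots,r+m-1)}$. Since each $e_n$ has a definite parity, an induction on $m$ shows that the $m$ parities must reproduce the parities of $s^r,\dots,s^{r+m-1}$, namely $(-1)^{r+n}$. Alternation and disjointness of the even and odd positive spectra follow at once.

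The main obstacle is the signed conjugation step: I must choose $\sigma_m$ and the interleaved ordering so that all Cauchy--Binet terms acquire a common sign on the whole line, not only on $(0,\infty)$. My first attempt would be to compute the sign of $\det[(\epsilon_i x_i)^{k_j}]$ as a function of the sign pattern $(\epsilon_i)$ and the parity pattern of $\mathbf k$, and to check that it factors as $\sigma_m(s)$ times a quantity depending only on $\mathbf k$. If that factorization fails for general $\mathbf k$, my fallback is to restrict to the two half-line blocks separately, where strict total positivity is clear. I would then obtain the cross-parity strict alternation from a separate rank-one interlacing argument between consecutive $r$, using Lemma~\ref{main-lem:minmax_rank1} applied to $A^{[r]}=A^{[r+1]}+\psi_r\otimes\psi_r/r!$.
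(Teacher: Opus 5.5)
Your half-line argument does give simplicity inside each parity block. The real content of the theorem, however, is the cross-parity statement (disjointness and alternation), and both of your routes to it fail.

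\emph{The primary plan.} No sign function can make the Cauchy--Binet terms nonnegative one by one. Already for $m=1$ the terms $(st)^k\exp\{-(s^2+t^2)/2\}/k!$, $k\ge r$, alternate in sign with $k$ whenever $st<0$, whereas $\sigma_1(s)\sigma_1(t)$ is fixed. The interleaved half-line representation does not help either: you note yourself that it is block diagonal, so a Perron argument there cannot couple the two parities.

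\emph{The fallback.} Put $v_k=\psi_k/\sqrt{k!}$. The identity $A^{[r+1]}=A^{[r]}-v_r\otimes v_r$ changes only the block of parity $(-1)^r$. Suppose $A^{[r]}$ alternates, say $e_0>o_0>e_1>o_1>\cdots$ for even $r$. Lemma~\ref{main-lem:minmax_rank1} then only places the new even eigenvalue $e'_j$ in $[e_{j+1},e_j]$, and this interval contains $o_j$. So interlacing cannot decide on which side of $o_j$ the value $e'_j$ lies, nor exclude $e'_j=o_j$. The paper shows that this kind of information is genuinely insufficient: $A^{(\sigma)}=A^{[1]}-v_2\otimes v_2$ has interlacing data of the same kind relative to the alternating $A^{[1]}$, yet Proposition~\ref{supp-prop:variance_only_double} gives it a double eigenvalue.

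What is missing is positivity of the \emph{whole} compound determinant on all of $\mathbb R$, not of its individual terms. The paper conjugates once by $\sigma_r(t)=\operatorname{sgn}(t)^r$, independent of $m$, and proves $\det[\sigma_r(s_i)\sigma_r(t_j)\mathcal K_r(s_i,t_j)]>0$ for all ordered nonzero nodes.

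\begin{itemize}
\item For $m=1$ this is the fact that $\exp\{x\}-\sum_{k<r}x^k/k!$ has the sign of $x^r$.
\item For general $m$, border $[\exp\{s_it_j\}]$ by derivative evaluations of orders $0,\dots,r-1$ at zero. A Schur complement turns the bordered determinant into $\prod_{k<r}k!$ times $\det[\exp\{s_it_j\}-\sum_{k<r}(s_it_j)^k/k!]$.
\item The bordered determinant is nonzero because a nonzero function in $\mathrm{span}\{1,u,\dots,u^{r-1},\exp\{t_1u\},\dots,\exp\{t_mu\}\}$ has at most $r+m-1$ real zeros counted with multiplicity.
\item Its sign follows by coalescing $r$ ordered nodes at zero in positive exponential determinants. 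Moving the zero-node blocks into increasing position costs $r(N_s+N_t)$ transpositions, which is exactly the sign $\prod_i\sigma_r(s_i)\prod_j\sigma_r(t_j)$.
\end{itemize}

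With this in hand, your exterior-power and Jentzsch argument runs on the whole line and gives simplicity of the entire positive spectrum. Your parity idea can then also be completed:
\begin{itemize}
\item Since $\det[s_i^{r+j-1}]=\prod_is_i^r\prod_{i<j}(s_j-s_i)$, the conjugated leading block $\Psi_{(r,\dots,r+m-1)}$ is strictly positive on the ordered simplex.
\item Its pairing with the strictly signed top eigenfunction of the $m$th compound is therefore nonzero.
\item By the Cauchy--Binet (Andr\'eief) integration identity, that pairing is $\det[\langle e_i,\psi_{r+j}\rangle_\theta]_{i,j=0}^{m-1}$. This determinant vanishes unless the parity counts of $e_0,\dots,e_{m-1}$ and of $r,\dots,r+m-1$ agree.
\end{itemize}
The paper uses the reflection $(s_1,\dots,s_m)\mapsto(-s_m,\dots,-s_1)$ instead. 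Without the whole-line positivity, neither parity argument has the strict sign it needs.
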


\begin{proof}
\emph{Positivity and trace class.} Write $d\mu(t)=\theta(t)\,dt$ and put
\begin{equation*}
v_k(t)=\exp\!\left\{-t^2/2\right\}\frac{t^k}{\sqrt{k!}},\qquad k\ge r.
\end{equation*}
Tonelli's theorem gives
\begin{equation*}
\sum_{k=r}^{\infty}\|v_k\|_\theta^2
=\int_{\mathbb R}\left(1-\exp\!\left\{-t^2\right\}
\sum_{k=0}^{r-1}\frac{t^{2k}}{k!}\right)d\mu(t)
\le\mu(\mathbb R)<\infty.
\end{equation*}
Since $\|v_k\otimes v_k\|_1=\|v_k\|_\theta^2$, the series of rank-one operators converges in trace norm. Moreover,
\begin{equation*}
\sum_{k=r}^{\infty}|v_k(s)v_k(t)|
\le\left(\sum_{k=r}^{\infty}v_k(s)^2\right)^{1/2}
\left(\sum_{k=r}^{\infty}v_k(t)^2\right)^{1/2}\le1.
\end{equation*}
The scalar series sums to \eqref{supp-eq:consecutive_kernel} and converges in $L^2(\mu\otimes\mu)$ by dominated convergence. Thus
\begin{equation*}
A^{[r]}_\theta=\sum_{k=r}^{\infty}v_k\otimes v_k,
\qquad
\langle A^{[r]}_\theta f,f\rangle_\theta
=\sum_{k=r}^{\infty}\langle f,v_k\rangle_\theta^2\ge0.
\end{equation*}
In particular,
\begin{equation*}
\ker A^{[r]}_\theta
=\left(\overline{\operatorname{span}}\{v_k:k\ge r\}\right)^\perp.
\end{equation*}
Every finite collection of the $v_k$ is linearly independent: a relation between them would give a polynomial vanishing $\mu$-almost everywhere, hence Lebesgue-almost everywhere because $\theta>0$ almost everywhere. The operator therefore has infinite rank. Until simplicity is proved, denote its positive eigenvalues, with multiplicities, by $\nu_0\ge\nu_1\ge\cdots>0$.

\emph{Strict positivity of the determinants.} Set $\sigma_r(t)=\operatorname{sgn}(t)^r$ for $t\ne0$ and $\sigma_r(0)=1$. Multiplication by $\sigma_r$ is unitary on $L^2(\mu;\mathbb R)$. The conjugated operator $B$ has kernel
\begin{equation}\label{supp-eq:signed_total_positive_kernel}
\widehat{\mathcal K}_r(s,t)=\sigma_r(s)\sigma_r(t)\mathcal K_r(s,t).
\end{equation}
We shall prove that, for all $m\ge1$ and all strictly increasing nonzero nodes $s_1<\cdots<s_m$ and $t_1<\cdots<t_m$,
\begin{equation}\label{supp-eq:strict_total_positivity}
\det[\widehat{\mathcal K}_r(s_i,t_j)]_{i,j=1}^m>0.
\end{equation}
We first establish the required strictness for the exponential kernel. Let $a_1<\cdots<a_q$ be real and let $p_\ell$ be polynomials of degree less than $m_\ell$, where $m_\ell\ge1$. A nonzero function
\begin{equation*}
F(u)=\sum_{\ell=1}^q p_\ell(u)\exp\!\left\{a_\ell u\right\}
\end{equation*}
has at most $N-1$ real zeros, counted with multiplicity, where $N=\sum_\ell m_\ell$. Here a nonzero coefficient vector cannot define the zero function: after division by the largest exponential and passage to $u\to+\infty$, the corresponding polynomial must vanish, and successive removal of that exponent proves linear independence. The zero bound follows by induction on $N$. For $N=1$, a nonzero constant multiple of an exponential has no real zero. For the induction step,
\begin{equation*}
\frac{d}{du}\left(\exp\!\left\{-a_1u\right\}F(u)\right)
=p_1'(u)+\sum_{\ell=2}^q
\{p_\ell'(u)+(a_\ell-a_1)p_\ell(u)\}
\exp\!\left\{(a_\ell-a_1)u\right\}.
\end{equation*}
The first polynomial has degree less than $m_1-1$ and is absent if $m_1=1$; the other degree bounds are unchanged. If this derivative is nonzero, induction gives at most $N-2$ zeros for it. Rolle's theorem, including multiplicities, then gives at most $N-1$ zeros for $F$. If the derivative vanishes identically, $F$ is a nonzero constant multiple of $\exp\!\left\{a_1u\right\}$ and has no zero.

For strictly increasing real nodes $x_1<\cdots<x_m$ and $y_1<\cdots<y_m$, the determinant $\det[\exp\!\left\{x_i y_j\right\}]$ is consequently nonzero: singularity would give a nonzero linear combination of the $m$ column functions with $m$ distinct zeros. Its sign is constant on the connected set of ordered nodes. For fixed $u_1<\cdots<u_m$, Taylor expansion at zero gives
\begin{equation*}
\lim_{\varepsilon\downarrow0}
\frac{\det[\exp\!\left\{\varepsilon u_i y_j\right\}]_{i,j=1}^m}
{\varepsilon^{m(m-1)/2}\prod_{i<j}(u_j-u_i)}
=\frac{\prod_{i<j}(y_j-y_i)}{\prod_{k=0}^{m-1}k!}>0.
\end{equation*}
Thus the exponential determinant is strictly positive for every pair of ordered node sets.

We also need derivative evaluations at zero. Append evaluations of orders $0,\ldots,r-1$ at zero to both node sets, and place both derivative blocks first. The resulting $(r+m)$-dimensional matrix is
\begin{equation*}
\mathcal E=
\begin{pmatrix}
D_r&T\\
S&E
\end{pmatrix},\qquad
\begin{gathered}
D_r=\operatorname{diag}(0!,\ldots,(r-1)!),\\
T_{k+1,j}=t_j^k,\quad S_{i,k+1}=s_i^k,\\
E_{ij}=\exp\!\left\{s_it_j\right\},\quad 0\le k<r.
\end{gathered}
\end{equation*}
In increasing-node order, its determinant is nonnegative. To verify this, first replace each derivative block by $r$ distinct ordered nodes tending to zero. Successive divided differences in those rows and columns divide the determinant by positive within-cluster Vandermonde factors. Their limits are the successive derivatives divided by positive factorials. Multiplication by these factorials gives the asserted nonnegative limit.

The limiting determinant is nonzero. If it were singular, some nonzero function in
\begin{equation*}
\operatorname{span}\{1,u,\ldots,u^{r-1},
\exp\!\left\{t_1u\right\},\ldots,\exp\!\left\{t_mu\right\}\}
\end{equation*}
would vanish to order at least $r$ at zero and at each of $s_1,\ldots,s_m$. These are at least $r+m$ zeros counted with multiplicity, contradicting the zero bound with $N=r+m$. Thus the ordered derivative-evaluation determinant is strictly positive.

Let $N_s$ and $N_t$ count the negative nodes in the two sets. Moving the derivative blocks from the first positions to their increasing-node positions uses $rN_s$ row interchanges and $rN_t$ column interchanges. Hence
\begin{equation*}
(-1)^{r(N_s+N_t)}\det\mathcal E>0,
\qquad
\det\mathcal E=\det D_r\,
\det\left[\exp\!\left\{s_it_j\right\}
-\sum_{k=0}^{r-1}\frac{s_i^kt_j^k}{k!}\right]_{i,j=1}^m.
\end{equation*}
Since $\prod_i\sigma_r(s_i)\prod_j\sigma_r(t_j)=(-1)^{r(N_s+N_t)}$, it follows that
\begin{equation*}
\det[\widehat{\mathcal K}_r(s_i,t_j)]
=\frac{\exp\!\left\{-\frac12\sum_i(s_i^2+t_i^2)\right\}}
{\prod_{k=0}^{r-1}k!}
(-1)^{r(N_s+N_t)}\det\mathcal E>0.
\end{equation*}
This proves \eqref{supp-eq:strict_total_positivity}.

\emph{Exterior powers and simplicity.} For $m\ge1$, let
\begin{equation*}
\mathcal W_m=\{s\in\mathbb R^m:s_1<\cdots<s_m,\ s_i\ne0\},
\qquad
\mu_m=\mu^{\otimes m}|_{\mathcal W_m}.
\end{equation*}
Tuples with a repeated or zero coordinate have $\mu^{\otimes m}$-measure zero. Restriction to $\mathcal W_m$, multiplied by $\sqrt{m!}$, is a unitary map from the antisymmetric subspace of $L^2(\mu^{\otimes m})$ onto $L^2(\mu_m)$. Denote this map by $V_m$. For an antisymmetric function $f$, averaging the tensor-product integral over permutations of the integration variables gives
\begin{align*}
(B^{\otimes m}f)(s)
&=\frac1{m!}\int_{\mathbb R^m}
\det[\widehat{\mathcal K}_r(s_i,t_j)]f(t)\,d\mu^{\otimes m}(t)\\
&=\int_{\mathcal W_m}
\det[\widehat{\mathcal K}_r(s_i,t_j)]f(t)\,d\mu_m(t).
\end{align*}
The second equality holds because the product of the determinant and $f$ is symmetric in $t$. Consequently the operator $B_m=V_m(B^{\otimes m}|_{\mathrm{alt}})V_m^{-1}$ has exactly the kernel
\begin{equation*}
\Delta_m(s,t)=\det[\widehat{\mathcal K}_r(s_i,t_j)]_{i,j=1}^m.
\end{equation*}
Expansion of the determinant and Cauchy--Schwarz give
\begin{equation*}
\int_{\mathcal W_m}\int_{\mathcal W_m}|\Delta_m(s,t)|^2\,
 d\mu_m(s)\,d\mu_m(t)
\le(m!)^2\|\widehat{\mathcal K}_r\|_{L^2(\mu\otimes\mu)}^{2m}<\infty.
\end{equation*}
Thus $B_m$ is compact and self-adjoint. It is positive because it is unitarily equivalent to the restriction of the positive operator $B^{\otimes m}$.

Choose real orthonormal eigenfunctions $h_0,h_1,\ldots$ of $B$ for the positive eigenvalues, still counted with multiplicities. For $I=(i_1<\cdots<i_m)$, put
\begin{equation*}
\Psi_I(s)=\det[h_{i_a}(s_b)]_{a,b=1}^m.
\end{equation*}
Expansion of both determinants, followed by integration, yields
\begin{align*}
\int_{\mathcal W_m}\Psi_I(s)\Psi_J(s)\,d\mu_m(s)
&=\det[\langle h_{i_a},h_{j_b}\rangle_\theta]_{a,b=1}^m
=\boldsymbol1_{\{I=J\}},\\
B_m\Psi_I&=\left(\prod_{a=1}^m\nu_{i_a}\right)\Psi_I.
\end{align*}
Completing the $h_n$ by an orthonormal basis of $\ker B$ gives a complete eigenbasis of $B$; its antisymmetrized tensor products give a complete eigenbasis of $B_m$. Products involving a nullspace vector have eigenvalue zero. The largest eigenvalue of $B_m$ is therefore
\begin{equation*}
\eta_m=\prod_{n=0}^{m-1}\nu_n>0.
\end{equation*}
It is simple. Indeed, if an eigenfunction $f$ for $\eta_m$ changed sign on sets of positive $\mu_m$-measure, then \eqref{supp-eq:strict_total_positivity} would give
\begin{align*}
\langle B_m|f|,|f|\rangle-\langle B_mf,f\rangle
&=4\int_{\{f>0\}}\int_{\{f<0\}}
\Delta_m(s,t)f(s)(-f(t))\,d\mu_m(t)\,d\mu_m(s)\\
&>0.
\end{align*}
This contradicts the variational characterization of $\eta_m$, since $\||f|\|=\|f\|$. Hence every such eigenfunction has one sign. The equation $f=\eta_m^{-1}B_mf$ makes that sign strict almost everywhere. Two orthogonal eigenfunctions at $\eta_m$ are therefore impossible.

If $\nu_k=\nu_{k+1}$ for some $k\ge0$, take $m=k+1$. The determinants formed from $h_0,\ldots,h_k$ and from $h_0,\ldots,h_{k-1},h_{k+1}$ are orthonormal eigenfunctions of $B_m$ with eigenvalue $\eta_m$. This contradicts its simplicity. Thus $\nu_0>\nu_1>\cdots>0$.

\emph{Parity.} Let $(Jh)(t)=h(-t)$. Evenness of $\theta$ and the identity $\widehat{\mathcal K}_r(-s,-t)=\widehat{\mathcal K}_r(s,t)$ imply $JB=BJ$. Since each positive eigenspace is one-dimensional and $J^2=I$, there are $\epsilon_n\in\{-1,1\}$ such that $h_n(-t)=\epsilon_nh_n(t)$ almost everywhere. The function
\begin{equation*}
\Psi_m(s)=\det[h_{i-1}(s_j)]_{i,j=1}^m
\end{equation*}
is an eigenfunction of $B_m$ for $\eta_m$, and therefore has one strict sign almost everywhere on $\mathcal W_m$. The transformation $\mathcal R_m(s)=(-s_m,\ldots,-s_1)$ preserves $\mathcal W_m$ and $\mu_m$. Reversing the columns of the determinant gives
\begin{equation*}
\Psi_m(\mathcal R_m s)
=(-1)^{m(m-1)/2}\left(\prod_{n=0}^{m-1}\epsilon_n\right)\Psi_m(s).
\end{equation*}
Its two sides have the same strict sign almost everywhere. Thus
\begin{equation*}
\prod_{n=0}^{m-1}\epsilon_n=(-1)^{m(m-1)/2},\qquad m\ge1.
\end{equation*}
Taking $m=1$, and then dividing consecutive identities, gives $\epsilon_0=1$ and $\epsilon_n=(-1)^n$. The original eigenfunction is $\sigma_rh_n$, whose parity is $(-1)^r\epsilon_n=(-1)^{r+n}$. Unitary conjugation preserves the eigenvalues and their multiplicities, proving all the assertions.
\end{proof}

For the classical determinant and eigenvalue theory of totally positive matrices, see Pinkus~\cite{Pinkus2010}. The proof above supplies the derivative-evaluation determinants and the compact-operator argument for the kernel \eqref{supp-eq:signed_total_positive_kernel}.

\subsection{Scope for one-parameter standardization}
For $r=1$, Theorem~\ref{supp-thm:consecutive_corrections} applies to $A_0-\phi_0^*\otimes\phi_0^*$, the known-parameter covariance. For $r=2$ it applies to $A^{(\mu)}$ of Theorem~\ref{supp-thm:mu_only_main_updated}. Thus the latter spectrum is simple and ordered as
\begin{equation*}
\lambda_{1,\mathrm{even}}^{(\mu)}>
\lambda_{1,\mathrm{odd}}^{(\mu)}>
\lambda_{2,\mathrm{even}}^{(\mu)}>
\lambda_{2,\mathrm{odd}}^{(\mu)}>\cdots>0.
\end{equation*}
The even spectrum of $A^{(\sigma)}$ is also simple, since it equals the fully standardized even spectrum. Its odd spectrum is the simple unperturbed odd spectrum. However, these two spectra need not be disjoint, as the following result shows.

\begin{proposition}\label{supp-prop:variance_only_double}
There exists $\rho\in(1/10,1/2)$ such that
\begin{equation*}
\lambda_{1,\mathrm{even}}^{(\sigma)}(\rho)=(1-\rho)\rho^3.
\end{equation*}
At such a value the variance-only operator has an eigenvalue of multiplicity exactly two.
\end{proposition}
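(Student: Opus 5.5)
The plan is an intermediate-value argument in $\rho$ applied to
\begin{equation*}
g(\rho):=\lambda_{1,\mathrm{even}}^{(\sigma)}(\rho)-(1-\rho)\rho^3 .
\end{equation*}
By Theorem~\ref{supp-thm:sigma_only_main_updated}, $(1-\rho)\rho^3=\lambda_{2,\mathrm{odd}}^{(0)}$ is the largest positive odd eigenvalue of $A^{(\sigma)}$, so a zero of $g$ is exactly the required coincidence. I will show that $g$ is continuous on $[1/10,1/2]$, that $g(1/10)<0$, and that $g(1/2)>0$.

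\emph{Continuity.} Transport $A^{(\sigma)}_{\mathrm{even}}$ to the fixed space $L^2(f_0)$ by $t=\beta x$. Its kernel becomes $K^{(\sigma)}(\beta(\rho)x,\beta(\rho)y)$, which is bounded and depends continuously on $\rho$. By dominated convergence the operators are continuous in Hilbert--Schmidt norm. Weyl's inequality then gives continuity of the ordered even eigenvalues, and in particular of $\lambda_{1,\mathrm{even}}^{(\sigma)}$.

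\emph{Endpoint signs.} Put $c=(1-\rho)\rho^3$. It lies strictly between the even poles $\kappa_1=(1-\rho)\rho^2$ and $\kappa_2=(1-\rho)\rho^4$ and is never a pole. I will count eigenvalues above $c$ by an inertia identity for the Schur complement. With $U$ having columns $\phi_0^*,\phi_2^*$, factor $A_{\mathrm{even}}-c=(A_0-c)-UU^\top$. Haynsworth inertia additivity, in the compact-perturbation form obtained by truncating to $\mathrm{span}\{e_0,\dots,e_{2N}\}$ and letting $N\to\infty$, gives
\begin{equation*}
\#\{\text{even eigenvalues of }A^{(\sigma)}>c\}=\#\{j:\kappa_j>c\}-n_-\bigl(I_2+G(c)\bigr)=2-n_-\bigl(I_2+G(c)\bigr),
\end{equation*}
where $G$ is the $2\times2$ matrix of Theorem~\ref{main-thm:mu_sigma_main}. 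Hence:
\begin{itemize}
\item $g(1/10)<0$ holds exactly when $I_2+G(c)$ is negative definite at $\rho=1/10$, that is, $\det>0$ and trace $<0$.
\item $g(1/2)>0$ holds exactly when $\det\bigl(I_2+G(c)\bigr)=D_{\mathrm{even}}(c)<0$ at $\rho=1/2$.
\end{itemize}
The numerical table is consistent with both signs.

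I will verify these signs with the scalar form \eqref{main-eq:mu_sigma_even_scalar} and the explicit weights $w_j=a_{2j,0}^2$ from Proposition~\ref{main-prop:overlaps_closed_form}. Each $S_r(c)$ is split into its terms $j=0,1$ (denominators positive) and the tail $j\ge2$ (denominators negative). Using $c_j\le1$ and geometric sums with $\rho^{4j}$, as in \eqref{supp-eq:pole_series_tail}, each tail is enclosed by explicit rational functions of $\rho$ and $\sqrt{1-\rho^2}$. Rational bounds then settle the signs at the two rational endpoints.

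\emph{Conclusion.} Continuity and the two signs give a zero $\rho\in(1/10,1/2)$ of $g$. At that $\rho$ the eigenvalue $c$ is a simple even eigenvalue, by the even simplicity established after Theorem~\ref{supp-thm:consecutive_corrections}. It is also a simple odd eigenvalue, since the unperturbed odd eigenvalues are distinct. The parity subspaces are orthogonal and invariant, so the multiplicity is exactly two.

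The main obstacle I expect is the endpoint sign at $\rho=1/10$. There $c$ is far from $\kappa_1$ relative to the pole spacing, but the determinant is a difference of terms of comparable size, through the $S_2(1+S_0)-S_1^2$ cancellation. I would first try $\rho=1/10$ directly with tails cut at $j=3$. If the margin is too thin, I would move the left endpoint upward within $(1/10,1/5)$, keeping the statement's interval.
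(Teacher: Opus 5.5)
Your proof is correct in outline. It shares the paper's skeleton: the intermediate value theorem for $g$, Hilbert--Schmidt continuity after $t=\beta x$, and simplicity of each parity spectrum for the multiplicity. You obtain the endpoint signs by a genuinely different and heavier route.

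The paper never uses the secular matrix at this point:
\begin{itemize}
\item At $\rho=1/10$ it bounds $\lambda_{1,\mathrm{even}}^{(\sigma)}$ by the even trace, $1/(1+\rho)-\|\phi_0^*\|_\beta^2-\|\phi_2^*\|_\beta^2\approx3.9\times10^{-4}<9\times10^{-4}$.
\item At $\rho=1/2$ it writes $A^{(\sigma)}_{\mathrm{even}}=\sum_{k\ge4,\,k\text{ even}}v_k\otimes v_k$ and takes the Rayleigh quotient at $v_4$. This is at least $\|v_4\|_\beta^2+\langle v_4,v_6\rangle_\beta^2/\|v_4\|_\beta^2=2506\sqrt5/78125>1/16$.
\end{itemize}
Both are closed-form Gaussian moments compared by squaring rationals, with no series tails and no cancellation.

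Your inertia count is, equivalently, $n_+(A^{(\sigma)}_{\mathrm{even}}-c)=n_+(I_2+G(c))$, since exactly two even poles exceed $c$. It is an exact test rather than a one-sided bound, so it could also be used to bisect toward the crossing. The price is that everything now rests on enclosing $G(c)$, which you announce but do not carry out.

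That step does succeed without moving the endpoint. At $\rho=1/10$ one finds $1+G_{00}(c)\approx-1.51\times10^{-3}$, $1+G_{22}(c)\approx-7.44\times10^{-2}$ and $G_{20}(c)\approx6.46\times10^{-3}$. Hence $\det(I_2+G(c))\approx7.0\times10^{-5}>0$ with negative trace. The terms $j\ge4$ contribute less than $10^{-9}$, so the terms $j\le3$ enclosed to about $10^{-6}$ suffice. At $\rho=1/2$ you need not estimate $G_{20}$ at all: $1+G_{00}(c)\approx-0.19<0<0.53\approx1+G_{22}(c)$ already forces $\det<0$.

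Three minor slips, none affecting the argument:
\begin{itemize}
\item $(1-\rho)\rho^3$ is the second odd eigenvalue of $A^{(\sigma)}$, not the largest (that is $(1-\rho)\rho$). This is harmless, since you only use that it is a simple odd eigenvalue.
\item The denominators $c-\kappa_j$ in $S_r(c)$ are negative for $j=0,1$ and positive for $j\ge2$, the reverse of what you wrote.
\item $\det(I_2+G(c))<0$ at $\rho=1/2$ is sufficient for $g(1/2)>0$ but not equivalent to it. A priori $\lambda_{2,\mathrm{even}}^{(\sigma)}$ could also exceed $c$, giving $n_+=2$ and a positive determinant. Only the sufficient direction is needed.
\end{itemize}
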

\begin{proof}
The largest even eigenvalue is bounded above by the even trace. At $\rho=1/10$, where $\beta^2=10/81$, this gives
\begin{equation*}
\lambda_{1,\mathrm{even}}^{(\sigma)}
\le\frac{10}{11}-\frac{93159\sqrt{101}}{1030301}
<\frac9{10000}=(1-\rho)\rho^3.
\end{equation*}
The trace is $1/(1+\rho)-\|\phi_0^*\|_\beta^2-\|\phi_2^*\|_\beta^2$. For a lower bound at $\rho=1/2$, put $v_k(t)=\exp\!\left\{-t^2/2\right\}t^k/\sqrt{k!}$. The even operator is $\sum_{k\ge4,\ k\text{ even}}v_k\otimes v_k$. Its Rayleigh quotient at $v_4$ is therefore at least
\begin{equation*}
\|v_4\|_\beta^2+\frac{\langle v_4,v_6\rangle_\beta^2}{\|v_4\|_\beta^2}
=\frac{2506\sqrt5}{78125}>\frac1{16}=(1-\rho)\rho^3.
\end{equation*}
The two strict inequalities follow by squaring positive rational quantities. Under the isometry $t=\beta x$, the even kernels are continuous in $\rho$ in Hilbert--Schmidt norm on compact subintervals of $(0,1)$, by dominated convergence. Hence their largest eigenvalues are continuous. The intermediate value theorem gives equality somewhere between the endpoints. The two parity eigenvalues are each simple, so their common eigenvalue has total multiplicity two.
\end{proof}

\section{Unique and ordered parameters for coincidences at even poles}
\label{supp-sec:unique_poles}
\setcounter{equation}{0}
We prove Theorem~\ref{main-thm:unique_pole_sequence}. All series below, and their derivatives of each fixed order, converge locally uniformly on $0<x<1$; this follows either from a geometric majorant times a polynomial or from the finite negative part and the positive tail displayed below.

Put $x=\rho^2$, $c_k=\binom{2k}{k}4^{-k}$, and retain $E_j$, $b_j$ and $\kappa_j$ from the main paper. The generating function $\sum_{k\ge0}c_kx^k=(1-x)^{-1/2}$ gives, by two differentiations,
\begin{equation}\label{supp-eq:moment_pole_identity}
1+b_j^2=\frac{2(1-x)^{5/2}}{x}\sum_{k\ge0}c_k(k-j)^2x^k.
\end{equation}
Since
\begin{equation*}
\frac{a_{2k,0}^2}{1-\rho}=\sqrt{1-x}\,c_kx^{2k},\qquad
(b_k-b_j)^2=\frac{2(1-x)^2}{x}(k-j)^2,
\end{equation*}
combining \eqref{supp-eq:moment_pole_identity} with the pole criterion gives
\begin{align}\label{supp-eq:Lambert_pole_identity}
E_j(\rho)&=2(1-x)^{5/2}x^{j-1}\mathcal H_j(x),\\
\mathcal H_j(x)&=\sum_{h=1}^\infty c_{j+h}h^2\frac{x^h}{1-x^h}
-\sum_{h=1}^j c_{j-h}h^2\frac1{1-x^h}.
\label{supp-eq:Lambert_Hj}
\end{align}
For example, this follows by combining the two terms at each $k\ne j$ using
\begin{equation*}
x^k+\frac{x^{2k}}{x^j-x^k}=\frac{x^{j+k}}{x^j-x^k}.
\end{equation*}
The $k=j$ term in \eqref{supp-eq:moment_pole_identity} is zero and is omitted. The formula in \eqref{supp-eq:Lambert_Hj} then separates $k>j$ and $k<j$.

\subsection{A unique simple zero for each fixed pole}
For $h\ge1$, set $p_h(x)=x^h/(1-x^h)$ and $C_j=\sum_{h=1}^jc_{j-h}h^2>0$. Since $c_{k+1}/c_k=(2k+1)/(2k+2)<1$, we can write, for $j\ge1$,
\begin{equation}\label{supp-eq:normalized_pole_function}
\frac{\mathcal H_j(x)}{p_j(x)}
=-\frac{C_j}{p_j(x)}
+\sum_{h=1}^j(c_{j+h}-c_{j-h})h^2\frac{p_h(x)}{p_j(x)}
+\sum_{h=j+1}^\infty c_{j+h}h^2\frac{p_h(x)}{p_j(x)}.
\end{equation}
The first term is strictly increasing. For fixed $x\in(0,1)$,
\begin{equation*}
\frac{d\log p_h(x)}{d\log x}=\frac{h}{1-x^h}
\end{equation*}
is strictly increasing in the positive real variable $h$. Indeed, if $a=-\log x>0$, the derivative of $h/(1-\exp\!\left\{-ah\right\})$ has positive numerator $1-(1+ah)\exp\!\left\{-ah\right\}$. In particular,
\begin{equation*}
\frac{d}{dx}\left(\frac{p_h(x)}{p_j(x)}\right)
=\frac{p_h(x)}{xp_j(x)}
\left(\frac{h}{1-x^h}-\frac{j}{1-x^j}\right).
\end{equation*}
Thus $p_h/p_j$ decreases strictly for $h<j$ and increases strictly for $h>j$; for $h=j$ it is constant. The coefficients of the finite sum in \eqref{supp-eq:normalized_pole_function} are negative, whereas those of the last sum are positive. On a compact interval $[a,b]\subset(0,1)$, the derivative of the $h$th term in the final sum is bounded in absolute value by $C_{a,b,j}h^3b^h$. The geometric majorant is summable, so this differentiation is locally uniform. Every term other than $-C_j/p_j$ has nonnegative derivative, while
\begin{equation*}
\frac{d}{dx}\left(-\frac{C_j}{p_j(x)}\right)=C_jj x^{-j-1}>0.
\end{equation*}
Termwise differentiation consequently gives
\begin{equation}\label{supp-eq:normalized_pole_monotone}
\frac{d}{dx}\left\{\frac{\mathcal H_j(x)}{p_j(x)}\right\}>0,
\qquad 0<x<1.
\end{equation}
In particular, this is a statement about the normalized function, not an assertion that $E_j$ itself is globally monotone.

For $x\downarrow0$, dominate the positive series by its summable value at any fixed $b\in(0,1)$. Dominated convergence gives $\mathcal H_j(x)\to-C_j$, hence $\mathcal H_j(x)/p_j(x)\to-\infty$. At the other endpoint,
\begin{equation*}
\lim_{x\uparrow1}(1-x)\sum_{h=1}^j\frac{c_{j-h}h^2}{1-x^h}
=\sum_{h=1}^j c_{j-h}h<\infty.
\end{equation*}
For every integer $N\ge1$, positivity of the remaining series gives
\begin{equation*}
\liminf_{x\uparrow1}(1-x)\sum_{h\ge1}c_{j+h}h^2p_h(x)
\ge\sum_{h=1}^N h c_{j+h}.
\end{equation*}
The right-hand side tends to infinity with $N$, since $\sum_{h\ge1}h c_{j+h}=\infty$.
For completeness, $c_k\ge1/(2k+1)$: the central coefficient is the largest of the $2k+1$ binomial coefficients whose sum is $4^k$. This lower bound already proves the divergence. Since $(1-x)p_j(x)\to1/j$, we obtain $\mathcal H_j(x)/p_j(x)\to+\infty$. Thus \eqref{supp-eq:normalized_pole_monotone} gives exactly one zero $x_j\in(0,1)$. At that zero,
\begin{equation*}
\mathcal H_j'(x_j)=p_j(x_j)\left.\frac{d}{dx}\{\mathcal H_j(x)/p_j(x)\}\right|_{x=x_j}>0.
\end{equation*}
For $\rho_j=\sqrt{x_j}$, differentiation of \eqref{supp-eq:Lambert_pole_identity} at the zero gives
\begin{equation*}
E_j'(\rho_j)=4\rho_j(1-x_j)^{5/2}x_j^{j-1}\mathcal H_j'(x_j)>0.
\end{equation*} For $j=0$, the original expression for $E_0$ is strictly positive, because $1+b_0^2>0$ and every denominator $\kappa_0-\kappa_k$, $k>0$, is positive. Hence that pole never occurs in the even perturbed spectrum.

\subsection{Ordering and the only accumulation point}
Fix $0<x<1$, put $a=-\log x$, and define
\begin{equation*}
f_x(u)=\frac{u^2}{\exp\!\left\{au\right\}-1}\quad(u\ne0),\qquad f_x(0)=0.
\end{equation*}
Then \eqref{supp-eq:Lambert_Hj} is $\mathcal H_j(x)=\sum_{k\ge0}c_kf_x(k-j)$. For $h>0$, the function $h^2/(1-\exp\!\left\{-ah\right\})$ is strictly increasing: its derivative has numerator $h\{2-(2+ah)\exp\!\left\{-ah\right\}\}>0$. Therefore $f_x(k-j)>f_x(-j-1)$ for every $k\ge0$. Set $\delta_k=c_k-c_{k+1}>0$. Since $c_k=\prod_{\ell=1}^k\{1-(2\ell)^{-1}\}\to0$, $\sum_k\delta_k=1$. Shifting indices gives
\begin{equation}\label{supp-eq:pole_ordering_difference}
\mathcal H_{j+1}(x)-\mathcal H_j(x)
=f_x(-j-1)-\sum_{k\ge0}\delta_kf_x(k-j)<0.
\end{equation}
The negative terms are finite in number and the positive terms decay geometrically, so the rearrangement is justified. At $x_j$, \eqref{supp-eq:pole_ordering_difference} implies $\mathcal H_{j+1}(x_j)<0$. The unique-zero and sign-change result above yields $x_{j+1}>x_j$.

For fixed $x<1$, keeping just the negative term $h=j$ and using $c_k\le1$ gives
\begin{equation*}
\mathcal H_j(x)\le\sum_{h\ge1}\frac{h^2x^h}{1-x^h}-j^2
\le\frac{x(1+x)}{(1-x)^4}-j^2.
\end{equation*}
This is negative for all sufficiently large $j$, so $x_j>x$ eventually. It follows that $x_j\uparrow1$ and hence $\rho_j\uparrow1$.

Finally, the two-step bounds in the main paper locate a coincidence at $\kappa_j$ only at the $j$th even eigenvalue: the $(j-1)$st, when present, is strictly above $\kappa_j$, and the $(j+1)$st is strictly below it. Simplicity in the full standardized space follows from Theorem~\ref{main-thm:simple_full_spectrum}. The strict ordering of the $\rho_j$ also shows that at a fixed parameter at most one even eigenvalue equals an unperturbed even pole. This last assertion does not exclude coincidences between eigenvalues and unperturbed poles of the opposite parity.

\subsection{Rational enclosures without square roots}
Formula~\eqref{supp-eq:Lambert_Hj} gives an alternative enclosure that uses only rational quantities. Truncate the positive sum at $h=N\ge j$. Its remainder is nonnegative and at most
\begin{align}\label{supp-eq:Lambert_rational_tail}
\frac{c_{j+N+1}x^{N+1}}{1-x^{N+1}}
\left\{\frac{(N+1)^2}{1-x}
+\frac{2(N+1)x}{(1-x)^2}
+\frac{x(1+x)}{(1-x)^3}\right\}.
\end{align}
This follows from $c_{j+h}\le c_{j+N+1}$ and $1-x^h\ge1-x^{N+1}$ for $h>N$. For rational $\rho$, every quantity in \eqref{supp-eq:Lambert_Hj} and \eqref{supp-eq:Lambert_rational_tail} is rational. Evaluating the finite sums with rational bounds and adding the remainder bound gives opposite signs at the following endpoints, with $N=200,300,400$, respectively:
\begin{align*}
0.71351870517303&<\rho_1<0.71351870517305,\\
0.83019379455034&<\rho_2<0.83019379455037,\\
0.87931690045595&<\rho_3<0.87931690045599.
\end{align*}
All endpoints are exact rational numbers. Bounds of width $10^{-50}$ for each retained positive summand, together with \eqref{supp-eq:Lambert_rational_tail}, suffice to determine the signs. The uniqueness proved above then gives the stated enclosure for each exceptional parameter.

\section{Comparison with the parameter-substitution theorem of Arcones}
\label{supp-sec:Arcones_comparison}
\setcounter{equation}{0}
The relevant general result in the published article of Arcones~\cite{Arcones2007} is Theorem~3.3, on pp.~185--187. It concerns integrated products of generalized one-sample $U$-statistics with estimated parameters. Its normality-test application is Theorem~2.3, on pp.~181--182, with proof on pp.~191--193. For block size one that application includes the classical BHEP statistic. We give the correspondence explicitly; the covariance projection itself is not a new parameter-substitution principle.

Use a variance coordinate $v>0$ and put
\begin{equation*}
g(x,t;\mu,v)=\cos\{t(x-\mu)/\sqrt v\}+\sin\{t(x-\mu)/\sqrt v\}-\phi_0(t).
\end{equation*}
At $(\mu,v)=(0,1)$ its expectation under $X\sim\Phi$ is zero. With $G(t;\mu,v)=\EE[g(X,t;\mu,v)]$, direct normal integration gives
\begin{equation*}
G(t;\mu,v)=\exp\!\left\{-\frac{t^2}{2v}\right\}
\left(\cos\{t\mu/\sqrt v\}-\sin\{t\mu/\sqrt v\}\right)-\phi_0(t).
\end{equation*}
Consequently the two derivatives at $(0,1)$ are $-t\phi_0(t)$ and $t^2\phi_0(t)/2$. The sample mean and sample variance have influence functions $x$ and $x^2-1$. Arcones's corrected summand therefore becomes
\begin{equation}\label{supp-eq:Arcones_corrected_summand}
g_*(x,t)=\cos(tx)+\sin(tx)-\phi_0(t)-t\phi_0(t)x
+\frac{t^2}{2}\phi_0(t)(x^2-1).
\end{equation}
The expression in \eqref{supp-eq:Arcones_corrected_summand} is exactly $J_\beta^{-1}\zeta_{\mu,\sigma}(\cdot;x)$ from Proposition~\ref{supp-prop:supp_full_ecf_limit}. Since the weight is even, the odd cross terms integrate to zero, and
\begin{equation*}
\int g_*(x,t)g_*(y,t)\theta(t)\,dt
=\langle\zeta_{\mu,\sigma}(\cdot;x),\zeta_{\mu,\sigma}(\cdot;y)\rangle_\theta.
\end{equation*}
Thus the covariance and its limiting quadratic form agree at the operator level as well as at the level of the corrected summand.

In the Gaussian-weight case, boundedness of $g$, local differentiability in $(\mu,v)$, the normal moments and the finite weight moments verify the integrability and local continuity conditions in that application. More explicitly, in a neighborhood with $v$ bounded away from zero, the first parameter derivatives of $g$ are bounded by $C|t|(1+|x|)$. This gives the integrated quadratic Lipschitz bound. The expectation $G$ is differentiable in the weighted $L^2$ norm, and its derivative at the null has finite fourth weighted moment. Boundedness gives the remaining dominated-convergence conditions for the second and fourth powers of local increments. The asymptotic linear representations of the estimators are precisely those used above. These bounds verify the conditions needed for the correspondence. The proofs in the main paper are independent of Arcones's theorem.

For the $U$-form one must additionally remove the diagonal of the original statistic, as in \eqref{main-eq:UV_identity}. That diagonal has limit $\EE[K_\theta(X,X)]$, not the trace of the projected covariance. This agrees with the original-trace centering in Cupari{\'c} et~al.~\cite{CuparicMilosevicObradovic2022}. Accordingly, neither covariance projection nor original-trace subtraction is claimed as a new general result here.

The additional assertion in Proposition~\ref{main-prop:derivative_tail_criterion} is a bound on the residual empirical sums, uniform in sample size, expressed in the Gaussian $L^2$ norms of each function and its derivative. Combined with absolute summability it controls the entire observation-space spectral expansion, including signed kernels. Corollary~\ref{main-cor:weighted_kernel_full_tail} verifies that condition for the complete characteristic-function kernel without requiring closed even eigenfunctions. This is a different sufficient-condition argument, not a claim that the hypotheses of Theorem~3.3 of Arcones~\cite{Arcones2007} have been weakened in full generality. The pole classification and the simple-spectrum results concern the further spectral analysis of the common limiting operator.

\section{Proofs of supporting results}\label{supp-sec:preliminary_proofs}
\setcounter{equation}{0}
This section gives the proofs of Propositions~\ref{main-prop:Hermite_basis}, \ref{main-prop:data_infinite_rank_limit_main} and \ref{main-prop:data_frequency_factorization}, Lemmas~\ref{main-lem:finite_rank_secular}, \ref{main-lem:minmax_rank1} and \ref{main-lem:data_projection_empirical_sums}, and Corollary~\ref{main-cor:full_U_V_limit}. The statements and notation remain in the main paper.

\subsection{Gaussian Hermite diagonalization}\label{supp-subsec:proof_Hermite_basis}
\begin{proof}[Proof of Proposition~\ref{main-prop:Hermite_basis}]
For a polynomial $p$ and a standard normal variable $Z$, completion of squares gives
\begin{equation*}
A_0\!\left[\exp\!\left\{-(1-\rho)t^2/2\right\}p(t)\right](s)
=(1-\rho)\exp\!\left\{-(1-\rho)s^2/2\right\}
\EE[p(\rho s+\sqrt\rho Z)].
\end{equation*}
The Hermite generating function implies
\begin{equation*}
\EE[h_n(\rho u+\sqrt{1-\rho^2}Z)]=\rho^n h_n(u).
\end{equation*}
Since $\alpha_\rho\sqrt\rho=\sqrt{1-\rho^2}$, the choice $p(t)=h_n(\alpha_\rho t)$ proves the eigenvalue equation. For orthonormality, set $y=\alpha_\rho t$. Direct substitution gives
\begin{equation*}
c_\rho^2\exp\!\left\{-(1-\rho)t^2\right\}\varphi_\beta(t)\,dt=f_0(y)\,dy.
\end{equation*}
Consequently the map $p\mapsto c_\rho\exp\!\left\{-(1-\rho)t^2/2\right\}p(\alpha_\rho t)$ is a surjective isometry from $L^2(f_0)$ onto $L^2(\varphi_\beta)$. Completeness follows from completeness of the Hermite polynomials. The last assertion follows by $t=\beta x$.
\end{proof}

\subsection{Finite-rank secular equations}\label{supp-subsec:proof_finite_rank_secular}
\begin{proof}[Proof of Lemma~\ref{main-lem:finite_rank_secular}]
Let $\lambda\notin\{0\}\cup\{\lambda_n^{(0)}(\rho)\}$ and suppose $f\neq0$ satisfies $Af=\lambda f$.
Expand $f=\sum_{n\ge0}c_n e_n$ in the orthonormal basis of eigenfunctions of $A_0$. Taking inner products with $e_n$ gives
\begin{equation*}
(\lambda_n^{(0)}(\rho)-\lambda)\,c_n=\sum_{\ell=1}^r a_{n,\ell}\,\langle f,u_\ell\rangle_\beta,
\qquad n\ge0,
\end{equation*}
hence
\begin{equation*}
c_n=-\,\frac{1}{\lambda-\lambda_n^{(0)}(\rho)}\,\sum_{\ell=1}^r a_{n,\ell}\,\langle f,u_\ell\rangle_\beta.
\end{equation*}
Substituting into $\langle f,u_m\rangle_\beta=\sum_{n\ge0}c_n\, a_{n,m}$, $m=1,\ldots,r$, yields the linear system
$(I_r+G(\lambda))\,v=0$ for the vector $v=(\langle f,u_1\rangle_\beta,\dots,\langle f,u_r\rangle_\beta)^\top$.
Conversely, a nonzero solution $v$ defines $f$ by the displayed coefficients. The denominators are bounded away from zero, so $f\in L^2(\varphi_\beta;\mathbb R)$. The system gives $\langle f,u_m\rangle_\beta=v_m$, and hence $f\ne0$ and $Af=\lambda f$. The two constructions are inverse linear maps, so they also identify the eigenspace dimension. The series defining $G$, and their derivatives, converge absolutely and locally uniformly off $\{0\}\cup\{\lambda_n^{(0)}\}$, by Cauchy--Schwarz and Parseval's identity. The assertion about the order of a determinant zero is proved in Section~\ref{supp-sec:secular_multiplicity}.

For $r=1$, \eqref{main-eq:finite_rank_det} reduces to \eqref{main-eq:finite_rank_rank1}.
On any interval free of poles, the derivative exists and
\begin{equation*}
F'(\lambda)=-\sum_{n=0}^\infty \frac{a_{n,1}^2}{(\lambda-\lambda_n^{(0)}(\rho))^2}<0,
\end{equation*}
so $F$ is strictly decreasing.
Fix $m\ge 1$ and consider $(\kappa_m,\kappa_{m-1})$.
As $\lambda\downarrow \kappa_m$, the term indexed by the pole $\kappa_m$ behaves like
$a^2/(\lambda-\kappa_m)\to +\infty$, while all other terms remain finite; hence $F(\lambda)\to +\infty$.
As $\lambda\uparrow \kappa_{m-1}$, the term indexed by $\kappa_{m-1}$ behaves like
$a^2/(\lambda-\kappa_{m-1})\to -\infty$ (since $\lambda-\kappa_{m-1}\uparrow 0$ from the negative side),
so $F(\lambda)\to -\infty$.
By strict monotonicity, there is exactly one (necessarily simple) root in $(\kappa_m,\kappa_{m-1})$.
This gives strict interlacing. For $\lambda>\kappa_0$ every summand in $F(\lambda)-1$ is positive. At a pole $\kappa_j$, write $z=\langle f,u_1\rangle$. The $j$th coordinate equation is $a_{j,1}z=0$, so $z=0$. Every other coordinate is then zero; finally $z=a_{j,1}c_j=0$ forces $c_j=0$. Thus neither a pole nor a value above $\kappa_0$ is a positive eigenvalue. The preceding equivalence proves exhaustion.
\end{proof}

\subsection{Rank-one interlacing}\label{supp-subsec:proof_minmax_rank1}
\begin{proof}[Proof of Lemma~\ref{main-lem:minmax_rank1}]
Choose orthonormal eigenvectors for the largest $m+2$ eigenvalues of $A$, taking the required number at a multiple eigenvalue, and denote their span by $E_{m+1}$. Then $\dim E_{m+1}=m+2$, and $E_{m+1}\cap u^\perp$ has dimension at least $m+1$. For every unit vector $f$ in this intersection,
\begin{equation*}
\langle\widetilde Af,f\rangle=\langle Af,f\rangle\ge\lambda_{m+1}(A)>0.
\end{equation*}
The min--max principle therefore gives at least $m+1$ positive eigenvalues of $\widetilde A$ and the lower bound $\lambda_m(\widetilde A)\ge\lambda_{m+1}(A)$. Since $m$ is arbitrary, the positive spectrum is infinite. The upper bound follows from the same principle and $\langle\widetilde Af,f\rangle\le\langle Af,f\rangle$.
\end{proof}

\subsection{Projection of residual empirical sums}\label{supp-subsec:proof_projection_empirical}
\begin{proof}[Proof of Lemma~\ref{main-lem:data_projection_empirical_sums}]
Put $a_n:=\overline X_n$ and $b_n:=S_n-1$.  On the set $|b_n|\le 1/2$ and $|a_n|\le1$, whose probability tends to one,
\begin{equation*}
R_{n,i}=\frac{X_i-a_n}{1+b_n}.
\end{equation*}
For fixed $x$, define $H_{k,x}(u,v):=r_k\{(x-u)/(1+v)\}$.  Taylor's formula at $(0,0)$ gives
\begin{equation*}
H_{k,x}(u,v)
=
r_k(x)-u r_k'(x)-v x r_k'(x)+E_{k,x}(u,v).
\end{equation*}
Since the family $r_0,\ldots,r_M$ is finite and the first two derivatives have polynomial growth, there are constants $C,L<\infty$ such that, for $|v|\le1/2$ and $|u|\le1$,
\begin{equation*}
|E_{k,x}(u,v)|
\le
C(1+|x|^L)(u^2+v^2),
\qquad k=0,\ldots,M.
\end{equation*}
Consequently,
\begin{equation*}
\frac1{\sqrt n}\sum_{i=1}^n
\left|E_{k,X_i}(a_n,b_n)\right|
\le
\sqrt n(a_n^2+b_n^2)\,C\frac1n\sum_{i=1}^n(1+|X_i|^L)
=o_{\mathbb P}(1),
\end{equation*}
uniformly in $k$.  Hence
\begin{align*}
\frac1{\sqrt n}\sum_{i=1}^n r_k(R_{n,i})
&=
\frac1{\sqrt n}\sum_{i=1}^n r_k(X_i)
-
\sqrt n\,\overline X_n
\left(\frac1n\sum_{i=1}^n r_k'(X_i)\right)\\
&\quad
-
\sqrt n(S_n-1)
\left(\frac1n\sum_{i=1}^n X_i r_k'(X_i)\right)
+o_{\mathbb P}(1).
\end{align*}
Furthermore,
\begin{equation*}
\sqrt n\,\overline X_n=\frac1{\sqrt n}\sum_{i=1}^n X_i,
\qquad
\sqrt n(S_n-1)=\frac1{2\sqrt n}\sum_{i=1}^n(X_i^2-1)+o_{\mathbb P}(1).
\end{equation*}
By the law of large numbers and Gaussian integration by parts,
\begin{equation*}
\mathbb E[r_k'(X)]=\mathbb E[Xr_k(X)]=\langle r_k,q_1\rangle_\Phi,
\end{equation*}
and
\begin{equation*}
\mathbb E[Xr_k'(X)]=\mathbb E[(X^2-1)r_k(X)]=\sqrt2\,\langle r_k,q_2\rangle_\Phi.
\end{equation*}
Substitution proves \eqref{main-eq:data_projection_musigma}.  The convergence of the empirical second moments follows by applying the same Taylor argument to the finite family $r_0^2,\ldots,r_M^2$.
\end{proof}

\subsection{The infinite-rank quadratic-form limit}\label{supp-subsec:proof_infinite_rank_limit}
\begin{proof}[Proof of Proposition~\ref{main-prop:data_infinite_rank_limit_main}]
Let $\mathcal W$ be an isonormal Gaussian process on the real space $L^2(\Phi)$, that is, a centered Gaussian family with covariance $\EE[\mathcal W(f)\mathcal W(g)]=\langle f,g\rangle_\Phi$. The finite-rank limits in Theorem~\ref{main-thm:data_finite_rank_projection_limit} can be realized on this common probability space as
\begin{equation*}
Q_M:=\sum_{k=0}^{M}\gamma_k\{\mathcal W^2(\Pi_{\mu,\sigma}r_k)-1\}.
\end{equation*}
Since $\|\Pi_{\mu,\sigma}r_k\|_\Phi\le1$, the expected absolute value of the $k$th summand is at most $2|\gamma_k|$. Hence $Q_M$ converges absolutely almost surely and in $L^1$.

Put $B_M=\Pi_{\mu,\sigma}T_{K_M}\Pi_{\mu,\sigma}$ and $B=\Pi_{\mu,\sigma}T_K\Pi_{\mu,\sigma}$. Then
\begin{equation*}
\|B-B_M\|_1\le\sum_{k>M}|\gamma_k|\longrightarrow0.
\end{equation*}
For a finite-rank self-adjoint operator $D=\sum_\ell d_\ell u_\ell\otimes u_\ell$ with orthonormal eigenvectors, set $Q(D)=\sum_\ell d_\ell\mathcal W^2(u_\ell)$. On a common finite-dimensional subspace this is a Gaussian quadratic form; hence the definition is independent of the eigenbasis and is linear in $D$. Diagonalization gives $\EE|Q(D)|\le\|D\|_1$. It therefore extends uniquely, by $L^1$ continuity, to trace-class self-adjoint operators. Applying this extension to $B_M\to B$ and then using an eigenbasis of $B$ identifies the limit of $Q_M$ as the right-hand side of \eqref{main-eq:data_infinite_rank_limit_main}. This argument applies also to signed $\gamma_k$ and does not require a decreasing enumeration of a mixed-sign spectrum.

For completeness, put $T_n=nU_n^{(\mu,\sigma)}(K)$ and $T_{n,M}=nU_n^{(\mu,\sigma)}(K_M)$, and let $Q$ be the $L^1$ limit of $Q_M$. If $f$ is bounded and Lipschitz with constant $L_f$, then, for every $\varepsilon>0$,
\begin{align*}
|\EE f(T_n)-\EE f(Q)|
&\le L_f\varepsilon+2\|f\|_\infty\PP(|T_n-T_{n,M}|>\varepsilon)\\
&\quad+|\EE f(T_{n,M})-\EE f(Q_M)|+L_f\EE|Q_M-Q|.
\end{align*}
For fixed $M$ the third term tends to zero by Theorem~\ref{main-thm:data_finite_rank_projection_limit}. Take $\limsup_{n\to\infty}$, then let $M\to\infty$ using \eqref{main-eq:data_tail_condition_probability} and $L^1$ convergence, and finally let $\varepsilon\downarrow0$. Convergence against bounded Lipschitz functions proves the assertion.
\end{proof}

\subsection{The real Hilbert-space factorization}\label{supp-subsec:proof_factorization}
\begin{proof}[Proof of Proposition~\ref{main-prop:data_frequency_factorization}]
For $a\in\mathcal H_\theta$, the adjoint is
\begin{equation*}
(L_\theta^*a)(x)=\langle a,\xi_x\rangle_\theta.
\end{equation*}
Using the symmetry and reality of $K_\theta$,
\begin{equation*}
(L_\theta^*L_\theta f)(x)
=
\int_{-\infty}^{\infty}K_\theta(x,y)f(y)\,d\Phi(y).
\end{equation*}
Thus $L_\theta^*L_\theta=T_{K_\theta}$.  Since $\Pi_{\mu,\sigma}$ is self-adjoint,
\eqref{main-eq:factorization_data_side} follows.

Self-adjointness of $\Pi_{\mu,\sigma}$ and the definition of the projected summand give
\begin{equation*}
L_{\theta,\mu,\sigma}f=\EE_\Phi[f(X)\xi_X^{(\mu,\sigma)}],
\qquad
(L_{\theta,\mu,\sigma}^*a)(x)=\langle a,\xi_x^{(\mu,\sigma)}\rangle_\theta.
\end{equation*}
The inner product is real on $\mathcal H_\theta$. Therefore, for $a\in\mathcal H_\theta$,
\begin{align*}
(L_{\theta,\mu,\sigma}L_{\theta,\mu,\sigma}^*a)(s)
&=\EE_\Phi[\langle a,\xi_X^{(\mu,\sigma)}\rangle_\theta
\xi_X^{(\mu,\sigma)}(s)]\\
&=\int C_\theta^{(\mu,\sigma)}(s,t)a(t)\theta(t)\,dt,
\end{align*}
where $C_\theta^{(\mu,\sigma)}(s,t)=\EE_\Phi[\xi_X^{(\mu,\sigma)}(s)
\overline{\xi_X^{(\mu,\sigma)}(t)}]$. The interchange is justified by Cauchy--Schwarz and $\EE_\Phi\|\xi_X^{(\mu,\sigma)}\|_\theta^2<\infty$. This is the real covariance operator, proving \eqref{main-eq:factorization_frequency_side}.

Finally, for a compact operator $L$ between real Hilbert spaces, $L^*L$ and $LL^*$ have the same positive eigenvalues.  If $L^*Lh=\lambda h$ with $\lambda>0$, then $\lambda^{-1/2}Lh$ is an eigenfunction of $LL^*$ with eigenvalue $\lambda$; the inverse on the corresponding eigenspaces is $\lambda^{-1/2}L^*$, which also proves equality of multiplicities. The polar decomposition gives a unitary equivalence on the orthogonal complements of the respective kernels.

These identifications also preserve parity. The projection $\Pi_{\mu,\sigma}$ commutes with reflection, since its two removed directions are respectively odd and even. Symmetry of $\Phi$ and $\xi_{-x}(t)=\xi_x(-t)$ give
\begin{equation*}
\bigl(L_{\theta,\mu,\sigma}[f(-\cdot)]\bigr)(t)
=\bigl(L_{\theta,\mu,\sigma}f\bigr)(-t).
\end{equation*}
Taking adjoints gives the corresponding identity for $L_{\theta,\mu,\sigma}^*$. Thus the eigenspace bijections preserve even and odd parity, with odd Fourier functions understood as imaginary-valued members of $\mathcal H_\theta$.
\end{proof}

\subsection{The original-trace subtraction}\label{supp-subsec:proof_U_V_limit}
\begin{proof}[Proof of Corollary~\ref{main-cor:full_U_V_limit}]
The Gaussian kernel formula gives
\begin{equation*}
K_\beta(x,x)=1-\frac{2}{\sqrt{1+\beta^2}}
\exp\!\left\{-\frac{\beta^2x^2}{2(1+\beta^2)}\right\}
+\frac1{\sqrt{1+2\beta^2}}.
\end{equation*}
This function is bounded and has bounded derivative. Since
\begin{equation*}
\frac1n\sum_j|V_{n,j}-X_j|
\le|S_n^{-1}-1|\frac1n\sum_j|X_j|+S_n^{-1}|\overline X_n|
=O_{\PP}(n^{-1/2}),
\end{equation*}
its empirical mean at the residuals differs by $o_{\PP}(1)$ from its empirical mean at the independent observations. The law of large numbers and a Gaussian integral give
\begin{equation*}
\frac1n\sum_jK_\beta(V_{n,j},V_{n,j})
\xrightarrow[n\to\infty]{\PP}\EE[K_\beta(X,X)]
=1-(1+2\beta^2)^{-1/2}=\tau_\beta.
\end{equation*}
Apply the exact identity \eqref{main-eq:UV_identity} at these residuals, then Theorem~\ref{main-thm:mu_sigma_main} and the scaling \eqref{main-eq:kernel_scaling_rho_beta}. This proves the asserted limit, with the original trace $\tau_\beta$ subtracted, in agreement with Corollary~\ref{main-cor:weighted_kernel_full_tail}.

Finally, the expectation of $T_\infty^{(\mu,\sigma)}$ is the trace of its covariance operator. From \eqref{main-eq:mu_sigma_operator_form},
\begin{align*}
\operatorname{tr}(A^{(\mu,\sigma)})-\tau_\beta
&=-\|\phi_1^*\|_\beta^2-\|\phi_2^*\|_\beta^2\\
&=-\frac{\beta^2}{(1+2\beta^2)^{3/2}}
-\frac{3\beta^4}{2(1+2\beta^2)^{5/2}}.
\end{align*}
Multiplication by $(1-\rho)^{-1}$ proves the formula for the limiting mean.
\end{proof}

\begingroup
\small
\bibliographystyle{imsart-number-issue}
\bibliography{references}
\endgroup

\end{document}